\documentclass[11pt,twoside]{scrartcl}

\usepackage{dnmath}

\usepackage{amscd}
\usepackage{latexsym}
\usepackage[all]{xy}
\usepackage{verbatim}
\usepackage{multicol} 
\usepackage{tikz} 
\usetikzlibrary{cd} 

\tikzset{
  paired poset/.style={
    x=9mm, y=10mm,
    every node/.style={font=\normalsize, inner sep=2.5pt, outer sep=0pt}
  },
  poset cover/.style={
    draw=black, line width=0.4pt, shorten <=1pt, shorten >=1pt
  }
}

\usepackage{Grothendieck}

\newcommand{\FUN}{\mathcal{F}\mkern-3mu\mathit{un}}
\renewcommand{\Fun}{\operatorname{Fun}}
\renewcommand{\End}{\operatorname{End}}

\theoremstyle{dnplain}

\theoremstyle{dndefinition}

\theoremstyle{dnremark}

\title{Grothendieck Rings of Module Categories: Localizations, Strata, and Radicals}
\author{Riley Chinburg \and Dmitri Nikshych}
\date{}
\dnshorttitle{Grothendieck Rings of Module Categories}
\dnshortauthor{R. Chinburg and D. Nikshych}
\dnaffiliation{Department of Mathematics and Statistics,\\
University of New Hampshire, Durham, NH 03824, USA}
\dnemail{riley.chinburg@unh.edu, dmitri.nikshych@unh.edu}
\begin{document}
\raggedbottom
\begin{abstract}
For a non-degenerate braided fusion category $\B$, we study the
Grothendieck $\mathbb Z_+$-ring of $\B$-module categories.
We define a convolution of étale algebras in the Drinfeld center \(\mathcal Z(\mathcal B)\) and show that the full center of a relative tensor product of module categories is canonically isomorphic to the convolution of their full centers.
The module Grothendieck ring has a canonical upper-triangular grading by the poset of localizations of $\B$. 
We describe each localization stratum by double-coset endpoint blocks and a group-ring-valued transition matrix, obtaining an explicit sandwich multiplication formula.
When \(\mathcal B\) is the center of a fusion category, the transition matrix of the maximal stratum is the 
Gram matrix of the Lagrangian algebras, so linear relations among their classes give rise to radicals.
We compute stratum rings and the radicals of their complexifications in examples from classical finite geometry, Drinfeld doubles of finite groups, exceptional WZW categories, and the Haagerup center. The examples exhibit noncommutative module Grothendieck rings with semisimple complexifications, as well as nonzero radicals in both intermediate and maximal complexified strata.
\end{abstract}

\maketitle
\tableofcontents


\section{Introduction}

Let $\B$ be a non-degenerate braided fusion category.
Throughout, all module categories are finite semisimple.
The relative tensor
product \cite[Subsection~3.4]{ENO2} makes the free abelian group on equivalence classes of indecomposable
$\B$-module categories into a $\mathbb Z_+$-ring
\[
K^\mmod(\B):=K_0(\ModCat(\B)),
\qquad
[\M][\N]=[\M\bt_\B\N].
\]
Thus, while the usual Grothendieck ring $K_0(\B)$ records tensor products of
objects of $\B$, the module Grothendieck ring records how entire module
categories compose. A classification of indecomposable module categories
determines its distinguished basis.

This multiplication contains information which is invisible in a mere
classification. In particular, although $K^\mmod(\B)$ has nonnegative integral
structure constants, it need not behave like a fusion ring. The complexified
Grothendieck ring of a fusion category is semisimple, whereas
$K^\mmod(\B)\ot_{\mathbb Z}\mathbb C$ need not be. One of our aims is to
explain this failure of semisimplicity in categorical terms and, more
generally, to describe the structure imposed on $K^\mmod(\B)$ by the
localization theory of $\B$.

A first structural description of this multiplication uses \(\alpha\)-induction
\(\alpha_\M^\pm:\B\to\FUN_\B(\M)\) for any \(\B\)-module category \(\M\)
\cite{BEK}. The associated endofunctor, which we denote by \(Z_\M\), appeared in
\cite[Proposition~3.6]{FFRS}. For a \(\B\)-module category \(\M\), we write
\[
Z_\M=(\alpha_\M^+)^{\mathrm R}\alpha_\M^-:\B\to\B,
\]
where \({\mathrm R}\) denotes the right adjoint.

We equip \(Z_\M\) with a braided lax monoidal structure and prove that
the relative tensor product of \(\B\)-module categories corresponds to composition, so that
\[
Z_{\M\bt_\B\N}\simeq Z_\M\circ Z_\N,
\]
as braided lax monoidal endofunctors. We also define a convolution of
algebras in \(\B\bt\B^{\rev}\) and show that it preserves étale algebras.
When \(\B\) is non-degenerate, the full center \(L_\M\) and the endofunctor
\(Z_\M\) encode the same invariant, and under this correspondence composition
of endofunctors matches convolution of full centers. In particular, the full
center of \(\M\bt_\B\N\) is canonically isomorphic to the convolution of
\(L_\M\) and \(L_\N\); see  Section~\ref{sect: Module categories, Lagrangian algebras, and residual data}.

The description of Lagrangian algebras by residual data \cite{DNO} reveals
the ideal structure of $K^\mmod(\B)$.  It is complementary to  the above categorical models for the relative tensor product: it organizes that product by localization.

The grading and stratification of the module Grothendieck ring are studied in Section~\ref{sect:The Grothendieck ring of module categories}. 
Let \(\mathfrak{Et}(\B)\) be the finite poset of connected étale algebras in \(\B\). For \(A\in\mathfrak{Et}(\B)\), the braided fusion category \(\B_A^{\loc}\) will be called the corresponding localization of \(\B\). Let \(\mathfrak{Loc}(\B)\) denote the poset of braided equivalence classes of such localizations, ordered by further localization: \(\C\leq\D\) if \(\D\) is braided equivalent to a localization of \(\C\). Then
\[
\mathfrak{Et}(\B)\longrightarrow\mathfrak{Loc}(\B),
\qquad
A\longmapsto\B_A^{\loc},
\]
is order-preserving.
An indecomposable \(\B\)-module category is described by connected étale algebras \(A_1,A_2\) with braided equivalent localizations, together with a residual equivalence between these localizations. If residual data
$
(A_1,A_2,\phi),\,(A_1',A_2',\phi')
$
are multiplied, then every summand \((B,C,\psi)\) of their product satisfies
$
A_1\subseteq B,\, A_2'\subseteq C.
$
Thus multiplication can only move upward in the localization order (Proposition~\ref{prop: etale monotonicity}).

Consequently, the decomposition
\[
K^\mmod(\B)=
\bigoplus_{\C\in\mathfrak{Loc}(\B)}K^\mmod(\B)_\C
\]
is upper triangular with respect to the localization order. For every
$\C\in\mathfrak{Loc}(\B)$ this gives two-sided ideals
\[
F_{\geq\C}:=
\bigoplus_{\D\geq\C}K^\mmod(\B)_\D,
\qquad
F_{>\C}:=
\bigoplus_{\D>\C}K^\mmod(\B)_\D,
\]
and the corresponding stratum ring
$
S_\C(\B):=F_{\geq\C}/F_{>\C}.
$
Multiplication in $S_\C(\B)$ retains exactly the terms which preserve the
localization $\C$ and discards those which localize further. The
semisimplicity criterion of Proposition~\ref{prop: semisimplicity poset grading}
shows that the algebra $K^\mmod(\B)\ot_{\mathbb Z}\mathbb C$ is semisimple if and only if
all of its complexified stratum algebras are semisimple.

The multiplication can be described uniformly within each localization stratum. Fix a localization \(\C\). The connected étale algebras \(A\) with \(\B_A^{\loc}\simeq\C\) form the possible endpoints of residual data in this stratum, while the residual equivalences are controlled by the braided autoequivalence group \(G=\Aut^{\mathrm{br}}(\C)\). Thus each pair of endpoints contributes a Hecke-type double coset block.

The remaining information needed for multiplication concerns the two endpoints that meet in the middle of a product. For every pair \(A_k,A_\ell\) in the localization fiber, the tensor product
$
\B_{A_k}\bt_\B\B_{A_\ell}
$
decomposes into $\B$-module transition components. Those which remain invertible after localization contribute to the same stratum; the others move to strictly higher localizations. After choosing identifications of the local categories with \(\C\), the surviving components assemble into an element
\(\Theta_{k\ell}\in\mathbb Z_{\geq0}[G]\), and the family
\(\Theta=(\Theta_{k\ell})\) is the transition matrix of the stratum. In the resulting double coset coordinates the multiplication takes the sandwich form
\[
x_{ik}(g)x_{\ell j}(h)=x_{ij}(h\Theta_{k\ell}g).
\]
Thus the structure of a localization stratum is determined by its endpoint double coset blocks together with the transition matrix between them (Theorem~\ref{thm: stratum multiplication}).

At the maximal localization the source of nonsemisimplicity becomes
particularly transparent. 
Suppose \(\mathcal B=\mathcal Z(\mathcal A)\), so that the maximal localization is $\Vect$ and its endpoints are the Lagrangian algebras \(L_1,\dots,L_m\).
By Corollary~\ref{cor: maximal Gram sandwich}, the maximal stratum has
basis $x_{ij}=x_{L_i,L_j}$ and multiplication
\[
x_{ij}x_{k\ell}=H_{jk}x_{i\ell},
\qquad
H_{jk}=\dim_k\Hom_\B(L_j,L_k).
\]
Hence, after identifying $x_{ij}$ with matrix units, the multiplication is
the sandwich product $X*Y=XHY$. The matrix $H$ is the Gram matrix of the
classes of the Lagrangian algebras as objects of $\B$. In particular,
\[
\ker H=
\left\{
(\lambda_i)\ \middle|\
\sum_i\lambda_i[L_i]=0
\text{ in }K_0(\B)\ot_{\mathbb Z}\mathbb C
\right\}.
\]
Thus singularity of $H$, and hence the radical of the complexified maximal stratum, is explained
by genuine linear relations among the underlying Lagrangian algebra objects.
The resulting nilpotent elements are virtual linear combinations of module
categories.

We illustrate these structures in several families. For centers of pointed
categories attached to elementary abelian groups of odd order, the
localization fibers are polar Grassmannians. Their transition matrices
are described by the associated orthogonal geometry
(Section~\ref{sect: Example classical}). In Witt rank at least two, the
ruling relations span all complex linear relations among the underlying
Lagrangian algebra classes and give generators for the radical of the
complexified maximal stratum. For $\Rep(D(G))$, following Davydov's
classification \cite{D}, we reformulate the parameterization of connected
\'etale algebras in terms of twisted sections and relative cohomology,
and identify their localizations with twisted Drinfeld doubles of
quotient sections. The case $G=S_3$ gives a group-theoretical example in
which both an intermediate stratum and the maximal stratum have nonzero
radicals (Section~\ref{sect: Example Drinfeld}). 

We also compute two exceptional WZW examples using the
stratum multiplication formula. 
For \(\mathcal C(\mathfrak{sl}_4,4)\), the stratum formulas determine products that the corresponding modular invariants do not distinguish, 
while the two-endpoint Ising fiber of
$\mathcal C(\mathfrak{sl}_6,6)$  produces  a noncommutative module Grothendieck
ring whose complexification is nevertheless semisimple (Section~\ref{sect: Example WZW}). Finally, the
center of the Haagerup fusion category gives an exotic,
non-group-theoretical example in which both an intermediate localization
stratum and the maximal stratum have nonzero radicals (Section~\ref{sect: Haagerup center}).

\noindent\textbf{Acknowledgments.}
The authors are grateful to Alexei Davydov for many illuminating discussions
and for sharing the unpublished work~\cite{CD}, and to
Cain Edie-Michell, Christoph Schweigert, and Kenichi Shimizu for useful
comments. D.N. would like to thank the Isaac Newton Institute for Mathematical Sciences, Cambridge, for support and hospitality during the programme 
\emph{Quantum Field Theory with Boundaries, Impurities and Defects}, where part of this work was undertaken.

The authors also acknowledge conversations with ChatGPT,
developed by OpenAI, which were useful in formulating several of the
questions, exploring possible approaches, and assisting with typesetting
and manuscript preparation. ChatGPT was used as an exploratory and
editorial aid; the mathematical content, verification of the arguments,
and all final decisions concerning the mathematics and its presentation
are the authors' own.

The authors' work was supported by the National Science Foundation under
Grant No.\ DMS-2302267.


\section{Module categories, Lagrangian algebras, and residual data}
\label{sect: Module categories, Lagrangian algebras, and residual data}

Throughout, $k$ is an algebraically closed field of characteristic zero.
We refer to \cite{EGNO} for basics of the theory of (braided) fusion
categories and module categories over them.

Let $\B$ be a braided fusion category. A left $\B$-module category
will also be regarded as a right $\B$-module category by setting
$m\triangleleft X:=X\triangleright m$. Its right module associativity
constraint is the composite
\begin{equation}
\label{eqn:right module convention}
(m\triangleleft X)\triangleleft Y
=Y\triangleright(X\triangleright m)
\xrightarrow{\sim}(Y\ot X)\triangleright m
\xrightarrow{c_{X,Y}^{-1}\triangleright\id_m}(X\ot Y)\triangleright m
=m\triangleleft(X\ot Y).
\end{equation}
The mixed constraint is
\[
(U\triangleright m)\triangleleft X
\cong(X\ot U)\triangleright m
\xrightarrow{c_{U,X}^{-1}\triangleright\id_m}
U\triangleright(m\triangleleft X),
\]
where module associators are suppressed. We use these constraints in the
relative tensor product $\M\bt_\B\N$. This right action is determined by
$\alpha^-_\M$ with the tensor convention below.

The \emph{full center} construction was introduced for algebras in modular tensor categories in \cite[Definition~4.9]{FjFRS} and further developed in \cite{KR,DavCenter}. In the present setting, it identifies indecomposable $\B$-module categories with Lagrangian algebras in the Drinfeld center $\Z(\B)$ \cite[Proposition~4.8]{DMNO}.

Algebra objects in fusion categories are associative and unital. An algebra $A$ in a braided
fusion category $\B$ is \emph{\'etale} if it is commutative and
separable, where separability means that multiplication splits
as an $A$-bimodule morphism. It is \emph{connected} if
$\dim_k\Hom_\B(\be,A)=1$.

We write \(\mathcal B_A\) for the category of left \(A\)-modules. A left \(A\)-module with action \(\mu:A\otimes M\to M\) is called \emph{local} if
\[
\mu\circ c_{M,A}\circ c_{A,M}=\mu.
\]
We write \(\mathcal B_A^{\mathrm{loc}}\) for the full subcategory of local modules, called the \emph{localization} of \(\mathcal B\) at \(A\).
For connected \'etale $A$, the category $\B_A^{\loc}$ is a braided fusion
category. If $\B$ is non-degenerate, so is $\B_A^{\loc}$
\cite[Corollary~3.30]{DMNO}, and \cite[Corollary~3.32]{DMNO} gives
\begin{equation}
\label{eqn:local-module-dimension}
\FPdim(\B_A^{\loc})
=\frac{\FPdim(\B)}{\FPdim(A)^2}.
\end{equation}
A connected \'etale algebra $A$ in a non-degenerate braided fusion
category $\B$ is \emph{Lagrangian} if
$\FPdim(A)^2=\FPdim(\B)$, equivalently if
$\B_A^{\loc}\simeq\Vect$.

\subsection{The braided lax endofunctor associated to a module category}
\label{subsect: modular invariant endofunctor}

Modular invariant matrices originated in rational conformal field theory as the coefficient matrices of torus partition functions; in the modular setting they have nonnegative integral entries and commute with the modular matrices $S$ and $T$. The ADE classification of Cappelli--Itzykson--Zuber \cite{CIZ} and the later $\alpha$-induction approach of B{\"o}ckenhauer--Evans--Kawahigashi \cite{BEK} are among the basic developments. Categorical formulations in terms of module categories, algebra objects, and full centers were developed subsequently by Ostrik, Fuchs--Runkel--Schweigert, Davydov, and others \cite{Ost03,FRS,DavCenter,DKR}. We use the same $\alpha$-induction data, but retain it before decategorification as a braided lax monoidal endofunctor of $\B$. This construction only requires the braided fusion category setting; the traditional statement involving $S$ and $T$ applies to  modular categories.
Let $\B$ be a braided fusion category, and let $\M$ be a $\B$-module category. Put
\[
\B^*_\M:=\Fun\B(\M,\M),
\qquad F\ot G=G\circ F,
\]
so that evaluation defines a right action of $\B^*_\M$ on $\M$. Here and below, $\circ$ denotes ordinary composition.

The tensor functors
$\alpha_\M^\pm:\B\to\B^*_\M$, called the two \emph{$\alpha$-induction
functors} \cite[Section 2.1]{BEK01}, have underlying endofunctor
$m\mapsto X\triangleright m$. For $U,X\in\B$, their module functor
constraints
\[
\alpha_\M^\pm(X)(U\triangleright m)
\longrightarrow U\triangleright\alpha_\M^\pm(X)(m),
\]
are induced by $c_{X,U}$ for $+$ and $c_{U,X}^{-1}$ for $-$.
Their tensor functor structure morphisms
\[
\alpha_\M^\pm(Y)\circ\alpha_\M^\pm(X)
\longrightarrow
\alpha_\M^\pm(X\ot Y)
\]
are induced by $c_{Y,X}$ for $+$ and $c_{X,Y}^{-1}$ for $-$; the unit
constraints come from the module unit constraints. Compare
\cite[equation~(4.4)]{DN2}, where the reversed tensor product is placed
in the source instead of the target. 
They satisfy the \emph{$\alpha$-interchange law}: for $X,Y\in\B$, there is a natural isomorphism of $\B$-module functors
\begin{equation}
\label{eqn:alpha interchange}
\iota^\M_{X,Y}:\alpha_\M^+(X)\circ\alpha_\M^-(Y)
\xRightarrow{\ \sim\ }
\alpha_\M^-(Y)\circ\alpha_\M^+(X),
\qquad
(\iota^\M_{X,Y})_m=c_{X,Y}\triangleright\id_m,
\end{equation}
where the module associators in the component formula are suppressed.

Let $F_\M:\Z(\B)\to\B^*_\M$ be the central action functor,
$Z\mapsto Z\triangleright-$. Its tensor functor structure morphism
\[
F_\M(Z)\ot F_\M(W)=F_\M(W)\circ F_\M(Z)
\longrightarrow F_\M(Z\ot W)
\]
is induced by $c^{\Z(\B)}_{W,Z}:W\ot Z\to Z\ot W$ and the module
associators. It restricts to the structure morphisms of $\alpha_\M^+$ and $\alpha_\M^-$ just specified. Its
restriction along the canonical braided tensor functor
$\B\bt\B^\rev\to\Z(\B)$ satisfies the ordinary composition isomorphism
\begin{equation}
\label{eqn:central action alpha}
F_\M(X\bt Y)\cong\alpha_\M^+(X)\circ\alpha_\M^-(Y).
\end{equation}
For simple objects $X_i,X_j\in\B$, the entries of the corresponding \emph{coupling matrix} are
\[
z^\M_{ij}:=\dim\Hom_{\B^*_\M}\bigl(\alpha_\M^+(X_i),\alpha_\M^-(X_j)\bigr).
\]
This matrix has nonnegative integral entries. If $\B$ is non-degenerate
and pseudo-unitary, equipped with its canonical ribbon structure,
and $\M$ is indecomposable, then it is the usual $\alpha$-induction modular invariant
matrix and commutes with $S$ and $T$.
The full center coefficients below use the diagonal kernel convention and therefore involve a dual in the second variable.

\enlargethispage{2\baselineskip}
\begin{definition}
\label{def: modular invariant endofunctor}
Let $\bigl(\alpha_\M^+\bigr)^{\mathrm R}:\B^*_\M\to\B$ be the right adjoint of $\alpha_\M^+$. Define
\begin{equation}
\label{eqn:ZM definition}
Z_\M:=\bigl(\alpha_\M^+\bigr)^{\mathrm R}\alpha_\M^-:\B\longrightarrow\B.
\end{equation}
\end{definition}
The adjunction gives a natural isomorphism
\begin{equation}
\label{eqn:ZM representing formula}
\Hom_\B(X,Z_\M(Y))
\cong
\Hom_{\B^*_\M}\bigl(\alpha_\M^+(X),\alpha_\M^-(Y)\bigr).
\end{equation}
In particular, $z^\M_{ij}=\dim\Hom_\B(X_i,Z_\M(X_j))$.

\begin{proposition}
\label{prop: Z braided lax monoidal}
The functor \(Z_\M\) has a canonical braided lax monoidal structure.
\end{proposition}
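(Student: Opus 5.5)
The plan is to build the lax structure from doctrinal adjunction and then let the $\alpha$-interchange law supply the braiding compatibility. Since $\alpha_\M^+$ is a tensor functor, its right adjoint $(\alpha_\M^+)^{\mathrm R}$ carries a canonical lax monoidal structure:
\[
(\alpha_\M^+)^{\mathrm R}(F)\ot(\alpha_\M^+)^{\mathrm R}(G)\longrightarrow(\alpha_\M^+)^{\mathrm R}(F\ot G).
\]
It is the mate of the composite
\[
\alpha^+_\M\bigl((\alpha^+_\M)^{\mathrm R}F\ot(\alpha^+_\M)^{\mathrm R}G\bigr)\cong\alpha^+_\M(\alpha^+_\M)^{\mathrm R}F\ot\alpha^+_\M(\alpha^+_\M)^{\mathrm R}G\xrightarrow{\epsilon\ot\epsilon}F\ot G.
\]
The unit map is the mate of $\alpha^+_\M(\be)\cong\id$. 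Composing with the strong monoidal functor $\alpha_\M^-$ gives $Z_\M$ a lax monoidal structure
\[
J_{X,Y}:Z_\M(X)\ot Z_\M(Y)\to Z_\M(X\ot Y),
\]
whose mate under (\ref{eqn:ZM representing formula}) is explicit:
\[
\alpha^+_\M(Z_\M X\ot Z_\M Y)\cong\alpha^+_\M(Z_\M X)\ot\alpha^+_\M(Z_\M Y)\xrightarrow{\epsilon\ot\epsilon}\alpha^-_\M(X)\ot\alpha^-_\M(Y)\cong\alpha^-_\M(X\ot Y).
\]
Associativity and unitality are formal consequences of mates preserving pasting, so I will not verify them by hand.

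The substantive step is showing that $J$ is braided, i.e.
\[
Z_\M(c_{X,Y})\circ J_{X,Y}=J_{Y,X}\circ c_{Z_\M X,Z_\M Y}.
\]
Passing to mates, both sides become morphisms
\[
\alpha^+_\M(Z_\M X\ot Z_\M Y)\to\alpha^-_\M(Y\ot X)
\]
in $\B^*_\M$, and I will compare them there. On the left, $\alpha^-_\M(c_{X,Y})$ is converted, via the tensor structure of $\alpha^-$ (induced by $c^{-1}$), into the interchange of the two factors using the reverse braiding. On the right, $\alpha^+_\M(c_{Z X,Z Y})$ composed with the tensor structure of $\alpha^+$ (induced by $c$) produces the interchange with the forward braiding. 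After that, both composites are $\epsilon_X$ and $\epsilon_Y$ applied in the two orders, and the discrepancy between the orders is exactly the interchange isomorphism $\iota^\M$ of (\ref{eqn:alpha interchange}). Naturality of $\iota^\M$ with respect to the counits $\epsilon:\alpha^+ Z_\M X\to\alpha^-X$ then lets me slide $\epsilon_X\ot\epsilon_Y$ past it.

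A cleaner route for this step is to rewrite everything through the central action functor $F_\M$ and (\ref{eqn:central action alpha}). The relevant swap becomes $F_\M$ applied to the braiding of $\Z(\B)$ on $X\bt\be$ and $\be\bt Y$, which is the identity-type half-braiding there, and this forces the two composites to agree. I would use this $F_\M$ formulation to organize the bookkeeping.

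I expect the braiding check to be the main obstacle. The identity is formally clear, but with the convention $F\ot G=G\circ F$ and the opposite-braiding conventions for $\alpha^-$ the hexagon-type diagram is error-prone. If the $F_\M$ organization does not immediately close the diagram, I would fall back to a component computation at an object $m\in\M$, suppressing associators, where every map reduces to braidings $c_{Z X,Z Y}$ acting on $m$ and the required identity follows from naturality of $c$ and the hexagon axioms. Canonicity then holds because every choice involved is either a mate or a structure map already fixed.
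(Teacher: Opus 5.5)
Your overall route is the paper's. The lax structure is the mate of $\epsilon\ot\epsilon$ between the tensor structures of $\alpha^\pm_\M$; with $F\ot G=G\circ F$ this is exactly $\mu_{U,V}$. The braiding identity is then checked on mates, and your two conversion steps are regions (i) and (iv) of the paper's diagram. Those two steps are only cancellations $c\,c^{-1}=\id$, so no hexagon identity is needed.

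The problem is the middle step, which carries all the content. ``Naturality of $\iota^\M$ with respect to the counits'' is not a property you have. $\iota^\M_{X,Y}$ is natural in $X,Y\in\B$. By contrast, $\epsilon_X\colon\alpha^+_\M Z_\M(X)\Rightarrow\alpha^-_\M(X)$ is a morphism of $\B^*_\M$ that is not the image of any morphism of $\B$, and it changes an $\alpha^+$-factor into an $\alpha^-$-factor. Moreover, the two swaps you must relate are not components of $\iota^\M$ at all: one is $c_{Z_\M(Y),Z_\M(X)}\triangleright-$ between two $\alpha^+$-factors, the other is $c_{Y,X}\triangleright-$ between two $\alpha^-$-factors.

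Your fallbacks miss the same point. First, $\epsilon$ is not in the image of $F_\M$, so the triviality of the mixed braiding of $X\bt\be$ and $\be\bt Y$ in $\B\bt\B^\rev$ forces nothing involving $\epsilon$. Second, in a componentwise computation the maps $(\epsilon_X)_m$ are not braidings, so naturality of $c$ cannot move them.

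The missing ingredient is that $\epsilon_X,\epsilon_Y$ are morphisms of $\B^*_\M=\Fun_\B(\M,\M)$, i.e.\ module natural transformations, and that every swap in the diagram is a module constraint. The sliding must be split into two squares.

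First, module naturality of $\epsilon_Y$ with respect to $Z_\M(X)$ turns $c_{Z_\M(Y),Z_\M(X)}\triangleright-$, the module constraint of $\alpha^+_\M Z_\M(Y)$, into $c^{-1}_{Z_\M(X),Y}\triangleright-$, the module constraint of $\alpha^-_\M(Y)$.

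Second, read the same arrow $c^{-1}_{Z_\M(X),Y}\triangleright-$ as the inverse of the module constraint of $\alpha^+_\M Z_\M(X)$ with respect to $Y$. Module naturality of $\epsilon_X$, namely
\[
\bigl(Y\triangleright(\epsilon_X)_m\bigr)\circ\bigl(c_{Z_\M(X),Y}\triangleright\id_m\bigr)
=\bigl(c^{-1}_{Y,X}\triangleright\id_m\bigr)\circ(\epsilon_X)_{Y\triangleright m},
\]
turns it into $c_{Y,X}\triangleright-$, the inverse module constraint of $\alpha^-_\M(X)$. This is exactly the swap produced by your $\alpha^-$ conversion.

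The fact that the middle arrow admits both readings is where the conventions ($c$ for $+$, $c^{-1}$ for $-$), i.e.\ the interchange law, actually enter. With these two squares in place of ``naturality of $\iota^\M$'', your argument coincides with the paper's proof.
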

\begin{proof}
Let \(\epsilon\) be the counit of
\(\alpha_\M^+\dashv(\alpha_\M^+)^{\mathrm R}\).
The structure morphism
\[
\mu_{U,V}:Z_\M(U)\ot Z_\M(V)\longrightarrow Z_\M(U\ot V)
\]
is defined by adjunction from the composite
\[
\alpha_\M^+\bigl(Z_\M(U)\ot Z_\M(V)\bigr)
\xrightarrow{\sim}
\alpha_\M^+Z_\M(V)\circ\alpha_\M^+Z_\M(U)
\xrightarrow{\epsilon_V\circ\epsilon_U}
\alpha_\M^-(V)\circ\alpha_\M^-(U)
\xrightarrow{\sim}
\alpha_\M^-(U\ot V).
\]
The first isomorphism is the inverse of the structure morphism for \(\alpha_\M^+\), and the last is the structure morphism for \(\alpha_\M^-\). The unit is defined similarly. Associativity and the unit axioms follow from the tensor functor structures of \(\alpha_\M^\pm\) and the adjunction.

It remains to check compatibility with the braiding. 
For $X,Y\in\B$, let
$\epsilon_X:\alpha_\M^+Z_\M(X)\Rightarrow\alpha_\M^-(X)$ and
$\epsilon_Y:\alpha_\M^+Z_\M(Y)\Rightarrow\alpha_\M^-(Y)$ be the counit
components.  We use the tensor functor structures of \(\alpha_\M^+\) and \(\alpha_\M^-\) specified above.
Under the adjunction
$\alpha_\M^+\dashv(\alpha_\M^+)^{\mathrm R}$, the two sides of
\begin{equation}
\label{eq:goal}
Z_\M(c_{X,Y})\,\mu_{X,Y}
=\mu_{Y,X}\,c_{Z_\M(X),Z_\M(Y)}
\end{equation}
correspond to the boundary paths of the following diagram: down the left
column and then across the bottom, or across the top and then down the
right column. The notation $c_{A,B}\triangleright-$ denotes the natural
transformation with component $c_{A,B}\triangleright\id_m$; module
associators are suppressed.

\[
\begin{tikzcd}[column sep=66pt,row sep=34pt]
\alpha_\M^+\bigl(Z_\M(X)\ot Z_\M(Y)\bigr)
 \arrow[r,"{\alpha_\M^+(c_{Z_\M(X),Z_\M(Y)})}"]
 \arrow[d,"{c_{Z_\M(Y),Z_\M(X)}^{-1}\triangleright-}"']
& \alpha_\M^+\bigl(Z_\M(Y)\ot Z_\M(X)\bigr)
 \arrow[d,"{c_{Z_\M(X),Z_\M(Y)}^{-1}\triangleright-}"]
\\
\alpha_\M^+Z_\M(Y)\circ\alpha_\M^+Z_\M(X)
 \arrow[r,"{c_{Z_\M(Y),Z_\M(X)}\triangleright-}"]
 \arrow[d,"{\epsilon_Y\circ\id}"']
& \alpha_\M^+Z_\M(X)\circ\alpha_\M^+Z_\M(Y)
 \arrow[d,"{\id\circ\epsilon_Y}"]
\\
\alpha_\M^-(Y)\circ\alpha_\M^+Z_\M(X)
 \arrow[r,"{c_{Z_\M(X),Y}^{-1}\triangleright-}"]
 \arrow[d,"{\id\circ\epsilon_X}"']
& \alpha_\M^+Z_\M(X)\circ\alpha_\M^-(Y)
 \arrow[d,"{\epsilon_X\circ\id}"]
\\
\alpha_\M^-(Y)\circ\alpha_\M^-(X)
 \arrow[r,"{c_{Y,X}\triangleright-}"]
 \arrow[d,"{c_{X,Y}^{-1}\triangleright-}"']
& \alpha_\M^-(X)\circ\alpha_\M^-(Y)
 \arrow[d,"{c_{Y,X}^{-1}\triangleright-}"]
\\
\alpha_\M^-(X\ot Y)
 \arrow[r,"{\alpha_\M^-(c_{X,Y})}"']
& \alpha_\M^-(Y\ot X)
\arrow[phantom,from=1-1,to=2-2,"\text{(i)}" description]
\arrow[phantom,from=2-1,to=3-2,"\text{(ii)}" description]
\arrow[phantom,from=3-1,to=4-2,"\text{(iii)}" description]
\arrow[phantom,from=4-1,to=5-2,"\text{(iv)}" description]
\end{tikzcd}
\]
The first vertical arrows are the inverses of the structure morphisms of
$\alpha_\M^+$, and the last are the structure morphisms of
$\alpha_\M^-$. The two middle vertical arrows in each column factor the horizontal
compositions $\epsilon_Y\circ\epsilon_X$ on the left and
$\epsilon_X\circ\epsilon_Y$ on the right. 

Regions \textup{(i)} and \textup{(iv)} commute by cancellation of inverse braidings. Region \textup{(ii)} is module naturality of \(\epsilon_Y\) with respect to \(Z_\M(X)\), and region \textup{(iii)} is module naturality of \(\epsilon_X\) with respect to \(Y\). Hence the diagram commutes.
\end{proof}

\begin{lemma}
\label{lem:alpha comparison}
Let $\M,\N$ be $\B$-module categories and put
$\mathcal P=\M\bt_\B\N$. Consider the tensor functors
\[
\begin{aligned}
\lambda:\B^*_\M&\longrightarrow\B^*_{\mathcal P},
&F&\longmapsto F\bt_\B\id_\N,\\
\rho:\B^*_\N&\longrightarrow\B^*_{\mathcal P},
&G&\longmapsto\id_\M\bt_\B G.
\end{aligned}
\]
Let $\lambda^{\mathrm R}$ be the right adjoint of $\lambda$.
There is a canonical monoidal natural isomorphism
\begin{equation}
\label{eqn:alpha comparison}
\alpha_\M^-(\alpha_\N^+)^{\mathrm R}
\cong\lambda^{\mathrm R}\rho.
\end{equation}
\end{lemma}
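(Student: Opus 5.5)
Both sides of \eqref{eqn:alpha comparison} are functors $\B^*_\N\to\B^*_\M$. I will produce a canonical natural transformation between them, show it is monoidal, and then prove it is an isomorphism by a representability argument.

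\textbf{Key relation.} Since the right action of $\B$ on $\M$ is the one determined by $\alpha^-_\M$, balancing in $\M\bt_\B\N$ gives
\[
\lambda\,\alpha_\M^-(X)\;\cong\;\rho\,\alpha_\N^+(X)\qquad (X\in\B),
\]
as module functors on $\mathcal P$. Both sides are $(m\triangleleft X)\bt n\cong m\bt(X\triangleright n)$, and the balancing constraint is the identifying isomorphism. I will check that it is compatible with the tensor structures of $\alpha^\pm$ (these are induced by $c_{X,Y}^{-1}$ and $c_{Y,X}$ respectively), so that $\lambda\alpha^-_\M\cong\rho\alpha^+_\N$ as tensor functors. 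This is the step I expect to be the main obstacle, because one must match the conventions \eqref{eqn:right module convention} with the balancing axioms of the relative tensor product, and this is exactly where the sign $-$ versus $+$ is forced.

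\textbf{Constructing the comparison map.} Given $G\in\B^*_\N$, the counit $\alpha_\N^+(\alpha_\N^+)^{\mathrm R}G\to G$ yields
\[
\lambda\,\alpha^-_\M(\alpha^+_\N)^{\mathrm R}G\;\cong\;\rho\,\alpha^+_\N(\alpha^+_\N)^{\mathrm R}G\;\longrightarrow\;\rho G.
\]
By the adjunction $\lambda\dashv\lambda^{\mathrm R}$ this corresponds to a map $\theta_G:\alpha_\M^-(\alpha_\N^+)^{\mathrm R}G\to\lambda^{\mathrm R}\rho G$. Both composites are lax monoidal, as right adjoints of tensor functors composed with tensor functors. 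Monoidality of $\theta$ then follows from two facts: counits of monoidal adjunctions are monoidal, and the key relation is a tensor isomorphism. This is the same kind of diagram chase as in Proposition~\ref{prop: Z braided lax monoidal}.

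\textbf{Proving $\theta$ is invertible.} I will reduce to hom spaces. Every object of $\B^*_\M$ is a summand of one of the form $\alpha^+_\M(X)\circ\alpha^-_\M(Y)$: the central action functor $F_\M$ of \eqref{eqn:central action alpha} is dominant for indecomposable $\M$, and the general case follows by additivity. For test objects of this form I will compare $\Hom(\,\cdot\,,\theta_G)$. On the left, adjunctions turn $\Hom(\alpha^+_\M X\circ\alpha^-_\M Y,\alpha^-_\M(\alpha^+_\N)^{\mathrm R}G)$ into a hom space in $\B^*_\N$ of the shape $\Hom(\alpha^+_\N(\ldots),G)$, after moving $\alpha^-_\M$ across by duals. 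On the right, $\lambda$ carries the test object to $\rho\,\alpha^+_\N(Y)\circ\lambda\alpha^+_\M(X)$, and one computes $\Hom_{\mathcal P}$ using the same relation. If the dimension count becomes awkward, I will instead use the equivalence $\B^*_{\mathcal P}\simeq\B^*_\M\bt_{\B}\B^*_\N$. Under it, $\lambda^{\mathrm R}\rho(G)$ is the internal "tensoring out" of $G$ over $\B$ via $\alpha^+_\N$, and this is exactly $\alpha^-_\M$ applied to the $\B$-valued internal Hom $(\alpha^+_\N)^{\mathrm R}G$. That identification gives invertibility directly.
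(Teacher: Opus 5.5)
Your comparison map $\theta$ is exactly the paper's $\beta$: the mate of the balancing isomorphism $\chi:\lambda\alpha^-_\M\Rightarrow\rho\alpha^+_\N$. Your monoidality argument is also the paper's. The gap is invertibility.

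Your test-object argument needs every object of $\B^*_\M$ to be a summand of some $\alpha^+_\M(X)\circ\alpha^-_\M(Y)$. Dominance of $F_\M$ on $\Z(\B)$ does not give this. By \eqref{eqn:central action alpha}, these objects are the values of $F_\M$ restricted along $\B\bt\B^\rev\to\Z(\B)$, and that restriction is dominant only when $\B$ is non-degenerate. The lemma, and Proposition~\ref{prop: modular invariant multiplicativity} which uses it, is stated for an arbitrary braided fusion category. For instance, take $\B=\Rep(\Gamma)$ with its symmetric braiding and $\M=\Vect$. Then $\B^*_\M\simeq\Vec_\Gamma$ and every $\alpha^\pm_\M(X)$ is concentrated in degree $e$, so $\delta_x$ with $x\neq e$ is never reached. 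You do not know in advance that $\lambda^{\mathrm R}\rho(G)$ has no such components, so comparing Hom spaces against your test objects only controls part of the target.

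Even for non-degenerate $\B$, the right-hand computation requires $\Hom_{\B^*_{\mathcal P}}\bigl(\lambda\alpha^+_\M(X)\circ\rho\alpha^+_\N(Y),\rho G\bigr)$. Balancing turns $\lambda\alpha^-_\M$ into a $\rho$-image, not $\lambda\alpha^+_\M$. So after moving $\rho\alpha^+_\N(Y)$ across by rigidity, you face a Hom space from $\lambda\alpha^+_\M(X)$ to a $\rho$-image, which is precisely what $\lambda^{\mathrm R}\rho$ encodes. The proposal gives no independent way to compute it. A dimension count alone would not show that $\Hom(T,\theta_G)$ is bijective in any case.

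Your fallback is the right idea and is essentially what the paper does, but as stated it only yields an abstract isomorphism $\lambda^{\mathrm R}\rho\cong\alpha^-_\M(\alpha^+_\N)^{\mathrm R}$. The existence of some isomorphism between source and target does not make the particular map $\theta$ invertible. What is missing is to turn that identification into an explicit inverse and check it against $\theta$.

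The paper does this with $\Phi:\B^*_{\mathcal P}\simeq\FUN_\B(\M,\M\bt_\B\B^*_\N)$ and $\widehat\alpha=\id_\M\bt_\B\alpha^+_\N$. These satisfy $\Phi(\lambda F)\cong\widehat\alpha F$ and $\widehat\alpha^{\mathrm R}\Phi\rho\cong\alpha^-_\M(\alpha^+_\N)^{\mathrm R}$. The key compatibility is that $\Phi$ sends $\rho\epsilon^+\circ\chi(\alpha^+_\N)^{\mathrm R}$ to the counit $\widehat\epsilon$ of $\widehat\alpha\dashv\widehat\alpha^{\mathrm R}$. With this, the map $\gamma$, given by $\widehat\eta$ followed by $\widehat\alpha^{\mathrm R}\Phi(\epsilon^\lambda\rho)$ and the identification, satisfies $\gamma\beta=\id$ and $\beta\gamma=\id$. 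Both identities follow from naturality and the triangle identities of $\lambda\dashv\lambda^{\mathrm R}$ and $\widehat\alpha\dashv\widehat\alpha^{\mathrm R}$. This verification, not the existence of the equivalence, is the substance of the invertibility proof.
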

\begin{proof}
Balancing gives a monoidal natural isomorphism
\begin{equation}
\label{eqn:alpha balancing}
\chi:\lambda\alpha_\M^-\xRightarrow{\ \sim\ }\rho\alpha_\N^+,
\end{equation}
induced by $(X\triangleright m)\bt_\B n\cong m\bt_\B(X\triangleright n)$.

Write $\eta^\lambda:\id\longrightarrow\lambda^{\mathrm R}\lambda$
for the unit of $\lambda\dashv\lambda^{\mathrm R}$, and
$\epsilon^+:\alpha_\N^+(\alpha_\N^+)^{\mathrm R}\longrightarrow\id$
for the counit of $\alpha_\N^+\dashv(\alpha_\N^+)^{\mathrm R}$.
Define
\begin{equation}
\label{eqn:BC mate}
\beta:\alpha_\M^-(\alpha_\N^+)^{\mathrm R}
\xrightarrow{\ \eta^\lambda\ }
\lambda^{\mathrm R}\lambda\alpha_\M^-(\alpha_\N^+)^{\mathrm R}
\xrightarrow{\ \lambda^{\mathrm R}\chi(\alpha_\N^+)^{\mathrm R}\ }
\lambda^{\mathrm R}\rho\alpha_\N^+(\alpha_\N^+)^{\mathrm R}
\xrightarrow{\ \lambda^{\mathrm R}\rho\epsilon^+\ }
\lambda^{\mathrm R}\rho.
\end{equation}
The transformation $\beta$ is monoidal since $\chi$ and the relevant
adjunction unit and counit are monoidal for the canonical lax structures
on the right adjoints. We show that it is invertible.


Write $\epsilon^\lambda:\lambda\lambda^{\mathrm R}\longrightarrow\id$
for the counit of $\lambda\dashv\lambda^{\mathrm R}$.

Give $\B^*_\N$ the left $\B$-action
$X\triangleright G=\alpha_\N^+(X)\ot G=G\circ\alpha_\N^+(X)$.
The functor $\alpha_\N^+$ and its right adjoint are module functors; the latter
has the module structure induced by adjunction and rigidity \cite{EGNO}.
Tensoring the adjunction
$\alpha_\N^+\dashv(\alpha_\N^+)^{\mathrm R}$ with $\M$ gives a module functor
\[
\widehat\alpha:=\id_\M\bt_\B\alpha_\N^+:
\M\longrightarrow\M\bt_\B\B^*_\N
\]
with right adjoint $\widehat\alpha^{\mathrm R}$. Write
$\widehat\eta$ and $\widehat\epsilon$ for the unit and counit of this
adjunction.

The correspondence between tensor products and module functors
\cite[Proposition~3.5]{ENO2} gives an equivalence
\begin{equation}
\label{eqn:BC tensor Hom}
\Phi:\B^*_{\mathcal P}\xrightarrow{\ \sim\ }
\FUN_\B\bigl(\M,\M\bt_\B\B^*_\N\bigr).
\end{equation}
Its quasi-inverse sends a functor $K$ to
$(\id_\M\bt_\B\mathrm{ev})(K\bt_\B\id_\N)$, where
$\mathrm{ev}(G\bt_\B n)=G(n)$; it is balanced by the
$\B$-module functor structure of $G$.

There are natural identifications of module functors
\begin{equation}
\label{eqn:BC tensor Hom identifications}
\Phi(\lambda F)\cong\widehat\alpha F,
\qquad
\Phi(\rho G)(m)\cong m\bt_\B G.
\end{equation}
The right unit equivalence gives a natural identification
\begin{equation}
\label{eqn:BC right unit}
\widehat\alpha^{\mathrm R}\Phi\rho
\cong\alpha_\M^-(\alpha_\N^+)^{\mathrm R}.
\end{equation}
At $G\in\B^*_\N$, the functor on the right sends $m$ to
$(\alpha_\N^+)^{\mathrm R}(G)\triangleright m$, with module constraint
$c^{-1}_{U,(\alpha_\N^+)^{\mathrm R}(G)}$. 

Using \eqref{eqn:BC tensor Hom identifications} and
\eqref{eqn:BC right unit}, define
\begin{equation}
\label{eqn:BC isomorphism}
\gamma:\lambda^{\mathrm R}\rho
\xrightarrow{\ \widehat\eta\,\lambda^{\mathrm R}\rho\ }
\widehat\alpha^{\mathrm R}\widehat\alpha\lambda^{\mathrm R}\rho
\xrightarrow{\ \widehat\alpha^{\mathrm R}\Phi(\epsilon^\lambda\rho)\ }
\widehat\alpha^{\mathrm R}\Phi\rho
\xrightarrow{\ \sim\ }
\alpha_\M^-(\alpha_\N^+)^{\mathrm R}.
\end{equation}
The second arrow uses
$\widehat\alpha\lambda^{\mathrm R}\rho
\cong\Phi\lambda\lambda^{\mathrm R}\rho$, and the last arrow is
\eqref{eqn:BC right unit}.

Under \eqref{eqn:BC tensor Hom identifications}, $\Phi(\chi_X)$ is the
balancing map
\begin{equation}
\label{eqn:BC balancing component}
(X\triangleright m)\bt_\B\id_\N
\longrightarrow
m\bt_\B\alpha_\N^+(X).
\end{equation}
By this identification and the definition of $\widehat\epsilon$,
\[
\Phi\bigl(\rho\epsilon^+\circ\chi(\alpha_\N^+)^{\mathrm R}\bigr)
=\widehat\epsilon_{\Phi\rho}.
\]


We suppress the evident whiskering in the diagrams below.

In the first diagram, the top row is \eqref{eqn:BC mate}, with its last
two arrows combined, and the right column is \eqref{eqn:BC isomorphism}.
Thus the top and right boundary is $\gamma\beta$.
\nopagebreak[4]
\begin{equation}
\label{eqn:BC gamma beta diagram}
\begin{tikzcd}[column sep=70pt,row sep=42pt]
\widehat\alpha^{\mathrm R}\Phi\rho
 \arrow[r,"{\eta^\lambda}"]
 \arrow[d,"{\widehat\eta}"']
& \lambda^{\mathrm R}\lambda\widehat\alpha^{\mathrm R}\Phi\rho
 \arrow[r,"{\lambda^{\mathrm R}(\rho\epsilon^+\chi)}"]
 \arrow[d,"{\widehat\eta}"']
& \lambda^{\mathrm R}\rho
 \arrow[d,"{\widehat\eta}"]
\\
\widehat\alpha^{\mathrm R}\widehat\alpha
 \widehat\alpha^{\mathrm R}\Phi\rho
 \arrow[r,"{\widehat\alpha^{\mathrm R}\widehat\alpha\eta^\lambda}"]
 \arrow[dr,equal]
& \widehat\alpha^{\mathrm R}\widehat\alpha\lambda^{\mathrm R}\lambda
 \widehat\alpha^{\mathrm R}\Phi\rho
 \arrow[r,"{\widehat\alpha^{\mathrm R}\widehat\alpha\lambda^{\mathrm R}(\rho\epsilon^+\chi)}"]
 \arrow[d,"{\widehat\alpha^{\mathrm R}\Phi\epsilon^\lambda}"']
& \widehat\alpha^{\mathrm R}\widehat\alpha\lambda^{\mathrm R}\rho
 \arrow[d,"{\widehat\alpha^{\mathrm R}\Phi\epsilon^\lambda}"]
\\
& \widehat\alpha^{\mathrm R}\widehat\alpha
 \widehat\alpha^{\mathrm R}\Phi\rho
 \arrow[r,"{\widehat\alpha^{\mathrm R}\widehat\epsilon}"']
& \widehat\alpha^{\mathrm R}\Phi\rho
\arrow[phantom,from=1-1,to=2-2,"\text{(i)}" description]
\arrow[phantom,from=1-2,to=2-3,"\text{(ii)}" description]
\arrow[phantom,from=2-1,to=3-2,"\text{(iii)}" description,yshift=9pt,xshift=12pt]
\arrow[phantom,from=2-2,to=3-3,"\text{(iv)}" description]
\end{tikzcd}
\end{equation}

Regions \textup{(i)} and \textup{(ii)} commute by naturality of
$\widehat\eta$. Region \textup{(iii)} is the triangle identity for
$\lambda\dashv\lambda^{\mathrm R}$, and region \textup{(iv)} is naturality
of $\epsilon^\lambda$, using the identification above. Thus the boundary
composite $\gamma\beta$ reduces to the triangle identity for
$\widehat\alpha\dashv\widehat\alpha^{\mathrm R}$ and hence is the identity.

For the second diagram, let $\Phi^{-1}$ denote the quasi-inverse described
above. We use $\Phi^{-1}\widehat\alpha\cong\lambda$, with the unit and
counit of this equivalence understood. The top row is
\eqref{eqn:BC isomorphism}, and the right column is \eqref{eqn:BC mate},
with its last two arrows combined. Thus the top and right boundary is
$\beta\gamma$.

\begin{equation}
\label{eqn:BC beta gamma diagram}
\begin{tikzcd}[column sep=70pt,row sep=46pt]
\lambda^{\mathrm R}\rho
 \arrow[r,"{\widehat\eta}"]
 \arrow[d,"{\eta^\lambda}"']
& \widehat\alpha^{\mathrm R}\widehat\alpha\lambda^{\mathrm R}\rho
 \arrow[r,"{\widehat\alpha^{\mathrm R}\Phi\epsilon^\lambda}"]
 \arrow[d,"{\eta^\lambda}"']
& \widehat\alpha^{\mathrm R}\Phi\rho
 \arrow[d,"{\eta^\lambda}"]
\\
\lambda^{\mathrm R}\lambda\lambda^{\mathrm R}\rho
 \arrow[r,"{\lambda^{\mathrm R}\lambda\widehat\eta}"]
 \arrow[dr,equal]
& \lambda^{\mathrm R}\lambda\widehat\alpha^{\mathrm R}
 \widehat\alpha\lambda^{\mathrm R}\rho
 \arrow[r,"{\lambda^{\mathrm R}\lambda\widehat\alpha^{\mathrm R}\Phi\epsilon^\lambda}"]
 \arrow[d,"{\lambda^{\mathrm R}\Phi^{-1}\widehat\epsilon}"']
& \lambda^{\mathrm R}\lambda\widehat\alpha^{\mathrm R}\Phi\rho
 \arrow[d,"{\lambda^{\mathrm R}(\rho\epsilon^+\chi)}"]
\\
& \lambda^{\mathrm R}\lambda\lambda^{\mathrm R}\rho
 \arrow[r,"{\lambda^{\mathrm R}\epsilon^\lambda}"']
& \lambda^{\mathrm R}\rho
\arrow[phantom,from=1-1,to=2-2,"\text{(i)}" description]
\arrow[phantom,from=1-2,to=2-3,"\text{(ii)}" description]
\arrow[phantom,from=2-1,to=3-2,"\text{(iii)}" description,yshift=11pt,xshift=12pt]
\arrow[phantom,from=2-2,to=3-3,"\text{(iv)}" description]
\end{tikzcd}
\end{equation}

Regions \textup{(i)} and \textup{(ii)} commute by naturality of
$\eta^\lambda$. Region \textup{(iii)} is the triangle identity for
$\widehat\alpha\dashv\widehat\alpha^{\mathrm R}$, and region
\textup{(iv)} is naturality of $\widehat\epsilon$, again using the
identification above. Thus the boundary composite $\beta\gamma$ reduces
to the triangle identity for $\lambda\dashv\lambda^{\mathrm R}$ and hence
is the identity.

Thus $\beta$ is the canonical monoidal natural isomorphism
\eqref{eqn:alpha comparison}.
\end{proof}

\begin{proposition}
\label{prop: modular invariant multiplicativity}
For $\B$-module categories $\M,\N$, there is a canonical isomorphism
\[
Z_{\M\bt_\B\N}\cong Z_\M\circ Z_\N
\]
of braided lax monoidal endofunctors of $\B$.
\end{proposition}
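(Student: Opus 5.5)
The plan is to factor $\alpha^\pm_{\mathcal P}$ through $\lambda$ and $\rho$, and then use Lemma~\ref{lem:alpha comparison} to pass the right adjoint across the middle. Here $\mathcal P=\M\bt_\B\N$.

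First, the action of $X\in\B$ on $\mathcal P$ can be computed on either tensor factor. I will fix the convention by comparing the module constraints specified above with the mixed constraint of the right action \eqref{eqn:right module convention}. With that convention in place, I expect canonical monoidal isomorphisms
\[
\alpha_{\mathcal P}^+\cong\lambda\alpha_\M^+,
\qquad
\alpha_{\mathcal P}^-\cong\rho\alpha_\N^-.
\]
The first holds because the left action on $\mathcal P$ is through $\M$ with constraint $c_{X,U}$. The second holds because the right action on $\N$ is governed by $\alpha_\N^-$. If the signs come out the other way, I will swap the roles of $\M$ and $\N$ accordingly.

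Taking right adjoints of the first isomorphism gives $(\alpha^+_{\mathcal P})^{\mathrm R}\cong(\alpha_\M^+)^{\mathrm R}\lambda^{\mathrm R}$. This holds as lax monoidal functors, since adjoints of monoidal isomorphisms are mates. We then compute
\[
Z_{\mathcal P}=(\alpha_\M^+)^{\mathrm R}\lambda^{\mathrm R}\rho\,\alpha_\N^-
\cong(\alpha_\M^+)^{\mathrm R}\alpha_\M^-(\alpha_\N^+)^{\mathrm R}\alpha_\N^-
=Z_\M\circ Z_\N ,
\]
where the middle step is Lemma~\ref{lem:alpha comparison}, whiskered on both sides. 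Every isomorphism in the chain is monoidal with respect to the canonical lax structures on right adjoints. So the composite is an isomorphism of lax monoidal functors, provided the lax structure of Proposition~\ref{prop: Z braided lax monoidal} agrees with the composite lax structure $(\alpha^+)^{\mathrm R}\circ\alpha^-$. This agreement holds by construction: $\mu$ was defined by adjunction from the tensor structures of $\alpha^\pm$, which is exactly the doctrinal-adjunction lax structure on the composite. I will check this once, by unwinding $\mu$ through the counit.

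Braidedness needs no separate verification. A lax monoidal natural isomorphism between two lax monoidal functors that are each braided is automatically compatible with the braidings, because braidedness is a property of a lax monoidal functor rather than extra structure. Thus it suffices that $Z_\M\circ Z_\N$ is braided as a composite of braided lax functors, and that $Z_{\mathcal P}$ is braided by Proposition~\ref{prop: Z braided lax monoidal}.

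The main obstacle is the first step: establishing that $\alpha^-_{\mathcal P}\cong\rho\alpha_\N^-$ and $\alpha^+_{\mathcal P}\cong\lambda\alpha_\M^+$ as tensor functors, including their module-functor constraints. This requires tracking the braiding conventions in the right action \eqref{eqn:right module convention} and in the balancing of $\M\bt_\B\N$. I expect it to reduce to the hexagon axiom applied to the balancing isomorphism $\chi$ of \eqref{eqn:alpha balancing}.

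A secondary point is that the isomorphism of Lemma~\ref{lem:alpha comparison} is monoidal for the lax structures induced on right adjoints. The lemma asserts this, and I would use it as stated.
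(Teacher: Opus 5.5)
Your proposal is correct and is essentially the paper's proof. Both use the outer-action isomorphisms $\alpha^+_{\mathcal P}\cong\lambda\alpha^+_\M$ and $\alpha^-_{\mathcal P}\cong\rho\alpha^-_\N$, pass to right adjoints in the first, insert Lemma~\ref{lem:alpha comparison} in the middle, and observe that the isomorphisms respect the lax structures. Braidedness then comes for free, since it is a property of lax monoidal functors; moreover, with the conventions \eqref{eqn:right module convention} and the mixed constraint, the orientation you expected is the correct one, so your fallback of swapping $\M$ and $\N$ is never needed.
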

\begin{proof}
Let $\mathcal P,\lambda,\rho$ be as in
Lemma~\ref{lem:alpha comparison}.
The $\B$-actions on the two tensor factors give monoidal natural
isomorphisms
\begin{equation}
\label{eqn:alpha outer actions}
\alpha_{\mathcal P}^+\cong\lambda\alpha_\M^+,
\qquad
\alpha_{\mathcal P}^-\cong\rho\alpha_\N^-.
\end{equation}
Taking the right adjoint of the first and using
\eqref{eqn:alpha comparison}, we obtain
\begin{equation}
\label{eqn:Z multiplicativity}
Z_{\mathcal P}
\cong(\alpha_\M^+)^{\mathrm R}
       \lambda^{\mathrm R}\rho\alpha_\N^-
\cong(\alpha_\M^+)^{\mathrm R}\alpha_\M^-
       (\alpha_\N^+)^{\mathrm R}\alpha_\N^-
=Z_\M\circ Z_\N.
\end{equation}
These isomorphisms preserve the lax monoidal structures, hence give
an isomorphism of braided lax monoidal endofunctors.
\end{proof}

We next relate \(Z_\M\) to the full center. Assume now that \(\B\) is non-degenerate, so that \(\Z(\B)\simeq\B\bt\B^\rev\). Let \(R_\M\) be the right adjoint of the central action functor
$F_\M:\Z(\B)\longrightarrow\B^*_\M$.

 The \emph{full center} of $\M$ is the commutative algebra $L_\M:=R_\M(\id_\M)$, with multiplication induced by the tensor product in $\B^*_\M$ and the lax monoidal structure of $R_\M$. For $\M\simeq\B_A$ this is the full center of $A$ in the sense of Davydov \cite{DavCenter}; see also \cite{DKR}. For indecomposable $\M$, the algebra $L_\M$ is connected and Lagrangian \cite[Proposition 4.8]{DMNO}. For a decomposable module category, we use the direct sum of the full centers of its indecomposable summands, with blockwise multiplication.

Let $\O(\B)$ be a set of representatives of the isomorphism classes
of simple objects of $\B$. Write
\[
\mathfrak D_\B:=\bigoplus_{T\in\O(\B)}T^*\bt T
\]
for the diagonal Lagrangian algebra.  For a braided lax monoidal endofunctor $Z$ of $\B$, put
\begin{equation}
\label{eqn:kernel of Z}
K_Z:=(Z\bt\id_\B)(\mathfrak D_\B),
\end{equation}
with multiplication induced by the diagonal multiplication and the lax monoidal structure morphism of $Z$.

\begin{lemma}
\label{lem:full center kernel}
There is a canonical isomorphism of algebras
\begin{equation}
\label{eqn:full center kernel}
L_\M\cong K_{Z_\M}.
\end{equation}
\end{lemma}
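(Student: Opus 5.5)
The plan is to compare the two objects through their representing functors on $\Z(\B)\simeq\B\bt\B^\rev$, and then to check that the identification respects multiplication.

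\emph{Step 1: objects.} For $X,Y\in\B$, use the adjunction $F_\M\dashv R_\M$ together with \eqref{eqn:central action alpha} to get
\[
\Hom_{\Z(\B)}(X\bt Y,L_\M)\cong\Hom_{\B^*_\M}\bigl(\alpha_\M^+(X)\circ\alpha_\M^-(Y),\id_\M\bigr).
\]
By rigidity of $\B^*_\M$, and since $\alpha_\M^-$ is a tensor functor, $\alpha_\M^-(Y)$ has dual $\alpha_\M^-(Y^*)$. Hence the right-hand side is
\[
\Hom_{\B^*_\M}\bigl(\alpha_\M^+(X),\alpha_\M^-(Y^*)\bigr)\cong\Hom_\B\bigl(X,Z_\M(Y^*)\bigr),
\]
where the last isomorphism is \eqref{eqn:ZM representing formula}. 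On the other side,
\[
\Hom(X\bt Y,K_{Z_\M})=\bigoplus_T\Hom(X,Z_\M(T^*))\ot\Hom(Y,T)\cong\Hom(X,Z_\M(Y^*)),
\]
by semisimplicity: only $T\cong Y$ contributes. Both isomorphisms are natural in $X$ and $Y$, so Yoneda gives an object isomorphism $\theta:K_{Z_\M}\to L_\M$. To make $\theta$ canonical rather than basis-dependent, I would construct it directly. The morphism $\mathfrak D_\B\to(\alpha^+)^{\mathrm R}\alpha^-\bt\id$ applied to $\mathfrak D_\B$ corresponds, under adjunction, to the composite $\alpha^+(Z_\M(T^*))\circ\alpha^-(T)\to\alpha^-(T^*)\circ\alpha^-(T)\to\id_\M$. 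This composite uses the counit $\epsilon$ and the evaluation in $\B^*_\M$, so $\theta$ is the adjoint of a map $F_\M(K_{Z_\M})\to\id_\M$.

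\emph{Step 2: units and multiplication.} The unit $\be\to L_\M$ is adjoint to $F_\M(\be)\cong\id_\M$. Under $\theta$ it matches the unit $\be\bt\be\to K_Z$, which comes from the unit of $Z_\M$ and the summand $T=\be$ of $\mathfrak D_\B$. This is a routine check. For multiplication it suffices, by adjunction, to compare two maps $F_\M(K\ot K)\to\id_\M$. The first is $F_\M(K)\ot F_\M(K)\to\id\ot\id\to\id$, via the tensor structure of $F_\M$; this is the multiplication transported from $L_\M$. The second applies $F_\M$ to $m_K$ and then the adjoint of $\theta$. Writing $m_K$ on the summand $T^*\bt T\ot S^*\bt S$, it uses $\mu_{T^*,S^*}$ from Proposition~\ref{prop: Z braided lax monoidal} on the first factor and the diagonal multiplication $T\ot S\to U$ on the second. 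Unwinding $\mu$, which is defined through $\epsilon$ and the structure morphisms of $\alpha^\pm$, reduces the comparison to a diagram. That diagram involves only the counit $\epsilon$, the tensor structures of $\alpha^\pm$, the $\alpha$-interchange $\iota^\M$ of \eqref{eqn:alpha interchange}, and the evaluation maps. The interchange is needed to reorder $\alpha^+(Z(T^*))\circ\alpha^-(T)\circ\alpha^+(Z(S^*))\circ\alpha^-(S)$ so that the two $\alpha^+$ factors and the two $\alpha^-$ factors become adjacent; this is exactly how $F_\M$'s structure map, induced by the braiding in $\Z(\B)$, restricts to $\B\bt\B^\rev$.

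\emph{Main obstacle.} The hard part is Step 2, specifically matching the diagonal multiplication of $\mathfrak D_\B$ against the evaluation maps $\alpha^-(T^*)\circ\alpha^-(T)\to\id$. The diagonal algebra's multiplication is the canonical one coming from $\mathfrak D_\B=R_{\B}(\id_\B)$, i.e.\ it is the full center of the regular module. I would therefore first prove the lemma for $\M=\B$, where $Z_\B=\id$ and the claim is essentially a tautology. I would then deduce the general case by naturality, treating the multiplication on $K_Z$ as induced functorially from $Z$ applied to the case $Z=\id$. Along the way the evaluation pieces cancel by the zig-zag identities, and the remaining braiding factors cancel as in regions (i) and (iv) in the proof of Proposition~\ref{prop: Z braided lax monoidal}.
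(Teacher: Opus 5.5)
Your proposal is correct, and Step~1 and your map $\theta$ coincide with the paper's. The Hom chain is the same (co-Yoneda, the adjunction defining $Z_\M$, then rigidity with the central action), and $\theta$ is the adjoint of the same composite $F_\M(K_{Z_\M})\to\alpha_\M^-(T(\mathfrak D_\B))\to\id_\M$ of counit and evaluation, where $T(X\bt Y)=X\ot Y$. Invertibility is read off on representables in both.

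The difference is Step~2. You plan to check multiplicativity by unwinding $\mu$ into an explicit diagram, and to handle the diagonal multiplication via the regular case $\M=\B$. The paper makes this step formal:
\begin{itemize}
\item Since $\mu$ is defined by adjunction from $\epsilon$, the counit $\epsilon\colon\alpha_\M^+Z_\M\Rightarrow\alpha_\M^-$ is monoidal by construction.
\item The module constraints make $E\bt Y\mapsto E\ot\alpha_\M^-(Y)$ a tensor functor; this is where your $\alpha$-interchange enters.
\item Hence $\epsilon\bt\id_\B$ becomes a monoidal transformation $F_\M\circ(Z_\M\bt\id_\B)\Rightarrow\alpha_\M^-\circ T$, and its component at the algebra $\mathfrak D_\B$ is an algebra map.
\item $\mathrm{ev}\colon T(\mathfrak D_\B)\to\be$ is an algebra map simply because $\mathfrak D_\B=T^{\mathrm R}(\be)$ carries the algebra structure induced by the lax monoidal right adjoint $T^{\mathrm R}$.
\item The composite is therefore an algebra map, and the restriction of $F_\M\dashv R_\M$ to algebras makes $\theta$ an algebra morphism.
\end{itemize}

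This removes what you call the main obstacle. The only input you need from the case $Z=\id$ is that $\mathrm{ev}$ is multiplicative, which holds by definition. The passage to general $Z_\M$ is exactly the monoidality of the whiskered counit. So your reduction to $\M=\B$ and the zig-zag bookkeeping with the diagonal multiplication are unnecessary. Your hands-on route would work, but it is longer, and its ``naturality'' step is precisely this whiskering argument.
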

\begin{proof}
Let \(T:\B\bt\B^\rev\to\B\), \(T(X\bt Y)=X\ot Y\), be the
forgetful functor under the canonical equivalence
\(\B\bt\B^\rev\simeq\Z(\B)\).
Recall that the diagonal algebra \(\mathfrak D_{\mathcal B}=T^{\mathrm R}(\mathbf 1)\) is equipped with the canonical algebra structure on this right adjoint image \cite[Lemma~3.5]{DMNO}. 
Its counit \(\mathrm{ev}:T(\mathfrak D_{\mathcal B})\to\mathbf 1\) is therefore an algebra morphism.

The counit \(\epsilon:\alpha_\M^+Z_\M\Rightarrow\alpha_\M^-\) is
monoidal. For a $\B$-module endofunctor \(E\), its module constraint gives
\[
\alpha_\M^-(Y)\ot E
=E\circ\alpha_\M^-(Y)
\xrightarrow{\sim}
\alpha_\M^-(Y)\circ E
=E\ot\alpha_\M^-(Y).
\]
These isomorphisms give a tensor functor structure on
\(E\bt Y\mapsto E\ot\alpha_\M^-(Y)\) from
\(\B^*_\M\bt\B^\rev\) to \(\B^*_\M\).
Applying it to \(\epsilon\bt\id_\B\) gives a
monoidal natural transformation
\(F_\M\circ(Z_\M\bt\id_\B)\Rightarrow\alpha_\M^-\circ T\).
On \(X\boxtimes Y\), this transformation is induced by
\[
\alpha_{\mathcal M}^{+}Z_{\mathcal M}(X)\otimes\alpha_{\mathcal M}^{-}(Y)
\xrightarrow{\;\epsilon_X\otimes\mathrm{id}\;}
\alpha_{\mathcal M}^{-}(X)\otimes\alpha_{\mathcal M}^{-}(Y).
\]
Evaluating it on \(\mathfrak D_\B\) and composing
with evaluation gives an algebra morphism
\begin{equation}
\label{eqn:kernel evaluation}
F_\M(K_{Z_\M})
\longrightarrow
\alpha_\M^-\bigl(T(\mathfrak D_\B)\bigr)
\xrightarrow{\alpha_\M^-(\mathrm{ev})}
\alpha_\M^-(\be)\cong\id_\M.
\end{equation}
Since $F_\M$ is a tensor functor, the adjunction
$F_\M\dashv R_\M$ restricts to an adjunction between the corresponding
categories of algebras \cite[Proposition~2.16]{KR09}.
 Hence the algebra morphism \eqref{eqn:kernel evaluation} corresponds to a canonical algebra morphism
\[
f: K_{Z_\M}\longrightarrow R_\M(\id_\M)=L_\M.
\]
To show that \(f\) is an isomorphism, compose maps from \(X\bt Y\) with \(f\) and apply the full center adjunction. By construction, the resulting map is the chain
\begin{equation}
\label{eqn:alpha kernel calculation}
\begin{aligned}
\Hom_{\B\bt\B^\rev}(X\bt Y,K_{Z_\M})
&\cong\Hom_\B\bigl(X,Z_\M(Y^*)\bigr)\\
&\cong\Hom_{\B^*_\M}
 \bigl(\alpha_\M^+(X),\alpha_\M^-(Y^*)\bigr)\\
&\cong\Hom_{\B^*_\M}
 \bigl(F_\M(X\bt Y),\id_\M\bigr).
\end{aligned}
\end{equation}
These are the co-Yoneda, adjunction, and rigidity isomorphisms, respectively; the latter two use precisely the counit and evaluation appearing in \eqref{eqn:kernel evaluation}. By additivity this gives a natural isomorphism on Hom spaces for every object of \(\B\bt\B^\rev\), so \(f\) is an isomorphism by Yoneda.
\end{proof}

Thus the full center and $Z_\M$ are two forms of the same structured invariant: $Z_\M$ is its endofunctor form and $L_\M$ its kernel.

\begin{remark}
\label{rem: modular invariant representation}
Proposition~\ref{prop: modular invariant multiplicativity} induces a unital ring homomorphism
\[
K^\mmod(\B)\longrightarrow\End_{\mathbb Z}(K_0(\B)),
\qquad
[\M]\longmapsto[Z_\M].
\]
When $\B$ is non-degenerate and pseudo-unitary, equipped with its
canonical ribbon structure, the classes of module categories are
sent to nonnegative integral matrices commuting with $S$ and $T$;
see \cite[Section~5]{FRS}.
\end{remark}

\begin{remark}
\label{rem: relation to previous work}
The endofunctor \(Z_\M\) already appears, in equivalent notation
in the ribbon setting, in \cite[Proposition~3.6]{FFRS}.
Numerical additivity and multiplicativity are established in
\cite[Proposition~5.3]{FRS}.

The description of full centers through module categories is given
in \cite[Section~6]{DavCenter}, and their \(2\)-functoriality in
\cite[Theorem~7.10]{DKR}.
In the unitary modular setting, Bischoff--Kawahigashi--Longo
\cite[Proposition~4.18]{BKL} identify the \(\alpha\)-induction
construction with the full center as a Q-system.
Related composition results for categories of bimodule functors,
relative tensor products of Lagrangian Q-systems, and internal Homs
appear in \cite[Theorem~3.1.7]{KZCenter},
\cite[Theorem~2.6]{KawahigashiRT}, and
\cite[Theorem~4.5]{KYZ}, respectively.

Here we equip \(Z_\M\) with a braided lax monoidal structure and
prove that \(Z_{\M\bt_\B\N}\cong Z_\M\circ Z_\N\) respects this
structure. We have not found this particular formulation stated
explicitly in the literature, although it is closely related to
the preceding constructions.
\end{remark}

\subsection{Convolution of \'etale algebras}
\label{subsect: Convolution of etale algebras}

Let $\B$ be a non-degenerate braided fusion category. We identify $\Z(\B)$ with $\B\bt\B^\rev$ via the canonical braided equivalence. Distinguish four copies of $\B$ and define a functor
\[
\operatorname{ctr}_{23}:\B_1\bt\B_2^\rev\bt\B_3\bt\B_4^\rev\longrightarrow \B_1\bt\B_4^\rev
\]
on pure tensors by
\begin{equation}
\label{eqn:middle contraction}
\operatorname{ctr}_{23}(X\bt U\bt V\bt Y):=\Hom_\B(\be,U\ot V)\otimes_k(X\bt Y).
\end{equation}
Here $X,U,V,Y$ are objects of $\B$ and $\otimes_k$ is the natural action of finite-dimensional vector spaces on the finite semisimple category $\B_1\bt\B_4^\rev$. For pure tensors $P=X\bt U\bt V\bt Y$ and $Q=X'\bt U'\bt V'\bt Y'$, define
\begin{equation}
\label{eqn:middle lax structure}
\mu_{P,Q}:\operatorname{ctr}_{23}(P)\ot\operatorname{ctr}_{23}(Q)\longrightarrow\operatorname{ctr}_{23}(P\ot Q)
\end{equation}
by tensoring the linear map
\begin{equation}
\label{eqn:middle crossing}
\begin{aligned}
\Hom_\B(\be,U\ot V)\otimes_k\Hom_\B(\be,U'\ot V')
&\longrightarrow
\Hom_\B\bigl(\be,(U\ot U')\ot(V\ot V')\bigr),\\
f\otimes_k g
&\longmapsto
f*g:=(\id_U\ot c_{V,U'}\ot\id_{V'})\circ(f\ot g)
\end{aligned}
\end{equation}
with $\id_{(X\ot X')\bt(Y\ot Y')}$, using the canonical
identifications for $\otimes_k$ and the componentwise tensor
product in $\B_1\bt\B_4^\rev$.
The unit morphism
\begin{equation}
\label{eqn:middle lax unit}
\be\bt\be\longrightarrow\operatorname{ctr}_{23}(\be\bt\be\bt\be\bt\be)
\end{equation}
corresponds to $\id_\be\in\Hom_\B(\be,\be)$.

\begin{lemma}
\label{lem:middle contraction}
The morphisms \eqref{eqn:middle lax structure} and \eqref{eqn:middle lax unit} make $\operatorname{ctr}_{23}$ a unital braided lax monoidal functor.
\end{lemma}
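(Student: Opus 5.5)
The plan is to verify the three lax monoidal axioms (naturality, associativity, unitality) and then the braiding compatibility. Each check is reduced to an identity among morphisms $\be\to U\ot V$ in $\B$ together with the identity in the outer factors $\B_1\bt\B_4^\rev$. Since $\operatorname{ctr}_{23}$ is defined on pure tensors and all categories are finite semisimple, it extends uniquely to a $k$-linear functor (the pairing $\Hom_\B(\be,U\ot V)$ is biadditive and functorial in $U,V$). It therefore suffices to check every axiom on pure tensors and extend by additivity. Naturality of $\mu_{P,Q}$ in $P$ and $Q$ is immediate: for morphisms $a:U\to U_1$ and $b:V'\to V'_1$, the map $f*g$ is natural because $c_{V,U'}$ is natural in both variables.

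For associativity, take pure tensors $P,Q,R$ with middle parts $(U,V),(U',V'),(U'',V'')$. I would compute both bracketings of $f*g*h$ in $\Hom_\B(\be,UU'U''\ot VV'V'')$, with associators suppressed. The bracketing $(f*g)*h$ applies $c_{V,U'}$ and then $c_{VV',U''}$. The bracketing $f*(g*h)$ applies $c_{V',U''}$ and then $c_{V,U'U''}$. By the hexagon axioms both composites equal the same braid: $V$ passes over $U'$ and $U''$, and $V'$ passes over $U''$. The outer factors carry identities, so the associativity square commutes. Unitality follows because $c_{\be,-}$ and $c_{-,\be}$ are identities, so $\id_\be*g=g$ and $f*\id_\be=f$ under the unit isomorphisms.

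The main step is braided compatibility, which is where the reversal of $\B_2$ and $\B_4$ matters. The braiding of $\B_1\bt\B_2^\rev\bt\B_3\bt\B_4^\rev$ is $c\bt\tilde c\bt c\bt\tilde c$, where $\tilde c_{A,B}=c_{B,A}^{-1}$. On the middle factors of $P\ot Q\to Q\ot P$ it therefore acts by $\tilde c_{U,U'}\ot c_{V,V'}=c^{-1}_{U',U}\ot c_{V,V'}$. The outer factors simply carry $c_{X,X'}\bt\tilde c_{Y,Y'}$ on both sides and match trivially. The identity to prove is
\[
(c^{-1}_{U',U}\ot c_{V,V'})\circ(\id_U\ot c_{V,U'}\ot\id_{V'})\circ(f\ot g)
=(\id_{U'}\ot c_{V',U}\ot\id_{V})\circ(g\ot f)\circ c_{\be,\be}.
\]
I would prove this graphically. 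On the right, first write $g\ot f=(g\ot\id)\circ(\id\ot f)$ and use naturality of the braiding to slide the cap $f$, a morphism out of $\be$, past the strands of $g$. The strands of $f$ then cross those of $g$, and the resulting braid word on four strands $U,V,U',V'$ can be compared with the left side using the braid relations.

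The sign conventions are the main obstacle. One must check that the crossing of $V$ over $U'$ in the definition of $*$ combines with the inverse crossing from $\B_2^\rev$ and the ordinary crossing from $\B_3$ into the over/under pattern produced by sliding $f$ around $g$. If a naive check suggests a mismatch (for example $c$ versus $c^{-1}$ at one crossing), I would recheck which copies are reversed, since the choice $\B_2^\rev$, $\B_3$ is exactly what makes the two crossings on the right-hand side cancel against $c^{-1}_{U',U}$ and $c_{V,V'}$. No non-degeneracy is needed here; only the hexagon axioms and naturality of the braiding enter.
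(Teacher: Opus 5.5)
Your outline follows the paper's proof. You reduce to pure tensors, get associativity from the hexagon identities, get unitality from $c_{\be,-}=c_{-,\be}=\id$, and obtain braided compatibility from the identity you display, which is exactly \eqref{eqn:middle braidedness}.

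The gap is in how you propose to finish that identity. Your one use of naturality trades $g\ot f$ for $c_{U\ot V,U'\ot V'}(f\ot g)$, or equally for $c^{-1}_{U'\ot V',U\ot V}(f\ot g)$. After that, the right-hand side is a composite of braidings applied to $f\ot g$, with signed crossing number $+5$ (respectively $-3$). The left-hand composite $(c^{-1}_{U',U}\ot c_{V,V'})(\id_U\ot c_{V,U'}\ot\id_{V'})$ has signed crossing number $+1$. These are different braids, so no rearrangement by hexagons and Yang--Baxter can identify them. The equality holds only after precomposing with $f\ot g$, and it needs a second use of naturality against a morphism out of $\be$. So the mismatch you anticipate is not a sign-convention problem to be fixed by rechecking which copies are reversed; the conventions are fixed by the statement and are correct. (The paper's one-line sketch, which expands the block braiding and invokes Yang--Baxter, leaves this same step implicit.)

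The missing step is short. Use
\[
c_{V,U'\ot V'}(\id_V\ot g)=g\ot\id_V,
\qquad
c_{U'\ot V',U}(g\ot\id_U)=\id_U\ot g,
\]
together with the hexagons $c_{V,U'\ot V'}=(\id_{U'}\ot c_{V,V'})(c_{V,U'}\ot\id_{V'})$ and $c_{U'\ot V',U}=(c_{U',U}\ot\id_{V'})(\id_{U'}\ot c_{V',U})$.

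On the left, write $f\ot g=(\id_{U\ot V}\ot g)f$. The crossings $c_{V,U'}$ and $c_{V,V'}$ combine into $\id_U\ot c_{V,U'\ot V'}$, which the cap $g$ absorbs.

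On the right, write $g\ot f=(g\ot\id_{U\ot V})f$ and substitute $g\ot\id_U=(\id_{U'}\ot c^{-1}_{V',U})(c^{-1}_{U',U}\ot\id_{V'})(\id_U\ot g)$. The factor $c^{-1}_{V',U}$ then cancels $c_{V',U}$.

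Both sides become
\[
(c^{-1}_{U',U}\ot\id_{V'\ot V})\circ(\id_U\ot g\ot\id_V)\circ f,
\]
and no Yang--Baxter identity is needed. This also shows where the reversal on $\B_2$ enters. The crossing left over on the right is $c^{-1}_{U',U}$, which is the braiding of $\B_2^{\rev}$. With an unreversed $\B_2$, the left side would in general acquire an extra factor $c_{U,U'}c_{U',U}\ot\id_{V'\ot V}$.
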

\begin{proof}
For $f_i:\be\to U_i\ot V_i$, both sides of the lax associativity axiom are obtained from $f_1\ot f_2\ot f_3$ by moving $V_1$ past $U_2,U_3$ and $V_2$ past $U_3$, without changing the relative order of the $U_i$'s or of the $V_i$'s. In one parenthesization one first moves $V_1$ past $U_2$ and then $V_1\ot V_2$ past $U_3$; in the other one first moves $V_2$ past $U_3$ and then $V_1$ past $U_2\ot U_3$. Expanding $c_{V_1\ot V_2,U_3}$ and $c_{V_1,U_2\ot U_3}$ by the two hexagon identities reduces the equality to the Yang--Baxter identity. The outer components are the ordinary associativity constraints in $\B_1\bt\B_4^\rev$.

The unit axioms follow from $c_{\be,U}=c_{U,\be}=\id_U$. The braiding between $U\bt V$ and $U'\bt V'$ in $\B^\rev\bt\B$ is $c_{U',U}^{-1}\bt c_{V,V'}$. 
It remains to check compatibility of the lax monoidal structure with the
braidings. On the middle multiplicity spaces, this amounts to
\begin{equation}
\label{eqn:middle braidedness}
(c_{U',U}^{-1}\ot c_{V,V'})(f*g)=(\id_{U'}\ot c_{V',U}\ot\id_V)(g\ot f).
\end{equation}
Naturality gives $g\ot f=c_{U\ot V,U'\ot V'}(f\ot g)$, and \eqref{eqn:middle braidedness} follows by expanding this block braiding with the hexagon identities and applying the Yang--Baxter identity. On the outer factors, the source and target braidings are respectively $c_{X,X'}$ and $c_{Y',Y}^{-1}$, so their compatibility is immediate.
\end{proof}

\begin{definition}
\label{def:convolution}
Let $A,C\in\B\bt\B^\rev$ be algebra objects. Their
\emph{convolution} is
\[
A\odot C:=\operatorname{ctr}_{23}(A_{12}\bt C_{34}),
\]
with multiplication
\[
(A\odot C)\ot(A\odot C)
\xrightarrow{\mu}
\operatorname{ctr}_{23}\bigl((A\ot A)_{12}\bt(C\ot C)_{34}\bigr)
\xrightarrow{\operatorname{ctr}_{23}(m_A\bt m_C)}
A\odot C
\]
and unit induced by \eqref{eqn:middle lax unit} and
$\eta_A\bt\eta_C$.
\end{definition}

By Lemma~\ref{lem:middle contraction} and the algebra axioms for
$A$ and $C$, these maps satisfy the associativity and unit axioms.
Thus $A\odot C$ is an algebra in $\B\bt\B^\rev$.

For local Q-systems, the corresponding construction by tensor product and restriction appears in \cite[Proposition~2.4]{KawahigashiRT}.

\begin{proposition}
\label{prop:etale convolution closure}
Let $A,C$ be \'etale algebras in $\B\bt\B^\rev$. Then $A\odot C$ is \'etale.
\end{proposition}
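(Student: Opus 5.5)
We need to show $A\odot C$ is commutative and separable; connectedness is not claimed.

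\emph{Commutativity.} Consider the exterior product $A_{12}\bt C_{34}$ in the braided category $(\B\bt\B^\rev)\bt(\B\bt\B^\rev)$, with componentwise braiding. It is a commutative algebra, since $A$ and $C$ are commutative. Lemma~\ref{lem:middle contraction} says that $\operatorname{ctr}_{23}$ is braided lax monoidal. A braided lax monoidal functor sends commutative algebras to commutative algebras: the multiplication of the image is $\operatorname{ctr}_{23}(m)\circ\mu$, and the braid-compatibility square $\operatorname{ctr}_{23}(c)\circ\mu=\mu\circ c$ transports the identity $m\circ c=m$. By Definition~\ref{def:convolution}, $A\odot C$ is exactly the image of $A_{12}\bt C_{34}$, so it is commutative.

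\emph{Separability.} Since $\operatorname{char}k=0$ and $\B\bt\B^\rev$ is fusion, it is enough to show that $A\odot C$ is semisimple as an algebra, i.e.\ that its category of modules is semisimple. Equivalently, we exhibit a nondegenerate trace form. A cleaner route is to realize $A\odot C$ as a subalgebra of something manifestly separable. Write
\[
\operatorname{ctr}_{23}(P)=(\id\bt\Hom_\B(\be,-)\bt\id)(\id\bt T\bt\id)(P),
\]
where $T:\B^\rev\bt\B\to\B$ is the tensor product functor. Then $A\odot C$ is obtained from the algebra $E:=(\id\bt T\bt\id)(A\bt C)$ in $\B_1\bt\B\bt\B_4^\rev$ by taking invariants in the middle factor, i.e.\ by applying $\Hom(\be,-)$ there. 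Here $T$ is a central (braided, via $\B^\rev\bt\B\to\Z(\B)$) tensor functor, so $E$ is separable as the image of the separable algebra $A\bt C$ under a tensor functor. The invariants of a separable algebra in the middle factor form a separable algebra again, by the following argument. The inclusion $\iota:A\odot C\to E$ is split by the projection $\pi$ onto the $\be$-isotypic part in the middle factor, and $\pi$ is an $A\odot C$-bimodule map because $A\odot C$ multiplies through the invariant part. Let $s$ be the bimodule splitting of the multiplication of $E$. Then $(\pi\ot\pi)\circ s\circ\iota$ splits the multiplication of $A\odot C$.

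\emph{Main obstacle.} The delicate point is that this averaging is compatible with the bimodule structures. Concretely, we must check that $(\pi\ot\pi)s$ restricted to invariants lands in $(A\odot C)\ot(A\odot C)$ and is bimodule-linear, which uses that the middle $\be$-isotypic projection commutes with multiplication by invariant elements. If this becomes messy, the fallback is the trace-form criterion. Take the form $\varepsilon_A\bt\varepsilon_C$ contracted through $\operatorname{ctr}_{23}$. Nondegeneracy should follow from the nondegeneracy of the pairings of $A$ and $C$ together with the nondegenerate pairing $\Hom(\be,U\ot V)\ot\Hom(\be,V^*\ot U^*)\to k$ on the middle multiplicity spaces.
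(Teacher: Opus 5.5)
\textbf{Verdict: there is a gap in the separability step.} Your commutativity argument, the factorization of $\operatorname{ctr}_{23}$ as the tensor functor $T$ followed by middle invariants, and separability of $E=T(A_{12}\bt C_{34})$ all match the paper. The problem is the step claiming that the invariant part of $E$ is separable.

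\textbf{Where the splitting fails.} Write $E=E_0\oplus E_1$, where $E_0=A\odot C$ is the largest subobject of $E$ lying in $\mathcal E=\B_1\bt\be\bt\B_4^\rev$, and $E_1$ has no simple constituents in $\mathcal E$.
\begin{itemize}
\item The points you flagged are harmless. Since $E_0\ot E_1$ and $E_1\ot E_0$ have no constituents in $\mathcal E$, the projection $\pi$ is $E_0$-bilinear. Hence $s_0:=(\pi\ot\pi)s\iota$ is an $E_0$-bimodule map $E_0\to E_0\ot E_0$.
\item What fails is $m_{E_0}s_0=\id_{E_0}$. You only know $\pi\,m_E\,s\,\iota=\id_{E_0}$, so you would need $m_{E_0}(\pi\ot\pi)=\pi\,m_E$.
\item That identity is false on $E_1\ot E_1$. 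A product of two noninvariant components can have an invariant component, since $W\ot W^*\supset\be$. The part of $s(E_0)$ lying in $E_1\ot E_1$ contributes to $m_E s\iota$ but is killed by $\pi\ot\pi$.
\end{itemize}

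\textbf{A concrete counterexample.} Your argument uses nothing beyond a fusion subcategory, so work in $\Vec_{\mathbb Z_2}=\langle\be,g\rangle\supset\Vect$. Let $E=\underline{\End}(\be\oplus g)\cong\operatorname{Mat}_2(k)$, with $e_{12},e_{21}$ in degree $g$. Take the valid bimodule splitting $s(1)=e_{11}\ot e_{11}+e_{21}\ot e_{12}$. Then $E_0=ke_{11}\oplus ke_{22}$, and $m_{E_0}s_0$ is multiplication by $e_{11}$. This is not even invertible, so the construction cannot be repaired by rescaling. Note also that $A\odot C$ need not be connected.

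\textbf{The fallback.} Your trace-form fallback is only a sketch. It would need a precise separability criterion for a specific trace form on $A\odot C$, together with an identification of your contracted form with that one.

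\textbf{How to close the gap.} The missing ingredient is exactly what the paper imports: the right adjoint $R$ of the tensor inclusion $\B_1\bt\B_4^\rev\hookrightarrow\B_1\bt\B\bt\B_4^\rev$ preserves separable algebras \cite[Corollary~4.5]{JY}. You can also derive it from your own first reduction.
\begin{itemize}
\item Put $\mathcal D=\B_1\bt\B\bt\B_4^\rev$, and let $\mathcal D_E$ be the category of right $E$-modules in $\mathcal D$, viewed as a left $\mathcal E$-module category.
\item For $X\in\mathcal E$ there are natural isomorphisms $\Hom_{\mathcal E}(X,R(E))\cong\Hom_{\mathcal D}(X,E)\cong\Hom_{\mathcal D_E}(X\ot E,E)$, the second by the free-module adjunction. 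Under them, the multiplication of $R(E)$ (the restriction of $m_E$) corresponds to composition $f\circ(\id_X\ot g)$.
\item Thus $R(E)$ is the internal endomorphism algebra of $E\in\mathcal D_E$ over $\mathcal E$.
\item Since $E$ is separable, $\mathcal D_E$ is semisimple. Hence so is the $\mathcal E$-submodule category generated by $E$, which is equivalent to the category of $R(E)$-modules in $\mathcal E$.
\end{itemize}
In characteristic zero this gives separability of $R(E)=A\odot C$, as your reduction intended.
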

\begin{proof}
If $A$ and $C$ are commutative, then $D:=A_{12}\bt C_{34}$ is commutative. By Lemma~\ref{lem:middle contraction},
\[
\operatorname{ctr}_{23}(m_D)\mu\,c
=\operatorname{ctr}_{23}(m_D)\operatorname{ctr}_{23}(c_{D,D})\mu
=\operatorname{ctr}_{23}(m_Dc_{D,D})\mu
=\operatorname{ctr}_{23}(m_D)\mu,
\]
where $c$ is the braiding on the two output factors and $\mu$ is the lax structure map. Thus $A\odot C$ is commutative.

Put $\mathcal E=\B_1\bt\B_4^\rev$ and $\mathcal D=\B_1\bt\B\bt\B_4^\rev$. The functor
\[
T:\B_1\bt\B_2^\rev\bt\B_3\bt\B_4^\rev \longrightarrow\mathcal D,\qquad
X\bt U\bt V\bt Y \longmapsto X\bt(U\ot V)\bt Y
\]
is tensor, with the canonical outer factor
isomorphisms and with tensor functor structure on the middle factor given by
\[
(U\ot V)\ot(U'\ot V')
\xrightarrow{\ \id_U\ot c_{V,U'}\ot\id_{V'}\ }
(U\ot U')\ot(V\ot V').
\]
The tensor inclusion 
\[
\mathcal E\longrightarrow\mathcal D,
\qquad
X\bt Y\longmapsto X\bt\be\bt Y,
\]
has right adjoint
\[
R(X\bt W\bt Y)
=
\Hom_\B(\be,W)\otimes_k(X\bt Y).
\]
Its canonical lax structure tensors two maps from the tensor unit. Consequently $\operatorname{ctr}_{23}=R\circ T$ as lax monoidal functors: on the middle multiplicity spaces the composite lax monoidal structure morphism sends $f,g$ to
\[
(\id_U\ot c_{V,U'}\ot\id_{V'})\circ(f\ot g),
\]
as in \eqref{eqn:middle crossing}, and its unit is the same identity map as in \eqref{eqn:middle lax unit}.

Since the external tensor product of separable algebras is separable, \(A_{12}\boxtimes C_{34}\) is separable. A tensor functor preserves separable algebras, so \(T(A_{12}\boxtimes C_{34})\) is separable.
By \cite[Corollary~4.5]{JY}, the right adjoint $R$ of the inclusion $\E\hookrightarrow\D$ preserves separable
algebras, so
\[
A\odot C=R(T(A_{12}\bt C_{34}))
\]
is separable.
Together with commutativity, this proves \'etaleness.
\end{proof}

If $A=\bigoplus_i A_i'\bt A_i''$ and $C=\bigoplus_j C_j'\bt C_j''$, then the underlying object is
\[
A\odot C=
\bigoplus_{i,j}\Hom_\B(\be,A_i''\ot C_j')\otimes(A_i'\bt C_j'').
\]
For $f:\be\to A_i''\ot C_j'$ and $g:\be\to A_k''\ot C_\ell'$, the middle part of their product is obtained from
\[
\be\xrightarrow{f\ot g}
A_i''\ot C_j'\ot A_k''\ot C_\ell'
\xrightarrow{\id\ot c_{C_j',A_k''}\ot\id}
A_i''\ot A_k''\ot C_j'\ot C_\ell',
\]
followed by the appropriate components of $m_A$ and $m_C$.

Equivalently, convolution admits the finite semisimple coend formula
\begin{equation}
\label{eqn:convolution}
\Hom(X\bt Y,A\odot C)
\cong
\bigoplus_{S\in\O(\B)}
\Hom(X\bt S,A)\otimes
\Hom(S^*\bt Y,C),
\qquad X,Y\in\B,
\end{equation}
which is independent of the choice of the representative set $\O(\B)$.

\begin{lemma}
\label{lem:kernel convolution}
For braided lax monoidal endofunctors $Z,W$ of $\B$, there is a canonical algebra isomorphism $K_Z\odot K_W\cong K_{Z\circ W}$.
\end{lemma}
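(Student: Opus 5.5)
First I identify the underlying objects through the coend formula \eqref{eqn:convolution}, then I check that the identification carries one multiplication to the other.

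\textbf{Underlying objects.} Write $K_Z=\bigoplus_{T\in\O(\B)}Z(T^*)\bt T$ and $K_W=\bigoplus_{S\in\O(\B)}W(S^*)\bt S$. Then \eqref{eqn:convolution} gives
\[
\Hom(X\bt Y,K_Z\odot K_W)\cong\bigoplus_{S}\Hom(X\bt S,K_Z)\otimes\Hom(S^*\bt Y,K_W).
\]
By co-Yoneda, $\Hom(X\bt S,K_Z)\cong\Hom_\B(X,Z(S^*))$ and $\Hom(S^*\bt Y,K_W)\cong\Hom_\B(S^*,W(Y^*))$. The sum over $S$ of these tensor products is the semisimple decomposition of $\Hom_\B(X,Z(W(Y^*)))$, obtained by splitting $W(Y^*)$ into simple summands $S^*$. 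This last space is $\Hom(X\bt Y,K_{Z\circ W})$. The identification is natural in $X$ and $Y$, so Yoneda gives an isomorphism of objects. The same map can be written directly at the level of contraction. On the summand $\Hom_\B(\be,T\ot W(S^*))\otimes(Z(T^*)\bt S)$ of $\operatorname{ctr}_{23}$, a morphism $\be\to T\ot W(S^*)$, taken together with the summation over $T$ and $S$, amounts to applying $Z$ to $W(S^*)$ decomposed along $T^*$. The isomorphism is then: apply $Z$ to the dual of the middle pairing, and use the evaluation $T\ot T^*\to\be$ built into $\mathfrak D_\B$.

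\textbf{Multiplications.} To compare the two algebra structures I would not work with matrix coefficients. Instead I would use the universal property in the proof of Lemma~\ref{lem:full center kernel}: $\mathfrak D_\B=T^{\mathrm R}(\be)$, and $K_Z$ is obtained by applying the lax monoidal functor $Z\bt\id$. Since $\operatorname{ctr}_{23}=R\circ T$ as lax monoidal functors (from the proof of Proposition~\ref{prop:etale convolution closure}), $K_Z\odot K_W$ is the image of $\mathfrak D_\B\bt\mathfrak D_\B$ under the lax monoidal composite $R\circ T\circ(Z\bt\id\bt W\bt\id)$.

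\textbf{Key identity.} The central step is $\operatorname{ctr}_{23}(\mathfrak D_\B\bt\mathfrak D_\B)\cong\mathfrak D_\B$ as algebras, which is the case $Z=W=\id$. Granting this, the middle contraction commutes with $Z$ acting on slot $1$ and $W$ acting on slot $3$. To see this, move $W$ from slot $3$ to slot $1$ through the pairing, using the counit and rigidity. This turns $Z\bt\id\bt W\bt\id$ followed by contraction into $(Z\circ W)\bt\id$ applied to $\mathfrak D_\B$. The lax structure of the composite then matches $\mu^{Z\circ W}=Z(\mu^W)\circ\mu^Z$.

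\textbf{Main obstacle.} I expect the hardest step to be checking that the middle crossing $c_{V,U'}$ in \eqref{eqn:middle crossing} agrees with the multiplication of $\mathfrak D_\B$ and the braided lax structures of $Z$ and $W$. This is where braidedness of $Z$ and $W$, and not just their lax monoidality, must be used, since moving $W$ across the pairing has to commute with the braiding. My first attempt would be to verify the identity on Hom spaces via \eqref{eqn:convolution} using string diagrams. If that gets unwieldy, I would fall back on the following route. When $Z$ and $W$ arise from module categories, Lemma~\ref{lem:full center kernel} and Proposition~\ref{prop: modular invariant multiplicativity} reduce the claim to the comparison already established in Lemma~\ref{lem:alpha comparison}. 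The general case would then need a direct diagram chase modeled on the proof of Lemma~\ref{lem:middle contraction}.
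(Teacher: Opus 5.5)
Your identification of underlying objects is the same co-Yoneda chain through \eqref{eqn:convolution} that the paper uses. The paper then just asserts that, under this isomorphism, the convolution structure map becomes $ZW(U)\ot ZW(V)\to Z(W(U)\ot W(V))\to ZW(U\ot V)$.

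\textbf{The gap.} Your structural treatment of the multiplication breaks at the step ``granting this, the middle contraction commutes with $Z$ acting on slot $1$ and $W$ acting on slot $3$''. For $Z$ this is true and formal. Slot $1$ is an outer factor, so $\operatorname{ctr}_{23}\circ(Z\bt\id\bt\id\bt\id)=(Z\bt\id)\circ\operatorname{ctr}_{23}$ as lax monoidal functors. For $W$ it is false: on $X\bt U\bt V\bt Y$ the two functors give $\Hom_\B(\be,U\ot W(V))\otimes(X\bt Y)$ and $\Hom_\B(\be,U\ot V)\otimes(W(X)\bt Y)$. Consequently the case $Z=W=\id$, i.e.\ $\mathfrak D_\B\odot\mathfrak D_\B\cong\mathfrak D_\B$, tells you nothing about the multiplication of $\mathfrak D_\B\odot K_W$. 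That multiplication involves the crossings $c_{W(S^*),T'}$ followed by $\mu^W$. ``Moving $W$ through the pairing'' does not follow from your key identity; it is the whole content.

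\textbf{What you need instead.} The correct statement is that $\mathfrak D_\B$ is a left unit for $\odot$. For every algebra $C=\bigoplus_jC'_j\bt C''_j$, the co-Yoneda map $\mathfrak D_\B\odot C\to C$ should be an algebra isomorphism. This map sends $f\in\Hom_\B(\be,T\ot C'_j)$, on the summand with outer factor $T^*\bt C''_j$, to $(\mathrm{ev}_T\ot\id)(\id_{T^*}\ot f)\bt\id$. Apply it to $C=K_W$ and combine with the outer-slot commutation for $Z$. This gives $K_Z\odot K_W\cong(Z\bt\id)(\mathfrak D_\B\odot K_W)\cong(Z\bt\id)(W\bt\id)(\mathfrak D_\B)=K_{Z\circ W}$, with lax structure $Z(\mu^W)\mu^Z$.

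\textbf{Where the braiding enters.} The left-unit law is where the braiding matters, and it is not where you place it. The multiplication of $\mathfrak D_\B$ is characterized by $\mathrm{ev}$ being an algebra map for the tensor structure of $T:\B\bt\B^\rev\to\B$. Hence the product of $f:\be\to T\ot C'_j$ and $g:\be\to T'\ot C'_k$ becomes a diagram in which the cap $\mathrm{ev}_{T'}$ crosses both outputs of $f$. It crosses $T$ through that tensor structure and $C'_j$ through $c_{C'_j,T'}$. You must check that these two crossings have the same type, so that naturality of the braiding slides the cap off $f$. This is a matching of the convention in \eqref{eqn:middle crossing} with that of the canonical equivalence $\B\bt\B^\rev\simeq\Z(\B)$. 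Only naturality of the braiding of $\B$ and lax monoidality of $Z,W$ are used. Braidedness of $Z$ and $W$ plays no role here; it only makes $K_Z$ commutative.

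\textbf{The fallback.} Your fallback is circular. For $Z=Z_\M$ and $W=Z_\N$, Lemma~\ref{lem:full center kernel} and Proposition~\ref{prop: modular invariant multiplicativity} turn the claim into $L_\M\odot L_\N\cong L_{\M\bt_\B\N}$. That is Proposition~\ref{prop:convolution etale}, which the paper deduces from this very lemma. Lemma~\ref{lem:alpha comparison} concerns $\alpha$-induction and says nothing about $\odot$.
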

\begin{proof}
Using \eqref{eqn:convolution} and finite semisimple co-Yoneda we obtain
\begin{equation}
\label{eqn:composition of kernels calculation}
\begin{aligned}
\Hom_{\B\bt\B^\rev}(X\bt Y,K_Z\odot K_W)
&\cong\bigoplus_{S\in\O(\B)}
\Hom_\B(X,Z(S^*))\otimes\Hom_\B(S^*,W(Y^*))\\
&\cong\Hom_\B(X,Z(W(Y^*))).
\end{aligned}
\end{equation}
Hence $K_Z\odot K_W\cong K_{Z\circ W}$ as objects. Under this isomorphism the convolution structure morphism induces exactly the lax monoidal structure of the composite:
\[
ZW(U)\ot ZW(V)
\longrightarrow Z\bigl(W(U)\ot W(V)\bigr)
\longrightarrow ZW(U\ot V).
\]
The convolution unit maps to the composite unit
$\be\to Z(\be)\to ZW(\be)$, so the units agree as well.
Thus
\begin{equation}
\label{eqn:composition of kernels}
K_Z\odot K_W\cong K_{Z\circ W}
\end{equation}
canonically as algebras.
\end{proof}

\begin{proposition}
\label{prop:convolution etale}
Let $\M,\N$ be indecomposable $\B$-module categories, and let $L_\M,L_\N\in\Z(\B)$ be their full centers.
There is a canonical isomorphism of commutative separable algebras
\[
L_{\M\bt_\B\N}\cong L_\M\odot L_\N.
\]
\end{proposition}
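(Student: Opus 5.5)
The plan is to chain the three results already established: Lemma~\ref{lem:full center kernel}, Proposition~\ref{prop: modular invariant multiplicativity}, and Lemma~\ref{lem:kernel convolution}. Concretely, I would write
\[
L_{\M\bt_\B\N}\cong K_{Z_{\M\bt_\B\N}}\cong K_{Z_\M\circ Z_\N}\cong K_{Z_\M}\odot K_{Z_\N}\cong L_\M\odot L_\N .
\]
The outer isomorphisms come from Lemma~\ref{lem:full center kernel}, applied to $\M\bt_\B\N$, to $\M$, and to $\N$. The third isomorphism is Lemma~\ref{lem:kernel convolution} applied to $Z=Z_\M$ and $W=Z_\N$. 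The second is induced by the isomorphism $Z_{\M\bt_\B\N}\cong Z_\M\circ Z_\N$ of Proposition~\ref{prop: modular invariant multiplicativity}.

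The first check is that the assignment $Z\mapsto K_Z$ of \eqref{eqn:kernel of Z} is functorial in monoidal natural transformations. A monoidal natural isomorphism $\theta:Z\Rightarrow Z'$ of braided lax monoidal endofunctors yields $(\theta\bt\id)_{\mathfrak D_\B}:K_Z\to K_{Z'}$. This map is an algebra isomorphism because the multiplication on $K_Z$ is built from the diagonal multiplication and the lax structure $\mu^Z$, and $\theta$ intertwines $\mu^Z$ with $\mu^{Z'}$ and the units. Since the isomorphism in Proposition~\ref{prop: modular invariant multiplicativity} respects the lax structures, the second step is an algebra isomorphism. The convolution of algebras is also functorial in algebra morphisms, because $\operatorname{ctr}_{23}$ is a lax monoidal functor. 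So once every link is an algebra isomorphism, the composite is one as well.

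One subtlety is decomposability. The product $\M\bt_\B\N$ need not be indecomposable, while Lemma~\ref{lem:full center kernel} was stated for a general module category. By convention the full center of a decomposable category is the direct sum of the full centers of its summands, with blockwise multiplication. I therefore need $K_{Z_\M}$ to be additive as well. Additivity follows from $Z_{\M_1\oplus\M_2}\cong Z_{\M_1}\oplus Z_{\M_2}$, which holds because $\B^*_{\M_1\oplus\M_2}$ contains the block-diagonal parts. I expect this to be the main point needing care: in the decomposable case, the algebra $R_\M(\id_\M)$ and the blockwise sum must agree. I would verify this by applying Lemma~\ref{lem:full center kernel} directly to $\M\bt_\B\N$ summand by summand, using that $K_{Z_1\oplus Z_2}=K_{Z_1}\times K_{Z_2}$ as algebras.

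Finally, "commutative separable" comes for free. The algebras $L_\M$ and $L_\N$ are Lagrangian, hence étale, so Proposition~\ref{prop:etale convolution closure} makes $L_\M\odot L_\N$ étale. On the other side, $L_{\M\bt_\B\N}$ is a finite product of Lagrangian algebras and is therefore also commutative separable.

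Canonicity holds because each link is canonical: the adjunction counit, the balancing isomorphism $\chi$, and co-Yoneda. The only choice made anywhere is the representative set $\O(\B)$ in \eqref{eqn:convolution}, and that formula is independent of it.
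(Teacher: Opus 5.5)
Your proposal is correct and is essentially the paper's proof: the paper obtains the same composite $L_\M\odot L_\N\cong K_{Z_\M}\odot K_{Z_\N}\cong K_{Z_\M\circ Z_\N}\cong K_{Z_{\M\bt_\B\N}}\cong L_{\M\bt_\B\N}$ by chaining Lemma~\ref{lem:full center kernel}, Lemma~\ref{lem:kernel convolution}, and Proposition~\ref{prop: modular invariant multiplicativity}. Your extra checks fill in details the paper leaves implicit: that $Z\mapsto K_Z$ sends monoidal isomorphisms to algebra isomorphisms, and that for the decomposable product $R_{\M\bt_\B\N}(\id)$ agrees with the blockwise sum of full centers.
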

\begin{proof}
Combining Lemma~\ref{lem:full center kernel},
Lemma~\ref{lem:kernel convolution}, and
Proposition~\ref{prop: modular invariant multiplicativity}
gives canonical algebra isomorphisms
\[
L_\M\odot L_\N
\cong K_{Z_\M}\odot K_{Z_\N}
\cong K_{Z_\M\circ Z_\N}
\cong K_{Z_{\M\bt_\B\N}}
\cong L_{\M\bt_\B\N}.\qedhere
\]
\end{proof}

\subsection{A parameterization of Lagrangian algebras by residual data}
\label{subsect: A parameterization of Lagrangian algebras by residual data}

Let $\B$ be a braided fusion category, and let $A\in \B$ be an \'etale algebra. For a fusion subcategory $\D \subset \B$, we denote by $A\cap \D$ the maximal \'etale subalgebra of $A$ contained in $\D$. This subalgebra is well-defined: if $F:\D \to \B_A$ denotes the restriction of the free module functor $\B \to \B_A$, $X \mapsto A\ot X$, and $I: \B_A \to \D$ is the right adjoint of $F$, then $A\cap \D=I(\be)$.

Recall the parameterization of Lagrangian algebras in the tensor product
$\B\bt\C$ of non-degenerate braided fusion categories
\cite[Section~3.2]{DNO}. The groupoid of such algebras is equivalent to
the groupoid of triples $(A_1,A_2,\phi)$, where $A_1\in\B$ and
$A_2\in\C$ are connected \'etale algebras and
$\phi:(\B_{A_1}^\loc)^\rev\xrightarrow{\sim}\C_{A_2}^\loc$
is a braided tensor equivalence.

An isomorphism between triples $(A_1,A_2,\phi)$ and
$(A_1',A_2',\phi')$ consists of algebra isomorphisms
$\alpha_1:A_1\xrightarrow{\sim}A_1'$ and
$\alpha_2:A_2\xrightarrow{\sim}A_2'$, together with a specified
monoidal natural isomorphism
\[
\tau:\phi'\circ\ind(\alpha_1)
\xRightarrow{\ \sim\ }
\ind(\alpha_2)\circ\phi.
\]
Here $\ind(\alpha)$ denotes the braided tensor equivalence between
the corresponding categories of local modules given by transport
of module structures along $\alpha$. 
Composition of isomorphisms of triples is obtained by composing the algebra isomorphisms and the specified monoidal natural isomorphisms, using the canonical composition isomorphisms for \(\ind\).

Under this equivalence, a Lagrangian algebra $L\in\B\bt\C$ corresponds
to the triple $(A_1,A_2,\phi)$ with
$A_1=L\cap(\B\bt\be)$ and $A_2=L\cap(\be\bt\C)$.
The image of $L$ in
\[
\D:=(\B\bt\C)_{A_1\bt A_2}^\loc
\simeq \B_{A_1}^\loc\bt\C_{A_2}^\loc
\]
is the graph algebra
\[
\Delta_\phi:=
\bigoplus_{X\in\O(\B_{A_1}^\loc)}
X^*\bt\phi(X).
\]

An algebra isomorphism $L\to L'$ restricts to isomorphisms $\alpha_i$
of the two endpoints. After transport along $\alpha_i$, its map on
the summand indexed by $X$ has the form
$\id_{\ind(\alpha_1)(X)^*}\bt\tau_X^{-1}$, using the canonical
duality identifications. Since the first tensor factors are
pairwise non-isomorphic simple objects, these component maps
determine a unique natural isomorphism $\tau$; compatibility
with the algebra multiplication and unit is equivalent to
monoidality of $\tau$. Conversely, every
$(\alpha_1,\alpha_2,\tau)$ defines an algebra isomorphism in this way.

\begin{definition}
We call a triple $(A_1,A_2,\phi)$ as above \emph{residual data}
for $\B\bt\C$.
\end{definition}

\begin{definition}
\label{def: Et and Loc}
Let $\B$ be a non-degenerate braided fusion category.
Denote by $\mathfrak{Et}(\B)$ the finite poset of isomorphism classes
of connected \'etale algebras in $\B$, ordered by inclusion.
For $A\in\mathfrak{Et}(\B)$, we call $\B_A^\loc$ a
\emph{localization} of $\B$.

Denote by $\mathfrak{Loc}(\B)$ the poset of braided equivalence classes
of localizations of $\B$, ordered by further localization:
$\C\leq\D$ if $\D$ is braided equivalent to a localization of $\C$.
\end{definition}

The transitivity of the order relation in $\mathfrak{Loc}(\B)$ follows from
\cite[Proposition~3.16]{DMNO}.

There is a surjective order-preserving map
\begin{equation}
\label{eqn: localization map}
\mathfrak{Et}(\B)\longrightarrow\mathfrak{Loc}(\B),
\qquad
A\longmapsto\B_A^\loc.
\end{equation}

\begin{remark}
For a metric group $(M,q)$, let $\C(M,q)$ denote the associated pointed
non-degenerate braided fusion category, defined up to braided equivalence
\cite[Section~2.11]{DGNO2}.
Let $(M_1,q_1)$ and $(M_2,q_2)$ be metric groups. The poset $\mathfrak{Et}(\C(M_i,q_i))$ identifies with the poset of isotropic subgroups $V_i\subset M_i$, and the localization map \eqref{eqn: localization map} sends $V_i$ to the metric group category associated to the residual quotient $V_i^\perp/V_i$. 
Thus, residual data for
$\C(M_1,q_1)\bt\C(M_2,q_2)^\rev$
consist of a pair of points
$V_i\in\mathfrak{Et}(\C(M_i,q_i))$
whose localizations are equivalent, together with an orthogonal
isomorphism
$\phi:V_1^\perp/V_1\xrightarrow{\sim}V_2^\perp/V_2$.
This justifies our terminology.
\end{remark}

To describe isomorphism classes of residual data, we next analyze the action of \(\Aut_{\alg,\B}(A)\) on the localization \(\B_A^\loc\).
By \cite[Corollary~3.30]{DMNO},
$\B_A^\loc$ is centrally embedded in $\B_A$, and there is a canonical
braided equivalence
\[
\Z(\B_A)\simeq\B\bt(\B_A^\loc)^\rev.
\]
Let $\E_A$ be the full subcategory of $\B_A$ spanned by the simple
objects $X$ such that $X^*\ot_A X\in\B_A^\loc$. We call $\E_A$ the
\emph{commutator subcategory} of the central embedding
$\B_A^\loc\subset\B_A$, following the terminology of \cite{GN}.

If $X,Y\in\E_A$ are simple and $Z$ is a simple constituent of
$X\ot_A Y$, then $Z^*\ot_A Z$ is a direct summand of
\[
Y^*\ot_A X^*\ot_A X\ot_A Y
\cong
(X^*\ot_A X)\ot_A(Y^*\ot_A Y)\in\B_A^\loc.
\]
Here the isomorphism uses the half-braiding of
$X^*\ot_A X\in\B_A^\loc$. Since
$\B_A^\loc$ is closed under subobjects, $Z\in\E_A$. Thus $\E_A$ is
closed under tensor products and subobjects and hence is rigid by
\cite[Corollary~F.7]{DGNO2}. Therefore $\E_A$ is a fusion subcategory of $\B_A$, cf.~\cite[Remark~4.9]{GN}
for the corresponding statement
when the ambient Grothendieck ring is commutative.

Let $\E_A^{\ad}$ denote the adjoint subcategory of $\E_A$.
We have $\E_A^{\ad}\subseteq\B_A^\loc$. By the universal grading
theory of \cite{GN}, fusion subcategories of $\E_A$ containing
$\E_A^{\ad}$ correspond to subgroups of the universal grading group
$U(\E_A)$. Let $H_A\leq U(\E_A)$ correspond to $\B_A^\loc$.
Every object of $\B_A^\loc$ has a half-braiding with every object of
$\E_A$, so every degree in $H_A$ commutes with every degree in
$U(\E_A)$. Hence $H_A$ is central. Put $G_A:=U(\E_A)/H_A$.
This gives a canonical faithful grading
\begin{equation}
\label{eqn: commutator grading etale}
\E_A=\bigoplus_{g\in G_A}(\E_A)_g,
\qquad
(\E_A)_e=\B_A^\loc.
\end{equation}
Every homogeneous component has Frobenius--Perron dimension
$\FPdim(\B_A^\loc)$.

\begin{theorem}
\label{thm: automorphisms etale algebra}
Let $\B$ be a non-degenerate braided fusion category, and let $A$ be a connected \'etale algebra in $\B$. There is a canonical group isomorphism
\[
\Aut_{\alg,\B}(A)\xrightarrow{\sim}G_A.
\]
Under this isomorphism, the autoequivalence of $\B_A^\loc$ induced by an algebra automorphism of $A$ agrees with the canonical action of the corresponding element of $G_A$ on the neutral component $\B_A^\loc$ of the $G_A$-crossed braided category $\E_A$.
\end{theorem}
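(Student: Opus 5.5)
We construct a homomorphism $\Aut_{\alg,\B}(A)\to G_A$ by attaching to each automorphism $\sigma$ of $A$ the twisted bimodule ${}_\sigma A$. This is the $A$-module $A$ whose left action is precomposed with $\sigma$, regarded as an object of $\B_A$. Since $A$ is commutative, a left $A$-module is an $A$-bimodule via the braiding, so $\B_A$ is monoidal under $\ot_A$. The object ${}_\sigma A$ is invertible, with inverse ${}_{\sigma^{-1}}A$, and ${}_\sigma A\ot_A{}_\tau A\cong{}_{\sigma\tau}A$.

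\textbf{Step 1: ${}_\sigma A$ lies in $\E_A$.} An invertible simple $X$ satisfies $X^*\ot_A X\cong A=\be_{\B_A}$, which is local, so $X\in\E_A$. This gives a group homomorphism from $\Aut_{\alg,\B}(A)$ to the group of invertible objects of $\E_A$. Composing with the degree map gives a homomorphism to $U(\E_A)$, and then to $G_A$.

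\textbf{Step 2: injectivity.} Suppose ${}_\sigma A$ has trivial degree in $G_A$. Then ${}_\sigma A$ lies in $\B_A^\loc$. For the module $A$ with action $m(\sigma\ot\id)$, locality says $m(\sigma\ot\id)c_{A,A}c_{A,A}=m(\sigma\ot\id)$. Now $c_{A,A}^2$ acts on $A\ot A$, and commutativity of $A$ gives $m\,c_{A,A}=m$; together with naturality this yields $m(\id\ot\sigma)=m(\sigma\ot\id)$. Composing with the unit, $\sigma(1)=1$ then forces $\sigma=\id$. As an alternative route, one could argue directly that a local invertible object that is free of rank one is the unit.

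\textbf{Step 3: surjectivity.} This is the main obstacle. Two things must be shown. First, each component $(\E_A)_g$ must contain an invertible object of the form ${}_\sigma A$. Second, $|\Aut_{\alg,\B}(A)|=|G_A|$.

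For the count, use $\Z(\B_A)\simeq\B\bt(\B_A^\loc)^\rev$ (\cite[Corollary~3.30]{DMNO}) together with the standard dimension identity $\FPdim(\E_A)=\FPdim(\B_A)^2/\FPdim(\B_A^\loc)\cdot(\ldots)$. More precisely, $\E_A$ should be the centralizer of the image of $\B$ in $\B_A$, computed through the Müger centralizer formula in $\Z(\B_A)$.

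A cleaner route uses the free functor $F:\B\to\B_A$. A simple summand $X$ of $F(Y)$ lies in $\E_A$ exactly when $Y$ "commutes" with $A$ up to a grading. Show that every component $(\E_A)_g$ contains a summand of $F(\be)=A$ viewed with twisted action. Equivalently, $\Hom_{\B_A}(A,X)\neq0$ for some $X\in(\E_A)_g$. This holds because $\Hom_{\B_A}(F(\be),X)=\Hom_\B(\be,X)$, and the forgetful image of $(\E_A)_g$ must contain $\be$.

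To see that containment, apply the forgetful functor to $\B$ and use the centralizing property: the objects of $(\E_A)_g$ become $A$-modules whose two induced module structures (via $c$ and $c^{-1}$) are compatible, so each carries an $A$-bimodule structure. A rank-one such bimodule is exactly ${}_\sigma A$ for an automorphism $\sigma$.

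I would first try this dimension/Frobenius reciprocity argument. I expect that pinning down $\Hom_\B(\be,X)\neq0$ for every graded component is the delicate point.

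\textbf{Step 4: compatibility of actions.} Conjugation by ${}_\sigma A$ on $\B_A^\loc$, $M\mapsto{}_\sigma A\ot_A M\ot_A{}_{\sigma^{-1}}A$, must be compared with transport of structure $\ind(\sigma)$. Identify ${}_\sigma A\ot_A M$ with $M$ with action twisted by $\sigma$, and take the crossed braiding isomorphisms to be induced by the half-braidings. The canonical $G_A$-action on the neutral component is then this conjugation, which matches $\ind(\sigma)$ as braided autoequivalences.
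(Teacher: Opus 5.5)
There is a genuine gap, and it occurs at the very start: the object you attach to $\sigma$ is trivial in $\B_A$.

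In $\B_A$ (left $A$-modules), $\sigma$ itself is a module isomorphism $(A,m)\to(A,m(\sigma\ot\id_A))$, because $\sigma m=m(\sigma\ot\sigma)$. Hence ${}_\sigma A\cong\be_{\B_A}$ for every $\sigma$, and the homomorphism $\Aut_{\alg,\B}(A)\to G_A$ of Step~1 is the trivial one. The later steps inherit this problem:
\begin{itemize}
\item In Step~2 the locality identity holds for every $\sigma$. By naturality of the double braiding and $mc_{A,A}=m$,
\[
m(\sigma\ot\id)c_{A,A}^2=mc_{A,A}^2(\sigma\ot\id)=m(\sigma\ot\id).
\]
This is a tautology and does not give $m(\id\ot\sigma)=m(\sigma\ot\id)$.
\item In Step~3, the claim that the underlying object of some simple $X\in(\E_A)_g$ contains $\be$ is false for $g\neq e$. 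By adjunction $\Hom_\B(\be,X)\cong\Hom_{\B_A}(A,X)$, and since $A$ is simple this is nonzero only if $X\cong A$, which has neutral degree.
\item Also in Step~3, a nonneutral component need not contain any invertible object. For $A=\Fun(S_3/C_3)$ in $\Rep(D(S_3))$, the odd component has a single simple object, of Frobenius--Perron dimension $3$.
\item In Step~4, conjugation by ${}_\sigma A\cong A$ is the identity functor, not $\ind(\sigma)$.
\end{itemize}
The genuinely nontrivial twisted object is the bimodule ${}_AA_\sigma$, with the \emph{right} action twisted. It lives in ${}_A\B_A$, not in the image of $\B_A$, and the paper uses it only to show that $\Aut_{\alg,\B}(A)$ is finite.

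The missing idea is to attach to $\sigma$ a \emph{sector} rather than an object. Let $\C_\sigma\subseteq\B_A$ consist of the modules $(X,\rho)$ with $\rho\,c_{X,A}c_{A,X}=\rho(\sigma^{-1}\ot\id_X)$, that is, modules whose monodromy with $A$ is $\sigma^{-1}$. These satisfy $\C_\sigma\ot_A\C_\tau\subseteq\C_{\sigma\tau}$ and $\C_e=\B_A^\loc$. The paper then proceeds as follows:
\begin{itemize}
\item It shows every $\C_\sigma$ is nonzero by an equivariantization count in $\B\bt\Vec_H$, which gives total dimension $|H|\FPdim(\B_A^\loc)$.
\item It shows $\C_\sigma$ is an entire homogeneous component of $\E_A$, and defines $a_A(\sigma)$ as its degree.
\item Injectivity: your Step~2 computation becomes correct here. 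If $A\in\C_\sigma$, the twist sits on the right-hand side, and commutativity plus the unit force $\sigma=\id$.
\item Surjectivity needs an independent supply of automorphisms, which no Frobenius-reciprocity argument on $F(\be)$ can provide. The paper takes the subalgebra $A_0\subseteq A$ corresponding to $\E_A$ and uses $(\B_{A_0}^\loc,A)\simeq(\E_A^{G_A},\Fun(G_A,k))$. Right translations of $\Fun(G_A,k)$ give a faithful $G_A$-action on $A$, so $|G_A|\leq|\Aut_{\alg,\B}(A)|$.
\item Compatibility with $\ind(\sigma)$ follows because the $A$-linearity square of the crossed braiding $c^\times_{X,Y}$, for $X\in\C_\sigma$, is literally the twisted locality condition.
\end{itemize}
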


\begin{proof}
Put $H:=\Aut_{\alg,\B}(A)$. This group is finite. Indeed, for $\alpha\in H$, twist the right action on the regular $A$-bimodule by $\alpha$ to obtain the invertible bimodule ${}_AA_\alpha$. If ${}_AA_\alpha\cong{}_AA_\beta$, the isomorphism is scalar as an endomorphism of the regular left $A$-module, since $A$ is connected; right $A$-linearity then gives $\alpha=\beta$. Thus $H$ embeds into the finite group $\Inv({}_A\B_A)$.

For $\alpha\in H$, let $\C_\alpha$ be the full subcategory of $\B_A$ consisting of modules $(X,\rho)$ satisfying
\begin{equation}
\label{eqn: twisted locality sector}
\rho\,c_{X,A}c_{A,X}
=
\rho\,(\alpha^{-1}\ot\id_X).
\end{equation}
The relative tensor product action and the two hexagon identities give
\begin{equation}
\label{eqn: twisted sector multiplication}
\C_\alpha\ot_A\C_\beta\subseteq\C_{\alpha\beta},
\qquad
(\C_\alpha)^*=\C_{\alpha^{-1}},
\qquad
\C_e=\B_A^\loc.
\end{equation}
Consider
\[
\widetilde\B:=\B\bt\Vec_H
\]
with its standard $H$-crossed braided structure. Equip $A$, placed in the
neutral component of $\widetilde\B$, with the $H$-equivariant algebra
structure whose component at $\alpha\in H$ is the automorphism $\alpha$.

Let
\[
\widetilde\E_H:=\bigoplus_{\alpha\in H}\C_\alpha
\subseteq\widetilde\B_A,
\]
where $\C_\alpha$ is placed in degree $\alpha$.

By the definition of the sectors $\C_\alpha$ and the equivariantization--de-equivariantization correspondence
\cite[Subsection~2.5]{DMNO},
\begin{equation}
\label{eqn: crossed local equivariantization}
(\widetilde\E_H)^H
\simeq
(\widetilde\B^H)_A^\loc,
\end{equation}
where on the right $A$ is regarded with the $H$-equivariant structure above.

By the equivariantization dimension formula and \eqref{eqn:local-module-dimension},
\begin{equation}
\label{eqn: crossed local dimension}
\FPdim(\widetilde\E_H)
= \frac{1}{|H|} \FPdim((\widetilde\E_H)^H)
=\frac{|H|\FPdim(\B)}{\FPdim(A)^2}
=|H|\FPdim(\B_A^\loc).
\end{equation}
If $S\subseteq H$ is the support of the grading of $\widetilde\E_H$, then $S$ is a subgroup, and every nonzero component has Frobenius--Perron dimension $\FPdim(\B_A^\loc)$. Hence $\FPdim(\widetilde\E_H)=|S|\FPdim(\B_A^\loc)$, and comparison with \eqref{eqn: crossed local dimension} gives $S=H$. In particular, every $\C_\alpha$ is nonzero.

For $X\in\C_\alpha$, duality and \eqref{eqn: twisted sector multiplication} give $X^*\ot_A X\in\B_A^\loc$, so $\C_\alpha\subseteq\E_A$. If $X,Y$ are simple objects of $\C_\alpha$, then $X^*\ot_A Y\in\B_A^\loc$; hence they have the same degree in \eqref{eqn: commutator grading etale}. Thus $\C_\alpha$ lies in one homogeneous component, and equality of Frobenius--Perron dimensions shows that it is that entire component. Let
\begin{equation}
\label{eqn: hom aA}
a_A:H\longrightarrow G_A
\end{equation}
send $\alpha$ to the degree of $\C_\alpha$. Equation \eqref{eqn: twisted sector multiplication} makes $a_A$ a homomorphism.

If $a_A(\alpha)=e$, then $\C_\alpha=\B_A^\loc$, so the unit object $A$ of $\B_A$ belongs to $\C_\alpha$. Writing $m:A\ot A\to A$ for multiplication, \eqref{eqn: twisted locality sector} becomes
\[
m\,c_{A,A}c_{A,A}=m(\alpha^{-1}\ot\id_A).
\]
By commutativity, the left-hand side is $m$. Composing with
$\id_A\ot\eta_A$, where $\eta_A:\be\to A$ is the unit, gives
$\alpha^{-1}=\id_A$. Hence $a_A$ is injective.

We now construct an injection in the other direction. 

Let $A_0\subseteq A$ be the connected \'etale subalgebra corresponding
to $\E_A\subseteq\B_A$ under
\cite[Theorem~4.10]{DMNO}.
The central embedding $\B_A^\loc\subseteq\E_A$ equips the faithful
$G_A$-grading with its compatible crossed braided structure.
Reconstruction and equivariantization give a braided equivalence of pairs
\begin{equation}
\label{eqn: equivalence of reconstructed pairs}
(\B_{A_0}^\loc,A)
\simeq
(\E_A^{G_A},\Fun(G_A,k)),
\end{equation}
where $A$ on the left is regarded as a local $A_0$-module algebra:
both pairs reconstruct the central embedding
$\B_A^\loc\subseteq\E_A$.

Right translations of $\Fun(G_A,k)$ commute with its left regular
action, so they give a faithful action of $G_A$ by algebra
automorphisms in $\Rep(G_A)\subseteq\E_A^{G_A}$.
Transporting this action through
\eqref{eqn: equivalence of reconstructed pairs} and forgetting the
$A_0$-module structure gives a faithful action of $G_A$ on $A$ viewed as an algebra in $\B$.
Hence $|G_A|\leq|\Aut_{\alg,\B}(A)|$.
Since $a_A$ is injective and both groups are finite, $a_A$ is an
isomorphism.

It remains to identify the action on $\B_A^\loc$. The induced autoequivalence
$\ind_A(\alpha)$ sends a local $A$-module $(Y,\rho_Y)$ to
$\bigl(Y,\rho_Y(\alpha^{-1}\ot\id_Y)\bigr)$.
If $X\in\C_\alpha=(\E_A)_{a_A(\alpha)}$, the crossed braiding has the form
\[
c^\times_{X,Y}:
X\ot_A Y\longrightarrow a_A(\alpha)(Y)\ot_A X.
\]
The square expressing its $A$-linearity is
\[
\begin{CD}
A\ot(X\ot_A Y) @>{\id_A\ot c^\times_{X,Y}}>>
A\ot(a_A(\alpha)(Y)\ot_A X)\\
@V{\rho_{X\ot_A Y}}VV
@VV{\rho_{a_A(\alpha)(Y)}\ot_A\id_X}V\\
X\ot_A Y @>{c^\times_{X,Y}}>>
a_A(\alpha)(Y)\ot_A X.
\end{CD}
\]
Commutativity of this square is exactly
\eqref{eqn: twisted locality sector}. It follows that
$a_A(\alpha)(Y)$ has underlying object $Y$ with $A$-module structure
$\rho_Y(\alpha^{-1}\ot\id_Y)$. Hence the action of $a_A(\alpha)$ on
$\B_A^\loc$ coincides with $\ind_A(\alpha)$, with its canonical tensor
structure.
\end{proof}

\begin{remark}
If $A$ is Lagrangian, then $\B_A^\loc=\Vect$ and $\E_A$ is the pointed subcategory of $\B_A$. Hence $G_A=\Inv(\B_A)$, and Theorem~\ref{thm: automorphisms etale algebra} recovers the isomorphism of \cite[Lemma~3.10]{S}.
\end{remark}

For a connected \'etale algebra $A$, set
\[
\Gamma_A
:=
\operatorname{Im}\!\left(
\ind_A:\Aut_{\alg}(A)\longrightarrow\Aut^{\mathrm{br}}(\B_A^\loc)
\right).
\]
By Theorem~\ref{thm: automorphisms etale algebra}, $\Gamma_A$ is the image of the canonical action of $G_A$ on $\B_A^\loc$.
The action homomorphism need not be faithful: if $A$ is Lagrangian, then $\B_A^\loc=\Vect$ and $\Gamma_A$ is trivial, even when $G_A$ is not.

\begin{corollary}
\label{cor: residual double orbits}
Fix connected \'etale algebras $A_1\in\B$ and $A_2\in\C$. Suppose that there is a braided equivalence
\[
\phi_0:(\B_{A_1}^\loc)^\rev\xrightarrow{\sim}\C_{A_2}^\loc,
\]
and put $\D_2:=\C_{A_2}^\loc$. The isomorphism classes of residual data with endpoints $A_1,A_2$ are parameterized by
\[
\Gamma_{A_2}\backslash\Aut^{\mathrm{br}}(\D_2)
/
\phi_0\Gamma_{A_1}\phi_0^{-1}.
\]
\end{corollary}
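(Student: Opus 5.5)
Fixing the endpoints $A_1,A_2$, I will identify triples $(A_1,A_2,\phi)$ with braided autoequivalences of $\D_2$ via $\phi\mapsto\phi\phi_0^{-1}\in\Aut^{\mathrm{br}}(\D_2)$. Every braided equivalence $\phi:(\B_{A_1}^\loc)^\rev\to\D_2$ arises this way, so the set of triples with these endpoints is in bijection with $\Aut^{\mathrm{br}}(\D_2)$, once $\phi$ is considered up to monoidal natural isomorphism. The reason for passing to isomorphism classes of functors is that an isomorphism of triples with fixed endpoints includes a specified monoidal natural isomorphism $\tau$. In particular, two triples with $\alpha_1=\id$ and $\alpha_2=\id$ are isomorphic exactly when $\phi\cong\phi'$ monoidally. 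Hence the relevant set is the group $\Aut^{\mathrm{br}}(\D_2)$ of isomorphism classes of braided autoequivalences.

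Next I will use the definition of isomorphism of triples. If $(\alpha_1,\alpha_2,\tau):(A_1,A_2,\phi)\to(A_1,A_2,\phi')$ with $\alpha_i\in\Aut_{\alg}(A_i)$, then $\tau$ says
\[
\phi'\cong\ind(\alpha_2)\,\phi\,\ind(\alpha_1)^{-1}.
\]
Writing $g=\phi\phi_0^{-1}$ and $g'=\phi'\phi_0^{-1}$, this becomes
\[
g'=\ind(\alpha_2)\,g\,\bigl(\phi_0\ind(\alpha_1)\phi_0^{-1}\bigr)^{-1}.
\]
Here $\ind(\alpha_1)$ is viewed on $(\B_{A_1}^\loc)^\rev$; reversing the braiding does not change the underlying autoequivalence. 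As $\alpha_2$ ranges over $\Aut_{\alg}(A_2)$, the element $\ind(\alpha_2)$ ranges over $\Gamma_{A_2}$. Likewise $\phi_0\ind(\alpha_1)\phi_0^{-1}$ ranges over $\phi_0\Gamma_{A_1}\phi_0^{-1}$. Conversely, for any $\alpha_1,\alpha_2$ and any monoidal isomorphism $\tau$ realizing this relation, $(\alpha_1,\alpha_2,\tau)$ is an isomorphism of triples. So $g$ and $g'$ give isomorphic data exactly when they lie in the same double coset.

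For this to be a group action on $\Aut^{\mathrm{br}}(\D_2)$, I need $\ind$ to be a homomorphism up to natural isomorphism, $\ind(\alpha\beta)\cong\ind(\alpha)\ind(\beta)$. This follows from the canonical composition isomorphisms for $\ind$ mentioned with the definition of composition of isomorphisms of triples. By Theorem~\ref{thm: automorphisms etale algebra}, this matches the group action of $G_A$.

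The remaining issue is isomorphisms whose endpoints are not fixed. Residual data $(A_1',A_2',\phi')$ with $A_i'\cong A_i$ can be transported along chosen isomorphisms to data with endpoints exactly $A_1,A_2$. So every isomorphism class with these endpoint classes meets the fixed-endpoint set, and any isomorphism between two fixed-endpoint triples has $\alpha_i\in\Aut(A_i)$. The main point to check carefully is the well-definedness step above: equality of classes in $\Aut^{\mathrm{br}}(\D_2)$ must exactly capture the choice of $\tau$. I will argue this directly: given $\alpha_1,\alpha_2$, a $\tau$ exists if and only if the two functors are monoidally isomorphic. Since the classification is only of isomorphism classes, the particular choice of $\tau$ is irrelevant.
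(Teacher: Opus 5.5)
Your proposal is correct and follows the same route as the paper: write every residual equivalence as $\gamma\phi_0$ with $\gamma\in\Aut^{\mathrm{br}}(\D_2)$, then read off from the isomorphism condition $\phi'\circ\ind(\alpha_1)\cong\ind(\alpha_2)\circ\phi$ that $\gamma$ is determined up to left multiplication by $\Gamma_{A_2}$ and right multiplication by $\phi_0\Gamma_{A_1}\phi_0^{-1}$. Your remarks on transporting data along endpoint isomorphisms, on the independence of $\alpha_1$ and $\alpha_2$, and on $\ind$ being a homomorphism spell out what the paper's two-line proof leaves implicit.
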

\begin{proof}
Every residual equivalence is of the form
$\gamma\phi_0$ with
$\gamma\in\Aut^{\mathrm{br}}(\D_2)$.
The isomorphism condition
\[
\phi'\circ\ind(\alpha_1)
\cong
\ind(\alpha_2)\circ\phi
\]
identifies $\gamma$ up to left multiplication by $\Gamma_{A_2}$
and right multiplication by
$\phi_0\Gamma_{A_1}\phi_0^{-1}$.
\end{proof}


\section{The Grothendieck ring of module categories}
\label{sect:The Grothendieck ring of module categories}

Let $\B$ be a braided fusion category. We denote by $K^\mmod(\B):=K_0(\ModCat(\B))$ the $\mathbb{Z}_+$-ring with basis $\mathbb{B}$ consisting of equivalence classes of indecomposable $\B$-module categories and product $[\M][\N]=[\M\bt_\B\N]$. Since $\B$-module categories are completely reducible, $[\M_i][\M_j]=\sum_k N_{ij}^k[\M_k]$ for $[\M_i],[\M_j]\in\mathbb{B}$.
We refer to $K^\mmod(\B)$ as the \emph{Grothendieck ring of $\B$-module categories} or, simply, the \emph{module Grothendieck ring of $\B$}. 

\begin{example}
$K^\mmod(\Rep(G))$ is the twisted Burnside ring of $G$; see \cite[Theorem~4.3]{GRR}.
\end{example}

\subsection{The product of residual data}
\label{subsect: The product of residual data}

Let $\B$ be a non-degenerate braided fusion category. The tensor product of $\B$-module categories translates to a product of residual data.
The product is obtained by convolving the corresponding Lagrangian algebras as in Subsection~\ref{subsect: Convolution of etale algebras}; Proposition~\ref{prop:convolution etale} identifies this convolution with the tensor product of module categories.

Let $(A_1,A_2,\phi)$ and $(A_1',A_2',\phi')$ be residual data corresponding to indecomposable $\B$-module categories $\M$ and $\M'$, respectively.
Their product can be written as a sum of residual data
\begin{equation}
\label{eqn: product of data}
(A_1,A_2,\phi)(A_1',A_2',\phi')=\sum_i(B_i,C_i,\phi_i).
\end{equation}

\begin{proposition}
\label{prop: etale monotonicity}
We have $A_1\subset B_i$ and $A_2'\subset C_i$ for every $i$ on the right-hand side of \eqref{eqn: product of data}.
\end{proposition}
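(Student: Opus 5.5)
We work with Lagrangian algebras in $\B\bt\B^\rev$ and use Proposition~\ref{prop:convolution etale}: the full center of $\M\bt_\B\M'$ is $L_\M\odot L_{\M'}$. Each summand $(B_i,C_i,\phi_i)$ corresponds to an indecomposable summand of $\M\bt_\B\M'$, whose full center $L_i$ is a direct summand (block) of $L_\M\odot L_{\M'}$. By the residual-data dictionary, $B_i=L_i\cap(\B\bt\be)$ and $C_i=L_i\cap(\be\bt\B^\rev)$. So it suffices to produce an algebra morphism $A_1\bt\be\to L_i$ which is injective, since then $A_1$ is an étale subalgebra of $L_i$ contained in $\B\bt\be$ and hence lies in the maximal one, $B_i$. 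The statement for $C_i$ is symmetric.

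\textbf{Step 1: an algebra map into the convolution.} Write $A_1\bt\be\hookrightarrow L_\M$ for the inclusion of the left endpoint and $\eta:\be\bt\be\to L_{\M'}$ for the unit. Then $(A_1\bt\be)_{12}\bt(\be\bt\be)_{34}\to (L_\M)_{12}\bt(L_{\M'})_{34}$ is an algebra map. Applying the lax monoidal functor $\operatorname{ctr}_{23}$ of Lemma~\ref{lem:middle contraction} yields an algebra map
\[
\operatorname{ctr}_{23}\bigl((A_1\bt\be)\bt(\be\bt\be)\bigr)=\Hom_\B(\be,\be)\otimes_k(A_1\bt\be)\cong A_1\bt\be\longrightarrow L_\M\odot L_{\M'}.
\]
One must check that the lax structure restricted to this source is the ordinary multiplication of $A_1\bt\be$. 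This holds because the middle objects are $\be$, so the crossing in \eqref{eqn:middle crossing} is trivial.

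\textbf{Step 2: injectivity on each block.} Compose with the projection onto the block $L_i$. This is an algebra map because $L_\M\odot L_{\M'}=\bigoplus_i L_i$ as algebras, with blockwise multiplication. The composite $A_1\bt\be\to L_i$ is a unital algebra map out of a connected étale algebra. Its kernel is an ideal of $A_1$, so it suffices that $A_1$ has no nontrivial ideals in $\B_{A_1}$. More simply, one can argue via the free module: the image of such a map is a quotient algebra of $A_1$, and a connected étale algebra is simple as an $A_1$-bimodule (\cite{DMNO}). Hence the map is injective, since it is nonzero, being unital.

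\textbf{Main obstacle.} The main obstacle is ensuring that the identification of Proposition~\ref{prop:convolution etale} is compatible with the decomposition into indecomposable summands. Concretely, the block of $L_\M\odot L_{\M'}$ corresponding to a summand of $\M\bt_\B\M'$ must be the full center of that summand as an algebra. I expect this to follow from additivity of the constructions: $Z_\M$, $K_Z$ and $\odot$ are all additive in the module category, and the full center of a decomposable category is defined blockwise. Note also that $L_\M\odot L_{\M'}$ need not be connected. Even so, each unital block receives the unit of $A_1\bt\be$ nontrivially, which is what Step~2 uses.
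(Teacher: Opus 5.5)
Your argument is correct, but it takes a different route from the paper.

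\textbf{The paper's route.} The paper works on the module-category side. Via the full center adjunction, it converts the endpoint inclusions into unital algebra morphisms $\alpha_\M^+(A_1)\to\id_\M$ and $\alpha_{\M'}^-(A_2')\to\id_{\M'}$. It transports these to $\mathcal P=\M\bt_\B\M'$ along the outer-action isomorphisms \eqref{eqn:alpha outer actions}. Using \eqref{eqn:alpha interchange} and \eqref{eqn:central action alpha}, it combines them into $F_{\mathcal P}(A_1\bt A_2')\to\id_{\mathcal P}$, restricts to each summand $\mathcal P_i$, and adjoins back to obtain $j_i:A_1\bt A_2'\to L_{\mathcal P_i}$.

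\textbf{Your route.} You stay on the algebra side and push the endpoint inclusion, tensored with the unit of the other factor, through the lax monoidal functor $\operatorname{ctr}_{23}$. Doing this with $(A_1\bt\be)\bt(\be\bt A_2')$ at once would recover exactly the paper's $j_i$. The injectivity step is the same in both: a connected \'etale algebra is simple as a module over itself, and a unital map to a nonzero algebra is nonzero.

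\textbf{Comparison.} Your version makes monotonicity a formal consequence of the shape of the convolution, since the outer tensor factors pass through the middle contraction untouched. The cost is that it uses the full strength of Proposition~\ref{prop:convolution etale}: you need an isomorphism of algebras, not just of objects. That rests on the lax monoidal part of Proposition~\ref{prop: modular invariant multiplicativity} together with Lemmas~\ref{lem:full center kernel} and~\ref{lem:kernel convolution}. The paper's proof needs only the full center adjunction and the $\alpha$-induction isomorphisms, and the matching of summands $\mathcal P_i$ with blocks is built in.

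\textbf{Your ``main obstacle'' is not actually one.} The paper defines the full center of a decomposable module category blockwise. This agrees with $R_{\mathcal P}(\id_{\mathcal P})$, because $\Hom_{\B^*_{\mathcal P}}(F_{\mathcal P}(Z),\id_{\mathcal P})$ splits over the summands $\mathcal P_i$. Proposition~\ref{prop:convolution etale} is an algebra isomorphism in exactly this sense, so each projection onto $L_{\mathcal P_i}$ is a unital algebra map. Nothing further, such as uniqueness of the block decomposition, is needed.
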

\begin{proof}
Put $\mathcal P:=\M\bt_\B\M'\simeq\bigoplus_i\mathcal P_i$, where $\mathcal P_i$ corresponds to the residual data $(B_i,C_i,\phi_i)$. The endpoint inclusions are algebra monomorphisms
\[
A_1\bt\be\hookrightarrow L_\M,
\qquad
\be\bt A_2'\hookrightarrow L_{\M'}.
\]
Since the full center adjunction is monoidal, these maps are equivalent to unital algebra morphisms
\[
F_\M(A_1\bt\be)\longrightarrow\id_\M,
\qquad
F_{\M'}(\be\bt A_2')\longrightarrow\id_{\M'},
\]
or, in $\alpha$-induction notation, $\alpha_\M^+(A_1)\to\id_\M$ and $\alpha_{\M'}^-(A_2')\to\id_{\M'}$. 
Via the outer-action isomorphisms \eqref{eqn:alpha outer actions}, the two endpoint morphisms induce unital algebra morphisms
\[
\alpha_{\mathcal P}^{+}(A_1)\longrightarrow\id_{\mathcal P},
\qquad
\alpha_{\mathcal P}^{-}(A_2')\longrightarrow\id_{\mathcal P}.
\]
The \(\alpha\)-interchange isomorphism \eqref{eqn:alpha interchange} identifies the two possible orders of these actions. Together with \eqref{eqn:central action alpha}, it therefore combines the two morphisms above into a unital algebra morphism
\begin{equation}
\label{eqn:endpoint action on product}
F_{\mathcal P}(A_1\bt A_2')
\longrightarrow\id_{\mathcal P}.
\end{equation}
Thus for every \(i\), \eqref{eqn:endpoint action on product} restricts to a unital algebra morphism
$F_{\mathcal P_i}(A_1\boxtimes A_2')\longrightarrow \id_{\mathcal P_i}$.
Under the full center adjunction, it corresponds to a unital algebra morphism
\[
j_i:A_1\bt A_2'\longrightarrow L_{\mathcal P_i}.
\]
The latter is a monomorphism. Indeed, \(A_1\bt A_2'\) is connected étale and hence simple as a right module over itself. Since the kernel of a unital algebra morphism is a proper right ideal, it must therefore be zero.

Restricting $j_i$ along the unit maps of the two tensor factors gives subalgebras $A_1\bt\be\subset L_{\mathcal P_i}$ and $\be\bt A_2'\subset L_{\mathcal P_i}$. Since the residual endpoints of $L_{\mathcal P_i}$ are $B_i=L_{\mathcal P_i}\cap(\B\bt\be)$ and $C_i=L_{\mathcal P_i}\cap(\be\bt\B^\rev)$, we obtain $A_1\subset B_i$ and $A_2'\subset C_i$.
\end{proof}

\begin{remark}
\label{rem: product multiplicity}
The preceding discussion also gives a useful multiplicity formula. Let $s$ be the number of indecomposable summands of $\M\bt_\B\M'$, counted with multiplicity. 
Proposition~\ref{prop:convolution etale} and  the convolution formula \eqref{eqn:convolution} give
\begin{equation}
\label{eqn:product multiplicity}
\begin{aligned}
s
&=\dim_k\Hom_{\B\bt\B^\rev}(\be\bt\be,L_{\M\bt_\B\M'})\\
&=\dim_k\Hom_{\B\bt\B^\rev}(\be\bt\be,L_\M\odot L_{\M'})\\
&=\dim_k\Hom_\B\bigl(\be,A_2\ot(A_1')^*\bigr) \\
&=\dim_k\Hom_\B(A_1',A_2).
\end{aligned}
\end{equation}
\end{remark}

\subsection{Grading by the partially ordered set of localizations}
\label{subsect: Grading by the partially ordered set of localizations}

The poset $\mathfrak{Loc}(\B)$ (see Definition~\ref{def: Et and Loc}) is bounded: its least element is
$\B$ itself, and its greatest element is the completely anisotropic
representative of the Witt class of $\B$
\cite[Theorem~5.14]{DMNO}.

It follows from the definition of residual data and Proposition~\ref{prop: etale monotonicity} that $K^\mmod(\B)$ is graded by $\mathfrak{Loc}(\B)$:
\begin{equation}
\label{eqn:Kmod grading}
K^\mmod(\B) = \bigoplus_{\C \in \mathfrak{Loc}(\B)}\, K^\mmod(\B)_\C,\qquad K^\mmod(\B)_\C =\mathbb{Z}\langle (A_1,A_2,\phi)\mid \B_{A_1}^\loc \cong \B_{A_2}^\loc \cong \C \rangle,
\end{equation}
so that for all $x_1\in K^\mmod(\B)_{\C_1}$ and $x_2\in K^\mmod(\B)_{\C_2}$ we have
\[
x_1x_2 =\sum_{\D \geq \C_1, \C_2}\, y_\D,\qquad y_\D \in K^\mmod(\B)_\D.
\]
\begin{corollary}
\label{cor: max ideal}
Let $\B$ be a non-degenerate braided fusion category, and let $\M,\N$ be indecomposable $\B$-module categories. If an invertible $\B$-module category occurs as a summand of $\M\boxtimes_\B\N$, then both $\M$ and $\N$ are invertible. Consequently, the $\mathbb Z$-span of the noninvertible basis elements of $K^\mmod(\B)$ is a two-sided ideal; it is the unique largest proper ideal spanned by basis elements, and the quotient is isomorphic to $
\mathbb Z[\Aut^{\mathrm{br}}(\B)]$.
\end{corollary}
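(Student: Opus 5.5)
Invertible $\B$-module categories correspond to residual data $(\be,\be,\phi)$ with $\phi\in\Aut^{\mathrm{br}}(\B)$, i.e.\ to the minimal element $\C=\B$ of $\mathfrak{Loc}(\B)$. This is the standard identification of invertible module categories over a non-degenerate $\B$ with braided autoequivalences: $\M$ is invertible if and only if its full center $L_\M$ meets both factors trivially, so $L_\M$ is the graph of a braided autoequivalence. The plan is to reduce everything to the grading \eqref{eqn:Kmod grading} and Proposition~\ref{prop: etale monotonicity}.

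\textbf{Step 1: the summand statement.} Let $\M\leftrightarrow(A_1,A_2,\phi)$ and $\N\leftrightarrow(A_1',A_2',\phi')$. Suppose a summand $(B,C,\psi)$ of $\M\bt_\B\N$ is invertible, so $B=C=\be$. Proposition~\ref{prop: etale monotonicity} gives $A_1\subset B=\be$ and $A_2'\subset C=\be$, hence $A_1=A_2'=\be$. Since $\B_{A_1}^\loc\simeq\B_{A_2}^\loc$ and $\FPdim(\B_{A}^\loc)=\FPdim(\B)/\FPdim(A)^2$ by \eqref{eqn:local-module-dimension}, we get $\FPdim(A_2)=\FPdim(A_1)=1$. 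A connected étale algebra of dimension $1$ is $\be$, so $A_2=\be$. Likewise $A_1'=\be$. Thus both $\M$ and $\N$ have trivial endpoints and are invertible. (Alternatively, apply the same argument to $\N^{\mathrm{op}}\bt_\B\M^{\mathrm{op}}$ for the other endpoints; the dimension argument avoids this.)

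\textbf{Step 2: the ideal.} Let $I$ be the span of the noninvertible basis elements. If $x=[\M]\in I$ and $[\N]$ is any basis element, Step 1 shows that no summand of $\M\bt_\B\N$ or $\N\bt_\B\M$ is invertible. Hence $[\M][\N]$ and $[\N][\M]$ lie in $I$, and since $\mathbb Z_+$-structure constants are computed summand-wise, $I$ is a two-sided ideal. It is proper because $[\B]\notin I$.

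\textbf{Step 3: maximality and the quotient.} Suppose $J$ is a proper ideal spanned by basis elements. If $J$ contained an invertible class $[\M]$, then $[\M^{-1}][\M]=[\B]\in J$, so $J$ would be the whole ring. Hence $J\subseteq I$, and $I$ is the unique largest such ideal.

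The quotient $K^\mmod(\B)/I$ is free on the invertible classes. The product of two invertible classes is a single invertible class, so the quotient is the group ring of the Picard group of $\B$-module categories. By \cite{DMNO} or the residual-data description above, this group is isomorphic to $\Aut^{\mathrm{br}}(\B)$: the data $(\be,\be,\phi)$ are parameterized by $\phi$, since $\Gamma_\be$ is trivial in Corollary~\ref{cor: residual double orbits}.

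\textbf{Main obstacle.} The delicate point is that this bijection is a group isomorphism, i.e.\ that $(\be,\be,\phi)(\be,\be,\phi')=(\be,\be,\phi\phi')$ up to ordering conventions. I would get this from Proposition~\ref{prop:convolution etale}: the convolution of the graph algebras $\bigoplus X^*\bt\phi(X)$ and $\bigoplus Y^*\bt\phi'(Y)$ is computed by \eqref{eqn:convolution} as the graph of the composite. The anti-isomorphism versus isomorphism convention does not affect the isomorphism type of the group ring.
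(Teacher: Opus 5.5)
Your proof is correct and follows the paper's argument. Endpoint monotonicity (Proposition~\ref{prop: etale monotonicity}) kills the outer endpoints, and the equal-localization condition on residual data kills the inner ones; your dimension count makes explicit what the paper leaves implicit in its appeal to the $\mathfrak{Loc}(\B)$-grading. The ideal, maximality, and quotient statements then follow because the invertible classes form the group $\Aut^{\mathrm{br}}(\B)$, as the paper states in one line; your extra remarks ($[\M^{-1}][\M]=[\B]$, and the iso versus anti-iso convention being harmless) just fill in details it omits.
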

\begin{proof}
Proposition~\ref{prop: etale monotonicity} shows that a product of basis
elements can have an invertible summand only if both factors are invertible;
the remaining assertions follow since the invertible basis elements form the
group $\Aut^{\mathrm{br}}(\B)$.
\end{proof}

For $\B=\Rep(D(G))$, Corollary~\ref{cor: max ideal} recovers \cite[Proposition~3.1 and Corollary~3.2]{NikshychBurnside2026}.

Let $P$ be a finite poset, and let $R$ be an algebra over a commutative ground
ring.
We say that \(R\) is \emph{graded by \(P\)} (in the upper-triangular sense) if
\[
R=\bigoplus_{p\in P}R_p,
\qquad
R_pR_q\subseteq \bigoplus_{\substack{r\in P\\ r\geq p,\ r\geq q}}R_r
\quad (p,q\in P).
\]
For $p\in P$, put
\[
F_{\geq p}:=\bigoplus_{q\geq p}R_q,
\qquad
F_{>p}:=\bigoplus_{q>p}R_q,
\qquad
S_p(R):=F_{\geq p}/F_{>p}.
\]
The grading condition makes $F_{\geq p}$ and $F_{>p}$ two-sided ideals. We
call $S_p(R)$ the \emph{$p$-stratum}. Its underlying ground ring
module is $R_p$, with multiplication obtained by projecting a product to its
$p$-component. For a finite-dimensional algebra $R$ over a field, we write
$\operatorname{Jac}(R)$ for its Jacobson radical.

\begin{proposition}
\label{prop: semisimplicity poset grading}
Let $R$ be a finite-dimensional algebra graded by a finite poset $P$. Then
$R$ is semisimple if and only if every stratum $S_p(R)$ is
semisimple. When these conditions hold, there is a noncanonical algebra
isomorphism
\[
R\cong\bigoplus_{p\in P}S_p(R).
\]
\end{proposition}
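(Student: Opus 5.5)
Choose a linear extension $p_1,\dots,p_n$ of $P$ with $p_i<p_j\Rightarrow i<j$, and filter $R$ by ideals
\[
I_k:=\bigoplus_{i\geq k}R_{p_i},\qquad R=I_1\supseteq I_2\supseteq\cdots\supseteq I_{n+1}=0 .
\]
Each $I_k$ is a two-sided ideal: if $i\geq k$ and $r\geq p_i$, then $r=p_j$ with $j\geq i\geq k$. The grading condition also gives $I_kF_{\geq p_k}\subseteq F_{\geq p_k}$ and $I_{k+1}R_{p_k}\subseteq F_{>p_k}$, and similarly on the other side. Hence $R_{p_k}=I_k/I_{k+1}$ as a vector space, and its induced multiplication, obtained by projecting to the $p_k$-component, agrees with that of $S_{p_k}(R)$. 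More precisely, $S_{p_k}(R)\cong I_k/I_{k+1}$ as algebras, where the latter is the subquotient algebra of $I_k$ by the ideal $I_{k+1}$, and this is also a subquotient of $R$ viewed as an $R$-bimodule. The comparison of the two multiplications is a direct check from the grading condition, which I will write out.

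\emph{($\Rightarrow$)} Suppose $R$ is semisimple. Then every two-sided ideal $I$ of $R$ is of the form $eR$ for a central idempotent $e$, and so is a unital semisimple algebra. Moreover $I_k=I_{k+1}\oplus J$ as algebras, with $J=(1-e_{k+1})I_k$ a complementary ideal. Hence $S_{p_k}(R)\cong I_k/I_{k+1}\cong J$ is semisimple.

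\emph{($\Leftarrow$)} Suppose every stratum is semisimple. I will prove by descending induction on $k$ that $I_k$ is a semisimple algebra. Note that a semisimple algebra is unital, though the non-unital ideals of $R$ need care. Assume $I_{k+1}$ is semisimple with unit $e$. Since $I_{k+1}$ is an ideal of $I_k$, $e$ is an idempotent of $I_k$. For $x\in I_k$ we have $xe,ex\in I_{k+1}$, and $(ex)y=e(xy)=xy$ for $y\in I_{k+1}$. So $ex$ acts on $I_{k+1}$ like $x$, and likewise for $xe$. Using that $I_{k+1}$ is unital, one checks that $ex=xe$, so $e$ is central in $I_k$. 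Then $I_k=eI_k\oplus(1-e)I_k$, where $(1-e)I_k:=\{x-ex\}$, with $eI_k=I_{k+1}$ and $(1-e)I_k\cong I_k/I_{k+1}\cong S_{p_k}(R)$ semisimple. Hence $I_k$ is semisimple, and at $k=1$ we get $R$. Alternatively, a nilpotent ideal $N$ of $R$ has nilpotent image in each $I_k/I_{k+1}$, hence zero image, so $N\subseteq\bigcap_k I_k=0$ (more precisely, $N\cap I_k\subseteq I_{k+1}$ inductively). This gives $\operatorname{Jac}(R)=0$, but I prefer the idempotent argument because it also yields the splitting.

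The decomposition $R\cong\bigoplus_k S_{p_k}(R)$ follows by iterating the central idempotent splitting. It is noncanonical because it depends on the idempotents, and in general on the chosen linear extension.

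The main obstacle is the centrality of $e$ in the non-unital ideal $I_k$. Here $ex=xe$ requires comparing $ex$ and $xe$ inside $I_{k+1}$. They satisfy $(ex)e=e(xe)$ and both equal their own multiplication by $e$ on the appropriate side, but $ex$ need not equal $exe$ a priori. I would resolve this in $R$ itself, which is unital: $I_{k+1}$ is a two-sided ideal of $R$ and is semisimple, so it is a direct factor $I_{k+1}=Rf$ for a central idempotent $f$ of $R$. For instance, $\operatorname{Jac}$-type arguments show that $I_{k+1}\cdot R\cdot I_{k+1}$ is nondegenerate, and $f$ is the unit of $I_{k+1}$, central because $fr-rf\in I_{k+1}$ is annihilated by $I_{k+1}$ on both sides. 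Then $R=Rf\oplus R(1-f)$, with $I_k=I_{k+1}\oplus(1-f)I_k$, and the rest proceeds as above.
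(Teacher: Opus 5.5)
Your proof is correct. The setup (linear extension, the ideals $I_k$, and $I_k/I_{k+1}\cong S_{p_k}(R)$) is the same as the paper's, and so is the forward direction together with the splitting of the chain by complementary ideals.

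The difference is in ($\Leftarrow$). The paper uses exactly the argument you relegate to an aside: if $\operatorname{Jac}(R)\subseteq I_i$, its image in $I_i/I_{i+1}$ is a nilpotent ideal of a semisimple algebra, hence zero, so $\operatorname{Jac}(R)\subseteq I_{i+1}$, and induction gives $\operatorname{Jac}(R)=0$. The decomposition then comes from the forward direction. Your main route instead builds $I_k\cong I_{k+1}\times S_{p_k}(R)$ inductively from the top of the chain. This proves semisimplicity and the decomposition in one pass, but it forces you to handle units of non-unital ideals, which the radical argument avoids entirely.

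The obstacle you flag is not real. Since $ex,xe\in I_{k+1}$ and $e$ is a two-sided unit of $I_{k+1}$, you get $ex=(ex)e=e(xe)=xe$ at once. So the detour through $R$, and the vague remark about $I_{k+1}RI_{k+1}$, are unnecessary, although your annihilator argument in $R$ is also valid.

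Your stated reasons for non-canonicity are also off, though this does not affect the proof. The unit of a semisimple ideal is unique, so no choice of idempotents is involved. Moreover, $I_k=F_{\geq p_k}+I_{k+1}$ and $F_{\geq p_k}\cap I_{k+1}=F_{>p_k}$. Hence the summand split off at step $k$ is the complementary ideal of $F_{>p_k}$ in $F_{\geq p_k}$, which does not depend on the linear extension. What is non-canonical is only that these summands differ from the graded pieces $R_p$.
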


\begin{proof}
Choose a linear extension $p_1,\ldots,p_r$ of the original partial order  such that $p_i<p_j$ implies
$i<j$, and put
\[
I_i:=\bigoplus_{j\geq i}R_{p_j},\qquad I_{r+1}=0.
\]
Each $I_i$ is a two-sided ideal and $I_i/I_{i+1}\cong S_{p_i}(R)$ as algebras. 

Suppose first that every $S_p(R)$ is semisimple, and let
$J=\operatorname{Jac}(R)$. Once $J\subseteq I_i$, its image in
$I_i/I_{i+1}$ is a nilpotent ideal of a semisimple algebra and is therefore
zero. Starting with $I_1=R$, induction gives $J\subseteq I_{r+1}=0$.

Conversely, if $R$ is semisimple, every $I_i$ and every quotient
$I_i/I_{i+1}$ are semisimple. Every ideal in a finite-dimensional semisimple
algebra has a complementary ideal, so successively splitting the chain
$R=I_1\supset I_2\supset\cdots\supset I_{r+1}=0$ gives the asserted
noncanonical direct sum decomposition.
\end{proof}

For the localization grading \eqref{eqn:Kmod grading}, define the
\emph{$\C$-stratum ring of $\B$} by
\begin{equation}
\label{eqn: stratum algebra}
S_\C(\B):=S_\C\bigl(K^\mmod(\B)\bigr).
\end{equation}
The stratum rings need not be unital. We write
\[
S_{\mathcal C}(\mathcal B)_{\mathbb C}
:=S_{\mathcal C}(\mathcal B)\otimes_{\mathbb Z}\mathbb C.
\]
By Proposition~\ref{prop: semisimplicity poset grading}, the complexified module Grothendieck ring is semisimple if and only if every complexified stratum algebra \(S_{\mathcal C}(\mathcal B)_{\mathbb C}\) is semisimple.

\subsection{The structure of stratum rings}
\label{subsect: The structure of stratum algebras}

Fix $\C\in\mathfrak{Loc}(\B)$. We describe multiplication in the stratum
ring $S_\C(\B)$.

Choose representatives $A_i$, $i\in I$, of the isomorphism classes of
connected \'etale algebras in the fiber over $\C$, together with braided
equivalences
\begin{equation}
\label{eqn: gauges}
\eta_i:\B_{A_i}^{\loc}\xrightarrow{\sim}\C.
\end{equation}
Set
\begin{equation}
\label{eqn: stratum groups}
G:=\Aut^{\mathrm{br}}(\C),
\qquad
H_i:=\eta_i\Gamma_{A_i}\eta_i^{-1}\leq G.
\end{equation}
For $g\in G$, write $x_{ij}(g)$ for the residual class represented by
\[
(A_i,A_j,\eta_j^{-1}g\eta_i).
\]
By Corollary~\ref{cor: residual double orbits}, this class depends only on the
double coset $H_jgH_i$. Put
\begin{equation}
\label{eqn: Sij}
S_{ij}:=\mathbb Z\langle x_{ij}(g)\mid g\in G\rangle
\cong\mathbb Z[H_j\backslash G/H_i],
\qquad
S_\C(\B)=\bigoplus_{i,j\in I}S_{ij}.
\end{equation}
We call residual data with distinct endpoints $A_i\ne A_j$ \emph{heterotic}; thus $S_{ij}$, $i\ne j$, are the heterotic endpoint blocks. We extend $g\mapsto x_{ij}(g)$ linearly to a map
\begin{equation}
\label{eqn: xij}
x_{ij}:\mathbb Z[G]\longrightarrow S_{ij}.
\end{equation}

Let two module categories in the \(\C\)-stratum have residual data with endpoints \((A_i,A_k)\) and \((A_\ell,A_j)\). The next proposition shows that every summand of their product retained in this stratum has endpoints \((A_i,A_j)\).

\begin{proposition}
\label{prop: stratum endpoint blocks}
For all $i,j,k,\ell\in I$, multiplication in the stratum ring satisfies
\[
S_{ik}S_{\ell j}\subseteq S_{ij}.
\]
In particular, every $S_{ij}$ is a (possibly nonunital) subring of
$S_\C(\B)$.
\end{proposition}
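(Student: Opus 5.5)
Take basis elements $x\in S_{ik}$ and $y\in S_{\ell j}$, represented by residual data $(A_i,A_k,\phi)$ and $(A_\ell,A_j,\phi')$. Write their product in $K^\mmod(\B)$ as in \eqref{eqn: product of data}, $\sum_r(B_r,C_r,\phi_r)$. The product in $S_\C(\B)$ is the image of this sum modulo $F_{>\C}$. By Proposition~\ref{prop: etale monotonicity} and the grading \eqref{eqn:Kmod grading}, every summand lies in a stratum $\D\ge\C$. Hence only the summands with $\B_{B_r}^\loc\simeq\B_{C_r}^\loc\simeq\C$ survive. We must show that each surviving summand has $B_r\cong A_i$ and $C_r\cong A_j$.

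Proposition~\ref{prop: etale monotonicity} gives algebra inclusions $A_i\subseteq B_r$ and $A_j\subseteq C_r$. Both are connected étale. So $B_r$ is a connected étale algebra in $\B_{A_i}^\loc$, using the standard correspondence of \cite{DMNO}: étale algebras over $A_i$ are connected étale algebras in the localization, and $\B_{B_r}^\loc\simeq(\B_{A_i}^\loc)_{B_r}^\loc$. Then \eqref{eqn:local-module-dimension} gives
\[
\FPdim(\B_{B_r}^\loc)=\frac{\FPdim(\B)}{\FPdim(B_r)^2}\le\frac{\FPdim(\B)}{\FPdim(A_i)^2}=\FPdim(\B_{A_i}^\loc),
\]
with equality if and only if $\FPdim(B_r)=\FPdim(A_i)$.

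If the summand survives, then $\B_{B_r}^\loc\simeq\C\simeq\B_{A_i}^\loc$, so the dimensions are equal. Therefore the inclusion $A_i\hookrightarrow B_r$ is a monomorphism between objects of equal Frobenius--Perron dimension, hence an isomorphism, and it is an algebra map. The same argument gives $C_r\cong A_j$. In the chosen fiber representatives, the surviving summand is therefore of the form $(A_i,A_j,\psi)$. It lies in $S_{ij}$ once we write $\psi=\eta_j^{-1}g\eta_i$ with $g\in G$, which is possible because $\psi$ is a braided equivalence between localizations both identified with $\C$.

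The subring statement is the case $k=\ell=i$, $j$ replaced appropriately: $S_{ij}S_{ij}\subseteq S_{ij}$.

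The step I expect to be the main obstacle is the reduction to a dimension argument. One must make sure that the relation $\B_{B_r}^\loc\simeq\C$ is read at the level of braided equivalence classes, so that the Frobenius--Perron dimension comparison applies. One must also check that the inclusion from Proposition~\ref{prop: etale monotonicity} is compatible with the chosen representatives $A_i$, meaning an isomorphism $B_r\cong A_i$ of algebras. Equality of Frobenius--Perron dimensions plus injectivity gives this directly, so the plan should go through.
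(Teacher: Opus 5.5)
Your proposal is correct and follows the paper's proof essentially verbatim: Proposition~\ref{prop: etale monotonicity} gives embeddings $A_i\hookrightarrow B_r$ and $A_j\hookrightarrow C_r$. Equation~\eqref{eqn:local-module-dimension} then forces equal Frobenius--Perron dimensions, so both embeddings are isomorphisms (the detour through algebras in $\B_{A_i}^{\loc}$ is not needed). The only slip is in your last line: the subring statement $S_{ij}S_{ij}\subseteq S_{ij}$ is the specialization $k=j$, $\ell=i$, not $k=\ell=i$.
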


\begin{proof}
Consider a summand of a product whose factors have endpoints
$(A_i,A_k)$ and $(A_\ell,A_j)$.
Denote its  endpoints by $B,D$. Proposition~\ref{prop: etale monotonicity}
gives algebra embeddings $A_i\hookrightarrow B$ and $A_j\hookrightarrow D$.
If that summand remains in the $\C$-stratum,
$\B_B^{\loc}\simeq\B_{A_i}^{\loc}$ and
$\B_D^{\loc}\simeq\B_{A_j}^{\loc}$. Equation~\eqref{eqn:local-module-dimension} forces
$\FPdim(B)=\FPdim(A_i)$ and  $\FPdim(D)=\FPdim(A_j)$. Hence both embeddings above are isomorphisms, so the retained
summand has endpoints $(A_i,A_j)$ and belongs to $S_{ij}$. Taking $k=j$ and
$\ell=i$ proves the last assertion.
\end{proof}

The remaining problem is to determine which summands of
$\B_{A_k}\bt_\B\B_{A_\ell}$ survive in the $\C$-stratum and which residual equivalence each
contributes.

Define the \emph{transition category} from $k$ to $\ell$ ($k,\ell\in I$) by
\begin{equation}
\label{eqn: middle transition category}
\T_{k\ell}:=\B_{A_k}\bt_{\B}\B_{A_\ell}
\simeq\bigoplus_{\mathcal P\subset\T_{k\ell}}\, \mathcal{P},
\end{equation}
where $\B_{A_k}$  is regarded as a right $\B$-module
category using the convention of
\eqref{eqn:right module convention}, and the last
sum runs over indecomposable $\B$-module summands, counted with multiplicity.
Proposition~\ref{prop:convolution etale} gives a decomposition
of the corresponding convolution of Lagrangian algebras:
\[
L_{\B_{A_k}}\odot L_{\B_{A_\ell}}
\cong\bigoplus_{\mathcal P\subset\T_{k\ell}}\,L_{\mathcal P}.
\]

Fix a summand  $\mathcal P\subset\T_{k\ell}$ in \eqref{eqn: middle transition category} and let
$(B,D,\rho)$ be the residual data of $L_{\mathcal P}$.
Proposition~\ref{prop: etale monotonicity} gives algebra embeddings
$A_k\hookrightarrow B$ and $A_\ell\hookrightarrow D$.
For $s\in I$, put $\C_s:=\B_{A_s}^{\loc}$.
By \cite[Proposition~3.16]{DMNO}, the algebra extensions $A_k\subset B$ and $A_\ell\subset D$ correspond to
connected \'etale algebras $\overline B\in\C_k$ and
$\overline D\in\C_\ell$. Transitivity of  local modules gives a
braided equivalence
\[
\overline\rho:
(\C_k)_{\overline B}^{\loc}
\xrightarrow{\sim}
(\C_\ell)_{\overline D}^{\loc}.
\]
Set
\begin{equation}
\label{eqn: localized full center}
\widetilde L_{\mathcal P}
:=\operatorname{Loc}_{A_k\bt A_\ell}(L_{\mathcal P})
\in\C_k\bt\C_\ell^{\rev}.
\end{equation}
Here $\operatorname{Loc}$ denotes localization by the displayed endpoint
algebra. Put
\[
\overline L_{\mathcal P}:=
(\eta_k\bt\eta_\ell^{\rev})(\widetilde L_{\mathcal P})
\in\C\bt\C^{\rev}.
\]

\begin{proposition}
\label{prop: local transition components}
The algebra $\overline L_{\mathcal P}$ is Lagrangian in
$\C\bt\C^{\rev}\simeq\Z(\C)$, with the residual data
\[
\bigl(
\eta_k(\overline B),
\eta_\ell(\overline D),
\eta_\ell\overline\rho\,\eta_k^{-1}
\bigr).
\]
\end{proposition}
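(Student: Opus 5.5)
The proof reduces to the structure of Lagrangian algebras in a product and the transitivity of local modules. We first treat $\widetilde L_{\mathcal P}$ in $\C_k\bt\C_\ell^{\rev}$ and then transport along the gauges $\eta_k\bt\eta_\ell^{\rev}$. Two facts are needed: $\widetilde L_{\mathcal P}$ is Lagrangian, and its endpoints and residual equivalence are $(\overline B,\overline D,\overline\rho)$. The second statement then follows because a braided equivalence $\eta_k\bt\eta_\ell^{\rev}$ carries residual data $(X,Y,\psi)$ to $(\eta_k(X),\eta_\ell(Y),\eta_\ell\psi\eta_k^{-1})$. This transport is formal: intersections $L\cap(\B\bt\be)$, localizations and graph algebras are all preserved by braided equivalences.

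For the first fact, recall that $A_k\bt A_\ell\subseteq B\bt D\subseteq L_{\mathcal P}$ by Proposition~\ref{prop: etale monotonicity}. Hence $L_{\mathcal P}$ is a connected étale algebra over $A_k\bt A_\ell$, and in fact a local one: it is a commutative algebra containing $A_k\bt A_\ell$ as a subalgebra, and such algebras are local modules by commutativity. By \cite[Proposition~3.16]{DMNO}, $\widetilde L_{\mathcal P}$ is therefore a connected étale algebra in $(\B\bt\B^\rev)^{\loc}_{A_k\bt A_\ell}\simeq\C_k\bt\C_\ell^\rev$. Its dimension is $\FPdim(L_{\mathcal P})/\FPdim(A_k\bt A_\ell)$. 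Combining this with $\FPdim(L_{\mathcal P})^2=\FPdim(\B)^2$ and \eqref{eqn:local-module-dimension} gives $\FPdim(\widetilde L_{\mathcal P})^2=\FPdim(\C_k)\FPdim(\C_\ell)$. So $\widetilde L_{\mathcal P}$ is Lagrangian, and $\C_k\bt\C_\ell^{\rev}$ identifies with $\Z(\C)$ after applying the gauges, since $\C_k\simeq\C_\ell\simeq\C$.

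Next we identify the endpoints. The relevant correspondence is the one in \cite[Proposition~3.16]{DMNO} between étale algebras over $A_k\bt A_\ell$ and étale algebras in the localization. We claim that $\widetilde L_{\mathcal P}\cap(\C_k\bt\be)$ is the localization of $L_{\mathcal P}\cap(\B\bt A_\ell)$, and we argue that this intersection equals $B\bt A_\ell$. One inclusion is clear. For the other, note that a subalgebra of $L_{\mathcal P}$ lying in $\B\bt A_\ell$ meets $\be\bt\B^\rev$ inside $\be\bt A_\ell$. Since $L_{\mathcal P}\cap(\B\bt\be)=B$, the maximal étale subalgebra over $A_\ell$ in the first factor is $B\bt A_\ell$. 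To make this rigorous, I would compare Frobenius–Perron dimensions using the explicit form of $L_{\mathcal P}$ as a sum of $X^*\bt\rho(X)$ over simple local modules after localizing at $B\bt D$. The localization of $B\bt A_\ell$ is $\overline B\bt\be$, and likewise on the other side, which gives endpoints $\overline B,\overline D$.

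The residual equivalence comes from localizing further at $\overline B\bt\overline D$. By transitivity, $(\C_k\bt\C_\ell^\rev)^{\loc}_{\overline B\bt\overline D}\simeq\B_B^\loc\bt(\B_D^\loc)^\rev$, and the image of $\widetilde L_{\mathcal P}$ there is the image of $L_{\mathcal P}$, namely the graph algebra $\Delta_\rho$. Transported through the transitivity equivalences, this is $\Delta_{\overline\rho}$ by the definition of $\overline\rho$. I expect the main obstacle to be the intersection identity $\widetilde L\cap(\C_k\bt\be)=\operatorname{Loc}(B\bt A_\ell)$, which requires care with maximal étale subalgebras under localization. I would settle it by the dimension count above together with the fact that $\overline B$ is connected.
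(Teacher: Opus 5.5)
Your overall plan matches the paper's. You show $\widetilde L_{\mathcal P}$ is Lagrangian, identify its endpoints with $\overline B,\overline D$, and obtain the residual equivalence by localizing further at $\overline B\bt\overline D$, so that transitivity turns $\Delta_\rho$ into $\Delta_{\overline\rho}$. You then transport along $\eta_k\bt\eta_\ell^{\rev}$. The only difference is in the Lagrangian step. You use the count $\FPdim(\widetilde L_{\mathcal P})^2=\FPdim(\C_k)\FPdim(\C_\ell)$. The paper instead notes that $\bigl((\B\bt\B^{\rev})^{\loc}_{A_k\bt A_\ell}\bigr)^{\loc}_{\widetilde L_{\mathcal P}}\simeq(\B\bt\B^{\rev})^{\loc}_{L_{\mathcal P}}\simeq\Vect$. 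Both arguments are fine.

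The step you flag as the main obstacle is easy, but the remedy you propose would not close it. Frobenius--Perron counts cannot detect whether the left endpoint $E$ of $\widetilde L_{\mathcal P}$ is strictly larger than $\overline B$. Any Lagrangian algebra in $\C_k\bt\C_\ell^{\rev}$ has dimension $\FPdim(\C_k)$. For residual data $(E,F,\psi)$ the local dimension formula only gives $\FPdim(E)=\FPdim(F)$, and connectedness of $\overline B$ adds nothing. Also, ``$L_{\mathcal P}\cap(\B\bt A_\ell)$'' is not defined as written, since $\B\bt A_\ell$ is not a subcategory.

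What works is the maximality argument you half-state. Suppose $E\bt\be\subseteq\widetilde L_{\mathcal P}$ with $\overline B\subseteq E$, and let $B_E\supseteq A_k$ correspond to $E$ under \cite[Proposition~3.16]{DMNO}. Then $B\subseteq B_E$, and the inclusion $E\bt\be\subseteq\widetilde L_{\mathcal P}$ lifts to an algebra embedding $B_E\bt A_\ell\hookrightarrow L_{\mathcal P}$. Restricting along the unit $\be\to A_\ell$ gives $B_E\bt\be\subseteq L_{\mathcal P}$. Hence $B_E\subseteq L_{\mathcal P}\cap(\B\bt\be)=B$, so $B_E=B$ and $E=\overline B$; the right endpoint is symmetric. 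This is exactly the paper's argument, and with it your proof is complete.
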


\begin{proof}
Since $L_{\mathcal P}$ is Lagrangian,
\[
(\B\bt\B^{\rev})_{L_{\mathcal P}}^{\loc}\simeq\Vect.
\]
Transitivity of localization gives
\[
\Bigl((\B\bt\B^{\rev})_{A_k\bt A_\ell}^{\loc}\Bigr)_{
\widetilde L_{\mathcal P}}^{\loc}
\simeq
(\B\bt\B^{\rev})_{L_{\mathcal P}}^{\loc}
\simeq\Vect.
\]
As $(\B\bt\B^{\rev})_{A_k\bt A_\ell}^{\loc}
\simeq\C_k\bt\C_\ell^{\rev}$,
the algebra $\widetilde L_{\mathcal P}$ is Lagrangian. The braided
equivalence $\eta_k\bt\eta_\ell^{\rev}$ then shows that
$\overline L_{\mathcal P}$ is Lagrangian as well.

The inclusion $B\bt D\hookrightarrow L_{\mathcal P}$ localizes to
\[
\overline B\bt\overline D\hookrightarrow\widetilde L_{\mathcal P}.
\]
Thus the left and right residual endpoints of $\widetilde L_{\mathcal P}$
contain $\overline B$ and $\overline D$, respectively. We show that there is
no further enlargement. Suppose $E$ is the left residual endpoint, so that
$\overline B\subseteq E$ and
$E\bt\be\subseteq\widetilde L_{\mathcal P}$. Let $B_E\supset A_k$ be the connected
\'etale algebra corresponding to $E$ under
\cite[Proposition~3.16]{DMNO}. The inclusion $\overline B\subseteq E$ lifts
to $B\subseteq B_E$, while $E\bt\be\subseteq\widetilde L_{\mathcal P}$
lifts to an algebra map
\[
B_E\bt A_\ell\longrightarrow L_{\mathcal P}.
\]
By maximality of the original
left residual endpoint $B$, this implies  $B_E=B$
and $E=\overline B$. The same argument on the right proves that the right
residual endpoint is $\overline D$.

Localizing $\widetilde L_{\mathcal P}$ further by
$\overline B\bt\overline D$ is, by transitivity, the same as localizing
$L_{\mathcal P}$ directly by $B\bt D$. The latter is the graph algebra of
\[
\rho:\B_B^{\loc}\xrightarrow{\sim}\B_D^{\loc}.
\]
Under the transitivity identifications
\[
\B_B^{\loc}\simeq(\C_k)_{\overline B}^{\loc},
\qquad
\B_D^{\loc}\simeq(\C_\ell)_{\overline D}^{\loc},
\]
this is the graph algebra of $\overline\rho$. Thus the residual data of
$\widetilde L_{\mathcal P}$ are
$(\overline B,\overline D,\overline\rho)$, and applying the gauges gives the
residual data stated for $\overline L_{\mathcal P}$. 
\end{proof}

\begin{definition}
\label{def: local transition component}
Let $\mathcal X_{\mathcal P}$ be the indecomposable $\C$-module category,
unique up to equivalence, whose full center is $\overline L_{\mathcal P}$.
We call $\mathcal X_{\mathcal P}$ the \emph{local transition component}
of $\mathcal P$.
\end{definition}

The following criterion concerns invertibility of $\mathcal X_{\mathcal P}$ as a one-sided $\C$-module category.

\begin{proposition}
\label{prop: local transition criterion}
For a summand $\mathcal P\subset\T_{k\ell}$, let $(B,D,\rho)$ be the
residual data of $L_{\mathcal P}$. The following conditions are equivalent:
\begin{enumerate}
\item $\mathcal P$ contributes to the $\C$-stratum;
\item $\overline B=\be=\overline D$;
\item $\mathcal X_{\mathcal P}$ is invertible as a $\C$-module category.
\end{enumerate}
Under these conditions, put
\[
\psi_{\mathcal P}:=\overline\rho:
\C_k\xrightarrow{\sim}\C_\ell.
\]
Its braided tensor isomorphism class is canonically determined by
$\mathcal P$, and
\begin{equation}
\label{eqn: graph algebra}
\widetilde L_{\mathcal P}\cong\Delta_{\psi_{\mathcal P}},
\qquad
\overline L_{\mathcal P}\cong\Delta_{\phi_{\mathcal P}},
\quad
\phi_{\mathcal P}:=\eta_\ell\psi_{\mathcal P}\eta_k^{-1}\in\Aut^{\mathrm{br}}(\C).
\end{equation}
\end{proposition}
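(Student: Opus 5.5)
We prove (1)$\Leftrightarrow$(2) by a dimension count and (2)$\Leftrightarrow$(3) from the residual-data description of $\mathcal X_{\mathcal P}$ in Proposition~\ref{prop: local transition components}. The graph-algebra identifications then follow from the parameterization of Lagrangian algebras by residual data.

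For (1)$\Leftrightarrow$(2), recall what "contributes to the $\C$-stratum" means. A product $x_{ik}(g)x_{\ell j}(h)$ is computed through $\T_{k\ell}$, and a summand coming from $\mathcal P$ survives exactly when its residual data has localization $\C$. By Proposition~\ref{prop: etale monotonicity} the endpoints $B\supseteq A_k$ and $D\supseteq A_\ell$ of $\mathcal P$ govern this. The summand stays in the stratum iff $\B_B^{\loc}\simeq\C$, i.e.\ iff $\FPdim(B)=\FPdim(A_k)$ by \eqref{eqn:local-module-dimension}, i.e.\ iff $B=A_k$; the same holds on the right.

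Under \cite[Proposition~3.16]{DMNO}, the extension $A_k\subset B$ corresponds to $\overline B\in\C_k$. We have $\FPdim(\overline B)=\FPdim(B)/\FPdim(A_k)$, so $B=A_k$ is equivalent to $\overline B=\be$. Likewise $D=A_\ell$ is equivalent to $\overline D=\be$. The main care point is that the retained summand of a product of two factors in the stratum is obtained from $\mathcal P$ by composing with invertible outer data. So its endpoints are $(B,D)$ transported along the outer factors, and the criterion depends only on $\mathcal P$. I would justify this via associativity of $\bt_\B$: $\B_{A_i}$-type outer factors in the stratum contribute equivalences after localization. Alternatively, I would simply take (2) as the operational meaning of (1), as in Proposition~\ref{prop: stratum endpoint blocks}.

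For (2)$\Leftrightarrow$(3), Proposition~\ref{prop: local transition components} says $\overline L_{\mathcal P}$ has residual data $(\eta_k(\overline B),\eta_\ell(\overline D),\eta_\ell\overline\rho\,\eta_k^{-1})$. A $\C$-module category is invertible iff its full center has trivial endpoints, i.e.\ lies in the least localization stratum $\C$ itself; this is the content of Corollary~\ref{cor: max ideal}, since invertible module categories correspond to $\Aut^{\mathrm{br}}(\C)$ via graph algebras $\Delta_\phi$. Hence (3) holds iff both endpoints are $\be$, which is (2), because the $\eta$'s are equivalences.

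Under these conditions, $\overline\rho$ is a braided equivalence $(\C_k)_\be^{\loc}=\C_k\to\C_\ell$. The residual data of $\widetilde L_{\mathcal P}$ is $(\be,\be,\psi_{\mathcal P})$, so $\widetilde L_{\mathcal P}\cong\Delta_{\psi_{\mathcal P}}$ by the description of the Lagrangian algebra with given residual data. Transporting along $\eta_k\bt\eta_\ell^{\rev}$ gives $\overline L_{\mathcal P}\cong\Delta_{\phi_{\mathcal P}}$.

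For canonicity, use the groupoid equivalence in Subsection~\ref{subsect: A parameterization of Lagrangian algebras by residual data}. With endpoints $\be$, the only automorphisms are trivial algebra automorphisms, so the isomorphism class of $\psi_{\mathcal P}$ is determined by the isomorphism class of $\widetilde L_{\mathcal P}$. That class in turn depends only on $\mathcal P$, because localization is functorial.

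I expect the main obstacle to be the precise meaning of (1), namely showing that survival in the stratum depends only on $\mathcal P$ and not on the outer factors.
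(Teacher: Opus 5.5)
Your proposal is correct and follows essentially the same route as the paper. The paper proves (i)$\Leftrightarrow$(ii) by a Frobenius--Perron dimension count through the correspondence of \cite[Proposition~3.16]{DMNO}. It proves (ii)$\Leftrightarrow$(iii) from the fact that an indecomposable $\C$-module category is invertible exactly when its full center has trivial residual endpoints, and obtains $\widetilde L_{\mathcal P}\cong\Delta_{\psi_{\mathcal P}}$ from the residual-data description. Your groupoid argument for canonicity is a useful addition that the paper leaves implicit.

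The paper simply reads (i) as the statement $\B_B^{\loc}\simeq\C$ about $\mathcal P$ itself, so your worry about outer factors does not arise here; that transport is handled later by Lemma~\ref{lem:outer transport}. Also, the invertibility criterion is the standard identification of invertible module categories with graph algebras. It is not the content of Corollary~\ref{cor: max ideal}, which presupposes it.
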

\begin{proof}
For $(i)\Rightarrow(ii)$, suppose that $\mathcal P$ contributes to the
$\C$-stratum. Transitivity of localization gives
\[
\B_B^{\loc}\simeq(\C_k)_{\overline B}^{\loc}\simeq\C.
\]
Since $\C_k\simeq\C$, \eqref{eqn:local-module-dimension}
gives $\FPdim(\overline B)=1$ and so $\overline B=\be$.
The same argument on the right gives
$\overline D=\be$.

For $(ii)\Rightarrow(i)$, if $\overline B=\be=\overline D$, the corresponding localizations are identified with $\C_k$ and $\C_\ell$, respectively,
and therefore with $\C$. Thus $\mathcal P$ remains in the $\C$-stratum.

For $(ii)\Rightarrow(iii)$, the full center $\overline L_{\mathcal P}$ has
trivial residual endpoints $\eta_k(\overline B)$ and
$\eta_\ell(\overline D)$;  therefore $\mathcal X_{\mathcal P}$ is invertible. For
$(iii)\Rightarrow(ii)$,  the residual
endpoints of the full center of an invertible $\C$-module category are
trivial. Hence $\eta_k(\overline B)=\be=\eta_\ell(\overline D)$, and so
$\overline B=\be=\overline D$. 

Under these equivalent conditions, put
$\psi_{\mathcal P}:=\overline\rho$. The residual-data description gives
$\widetilde L_{\mathcal P}\cong\Delta_{\psi_{\mathcal P}}$, and applying
the gauges gives
$\overline L_{\mathcal P}\cong\Delta_{\phi_{\mathcal P}}$.
\end{proof}

With respect to the chosen equivalences \eqref{eqn: gauges}, we call
$\phi_{\mathcal P}\in G$ from \eqref{eqn: graph algebra} the \emph{Picard label} of the invertible
local transition component $\mathcal X_{\mathcal P}$. Using these labels, define
\begin{equation}
\label{eqn: transition element}
\Theta_{k\ell}:=
\sum_{\substack{\mathcal P\subset\T_{k\ell}\\
\mathcal P\text{ contributes to the }\C\text{-stratum}}}
\phi_{\mathcal P}
\in\mathbb Z_{\geq0}[G],
\end{equation}
where the sum is over summands, counted with multiplicity. 

\begin{definition}
\label{def: transition elements and matrix}
The elements $\Theta_{k\ell}$, $k,\ell\in I$, from \eqref{eqn: transition element} will be called {\em transition elements}.
The family
$(\Theta_{k\ell})_{k,\ell\in I} \in \operatorname{Mat}_{|I|}(\mathbb Z_{\geq0}[G])$ will be called the {\em transition matrix} of the $\C$-stratum.
\end{definition}

The transition elements \eqref{eqn: transition element} are bi-invariant in the following sense. Choose
$\alpha_s\in\Aut_{\alg}(A_s)$ representing $a_s\in H_s$ for $s=k,\ell$.
Theorem~\ref{thm: automorphisms etale algebra} identifies $\alpha_s$ with a
homogeneous invertible component of the commutator subcategory
$\E_{A_s}\subseteq\B_{A_s}$, and its action on the neutral component
$\B_{A_s}^{\loc}$ with $\ind_{A_s}(\alpha_s)$. The target and source actions by these components
permute the surviving summands and preserve their multiplicities.
If $\mathcal P$ is sent to $\mathcal P'$, then
\[
\psi_{\mathcal P'}=
\ind_{A_\ell}(\alpha_\ell)\psi_{\mathcal P}
\ind_{A_k}(\alpha_k)^{-1},
\]
so $\phi_{\mathcal P'}=a_\ell\phi_{\mathcal P}a_k^{-1}$. Summing over the
permuted summands gives
\begin{equation}
\label{eqn: transition bi-invariance}
a_\ell\Theta_{k\ell}a_k^{-1}=\Theta_{k\ell}
\qquad(a_\ell\in H_\ell,\ a_k\in H_k).
\end{equation}

In addition, transition elements satisfy the following gauge covariance. Replacing
$\eta_s$ from \eqref{eqn: gauges} by $t_s\eta_s$, with $t_s\in G$, gives
\begin{equation}
\label{eqn: transition gauge change}
H_s\longmapsto t_sH_st_s^{-1},
\qquad
\Theta_{k\ell}\longmapsto t_\ell\Theta_{k\ell}t_k^{-1}.
\end{equation}
Thus the gauge class of $\bigl((H_i),(\Theta_{k\ell})\bigr)$ is intrinsic to
the localization fiber. Replacing the representatives $A_i$ by isomorphic
algebras gives the same gauge class, up to the evident simultaneous
reindexing.

For residual data $(A_r,A_s,\gamma)$, with
$\gamma:\C_r\xrightarrow{\sim}\C_s$, write
$L(A_r,A_s,\gamma)$ for the corresponding Lagrangian algebra.

\begin{lemma}
\label{lem:outer transport}
Let $i,j,k,\ell\in I$, and let $
\gamma:\C_i\xrightarrow{\sim}\C_k,
\,
\delta:\C_\ell\xrightarrow{\sim}\C_j
$
be braided tensor equivalences. There is an isomorphism of algebras in
$\C_i\bt\C_j^{\rev}$,
\begin{equation}
\label{eqn: outer transport}
\operatorname{Loc}_{A_i\bt A_j}
\bigl(
L(A_i,A_k,\gamma)\odot L(A_\ell,A_j,\delta)
\bigr)
\cong
(\gamma^{-1}\bt\delta^{\rev})
\operatorname{Loc}_{A_k\bt A_\ell}
\bigl(
L(A_k,A_k,\id)\odot L(A_\ell,A_\ell,\id)
\bigr).
\end{equation}
\end{lemma}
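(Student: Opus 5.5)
The plan is to reduce the statement to a factorization of convolution across the middle contraction. We first separate the outer transports from the middle ones, and then observe that localization at the outer endpoints commutes with contraction in the middle.

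\textbf{Step 1: factor each Lagrangian algebra.} The residual-data description of Subsection~\ref{subsect: A parameterization of Lagrangian algebras by residual data} gives
\[
\operatorname{Loc}_{A_i\bt A_k}L(A_i,A_k,\gamma)\cong\Delta_\gamma\cong(\gamma^{-1}\bt\id)\Delta_{\id_{\C_k}}.
\]
Here we use that the graph algebra of \(\gamma\) is the image of the diagonal algebra of \(\C_k\) under \(\gamma^{-1}\) in the first factor, which is a relabeling of \(\O(\C_i)\) via \(\gamma\). Lifting back along \cite[Proposition~3.16]{DMNO}, with the endpoint \(A_i\) unchanged and \(A_k\) on the right, we would like to describe \(L(A_i,A_k,\gamma)\) as the Lagrangian algebra in \(\B\bt\B^\rev\) whose local image is \(\Delta_\gamma\). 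The analogous statement holds for \(L(A_\ell,A_j,\delta)\cong\) the algebra whose local image is \((\id\bt\delta^\rev)\Delta_{\id_{\C_\ell}}\).

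\textbf{Step 2: localization commutes with convolution on the outer factors.} Using the coend formula \eqref{eqn:convolution} and the formula \(\operatorname{ctr}_{23}=R\circ T\) from the proof of Proposition~\ref{prop:etale convolution closure}, we show that \(\operatorname{Loc}_{A_i\bt A_j}(L\odot L')\) depends on \(L\) only through its \(\B\bt\C_k\)-type piece after the right factor is seen by \(\B\). More precisely, the free module functor \(\B\to\B_{A_i}\) on factor 1 and \(\B\to\B_{A_j}\) on factor 4 commute with \(\operatorname{ctr}_{23}\), which touches only factors 2 and 3. The localization functor is a lax monoidal left-exact projection, so it also commutes. Thus
\[
\operatorname{Loc}_{A_i\bt A_j}(L\odot L')\cong \operatorname{ctr}_{23}\bigl(\operatorname{Loc}_{A_i}^{(1)}L\bt\operatorname{Loc}_{A_j}^{(4)}L'\bigr)
\]
as algebras.

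\textbf{Step 3: move the outer equivalences out.} The algebra \(\operatorname{Loc}^{(1)}_{A_i}L(A_i,A_k,\gamma)\) is an algebra in \(\C_i\bt\B^\rev\). By the description of Lagrangian algebras over \(\B\bt\C\) with prescribed left endpoint, it equals \((\gamma^{-1}\bt\id)\) applied to \(\operatorname{Loc}^{(1)}_{A_k}L(A_k,A_k,\id)\). Indeed, both are the unique Lagrangian algebra lifting the diagonal of \(\C_k\) along \(A_k\) in the second factor. Symmetrically, \(\operatorname{Loc}^{(4)}_{A_j}L(A_\ell,A_j,\delta)\cong(\id\bt\delta^\rev)\operatorname{Loc}^{(4)}_{A_\ell}L(A_\ell,A_\ell,\id)\). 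Since \(\operatorname{ctr}_{23}\) is natural in the outer factors, \(\gamma^{-1}\bt\delta^\rev\) pulls out of the contraction. Reversing Step 2 then yields the right-hand side of \eqref{eqn: outer transport}.

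\textbf{Main obstacle.} The hard part is Step 3. It requires a genuine equivalence of groupoids between Lagrangian algebras in \(\B\bt\B^\rev\) with left endpoint containing \(A\) and the corresponding algebras in \(\C_A\bt\B^\rev\). It also requires checking that the identification respects multiplications, not just objects. My first attempt would be to apply \cite[Proposition~3.16]{DMNO} to the algebra extension \(A\bt\be\subset L\), which identifies \(L\) with an étale algebra in \((\B\bt\B^\rev)_{A\bt\be}^{\loc}\simeq\C_A\bt\B^\rev\). Transport along \(\gamma^{-1}\bt\id\) is then functorial, and the residual data comparison of Proposition~\ref{prop: local transition components} confirms the match.
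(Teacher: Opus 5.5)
Your proposal is correct and follows essentially the paper's route: identify the outer localization of the convolution with the middle contraction of the outer-localized factors, pull $\gamma^{-1}\bt\delta^{\rev}$ out of $\operatorname{ctr}_{23}$, and then reverse the identification with $i=k$, $j=\ell$. The step you flag as the main obstacle is easier than you expect. The paper writes the partial localization as $(\id_{\C_i}\bt U_k^{\rev})(\Delta_\gamma)$, with $U_k$ the lax monoidal underlying-object functor, so $\Delta_\gamma\cong(\gamma^{-1}\bt\id)\Delta_{\id_{\C_k}}$ together with naturality of $U_k$ in the other factor gives the algebra isomorphism directly. No groupoid equivalence or residual-data comparison is needed. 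One small correction to your Step 2: the operation that commutes with $\operatorname{ctr}_{23}$ is regarding each algebra as a module over its endpoint subalgebra, not applying the free module functor.
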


\begin{proof}
Let $U_s:\C_s\to\B$ be the underlying-object functor, with its canonical
lax monoidal structure given by the quotient maps
$
U_s(X)\ot U_s(Y)\longrightarrow U_s(X\ot_{A_s}Y)
$
and the unit $\be\to A_s$.

We use $\operatorname{ctr}_{23}$ for the middle contraction with any pair
of outer local categories. With outer categories $\C_i$ and
$\C_j^{\rev}$, it is given by
\[
X\bt V\bt W\bt Y
\longmapsto
\Hom_\B(\be,V\ot W)\otimes_k(X\bt Y),
\]
where $X\in\C_i$, $V,W\in\B$, and $Y\in\C_j$. Its lax monoidal
structure is given by the middle crossing \eqref{eqn:middle crossing}.

By the residual-data description,
\[
\operatorname{Loc}_{A_i\bt A_k}
\bigl(L(A_i,A_k,\gamma)\bigr)
\cong\Delta_\gamma,
\qquad
\operatorname{Loc}_{A_\ell\bt A_j}
\bigl(L(A_\ell,A_j,\delta)\bigr)
\cong\Delta_\delta.
\]
The left-hand side of \eqref{eqn: outer transport} is therefore the algebra
\begin{equation}
\label{eqn: ctr23 algebra}
\operatorname{ctr}_{23}\!\left(
(\id_{\C_i}\bt U_k^{\rev})(\Delta_\gamma)
\bt
(U_\ell\bt\id_{\C_j^{\rev}})(\Delta_\delta)
\right).
\end{equation}
Indeed, the local-module structures on the two outer factors are retained,
while the middle ones are forgotten via $U_k$ and $U_\ell$ before applying
the middle contraction. The quotient maps defining the tensor products in
the outer local categories commute with the middle crossing
\eqref{eqn:middle crossing}, so this identification respects multiplication
and units.

The braided tensor equivalences $\gamma^{-1}$ and $\delta$ give algebra
isomorphisms
\[
(\gamma^{-1}\bt\id_{\C_k^{\rev}})
(\Delta_{\id_{\C_k}})
\cong\Delta_\gamma,
\qquad
(\id_{\C_\ell}\bt\delta^{\rev})
(\Delta_{\id_{\C_\ell}})
\cong\Delta_\delta.
\]
Since these equivalences act only on the outer factors, they commute with
the middle contraction and its lax monoidal structure. Hence the 
algebra  \eqref{eqn: ctr23 algebra} is isomorphic to
\begin{equation}
\label{eqn: ctr23 algebra+}
(\gamma^{-1}\bt\delta^{\rev})
\operatorname{ctr}_{23}\!\left(
(\id_{\C_k}\bt U_k^{\rev})(\Delta_{\id_{\C_k}})
\bt
(U_\ell\bt\id_{\C_\ell^{\rev}})(\Delta_{\id_{\C_\ell}})
\right).
\end{equation}
Applying \eqref{eqn: ctr23 algebra} with $i=k$, $j=\ell$ and residual
equivalences $\id_{\C_k},\id_{\C_\ell}$, the contraction in
\eqref{eqn: ctr23 algebra+} is
\[
\operatorname{Loc}_{A_k\bt A_\ell}
\bigl(
L(A_k,A_k,\id)\odot L(A_\ell,A_\ell,\id)
\bigr).
\]
Thus the full expression \eqref{eqn: ctr23 algebra+} is the right-hand
side of \eqref{eqn: outer transport}.
\end{proof}

\begin{remark}
With the notation of Lemma~\ref{lem:outer transport}  and its proof, expanding the graph algebras
gives the underlying object of the left-hand side of
\eqref{eqn: outer transport}:
\begin{equation}
\label{eqn: mixed contraction coefficients}
\operatorname{Loc}_{A_i\bt A_j}
\bigl(
L(A_i,A_k,\gamma)\odot L(A_\ell,A_j,\delta)
\bigr)
\cong
\bigoplus_{\substack{Z\in\O(\C_k)\\Y\in\O(\C_\ell)}}
\Hom_\B\bigl(U_\ell(Y),U_k(Z)\bigr)
\otimes_k
\bigl(\gamma^{-1}(Z^*)\bt\delta(Y)\bigr).
\end{equation}
\end{remark}

In the following theorem, we use the $\mathbb Z$-linear extension
$x_{ij}:\mathbb Z[G]\to S_{ij}$ from \eqref{eqn: xij}.

\begin{theorem}
\label{thm: stratum multiplication}
For all $i,j,k,\ell\in I$ and $g,h\in G$,
\begin{equation}
\label{eqn: stratum multiplication}
x_{ik}(g)x_{\ell j}(h)=x_{ij}(h\Theta_{k\ell}g).
\end{equation}
The right-hand side depends only on $H_kgH_i$ and $H_jhH_\ell$, and hence
the formula is well defined.
\end{theorem}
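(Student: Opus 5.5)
We compute the product at the level of full centers and then localize. Let $\M$ and $\M'$ be the module categories with residual data $(A_i,A_k,\eta_k^{-1}g\eta_i)$ and $(A_\ell,A_j,\eta_j^{-1}h\eta_\ell)$. By Proposition~\ref{prop:convolution etale}, $L_{\M\bt_\B\M'}\cong L_\M\odot L_{\M'}$. By Proposition~\ref{prop: stratum endpoint blocks}, the summands retained in the $\C$-stratum are exactly those whose residual endpoints are $(A_i,A_j)$. Such a summand is determined by its residual equivalence $\C_i\to\C_j$, and its image under $\operatorname{Loc}_{A_i\bt A_j}$ is a graph algebra. Moreover, a summand with strictly larger endpoints localizes to a Lagrangian algebra with nontrivial residual endpoints, as in the proof of Proposition~\ref{prop: local transition components}. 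So the task is to decompose
\[
\operatorname{Loc}_{A_i\bt A_j}\bigl(L_\M\odot L_{\M'}\bigr)\in\C_i\bt\C_j^{\rev}
\]
into Lagrangian summands and read off the graph-algebra summands $\Delta_\psi$ for $\psi:\C_i\xrightarrow{\sim}\C_j$, with multiplicities. One point to verify here is that localization by $A_i\bt A_j$ commutes with the decomposition of $L_{\M\bt_\B\M'}$ into the full centers of its indecomposable summands. This holds because $\operatorname{Loc}$ is additive and the decomposition is a decomposition of algebras.

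Next, apply Lemma~\ref{lem:outer transport} with $\gamma=\eta_k^{-1}g\eta_i$ and $\delta=\eta_j^{-1}h\eta_\ell$. The localized convolution becomes $(\gamma^{-1}\bt\delta^{\rev})$ applied to $\operatorname{Loc}_{A_k\bt A_\ell}\bigl(L_{\B_{A_k}}\odot L_{\B_{A_\ell}}\bigr)$, using $L(A_s,A_s,\id)=L_{\B_{A_s}}$. By Proposition~\ref{prop:convolution etale} and \eqref{eqn: middle transition category}, this last algebra decomposes as $\bigoplus_{\mathcal P\subset\T_{k\ell}}\widetilde L_{\mathcal P}$. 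By Proposition~\ref{prop: local transition criterion}, the summands with $\widetilde L_{\mathcal P}\cong\Delta_{\psi_{\mathcal P}}$ are exactly the contributing ones. The remaining summands have nontrivial residual endpoints $\overline B$ or $\overline D$. A braided equivalence acting on the outer factors preserves this property, so these summands land in strictly higher strata and are discarded.

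For a contributing $\mathcal P$, we then compute
\[
(\gamma^{-1}\bt\delta^{\rev})(\Delta_{\psi_{\mathcal P}})\cong\Delta_{\delta\psi_{\mathcal P}\gamma},
\]
which is the transport identity for graph algebras already used in the proof of Lemma~\ref{lem:outer transport}. Substituting $\psi_{\mathcal P}=\eta_\ell^{-1}\phi_{\mathcal P}\eta_k$ gives
\[
\delta\psi_{\mathcal P}\gamma=\eta_j^{-1}\,h\,\phi_{\mathcal P}\,g\,\eta_i.
\]
This is precisely the residual equivalence defining $x_{ij}(h\phi_{\mathcal P}g)$. Summing over contributing $\mathcal P$ with multiplicity and using \eqref{eqn: transition element} yields $x_{ij}(h\Theta_{k\ell}g)$. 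Well-definedness then follows from three facts. Replacing $g$ by $a_kga_i$ changes $h\Theta_{k\ell}g$ to $h\Theta_{k\ell}a_kga_i$, and $\Theta_{k\ell}a_k=a_\ell\Theta_{k\ell}$ by \eqref{eqn: transition bi-invariance}. Right factors in $H_i$ and left factors in $H_j$ are absorbed by $x_{ij}$ (Corollary~\ref{cor: residual double orbits}). A factor $a_\ell$ on $h$ is absorbed into $\Theta_{k\ell}$ by the same invariance.

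I expect the main obstacle to be justifying that the decomposition of the localized algebra into graph algebras recovers the multiplicities of the indecomposable summands of $\M\bt_\B\M'$ in the $\C$-stratum. Two things must be checked: that distinct retained summands give distinct Lagrangian summands after localization, and that no summand of the non-localized convolution collapses or merges under localization. To handle this, I would use that for Lagrangian $L$ with endpoints containing $A_i\bt A_j$, the algebra $\operatorname{Loc}_{A_i\bt A_j}(L)$ is again Lagrangian and connected, by the transitivity argument in Proposition~\ref{prop: local transition components}. Combined with the residual-data equivalence of groupoids, this gives a bijection on isomorphism classes between Lagrangian algebras with endpoints exactly $(A_i,A_j)$ and graph algebras in $\C_i\bt\C_j^{\rev}$. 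A secondary concern is orientation and convention bookkeeping, namely getting $h\Theta g$ rather than $g\Theta h$. I would settle this using the right-module convention \eqref{eqn:right module convention} and the composition order in Lemma~\ref{lem:outer transport}.
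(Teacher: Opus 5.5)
Your proposal is correct and matches the paper's proof essentially step for step: you localize the convolution of full centers at $A_i\bt A_j$ and apply Lemma~\ref{lem:outer transport} with $\gamma=\eta_k^{-1}g\eta_i$ and $\delta=\eta_j^{-1}h\eta_\ell$. You then identify the surviving graph summands via Proposition~\ref{prop: local transition criterion}, transport each to $\Delta_{\delta\psi_{\mathcal P}\gamma}$ with Picard label $h\phi_{\mathcal P}g$, and get well-definedness from the bi-invariance \eqref{eqn: transition bi-invariance}. The multiplicity concern you raise is settled, as you suggest, by the uniqueness of the decomposition of a commutative separable algebra into connected summands, which the paper leaves implicit.
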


\begin{proof}
Put
\[
\gamma_g:=\eta_k^{-1}g\eta_i:\C_i\longrightarrow\C_k,
\qquad
\gamma_h:=\eta_j^{-1}h\eta_\ell:\C_\ell\longrightarrow\C_j.
\]
Apply Lemma~\ref{lem:outer transport} with
$\gamma=\gamma_g$ and $\delta=\gamma_h$.
Let $\M,\M'$ be the indecomposable $\B$-module categories representing
$x_{ik}(g)$ and $x_{\ell j}(h)$. By
Proposition~\ref{prop:convolution etale}, the left-hand side of
\eqref{eqn: outer transport} is the sum of the full centers of the
indecomposable summands of $\M\bt_\B\M'$, localized at $A_i\bt A_j$.
Its graph-algebra summands represent precisely the terms retained in
$x_{ik}(g)x_{\ell j}(h)$.
On the right, before transport, Proposition~\ref{prop: local transition criterion}
identifies the graph summands with those defining $\Theta_{k\ell}$.
The equivalence $\gamma_g^{-1}\bt\gamma_h^{\rev}$ sends each
$\Delta_{\psi_{\mathcal P}}$ to
$\Delta_{\gamma_h\psi_{\mathcal P}\gamma_g}$, whose Picard label is
$h\phi_{\mathcal P}g$. Since this equivalence preserves graph summands
and their multiplicities, retaining graph summands on both sides of
\eqref{eqn: outer transport} gives
$x_{ik}(g)x_{\ell j}(h)=x_{ij}(h\Theta_{k\ell}g)$, as required.

Finally, let $g'=a_k^{-1}ga_i$ and $h'=a_jha_\ell$, with $a_s\in H_s$.
By \eqref{eqn: transition bi-invariance},
$h'\Theta_{k\ell}g'=a_jh\Theta_{k\ell}ga_i$, so
$x_{ij}(h'\Theta_{k\ell}g')=x_{ij}(h\Theta_{k\ell}g)$.
Thus the right-hand side of \eqref{eqn: stratum multiplication}
depends only on the input double cosets.
\end{proof}

\begin{proposition}
\label{prop:diagonal component correspondence}
Let $A$ be a connected \'etale algebra in $\B$. The indecomposable
$\B$-module components of $\B_A\bt_{\B}\B_A$
are in a canonical multiplicity-preserving bijection with the indecomposable
$\B_A^{\loc}$-module components of $\B_A$.
\end{proposition}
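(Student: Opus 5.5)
The natural route is to identify $\B_A\bt_\B\B_A$ with a category of $A$-bimodules, and then use the central embedding $\B_A^{\loc}\subset\B_A$ to pass from $\B$-module structures to $\B_A^{\loc}$-module structures.

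By \cite[Proposition~3.5]{ENO2}, or the standard description of relative tensor products of module categories of algebras, there is an equivalence of $\B$-module categories
\[
\B_A\bt_\B\B_A\simeq{}_A\B_A ,
\]
the category of $A$-bimodules in $\B$. Here the right $\B$-action on the first factor is the one of \eqref{eqn:right module convention}. Since $A$ is commutative, a left $A$-module $M$ becomes a bimodule in two ways, using $c_{M,A}$ or $c_{A,M}^{-1}$. This gives the two tensor embeddings $\B_A\to{}_A\B_A$, which are the $\alpha^\pm$-inductions. Moreover
\[
{}_A\B_A\simeq \B_A\bt_{\B_A^{\loc}}\B_A ,
\]
because, by \cite[Corollary~3.30]{DMNO}, $\Z(\B_A)\simeq\B\bt(\B_A^{\loc})^{\rev}$. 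Dually, ${}_A\B_A$ is the category $\FUN_{\B}(\B_A,\B_A)$ of $\B$-module endofunctors, while $\FUN_{\B_A}(\B_A,\B_A)\simeq\B_A$.

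The plan is to exhibit a canonical correspondence between indecomposable summands on the two sides. An indecomposable $\B$-module summand $\mathcal P\subset{}_A\B_A$ is a summand stable under the action of $\B$ through either tensor embedding. My first attempt is to use the equivalence
\[
\FUN_\B(\B_A,\B_A)\simeq\FUN_{\B_A^{\loc}}(\B_A,\B_A)^{\text{?}}
\]
in the following form. Regard $\B_A$ as a $\B\bt(\B_A^{\loc})^{\rev}\simeq\Z(\B_A)$-module via its regular bimodule structure. The $\B$-module decomposition of $\B_A\bt_\B\B_A$ then corresponds, under the Morita equivalence between $\B_A$ and its dual with respect to $\B$, to the decomposition of $\B_A$ into indecomposable $\B_A^{\loc}$-module summands.

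Concretely, I would use Lemma~\ref{lem:alpha comparison} and the multiplicity formula of Remark~\ref{rem: product multiplicity}. The number of summands of $\B_A\bt_\B\B_A$ is $\dim\Hom_\B(A,A)$. On the other side, the number of indecomposable $\B_A^{\loc}$-summands of $\B_A$ is $\dim\End_{\FUN_{\B_A^{\loc}}(\B_A,\B_A)}(\id)$. Via the full-center adjunction for $\B_A$ as a $\B_A^{\loc}$-module, this equals $\dim\Hom_{\B_A}(A,\text{(image of }\B_A^{\loc}\text{ unit algebra)})=\dim\Hom_\B(A,A)$, since $A\ot A$ restricted along the free functor gives $\Hom_{\B_A}(A\ot A,A)\cong\Hom_\B(A,A)$.

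To produce the actual bijection rather than just a count, I would compare endomorphism algebras. The center $\End(\id_{\mathcal T})$ of the identity $\B$-module functor of $\mathcal T=\B_A\bt_\B\B_A$ is computed by $\Hom_{\Z(\B)}(\be,L_{\B_A}\odot L_{\B_A})$. Its primitive idempotents index the summands, with multiplicities read off from the isotypic pieces. The same computation for the $\B_A^{\loc}$-module $\B_A$ gives $\End(\id_{\B_A})$ as $\B_A^{\loc}$-module functor, i.e.\ the algebra $\Hom_{\B_A}(A, \ldots)$. I would show that both algebras are canonically isomorphic to the algebra $\Hom_\B(A,A)$ with a convolution-type product, commutative and semisimple, and that the isomorphism respects the idempotent decompositions.

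The main obstacle is this last identification. One must check that the algebra structures, not merely the dimensions, agree under the equivalence ${}_A\B_A\simeq\B_A\bt_{\B_A^{\loc}}\B_A$. One must also check that multiplicities, meaning the sizes of matrix blocks in the case of repeated summands, match. I would handle this by writing both sides as $\End$ of the same object, namely the identity bimodule functor of $\B_A$ viewed over $\B\bt(\B_A^{\loc})^{\rev}$, and restricting to each factor.
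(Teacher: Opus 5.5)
\textbf{There is a genuine gap: the proposal never constructs the bijection.} At best it argues that the two sides have the same size, and even that is not established.

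The $\B$-side count $\dim_k\Hom_\B(A,A)$ is fine by Remark~\ref{rem: product multiplicity}. The local-side count is not. The phrase ``image of the $\B_A^\loc$ unit algebra'' does not name an object. The isomorphism $\Hom_{\B_A}(A\ot A,A)\cong\Hom_\B(A,A)$ is just the free--forgetful adjunction, and you give no link between it and the $\B_A^\loc$-components of $\B_A$. In the paper this count is Corollary~\ref{cor:transition component count}, which is deduced \emph{from} the present proposition.

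The step you call the main obstacle is the entire content, and the mechanism you propose for it does not work as described. The identity functor of $\B_A$, viewed as a module functor over $\B\bt(\B_A^\loc)^\rev$, has endomorphism algebra $k$: already $\End_\B(\id_{\B_A})=k$, because $A$ is connected and so $\B_A$ is an indecomposable $\B$-module category. So this object cannot produce $\End_{\B_A^\loc}(\id_{\B_A})$, whose dimension is the number of $\B_A^\loc$-components and is in general larger than one. Nor does it say anything about $\End(\id_{\mathcal T})$, which lives on $\mathcal T=\B_A\bt_\B\B_A$, not on $\B_A$. The ``convolution-type product on $\Hom_\B(A,A)$'' is never specified.

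Your worry about matrix blocks is unnecessary. The endomorphism algebra of an identity module functor is always $k^s$, with $s$ the number of summands, and equivalent summands give orthogonal idempotents. So multiplicities take care of themselves once a bijection of components exists.

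\textbf{The missing idea is to compare components inside a single fusion category.} Put $\mathcal D=\B^*_{\B_A}\simeq{}_A\B_A$ and $\mathcal D^\pm=\alpha^\pm_{\B_A}(\B)$. Then $\mathcal T\simeq\mathcal D$ with $\B$ acting through $\mathcal D^+$. Moreover $\mathcal D^-\simeq\B_A$, and $\mathcal D^+\cap\mathcal D^-\simeq\B_A^\loc$ acts on $\mathcal D^-$ by $\ot_A$ \cite[Theorem~5.20]{FFRS}. Send a $\mathcal D^+$-component $\mathcal P$ to $\mathcal P\cap\mathcal D^-$. Two facts make this a bijection:
\begin{itemize}
\item \emph{Nonempty.} $\mathcal D^+$ and $\mathcal D^-$ generate $\mathcal D$, and the $\alpha$-interchange law \eqref{eqn:alpha interchange} makes every simple object a summand of some $W\ot M$ with $W\in\mathcal D^+$ and simple $M\in\mathcal D^-$.
\item \emph{A single $\B_A^\loc$-component.} If simple $M,N\in\mathcal D^-$ are linked by a simple $W\in\mathcal D^+$, then rigidity gives $W\subset N\ot M^*\in\mathcal D^-$, so $W\in\B_A^\loc$.
\end{itemize}
In your idempotent language, these two facts are exactly injectivity and surjectivity of the restriction homomorphism $\End_{\mathcal D^+}(\id_{\mathcal D})\to\End_{\B_A^\loc}(\id_{\mathcal D^-})$. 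That homomorphism is the canonical algebra isomorphism you were looking for.

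Alternatively, the equivalence ${}_A\B_A\simeq\B_A\bt_{\B_A^\loc}\B_A$ that you quote could give the bijection directly: send each $\B_A^\loc$-component $\mathcal N_i$ of $\B_A$ to $\B_A\bt_{\B_A^\loc}\mathcal N_i$. This works only if the equivalence is established compatibly with the $\B$-action. It does not follow from $\Z(\B_A)\simeq\B\bt(\B_A^\loc)^\rev$ alone, and proving it needs essentially the same generation and intersection facts.
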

\begin{proof}
Put 
\[
\mathcal D=\B^*_{\B_A},\,
\mathcal D^\pm=\alpha_{\B_A}^\pm(\B).
\]
Using \cite[Proposition~3.5]{ENO2} and the equivalence
$(\B_A)^{\op}\simeq\B_A$ coming from commutativity of $A$, we obtain
a $\B$-module equivalence
\[
\B_A\bt_{\B}\B_A\simeq\mathcal D.
\]
Under this equivalence, the indecomposable $\B$-module summands 
of $\B_A\bt_{\B}\B_A$ correspond to the
$\mathcal D^+$-components of $\mathcal D$. Under the standard
realization of $\mathcal D^-$ by $A$-modules, the intersection
$\mathcal D^+\cap\mathcal D^-$ identifies with $\B_A^{\loc}$,
and its action on $\mathcal D^-\simeq\B_A$ is given by the tensor product
$\ot_A$ \cite[Theorem~5.20]{FFRS}.

The subcategories $\mathcal D^+$ and $\mathcal D^-$ generate
$\mathcal D$, and the $\alpha$-interchange law
\eqref{eqn:alpha interchange} allows their factors to be reordered.
Hence every simple object of $\mathcal D$ is a summand of
$W\ot M$ for some $W\in\mathcal D^+$ and simple
$M\in\mathcal D^-$. Thus every $\mathcal D^+$-component of
$\mathcal D$ meets $\mathcal D^-$.

If simple objects $M,N\in\mathcal D^-$ belong to the same
$\mathcal D^+$-component, then
\[
\Hom_{\mathcal D}(W\ot M,N)\ne0
\]
for some simple $W\in\mathcal D^+$. By rigidity, $W$ is a subobject
of $N\ot M^*$, so $W\in\mathcal D^-\cap\mathcal D^+$. Hence
$M$ and $N$ already belong to the same $\B_A^{\loc}$-module
component. The converse is immediate since
$\mathcal D^+\cap\mathcal D^-\subset\mathcal D^+$.
Thus intersection with $\mathcal D^-$ gives the required bijection.
\end{proof}

For an indecomposable $\B$-module component
$\mathcal P\subset\B_A\bt_{\B}\B_A$, regard $\mathcal P$ as the
corresponding $\mathcal D^+$-component of $\mathcal D$ under the
equivalence used in the proof of
Proposition~\ref{prop:diagonal component correspondence}, and denote by
\[
\mathcal P^-:=\mathcal P\cap\mathcal D^-
\subset\mathcal D^-\simeq\B_A
\]
the corresponding $\B_A^{\loc}$-module component of $\B_A$.

Let $L_{\mathcal P}\in\Z(\B)\simeq\B\boxtimes\B^{\rev}$ be the full
center of $\mathcal P$, and let
\[
\widetilde L_{\mathcal P}
:=\operatorname{Loc}_{A\boxtimes A}(L_{\mathcal P})
\in \B_A^{\loc}\boxtimes (\B_A^{\loc})^{\rev}
\]
be its localization.

\begin{proposition}
\label{prop:diagonal local transition component}
There is an isomorphism of algebras
$
\widetilde L_{\mathcal P}\cong L_{\mathcal P^-}.
$
Consequently, the local transition component of $\mathcal P$ is
equivalent to $\mathcal P^-$.
\end{proposition}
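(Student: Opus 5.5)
We compare the two algebras by comparing the functors they represent, in the style of Lemma~\ref{lem:full center kernel}. Write $\C_A:=\B_A^{\loc}$. The full center $L_{\mathcal P^-}$ of the $\C_A$-module category $\mathcal P^-$ is $R_{\mathcal P^-}(\id_{\mathcal P^-})$, where $R_{\mathcal P^-}$ is right adjoint to the central action $\C_A\bt\C_A^{\rev}\to\C_{A\,\mathcal P^-}^*$. Equivalently, for simple $X\bt Y\in\C_A\bt\C_A^\rev$, its multiplicity is $\dim\Hom(\alpha^+_{\mathcal P^-}(X),\alpha^-_{\mathcal P^-}(Y^*))$, with $\alpha$-induction taken over $\C_A$. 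On the other side, localization at $A\bt A$ is right adjoint to restriction. Hence $\Hom(X\bt Y,\widetilde L_{\mathcal P})\cong\Hom_{\B\bt\B^\rev}(X\bt Y,L_{\mathcal P})$ for local $X,Y$, regarded in $\B\bt\B^\rev$ via the forgetful functor, with the module structure remembered through the counit. By Lemma~\ref{lem:full center kernel} this equals $\Hom_{\B^*_{\mathcal P}}(\alpha^+_{\mathcal P}(X),\alpha^-_{\mathcal P}(Y^*))$, now with $X,Y$ as objects of $\B$.

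The key step is to realize everything inside $\mathcal D=\B^*_{\B_A}$, using the equivalence $\B_A\bt_\B\B_A\simeq\mathcal D$ from the proof of Proposition~\ref{prop:diagonal component correspondence}. Under it, $\alpha^+_{\mathcal P}$ and $\alpha^-_{\mathcal P}$ become left and right multiplication by objects of $\mathcal D^+$ on the component $\mathcal P$. The $\alpha$-interchange law \eqref{eqn:alpha interchange} makes these two actions commute. We then use the identification $\mathcal D^+\cap\mathcal D^-\simeq\C_A$ from \cite[Theorem~5.20]{FFRS}. The $A\bt A$-module structure on a map into $L_{\mathcal P}$ should translate into the requirement that the induced endofunctors factor through $\mathcal D^+\cap\mathcal D^-$, that is, through the actions of $\C_A$ on $\mathcal P^-\subset\mathcal D^-\simeq\B_A$ by $\ot_A$. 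The reduction from $\mathcal P$ to $\mathcal P^-$ should follow from the generation argument in Proposition~\ref{prop:diagonal component correspondence}. Every simple of $\mathcal P$ is a summand of $W\ot M$ with $W\in\mathcal D^+$ and $M\in\mathcal P^-$. Consequently a module natural transformation on $\mathcal P$ is determined by its restriction to $\mathcal P^-$, where it becomes a $\C_A$-module transformation. This yields a natural isomorphism
\[
\Hom(X\bt Y,\widetilde L_{\mathcal P})\cong\Hom(X\bt Y,L_{\mathcal P^-}),
\]
natural in $X\bt Y\in\C_A\bt\C_A^\rev$, and Yoneda gives an isomorphism of objects.

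To upgrade this to an algebra isomorphism, we build the map as in Lemma~\ref{lem:full center kernel}. The counit of localization gives an algebra morphism into $L_{\mathcal P}$. Composing with the central action and restricting to $\mathcal P^-$ yields a unital algebra morphism $F_{\mathcal P^-}(\widetilde L_{\mathcal P})\to\id_{\mathcal P^-}$. By \cite[Proposition~2.16]{KR09} this corresponds to an algebra morphism $\widetilde L_{\mathcal P}\to L_{\mathcal P^-}$, and the Hom computation above shows it is invertible. Alternatively, both algebras are connected Lagrangian in $\C_A\bt\C_A^\rev$, by Proposition~\ref{prop: local transition components}. So it may suffice to produce a nonzero algebra map and check dimensions, since an algebra map between connected étale algebras is injective by the argument in Proposition~\ref{prop: etale monotonicity}, and equal $\FPdim$ then forces an isomorphism. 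The consequence follows from Definition~\ref{def: local transition component} with $\eta_k=\eta_\ell=\id$: the local transition component is the $\C_A$-module category with full center $\widetilde L_{\mathcal P}\cong L_{\mathcal P^-}$, and since indecomposable module categories are determined by their full centers, it is equivalent to $\mathcal P^-$.

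The main obstacle is the middle step: checking that locality on both sides corresponds precisely to restricting to $\mathcal D^+\cap\mathcal D^-$ acting on $\mathcal P^-$, with the braiding conventions of \eqref{eqn:right module convention} and $\alpha^\pm$ matching those of $\C_A$. My first attempt would be to identify $\alpha^\pm_{\mathcal P^-}$, over $\C_A$, with the restrictions of $\alpha^\pm_{\B_A}$ to local modules. I would then compare dimensions, using \eqref{eqn:local-module-dimension}, so that only the algebra map needs to be constructed canonically.
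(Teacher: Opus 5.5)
There is a genuine gap. It sits exactly at the step you flag as the main obstacle: turning $A\bt A$-linearity on the $\B$ side into $\B_A^{\loc}$-module data on $\mathcal P^-$. You assert this (``should translate'') but never carry it out, and both of your routes depend on it. Your opening identification is also not right as written. $\operatorname{Loc}_{A\bt A}$ does not change the underlying object; it only records the $A\bt A$-module structure that $L_{\mathcal P}$ already carries. So for local $X,Y$ the space $\Hom(X\bt Y,\widetilde L_{\mathcal P})$ consists of the $A\bt A$-linear maps, and it is in general a proper subspace of $\Hom_{\B\bt\B^{\rev}}(U(X)\bt U(Y),L_{\mathcal P})$. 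For example, take $X=Y$ the unit of $\B_A^{\loc}$ (so $U(X)=U(Y)=A$) with $A\neq\be$. The former space is $k$ because $\widetilde L_{\mathcal P}$ is connected. The latter contains $\End_\B(A)\otimes\End_\B(A)$, since $A\bt A$ is a summand of $L_{\mathcal P}$.

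Thus Lemma~\ref{lem:full center kernel} only gives you an ambient space of transformations $\alpha^+_{\mathcal P}(U(X))\Rightarrow\alpha^-_{\mathcal P}(U(Y)^*)$. The real content is to cut out the $A\bt A$-linear ones and match them with transformations over $\B_A^{\loc}$ on $\mathcal P^-$. Your generation argument only shows that restriction to $\mathcal P^-$ is injective. Even then, the restricted components $U(X)\triangleright M\to U(Y)^*\triangleright M$ lie in $\mathcal P$, not in $\mathcal P^-$, because the $\B$-action goes through $\mathcal D^+$ and does not preserve $\mathcal D^-$.

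For the same reason, ``composing with the central action and restricting to $\mathcal P^-$'' does not by itself define a morphism $F_{\mathcal P^-}(\widetilde L_{\mathcal P})\to\id_{\mathcal P^-}$. You would first have to show that the counit $F_{\mathcal P}(L_{\mathcal P})\to\id_{\mathcal P}$ descends along the quotients $U(V)\triangleright M\to V\ot_A M$, for $V\in\B_A^{\loc}$ and $M\in\mathcal P^-$. This requires the endpoint inclusions $A\bt\be,\be\bt A\to L_{\mathcal P}$ to act on $M$ through its given $A$-module structure. You would then have to show that the descended map is a $\B_A^{\loc}$-module transformation, which is where locality enters.

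Your alternative reduction is sound and worth keeping. Both algebras are connected Lagrangian in $\B_A^{\loc}\bt(\B_A^{\loc})^{\rev}$: for $\widetilde L_{\mathcal P}$ by the argument of Proposition~\ref{prop: local transition components}, and for $L_{\mathcal P^-}$ because $\mathcal P^-$ is indecomposable. Hence any unital algebra morphism between them is injective, as in Proposition~\ref{prop: etale monotonicity}, and then an isomorphism by equality of $\FPdim$. This replaces the terminality check in the paper by a mere existence statement.

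Existence, however, is precisely the descent above, and the missing idea is how the paper obtains it. For a simple $X\in\mathcal P^-$, the internal End $R=\underline{\End}_{\mathcal D^+}(X)$ is the largest subobject of $X\ot X^*$ lying in $\mathcal D^+$. Since $X\ot X^*\in\mathcal D^-$, this forces $R\in\mathcal D^+\cap\mathcal D^-=\B_A^{\loc}$, and so $R\cong Q=\underline{\End}_{\B_A^{\loc}}(X)$. Hence $\mathcal P$ is the category of modules over the lift $\widehat Q$ of $Q$ to an algebra over $A$ in $\B$, while $\mathcal P^-$ is the category of $Q$-modules in $\B_A^{\loc}$. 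The universal property of the full center then identifies central morphisms into $\widehat Q$ compatible with both copies of $A$ with central morphisms into $Q$ in $\B_A^{\loc}$. With that in hand, the descended central morphism $\widetilde L_{\mathcal P}\to Q$ gives your algebra map, and your dimension count finishes the proof without needing universality.
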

\begin{proof}
Choose a simple object $X\in\mathcal P^-$. We claim that the internal  endomorphism algebra 
\[
Q:=\underline{\End}_{\B_A^{\loc}}(X)
\]
is isomorphic to \(R=\underline{\End}_{\mathcal D^+}(X)\).
Indeed, by rigidity, \(R\) is the maximal subobject of \(X\otimes X^*\) lying in \(\mathcal D^+\). Since \(X\in\mathcal D^-\), we have \(X\otimes X^*\in\mathcal D^-\) and hence \(R\in\mathcal D^+\cap\mathcal D^-=\B_A^{\loc}\). 
For every $V\in\B_A^{\loc}$ we have natural identifications
\[
\Hom_{\B_A^{\loc}}(V,R)
\cong \Hom_{\mathcal P}(V\ot X,X)
= \Hom_{\mathcal P^-}(V\ot X,X)
\cong \Hom_{\B_A^{\loc}}(V,Q).
\]
Thus $R$ and $Q$ represent the same functor on $\B_A^{\loc}$.
The resulting isomorphism $R\cong Q$ respects multiplication and unit,
since the algebra structures on both internal endomorphism objects are
induced by composition.

By Proposition~\ref{prop:diagonal component correspondence},
$\mathcal P^-$ is the $\B_A^{\loc}$-component of $\mathcal P$
containing $X$. Hence
\[
\mathcal P^-\simeq\Mod_{\B_A^{\loc}}Q,
\qquad
\mathcal P\simeq\Mod_{\mathcal D^+}R.
\]
Since $R\cong Q$, the algebra-over-$A$ correspondence
\cite[Subsection~3.6]{DMNO} associates to $Q$ an algebra
$\widehat Q$ in $\B$ such that
\[
\mathcal P\simeq\Mod_{\B}\widehat Q.
\]
By \cite{DavCenter}, $L_{\mathcal P^-}$ is the full
center of $Q$ in $\Z(\B_A^{\loc})$, while $L_{\mathcal P}$ is the
full center of $\widehat Q$ in $\Z(\B)$.

We use the universal characterization of the full center
\cite[Section~4]{DavCenter}: the full center of an algebra $S$ in a
monoidal category $\E$ is terminal among pairs consisting of an object
of $\Z(\E)$ and a central morphism to $S$.
The algebra map $A\to\widehat Q$ has the two central lifts
\[
A\boxtimes\be\longrightarrow L_{\mathcal P},
\qquad
\be\boxtimes A\longrightarrow L_{\mathcal P},
\]
which are the endpoint maps used in the localization
$\widetilde L_{\mathcal P}$.
Under the above algebra correspondence, central morphisms to
$\widehat Q$ which are compatible with these two inclusions of $A$
are precisely central morphisms to $Q$ in $\B_A^{\loc}$.
Compatibility with one endpoint $A$-action allows descent through the
relative tensor product, compatibility with the other is $A$-linearity
of the descended morphism, and locality makes the half-braiding descend,
identifying centrality in $\B$ with centrality in $\B_A^{\loc}$.
Hence the universal central morphism defining $L_{\mathcal P}$
descends after localization to a central morphism
\[
\widetilde L_{\mathcal P}\longrightarrow Q
\]
which is terminal among central morphisms to $Q$. Thus
$\widetilde L_{\mathcal P}$ has the universal property of the full
center of $Q$. Therefore $\widetilde L_{\mathcal P}\cong L_{\mathcal P^-}$
as algebras in
$\B_A^{\loc}\boxtimes(\B_A^{\loc})^{\rev}$.
\end{proof}

Recall from \eqref{eqn: middle transition category} and
\eqref{eqn: transition element}, respectively, that
\[
\T_{k\ell}=\B_{A_k}\bt_{\B}\B_{A_\ell}
\]
is the transition category from $k$ to $\ell$, and that
$\Theta_{k\ell}\in\mathbb Z_{\geq0}[G]$ is its transition element;
see also Definition~\ref{def: transition elements and matrix}.

Let $\Theta_{A_i}\in\mathbb Z_{\geq0}[G]$ denote the sum of the Picard
labels of the invertible $\C_i$-module components of $\B_{A_i}$,
counted with multiplicity and transported to $G$ through the chosen
equivalence $\eta_i$ from \eqref{eqn: gauges}.

\begin{corollary}\label{cor:diagonal transition element}
We have
\begin{equation}
\label{eqn: theta diagonal convention}
\Theta_{ii}=\Theta_{A_i}.
\end{equation}
\end{corollary}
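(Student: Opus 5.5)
The plan is to combine Proposition~\ref{prop:diagonal component correspondence} with Proposition~\ref{prop:diagonal local transition component}, and then compare Picard labels term by term. By definition \eqref{eqn: transition element}, $\Theta_{ii}$ is a sum over the indecomposable $\B$-module summands $\mathcal P\subset\T_{ii}=\B_{A_i}\bt_\B\B_{A_i}$ that contribute to the $\C$-stratum, counted with multiplicity. Each such summand contributes its Picard label $\phi_{\mathcal P}=\eta_i\psi_{\mathcal P}\eta_i^{-1}$. On the other side, $\Theta_{A_i}$ is a sum over the invertible $\C_i$-module components of $\B_{A_i}$, with their Picard labels transported by $\eta_i$. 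So it suffices to produce a multiplicity-preserving bijection between the two index sets that also preserves labels.

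First I apply Proposition~\ref{prop:diagonal component correspondence} with $A=A_i$. It gives a canonical multiplicity-preserving bijection $\mathcal P\mapsto\mathcal P^-$ from $\B$-module components of $\T_{ii}$ to $\C_i$-module components of $\B_{A_i}$. Next, Proposition~\ref{prop:diagonal local transition component} gives $\widetilde L_{\mathcal P}\cong L_{\mathcal P^-}$ as algebras in $\C_i\bt\C_i^{\rev}$. By Definition~\ref{def: local transition component}, the local transition component $\mathcal X_{\mathcal P}$ is the $\C$-module category with full center $(\eta_i\bt\eta_i^{\rev})(\widetilde L_{\mathcal P})$. So $\mathcal X_{\mathcal P}$ is the transport of $\mathcal P^-$ along $\eta_i$.

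Now I use Proposition~\ref{prop: local transition criterion}: $\mathcal P$ contributes to the $\C$-stratum if and only if $\mathcal X_{\mathcal P}$ is invertible. Since $\eta_i$ is a braided equivalence, this holds exactly when $\mathcal P^-$ is an invertible $\C_i$-module category. Hence the bijection restricts to a multiplicity-preserving bijection between the contributing summands and the invertible $\C_i$-components of $\B_{A_i}$.

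For such $\mathcal P$, \eqref{eqn: graph algebra} gives $\widetilde L_{\mathcal P}\cong\Delta_{\psi_{\mathcal P}}$. Meanwhile, the Picard label $\psi$ of the invertible component $\mathcal P^-$ is defined by $L_{\mathcal P^-}\cong\Delta_{\psi}$. By the residual-data parameterization, graph algebras $\Delta_\psi$ and $\Delta_{\psi'}$ are isomorphic exactly when $\psi\cong\psi'$, since the endpoints $A_i$ are fixed with trivial local endpoints here. Therefore $\psi_{\mathcal P}$ equals the label of $\mathcal P^-$ in $\Aut^{\mathrm{br}}(\C_i)$. Conjugating by $\eta_i$ shows that the two labels agree in $G$, and summing over the bijection gives $\Theta_{ii}=\Theta_{A_i}$.

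The main obstacle is a convention check. I need the Picard label of an invertible $\C_i$-module component, as used in defining $\Theta_{A_i}$, to be read from its full center with the same left/right orientation, namely $\Delta_\psi$ with $\psi:\C_i\to\C_i$, and not from the inverse or the $\rev$-twisted version. The orientation of $\widetilde L_{\mathcal P}$ comes from the first and second endpoint copies of $A$ in Proposition~\ref{prop:diagonal local transition component}. I would handle this by adopting, as the meaning of $\Theta_{A_i}$, that the label is the $\psi$ with $L_{\mathcal P^-}\cong\Delta_\psi$ in $\C_i\bt\C_i^{\rev}$ under the identifications of Proposition~\ref{prop:diagonal local transition component}. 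With that convention the equality becomes immediate.
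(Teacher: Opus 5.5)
Your proposal is correct and follows the paper's argument. The paper likewise combines the multiplicity-preserving bijection $\mathcal P\mapsto\mathcal P^-$ of Proposition~\ref{prop:diagonal component correspondence} with the identification $\widetilde L_{\mathcal P}\cong L_{\mathcal P^-}$ of Proposition~\ref{prop:diagonal local transition component}, then reads off $\Theta_{ii}=\Theta_{A_i}$ from the definition. Your extra steps make explicit what the paper leaves implicit: restricting to contributing summands via Proposition~\ref{prop: local transition criterion}, matching the graph-algebra labels, and fixing the orientation convention for the Picard labels in $\Theta_{A_i}$.
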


\begin{proof}
Proposition~\ref{prop:diagonal component correspondence} gives a
multiplicity-preserving bijection between the indecomposable summands
of $\T_{ii}$ and the $\C_i$-module components of $\B_{A_i}$, while
Proposition~\ref{prop:diagonal local transition component} identifies
their local transition components. The formula follows from the
definition of $\Theta_{ii}$.
\end{proof}

\begin{corollary}\label{cor:transition component count}
For all $k,\ell\in I$,
\begin{equation}
\label{eqn: transition component count}
\#\{\text{indecomposable summands of }\T_{k\ell}\}
=
\dim_k\Hom_{\B}(A_\ell,A_k),
\end{equation}
with summands counted with multiplicity. For $k=\ell$, this is also
the number of $\C_k$-module components of $\B_{A_k}$.
\end{corollary}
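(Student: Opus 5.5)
The plan is to count summands of $\T_{k\ell}$ as unit multiplicities in full centers, mirroring Remark~\ref{rem: product multiplicity}. The number of indecomposable summands of a $\B$-module category $\mathcal P=\bigoplus \mathcal P_r$ equals $\dim_k\Hom_{\B\bt\B^\rev}(\be\bt\be,\bigoplus_r L_{\mathcal P_r})$, since each $L_{\mathcal P_r}$ is connected. The decomposition following \eqref{eqn: middle transition category}, coming from Proposition~\ref{prop:convolution etale}, gives
\[
\bigoplus_{\mathcal P\subset\T_{k\ell}}L_{\mathcal P}\cong L_{\B_{A_k}}\odot L_{\B_{A_\ell}}.
\]
So it suffices to compute the unit multiplicity of this convolution.

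Next I identify the full centers and their endpoints. For $\B_A$ with $A$ connected étale, the residual data of $L_{\B_A}$ are $(A,A,\id)$: the full center of $A$ contains $A\bt\be$ and $\be\bt A$ via the two central lifts, as used in the proof of Proposition~\ref{prop:diagonal local transition component}. Hence $L_{\B_{A_k}}=L(A_k,A_k,\id)$ has right endpoint $A_k$, and $L_{\B_{A_\ell}}=L(A_\ell,A_\ell,\id)$ has left endpoint $A_\ell$.

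Then I apply \eqref{eqn:convolution} with $X=Y=\be$:
\[
\Hom(\be\bt\be,L_{\B_{A_k}}\odot L_{\B_{A_\ell}})\cong\bigoplus_S \Hom(\be\bt S,L_{\B_{A_k}})\otimes\Hom(S^*\bt\be,L_{\B_{A_\ell}}).
\]
For a Lagrangian algebra $L$, the object $\bigoplus_S\Hom(\be\bt S,L)\otimes S$ is the maximal subobject of $L$ in $\be\bt\B^\rev$. This is exactly the endpoint $L\cap(\be\bt\B^\rev)$, because connectedness forces that subobject to be an étale subalgebra, and it is the maximal one. Thus $\dim\Hom(\be\bt S,L_{\B_{A_k}})=[A_k:S]$ and $\dim\Hom(S^*\bt\be,L_{\B_{A_\ell}})=[A_\ell:S^*]$. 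The total is
\[
\sum_S[A_k:S][A_\ell:S^*]=\dim\Hom_\B(\be,A_k\ot A_\ell)=\dim\Hom_\B(A_\ell^*,A_k)=\dim\Hom_\B(A_\ell,A_k),
\]
using self-duality of étale algebras ($A\cong A^*$ via the Frobenius pairing). This is the same computation as the last lines of \eqref{eqn:product multiplicity}.

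The main obstacle is the middle step: identifying the multiplicity of $\be\bt S$ in $L$ with that of $S$ in the right endpoint. I would justify it by noting that the subobject of $L$ in $\be\bt\B^\rev$ is the image of the right adjoint to the inclusion. Because $\B\bt\be$ and $\be\bt\B^\rev$ are fusion subcategories of $\Z(\B)$, this subobject is a subalgebra, so it coincides with $L\cap(\be\bt\B^\rev)$.

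For the case $k=\ell$, Proposition~\ref{prop:diagonal component correspondence} gives a multiplicity-preserving bijection between the summands of $\T_{kk}$ and the $\C_k$-module components of $\B_{A_k}$. This yields the final assertion.
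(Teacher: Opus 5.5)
Your proposal is correct and is essentially the paper's argument: the paper applies the multiplicity computation \eqref{eqn:product multiplicity} to $\T_{k\ell}=\B_{A_k}\bt_\B\B_{A_\ell}$, whose factors have residual data $(A_k,A_k,\id)$ and $(A_\ell,A_\ell,\id)$, and then invokes Proposition~\ref{prop:diagonal component correspondence} for $k=\ell$; you have simply written that computation out. One point to tighten: the two central lifts only show that the endpoints of $L_{\B_A}$ contain $A$, but your count needs equality, which follows for instance from $\Hom(\alpha^\pm_{\B_A}(X),\id_{\B_A})\cong\Hom_\B(X,A)$, since by commutativity of $A$ every left $A$-module map $A\ot X\to A$ is automatically compatible with either braided right action.
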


\begin{proof}
The first statement is \eqref{eqn:product multiplicity} applied to
$\T_{k\ell}$. The diagonal statement follows from
Proposition~\ref{prop:diagonal component correspondence}.
\end{proof}

For $s=k,\ell$, let $U_s:\C_s\to\B$ be the underlying-object functor. Thus
$U_s(\be_{\C_s})=A_s$, and the chosen equivalences $\eta_s$ identify
both local categories with $\C$. Denote by
$\varepsilon:\mathbb Z[G]\to\mathbb Z$ the augmentation,
$\varepsilon(\sum_g n_gg)=\sum_g n_g$.

\begin{proposition}
\label{prop: all invertible transition components}
The following conditions are equivalent:
\begin{enumerate}[label=\textup{(\arabic*)}]
\item For every indecomposable summand
$\mathcal P\subset\T_{k\ell}$, the local transition component
$\mathcal X_{\mathcal P}$ of
Definition~\ref{def: local transition component} is invertible as a
one-sided $\C$-module category.
\item $\Hom_{\B}(U_\ell(Y),A_k)=0$ for every
$Y\in\O(\C_\ell)$ with $Y\not\cong\be_{\C_\ell}$.
\item $\Hom_{\B}(U_k(X),A_\ell)=0$ for every
$X\in\O(\C_k)$ with $X\not\cong\be_{\C_k}$.
\item $\varepsilon(\Theta_{k\ell})=
\dim_k\Hom_{\B}(A_\ell,A_k)$.
\end{enumerate}
\end{proposition}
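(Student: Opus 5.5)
The idea is to compute the number of surviving summands in two ways. By Corollary~\ref{cor:transition component count}, $\T_{k\ell}$ has exactly $\dim_k\Hom_\B(A_\ell,A_k)$ indecomposable summands, counted with multiplicity. By Proposition~\ref{prop: local transition criterion}, each summand contributing to the $\C$-stratum adds exactly one group element to $\Theta_{k\ell}$. Hence $\varepsilon(\Theta_{k\ell})$ equals the number of summands $\mathcal P$ whose $\mathcal X_{\mathcal P}$ is invertible. This immediately gives (1)$\Leftrightarrow$(4): the inequality $\varepsilon(\Theta_{k\ell})\le\dim_k\Hom_\B(A_\ell,A_k)$ always holds, and it is an equality exactly when every summand survives.

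For (1)$\Leftrightarrow$(2), I would count the \emph{left} residual endpoints $\overline B$. Take Lemma~\ref{lem:outer transport} with $i=k$, $j=\ell$ and identity equivalences. Formula \eqref{eqn: mixed contraction coefficients} then describes the object $\bigoplus_{\mathcal P}\widetilde L_{\mathcal P}$ as
\[
\bigoplus_{Z,Y}\Hom_\B\bigl(U_\ell(Y),U_k(Z)\bigr)\otimes_k(Z^*\bt Y).
\]
Now compute the multiplicity of the subcategory $\C_k\bt\be$. This means keeping only $Y=\be_{\C_\ell}$, so that $U_\ell(Y)=A_\ell$, which gives $\bigoplus_Z\Hom_\B(A_\ell,U_k(Z))\otimes Z^*$.

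The same intersection can be computed summand by summand. For each $\mathcal P$, Proposition~\ref{prop: local transition components} shows that the maximal étale subalgebra of $\widetilde L_{\mathcal P}$ in $\C_k\bt\be$ is $\overline B_{\mathcal P}$. Since $\widetilde L_{\mathcal P}$ is Lagrangian in $\C_k\bt\C_\ell^\rev$, the part of $\widetilde L_{\mathcal P}$ lying in $\C_k\bt\be$ is exactly $\overline B_{\mathcal P}\bt\be$. This follows from the residual-data description, because the graph algebra $\Delta$ meets $\C_k\bt\be$ only in the unit.

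Comparing the two computations gives an identity of objects of $\C_k$:
\[
\bigoplus_{\mathcal P}\overline B_{\mathcal P}\cong\bigoplus_{Z\in\O(\C_k)}\Hom_\B\bigl(A_\ell,U_k(Z)\bigr)\otimes Z^*.
\]
Each connected $\overline B_{\mathcal P}$ contains $\be$ exactly once. So the total number of nonunit simple constituents is zero if and only if every $\overline B_{\mathcal P}=\be$. Since duality and Frobenius reciprocity give $\Hom_\B(A_\ell,U_k(Z))\cong\Hom_\B(U_k(Z^*),A_\ell)$ up to dual, the right-hand side has no nonunit constituents exactly when condition (3) holds. So this count actually controls the left endpoints and gives (3). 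The symmetric count with $Z=\be$ controls the right endpoints $\overline D_{\mathcal P}$ and gives (2).

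Combining both counts with Proposition~\ref{prop: local transition criterion}(ii) shows that (1) is equivalent to (2) and (3) together. To obtain each separately, I would add a dimension argument. Each $\widetilde L_{\mathcal P}$ is Lagrangian, so by \cite[Section~3.2]{DNO} the localizations $(\C_k)^\loc_{\overline B}$ and $(\C_\ell)^\loc_{\overline D}$ are equivalent. Since $\FPdim\C_k=\FPdim\C_\ell$, formula \eqref{eqn:local-module-dimension} gives $\FPdim\overline B=\FPdim\overline D$. Hence $\overline B=\be$ if and only if $\overline D=\be$, and (2)$\Leftrightarrow$(3) follows.

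The main obstacle will be justifying that the $\C_k\bt\be$-part of $\widetilde L_{\mathcal P}$ is exactly $\overline B_{\mathcal P}$, rather than merely containing it. One also has to check that \eqref{eqn: mixed contraction coefficients} with identity gauges is compatible with the summand decomposition. I would handle this using the graph-algebra form after localizing at $\overline B\bt\overline D$, as in the proof of Proposition~\ref{prop: local transition components}.
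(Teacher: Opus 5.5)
Your proposal is correct and follows essentially the paper's route: read off the summed residual endpoints from \eqref{eqn: mixed contraction coefficients} at a unit label, tie the two endpoints together by the Frobenius--Perron dimension equality, and compare the augmentation with the component count \eqref{eqn: transition component count} to get (4). The step you flag as the main obstacle is immediate from the paper's definition $A\cap\mathcal D=I(\mathbf 1)$. Indeed, $I(\mathbf 1)$ represents $X\mapsto\operatorname{Hom}(X,A)$ on $\mathcal D$, so it is the whole maximal subobject of $A$ lying in $\mathcal D$, not merely an \'etale subalgebra of it.
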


\begin{proof}
For each summand $\mathcal P\subset\T_{k\ell}$, let
$\widetilde L_{\mathcal P}\in\C_k\bt\C_\ell^{\rev}$ be its localized
full center, with connected \'etale residual endpoints
$\overline B\in\C_k$ and $\overline D\in\C_\ell$, as in
Proposition~\ref{prop: local transition components}. For
$Y\in\O(\C_\ell)$, equation  \eqref{eqn: mixed contraction coefficients} with
$U_k(\be_{\C_k})=A_k$, gives
\[
\sum_{\mathcal P\subset\T_{k\ell}}
\dim_k\Hom_{\C_k\bt\C_\ell^{\rev}}
\bigl(\be_{\C_k}\bt Y,\widetilde L_{\mathcal P}\bigr)
=\dim_k\Hom_{\B}(U_\ell(Y),A_k).
\]
The sum counts all summands with multiplicity. By the residual data
description in
Subsection~\ref{subsect: A parameterization of Lagrangian algebras by residual data},
the part of $\widetilde L_{\mathcal P}$ supported in
$\be_{\C_k}\bt\C_\ell^{\rev}$ is $\be_{\C_k}\bt\overline D$.
Nonnegative multiplicities and connectedness identify condition~(2) with
all right residual endpoints $\overline D$ being the tensor unit.
The residual equivalence
$(\C_k)_{\overline B}^{\loc}\simeq(\C_\ell)_{\overline D}^{\loc}$ and
$\C_k\simeq\C_\ell$ give
$\FPdim(\overline B)=\FPdim(\overline D)$ by
\eqref{eqn:local-module-dimension}. Thus either endpoint is the tensor
unit exactly when both are, and
Proposition~\ref{prop: local transition criterion} proves
$(1)\Leftrightarrow(2)$. The same argument with the two endpoints
interchanged proves $(1)\Leftrightarrow(3)$.

By \eqref{eqn: transition element}, augmentation counts surviving summands
with multiplicity, retaining repeated labels, while
\eqref{eqn: transition component count} counts all summands with
multiplicity. Equality holds exactly when every summand survives, proving
$(1)\Leftrightarrow(4)$.
\end{proof}

\begin{lemma}
\label{lem: transition duality}
For every $k,\ell\in I$ and the fixed gauges,
\begin{equation}
\label{eqn: transition duality}
\Theta_{\ell k}=\Theta_{k\ell}^{\vee},
\end{equation}
where $(\sum_g n_g g)^\vee=\sum_g n_g g^{-1}$.
\end{lemma}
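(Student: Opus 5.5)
We will realize $\T_{\ell k}$ as the "opposite" of $\T_{k\ell}$ and track how residual data transform under this reversal. For a $\B$-module category $\M$, let $\M^{\op}$ be its opposite module category, with the action twisted by duality. It is a standard fact (compare \cite[Proposition~3.5]{ENO2}) that $(\M\bt_\B\N)^{\op}\simeq\N^{\op}\bt_\B\M^{\op}$. Commutativity of $A$ gives $(\B_A)^{\op}\simeq\B_A$, as already used in the proof of Proposition~\ref{prop:diagonal component correspondence}. Hence
\[
\T_{k\ell}^{\op}\simeq\B_{A_\ell}^{\op}\bt_\B\B_{A_k}^{\op}\simeq\T_{\ell k},
\]
and $\mathcal P\mapsto\mathcal P^{\op}$ is a multiplicity-preserving bijection between the indecomposable summands of $\T_{k\ell}$ and those of $\T_{\ell k}$. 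The only thing to check in this first step is that the right-action convention \eqref{eqn:right module convention} is compatible with taking opposites. For this we use that $\B$ is braided: the left and right actions are related by the braiding, and inverting the braiding is matched by the passage $\B\to\B^{\rev}$ implicit in duality.

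Next we compute the full center of $\mathcal P^{\op}$. We claim that $L_{\mathcal P^{\op}}$ is the image of $L_{\mathcal P}$ under the flip braided equivalence $\sigma:\B\bt\B^{\rev}\to\B^{\rev}\bt\B$, followed by the identification $(\B\bt\B^{\rev})^{\rev}$ together with the duality functor. Equivalently, on the $\alpha$-induction side, $\alpha^\pm_{\M^{\op}}$ correspond to $\alpha^\mp_\M$ after dualizing, so $Z_{\M^{\op}}$ is the adjoint (transpose) of $Z_\M$. This is visible on coupling matrices as $z^{\M^{\op}}=(z^\M)^{t}$, up to the dual convention in the second variable. At the level of residual data, this means that if $L_{\mathcal P}$ has residual data $(B,D,\rho)$, then $L_{\mathcal P^{\op}}$ has residual data $(D,B,\rho^{-1})$. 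Indeed, the flip exchanges the two endpoint intersections $L\cap(\B\bt\be)$ and $L\cap(\be\bt\B^{\rev})$, and the graph algebra $\Delta_\rho$ goes to $\Delta_{\rho^{-1}}$, since $X^*\bt\rho(X)$ runs over $\rho^{-1}(Y)^*\bt Y$ with duals reindexed.

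Localization at $A_k\bt A_\ell$ commutes with the flip, so $\overline B,\overline D$ are swapped. Therefore $\mathcal P$ survives in the $\C$-stratum if and only if $\mathcal P^{\op}$ does, by Proposition~\ref{prop: local transition criterion}. For survivors we get $\psi_{\mathcal P^{\op}}=\psi_{\mathcal P}^{-1}:\C_\ell\to\C_k$. Applying the gauges gives
\[
\phi_{\mathcal P^{\op}}=\eta_k\psi_{\mathcal P}^{-1}\eta_\ell^{-1}=\phi_{\mathcal P}^{-1}.
\]
Summing over summands with multiplicity yields $\Theta_{\ell k}=\Theta_{k\ell}^\vee$.

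The main obstacle is the second step: identifying $L_{\mathcal P^{\op}}$ with the flipped full center as an algebra, and in particular getting $\rho^{-1}$ rather than some twist of it. The approach will be to use Lemma~\ref{lem:full center kernel}, which reduces the question to showing that $Z_{\M^{\op}}$ is the right adjoint of $Z_\M$ as a lax functor. That in turn follows by taking right adjoints in $\alpha^+_{\M^{\op}}\leftrightarrow\alpha^-_\M$ under $\B^*_{\M^{\op}}\simeq(\B^*_\M)^{\mathrm{op}}$. As a fallback, invertibility of the surviving local components allows a purely decategorified check. Via \eqref{eqn: mixed contraction coefficients}, the multiplicity of $\Delta_{g}$ in $\operatorname{Loc}_{A_k\bt A_\ell}(L_{\B_{A_k}}\odot L_{\B_{A_\ell}})$ is expressed through the spaces $\Hom_\B(U_\ell(Y),U_k(Z))$. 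Swapping $k,\ell$ and using semisimple duality $\dim\Hom(a,b)=\dim\Hom(b,a)$ identifies the multiplicity of $g$ in $\Theta_{k\ell}$ with the multiplicity of $g^{-1}$ in $\Theta_{\ell k}$.
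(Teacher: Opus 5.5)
Your main argument is correct and is essentially the paper's proof. You take the multiplicity-preserving bijection $\mathcal P\mapsto\mathcal P^{\op}$ coming from $\T_{k\ell}^{\op}\simeq\T_{\ell k}$, reverse the localized residual data $(B,D,\rho)\mapsto(D,B,\rho^{-1})$, and conclude $\phi_{\mathcal P^{\op}}=\eta_k\psi_{\mathcal P}^{-1}\eta_\ell^{-1}=\phi_{\mathcal P}^{-1}$; the paper asserts this reversal under duality without the extra justification via $Z_{\M^{\op}}$ that you sketch.

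Your decategorified fallback would not work on its own. The object underlying $\operatorname{Loc}_{A_k\bt A_\ell}(L_{\B_{A_k}}\odot L_{\B_{A_\ell}})$ does not in general determine its decomposition into connected Lagrangian summands. Nor does it distinguish $\Delta_g$ from $\Delta_{g'}$ when $g,g'$ act identically on simple objects. The algebra-level identification of $L_{\mathcal P^{\op}}$ is therefore genuinely needed.
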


\begin{proof}
Duality identifies $\T_{k\ell}^{\vee}\simeq\T_{\ell k}$ and sends
$\mathcal P$ to $\mathcal P^{\vee}$, preserving multiplicities. It reverses
the localized residual data, taking the residual equivalence to its
inverse. Thus surviving summands correspond, and their Picard labels
satisfy $\phi_{\mathcal P^{\vee}}=
\eta_k\psi_{\mathcal P}^{-1}\eta_\ell^{-1}=\phi_{\mathcal P}^{-1}$.
Summation proves the identity. In particular,
$\varepsilon(\Theta_{k\ell})=\varepsilon(\Theta_{\ell k})$.
\end{proof}

\begin{corollary}
\label{cor: scalar sandwich strata}
Suppose $G=1$. Put $e_{ij}=x_{ij}(1)$ and
$m_{k\ell}=\varepsilon(\Theta_{k\ell})$. The stratum ring has basis $e_{ij}$
with multiplication
\begin{equation}
\label{eqn: trivial group sandwich}
e_{ik}e_{\ell j}=m_{k\ell}e_{ij}.
\end{equation}
Under $e_{ij}\leftrightarrow E_{ij}$, it is the integral matrix sandwich
ring $X*Y=XM_\C Y$, where $M_\C=(m_{k\ell})$ is symmetric.
If $\C$ is maximal, then $m_{k\ell}=\dim_k\Hom_\B(A_\ell,A_k)$.
\end{corollary}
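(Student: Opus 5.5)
When $G=1$, the statement specializes Theorem~\ref{thm: stratum multiplication}, Lemma~\ref{lem: transition duality} and Proposition~\ref{prop: all invertible transition components}.

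\emph{Basis and multiplication.} Since $G=1$, every $H_i$ is trivial and each block $S_{ij}\cong\mathbb Z[H_j\backslash G/H_i]$ from \eqref{eqn: Sij} has rank one, spanned by $e_{ij}=x_{ij}(1)$. Hence the $e_{ij}$, $i,j\in I$, form a basis of $S_\C(\B)=\bigoplus_{i,j}S_{ij}$. Also $\mathbb Z[G]=\mathbb Z$, and $\Theta_{k\ell}$ is the integer $\varepsilon(\Theta_{k\ell})=m_{k\ell}$. Substituting $g=h=1$ into \eqref{eqn: stratum multiplication} gives $e_{ik}e_{\ell j}=x_{ij}(m_{k\ell})=m_{k\ell}e_{ij}$, using $\mathbb Z$-linearity of $x_{ij}$ from \eqref{eqn: xij}.

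\emph{Matrix sandwich.} Under $e_{ij}\leftrightarrow E_{ij}$ we have $E_{ik}M_\C E_{\ell j}=m_{k\ell}E_{ij}$. Extending bilinearly, the product of $X=\sum x_{ik}E_{ik}$ and $Y=\sum y_{\ell j}E_{\ell j}$ is $XM_\C Y$.

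\emph{Symmetry.} By Lemma~\ref{lem: transition duality}, $\Theta_{\ell k}=\Theta_{k\ell}^\vee$. Applying the augmentation, which is invariant under $\vee$, gives $m_{\ell k}=m_{k\ell}$.

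\emph{Maximal stratum.} This is the only step with content. Suppose $\C$ is maximal in $\mathfrak{Loc}(\B)$. Take a summand $\mathcal P\subset\T_{k\ell}$ with localized residual endpoints $\overline B\in\C_k$ and $\overline D\in\C_\ell$. By transitivity of localization, $\B_B^{\loc}\simeq(\C_k)^{\loc}_{\overline B}$, so $\B_B^{\loc}$ is a localization of $\C$ and hence lies in $\mathfrak{Loc}(\B)$ above $\C$. Maximality forces $\B_B^{\loc}\simeq\C$. The dimension formula \eqref{eqn:local-module-dimension} then gives $\FPdim(\overline B)=1$, so $\overline B=\be$; the same argument gives $\overline D=\be$.

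By Proposition~\ref{prop: local transition criterion}, every summand therefore contributes to the $\C$-stratum. This is condition~(1) of Proposition~\ref{prop: all invertible transition components}, and condition~(4) of that proposition yields $m_{k\ell}=\dim_k\Hom_\B(A_\ell,A_k)$.

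The only subtlety is the maximality step: we must invoke the transitivity of localization of \cite[Proposition~3.16]{DMNO} to see that $(\C_k)^{\loc}_{\overline B}$ is a genuine localization of $\B$ lying above $\C$ in the poset.
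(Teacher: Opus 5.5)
Your proposal is correct and follows the paper's proof: the multiplication comes from Theorem~\ref{thm: stratum multiplication} at $g=h=1$, symmetry from Lemma~\ref{lem: transition duality}, and the maximal case from condition~(4) of Proposition~\ref{prop: all invertible transition components}. Your argument that maximality forces $\overline B=\be=\overline D$ is the same dimension count as in the proof of Proposition~\ref{prop: local transition criterion}, and it supplies the step the paper states only as ``every local transition component is invertible.''
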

\begin{proof}
The multiplication follows from
Theorem~\ref{thm: stratum multiplication}, and symmetry follows from
Lemma~\ref{lem: transition duality}. 
If $\C$ is maximal, then every local transition component is
invertible, so Proposition~\ref{prop: all invertible transition components}
gives
\[
m_{k\ell}
=\varepsilon(\Theta_{k\ell})
=\dim_k\Hom_\B(A_\ell,A_k).\qedhere
\]
\end{proof}

\begin{lemma}
\label{lem: sandwich radical}
Let $H\in\operatorname{Mat}_m(\mathbb C)$ have rank $r$, and let $R_H$ be
the vector space $\operatorname{Mat}_m(\mathbb C)$ with multiplication
$X*Y=XHY$. Then
\[
\operatorname{Jac}(R_H)
=\{X\in\operatorname{Mat}_m(\mathbb C)\mid HXH=0\},
\]
\[
R_H/\operatorname{Jac}(R_H)\cong\operatorname{Mat}_r(\mathbb C),
\qquad
\dim_{\mathbb C}\operatorname{Jac}(R_H)=m^2-r^2,
\qquad
\operatorname{Jac}(R_H)^3=0.
\]
In particular, $R_H$ is semisimple if and only if $H$ is nonsingular.
\end{lemma}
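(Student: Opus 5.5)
We plan to reduce to the normal form $H=\begin{pmatrix}I_r&0\\0&0\end{pmatrix}$. Choose invertible $P,Q\in\operatorname{GL}_m(\mathbb C)$ with $H=PH_0Q$, where $H_0$ is this block matrix. The map $X\mapsto QXP$ is a linear bijection, and it satisfies
\[
Q(XHY)P=(QXP)H_0(QYP).
\]
Hence it is an algebra isomorphism $R_H\cong R_{H_0}$. The condition $HXH=0$ becomes $H_0(QXP)H_0=0$, because $HXH=P\,H_0(QXP)H_0\,Q$. So it suffices to prove everything for $H=H_0$.

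For $H=H_0$, write $X=\begin{pmatrix}a&b\\c&d\end{pmatrix}$ in blocks of sizes $r$ and $m-r$. Then
\[
X*Y=\begin{pmatrix}a\\c\end{pmatrix}\begin{pmatrix}a'&b'\end{pmatrix},
\]
so the $(1,1)$ block of $X*Y$ is $aa'$. Define $\pi\colon R_{H_0}\to\operatorname{Mat}_r(\mathbb C)$ by $X\mapsto a$. It is a surjective algebra homomorphism. Its kernel $N=\{a=0\}=\{H_0XH_0=0\}$ is a two-sided ideal of dimension $m^2-r^2$.

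We next show $N^3=0$. For $X,Y,Z\in N$ the product $X*Y$ has $(1,1)$ block zero, and its only possibly nonzero block is $cb'$ in position $(2,2)$. Multiplying this by $H_0$ on either side kills it, so $(X*Y)*Z=0$. Thus $N$ is a nilpotent ideal, and $N\subseteq\operatorname{Jac}$. Since $R/N\cong\operatorname{Mat}_r(\mathbb C)$ is semisimple, $\operatorname{Jac}\subseteq N$. Hence $\operatorname{Jac}=N$, and transporting back through the isomorphism gives all the claims.

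There is one degenerate case. If $r=0$, then $R_H$ has zero multiplication and everything is radical, which is consistent with the formulas. For semisimplicity, $R_H$ is semisimple iff $\operatorname{Jac}=0$ iff $m^2=r^2$ iff $H$ is nonsingular. The only point needing care is the verification that $X\mapsto QXP$ intertwines the sandwich products, together with the translation of the kernel condition; both are one-line computations.
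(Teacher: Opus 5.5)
Your proposal is correct and follows essentially the same route as the paper's proof. Both reduce to $H_0=\operatorname{diag}(I_r,0)$ via the isomorphism $X\mapsto QXP$ (the paper writes $X\mapsto Q^{-1}XP^{-1}$ with $H_0=PHQ$), then take the upper-left $r\times r$ block as a surjective algebra map onto $\operatorname{Mat}_r(\mathbb C)$, whose kernel $\{H_0XH_0=0\}$ satisfies $N^3=0$ and is therefore the radical; your explicit block products and the transfer of the condition $HXH=0$ back to $H$ spell out details the paper leaves implicit.
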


\begin{proof}
Invertible row and column operations reduce $H$ to
$H_0=\operatorname{diag}(I_r,0)$. If $H_0=PHQ$, then
$X\mapsto Q^{-1}XP^{-1}$ is an algebra isomorphism $R_H\to R_{H_0}$, so it
suffices to consider $H_0$. Projection to the upper-left $r\times r$ block is
then a surjective algebra homomorphism
$R_{H_0}\to\operatorname{Mat}_r(\mathbb C)$. Its kernel is
$\{X\mid H_0XH_0=0\}$, has dimension $m^2-r^2$, and the product of any three
of its elements is zero. Since the quotient is semisimple and the kernel is
nilpotent, the kernel is the Jacobson radical.
\end{proof}

\begin{corollary}
\label{cor: maximal Gram sandwich}
Suppose $\B=\Z(\A)$ and $\C=\Vect$, and let $L_1,\ldots,L_m$
represent all isomorphism classes of Lagrangian algebras in $\B$.
The maximal stratum ring has basis $x_{ij}=x_{L_i,L_j}$ and multiplication
\begin{equation}
\label{eqn: maximal Gram sandwich}
x_{ij}x_{k\ell}=H_{jk}x_{i\ell},
\qquad H_{jk}=\dim_k\Hom_\B(L_j,L_k).
\end{equation}
Thus the integral stratum ring is the matrix sandwich ring with Gram matrix
$H$, and
\[
\ker H=\left\{(\lambda_i)\in\mathbb C^m\ \middle|\
\sum_i\lambda_i[L_i]=0
\text{ in }K_0(\B)\ot_{\mathbb Z}\mathbb C\right\}.
\]
The complexified maximal stratum algebra is semisimple if and only if the underlying
classes $[L_i]$ are linearly independent. Its radical and semisimple
quotient are given by Lemma~\ref{lem: sandwich radical}, with
$r=\dim_\mathbb C\langle[L_1],\ldots,[L_m]\rangle$.
\end{corollary}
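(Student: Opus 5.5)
The plan is to specialize Corollary~\ref{cor: scalar sandwich strata} and Lemma~\ref{lem: sandwich radical} to the maximal stratum $\C=\Vect$. Since $\B=\Z(\A)$ is Witt trivial, $\Vect$ is the greatest element of $\mathfrak{Loc}(\B)$ by \cite[Theorem~5.14]{DMNO}. Its fiber in $\mathfrak{Et}(\B)$ consists exactly of the Lagrangian algebras $L_1,\dots,L_m$, because $\B_A^\loc\simeq\Vect$ is the definition of Lagrangian. Here $G=\Aut^{\mathrm{br}}(\Vect)$ is trivial, so every $H_i$ is trivial and each block $S_{ij}$ has rank one, spanned by $x_{ij}=x_{ij}(1)=x_{L_i,L_j}$.

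Next I apply Corollary~\ref{cor: scalar sandwich strata} with $\C$ maximal. This gives $x_{ik}x_{\ell j}=m_{k\ell}x_{ij}$ with $m_{k\ell}=\dim_k\Hom_\B(L_\ell,L_k)$. After relabeling indices to the form in \eqref{eqn: maximal Gram sandwich}, the coefficient is $\dim_k\Hom_\B(L_k,L_j)$. This equals $H_{jk}=\dim_k\Hom_\B(L_j,L_k)$ by semisimplicity, or by the symmetry of $M_\C$ already recorded in that corollary. The identification with the sandwich ring $X*Y=XHY$ then follows under $x_{ij}\leftrightarrow E_{ij}$.

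For the kernel description, I view $H$ as a Gram matrix. Write $[L_i]=\sum_T n_{iT}[T]$ in the simple basis of $K_0(\B)$. Then $H_{jk}=\sum_T n_{jT}n_{kT}$, so $H=NN^{\mathsf T}$ with $N=(n_{iT})$ a real integer matrix. The form $\langle\cdot,\cdot\rangle$ on $K_0(\B)\otimes\mathbb R$ given by $\dim\Hom$ is positive definite in the simple basis. For a complex vector $\lambda$, $H\lambda=0$ implies $\bar\lambda^{\mathsf T}NN^{\mathsf T}\lambda=\|N^{\mathsf T}\lambda\|^2=0$, hence $N^{\mathsf T}\lambda=0$, which says $\sum_i\lambda_i[L_i]=0$. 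The converse inclusion is immediate. Consequently $\operatorname{rank}H=\operatorname{rank}N=\dim_\mathbb C\langle[L_1],\dots,[L_m]\rangle$. The transpose convention (row versus column kernel) is harmless because $H$ is symmetric.

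Finally, Lemma~\ref{lem: sandwich radical} with this $r$ gives the radical and the semisimple quotient. Semisimplicity of the complexified stratum holds exactly when $H$ is nonsingular, which by the kernel description is linear independence of the $[L_i]$. The only step needing care is the index bookkeeping in translating $e_{ik}e_{\ell j}=m_{k\ell}e_{ij}$ into the stated form. The one substantive input is positivity, which is what makes $\ker H$ coincide with the relation space rather than merely contain it.
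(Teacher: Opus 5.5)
Your proposal is correct and follows essentially the paper's proof. You specialize Corollary~\ref{cor: scalar sandwich strata} with $G=\Aut^{\mathrm{br}}(\Vect)=1$, identify $H=NN^{\mathsf t}$ as the Gram matrix of the multiplicity vectors so that its kernel is the space of linear relations among the $[L_i]$, and then invoke Lemma~\ref{lem: sandwich radical}; your Hermitian-norm argument directly over $\mathbb C$ is only a more explicit form of the paper's step ``kernel over $\mathbb R$, then extend scalars.''
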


\begin{proof}
Apply Corollary~\ref{cor: scalar sandwich strata} with
$G=\Aut^{\mathrm{br}}(\Vect)=1$. The entries of $H$ are the dot products
of the integer multiplicity vectors of the $L_i$ in the simple object
basis of $K_0(\B)$. Over $\mathbb R$, the kernel of this Gram matrix is
the space of relations between those vectors; extending scalars to
$\mathbb C$ gives the displayed kernel. The remaining assertions follow
from Lemma~\ref{lem: sandwich radical}.
\end{proof}

For \(\B=\Z(\Vec_G)\), the maximal stratum specializes to what \cite{NikshychBurnside2026} calls the bottom ideal, and \(H\) to the Lagrangian Gram matrix studied there.

\begin{remark}
\label{rem: Tannakian regular endpoint}
Let $A$ be the regular algebra of a Tannakian subcategory
$\Rep(\Gamma)\subset\B$. Then $\B_A$ is the faithfully $\Gamma$-graded
de-equivariantization of $\B$, with neutral component
$\C=\B_A^{\loc}$. Every homogeneous component is an invertible
$\C$-bimodule category. By
Propositions~\ref{prop:diagonal component correspondence}
and~\ref{prop:diagonal local transition component}, these are precisely the
diagonal transition components. Consequently the entire product in
$K^\mmod(\B)$ of any two residual classes $(A,A,\phi)$ and $(A,A,\psi)$ lies
in the $\C$-stratum. Theorem~\ref{thm: automorphisms etale algebra} and the
grading give
\begin{equation}
\label{eqn: Tannakian transition element}
\Theta_A
=\sum_{\alpha\in\Aut_{\alg}(A)}\ind_A(\alpha)
=|\ker(\ind_A)|\sum_{h\in H}h,
\qquad H=\operatorname{Im}(\ind_A).
\end{equation}
Thus the diagonal block has the corresponding scaled ordinary Hecke
multiplication already in the full module Grothendieck ring.

More generally, whenever the support of a diagonal $\Theta_A$ is contained
in $H$, $H$-bi-invariance forces
$\Theta_A=m_A\sum_{h\in H}h$ for some $m_A>0$, and the corresponding diagonal block is
a scaled Hecke ring.
\end{remark}

\section{Illustrations from classical geometry}
\label{sect: Example classical}

We illustrate the results of Section~\ref{sect:The Grothendieck ring of module categories}
using the classical geometry of totally isotropic subspaces of a split
quadratic space. Throughout this section, $p$ is an odd prime,
$V=\mathbb F_p^n$, and $\B=\Z(\Vec_V)$.

\subsection{Localization and stratum multiplication}
\label{subsect: orthogonal localization data}

Choose a primitive $p$-th root of unity $\zeta\in k^\times$ and put
\[
E=\mathbb H(V)=V\oplus V^*,
\qquad Q(v,f)=f(v)\in\mathbb F_p,
\qquad \mathbf q(v,f)=\zeta^{Q(v,f)}.
\]
Let $b(x,y)=Q(x+y)-Q(x)-Q(y)$ be the polar form of $Q$; all orthogonal
complements below refer to $b$. There is a standard braided equivalence
$\B\simeq\C(E,\mathbf q)$ \cite[Section~2.11]{DGNO2}.
For each totally isotropic subspace $U\leq E$, let $A_U$ be the connected
\'etale algebra with underlying object $\bigoplus_{u\in U}u$. Its algebra
structure is unique up to algebra isomorphism, and every connected
\'etale algebra in $\B$ is of this form. The corresponding localization is
$\B_{A_U}^{\loc}\simeq\C(E_U,\mathbf q_U)$, where \(E_U=U^\perp/U\) and \(\mathbf q_U\) is induced from \(\mathbf q\).

If $\dim U=r$, then $E_U$ is split of dimension $2(n-r)$. Put
$X_r=\{U\leq E\mid\dim U=r,\ Q|_U=0\}$ for $0\leq r\leq n$.
The localization poset is the chain indexed by $r=0,\ldots,n$ 
and the fiber over the localization of Frobenius--Perron dimension
$p^{2(n-r)}$ is the polar Grassmannian $X_r$.

For $U,W\in X_n$, the algebras $A_U,A_W$ are Lagrangian and the residual
category is $\Vect$. Corollary~\ref{cor: maximal Gram sandwich} gives
\begin{equation}
\label{eqn: maximal orthogonal transition matrix}
\Theta^{(n)}_{UW}
=\dim_k\Hom_\B(A_U,A_W)
=|U\cap W|=p^{\dim(U\cap W)}.
\end{equation}
Thus the maximal stratum is the integral matrix sandwich ring with
product $X*Y=X\Theta^{(n)}Y$.

For $n\geq1$, the projectivizations of the maximal totally isotropic
subspaces are traditionally called the \emph{generators} of the hyperbolic
quadric $Q^+(2n-1,p)$; we use the same term for the corresponding vector
subspaces. They form two systems of generators
$X_n=X_n^+\sqcup X_n^-$: two generators $U,W$ belong to the same system
if and only if $n-\dim(U\cap W)$ is even
\cite[Theorem~7.12(i)]{TitsBuildings}. Fixing one generator determines
the two systems by this parity rule; the unordered partition is independent
of that choice. For $n=1$ the systems consist of the two singular lines.

The \emph{full dual polar graph} has vertex set $X_n$, with distinct vertices
adjacent when their intersection has dimension $n-1$. Its distance is
$d(U,W)=n-\dim(U\cap W)$ \cite[Section~9.4]{BCN}. Every edge joins
opposite systems, so the graph is bipartite with parts $X_n^+,X_n^-$.
For $n=0$ there is a single vertex and no partition into two systems. Let $R_i$
denote the distance-$i$ matrix, for $0\leq i\leq n$, so $R_0$ is the
identity matrix. Since $\Theta^{(n)}_{UW}=p^{n-d(U,W)}$, we have
\[
\Theta^{(n)}=\sum_{i=0}^n p^{n-i}R_i.
\]

Let $N$ have rows indexed by $U\in X_n$ and columns indexed by all vectors
$z\in E$, including zero, with $N_{U,z}=\mathbf1_U(z)$, where
$\mathbf1_U$ is the indicator function of $U$. Since
$(NN^{\mathsf t})_{UW}=|U\cap W|$, we obtain
\begin{equation}
\label{eqn: maximal orthogonal Gram factorization}
\Theta^{(n)}=NN^{\mathsf t}.
\end{equation}
In particular, $\Theta^{(n)}$ is positive semidefinite and, over
$\mathbb C$, $\ker\Theta^{(n)}=\ker N^{\mathsf t}$. This is the space of
linear relations among the classes $[A_U]$ in
$K_0(\B)\otimes_{\mathbb Z}\mathbb C$.

In a split four-dimensional quadratic space, the maximal totally isotropic
planes projectivize to the generator lines of $Q^+(3,p)$. Their two
systems are called the \emph{rulings}.

\begin{proposition}
\label{prop: orthogonal ruling relations}
Suppose $n\geq2$. For $T\in X_{n-2}$, the maximal isotropic subspaces
containing $T$ correspond to the maximal isotropic planes of the split
four-dimensional quotient $T^\perp/T$. Its two rulings are the restrictions
of $X_n^+,X_n^-$, up to interchanging their names, and
\begin{equation}
\label{eqn: orthogonal ruling relations}
\sum_{\substack{U\in X_n^+\\T\subseteq U}}[A_U]
=
\sum_{\substack{U\in X_n^-\\T\subseteq U}}[A_U]
\quad\text{in }K_0(\B).
\end{equation}
As $T$ varies, these integral identities span over $\mathbb C$ all linear
relations among the classes $[A_U]$.
\end{proposition}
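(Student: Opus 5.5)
The argument has three parts: the quotient correspondence and the ruling assertion, the integral identity \eqref{eqn: orthogonal ruling relations}, and the spanning claim. For the first, fix $T\in X_{n-2}$. Any totally isotropic $U\supseteq T$ satisfies $U\subseteq T^\perp$, so $U\mapsto U/T$ is an inclusion-preserving bijection between totally isotropic subspaces of $E$ containing $T$ and totally isotropic subspaces of $T^\perp/T$. The quotient $T^\perp/T$ is split of dimension $4$, and $U\in X_n$ corresponds to a maximal isotropic plane. For the rulings, use
\[
\dim(U\cap W)=\dim T+\dim\bigl(U/T\cap W/T\bigr).
\]
It gives $n-\dim(U\cap W)=2-\dim(U/T\cap W/T)$. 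By the parity rule \cite[Theorem~7.12(i)]{TitsBuildings}, applied in $E$ and in $T^\perp/T$, two generators through $T$ lie in the same system of $E$ exactly when their images lie in the same ruling. So the two rulings are the restrictions of $X_n^\pm$.

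For the identity, I identify $K_0(\B)$ with $\mathbb Z^E$ via the simple objects. Then $[A_U]=\mathbf 1_U$, and it suffices to show that for every $z\in E$ the number of $U\in X_n^+$ with $T\subseteq U\ni z$ equals the corresponding number for $X_n^-$. I would split into cases.
\begin{enumerate}
\item If $z\notin T^\perp$, both counts are $0$.
\item If $z\in T$, both counts are the number of lines in one ruling of $Q^+(3,p)$, namely $p+1$.
\item If $z\in T^\perp\setminus T$ and $\bar z\in T^\perp/T$ is nonsingular, both counts are $0$.
\item If $\bar z$ is singular and nonzero, the point $\langle\bar z\rangle$ of $Q^+(3,p)$ lies on exactly one line of each ruling, so both counts are $1$.
\end{enumerate}
This is a standard fact about the hyperbolic quadric.

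For the spanning claim, recall from \eqref{eqn: maximal orthogonal Gram factorization} that the relation space is $\ker N^{\mathsf t}=\ker\Theta^{(n)}$. Let $\epsilon(U)=\pm1$ according to $U\in X_n^\pm$, and let $M$ be the incidence matrix between $X_{n-2}$ and $X_n$. The relation vectors are the rows $r_T=(\epsilon(U)M_{T,U})_U$, and the span $W$ of these rows is contained in $\ker\Theta^{(n)}$ by the identity just proved. It remains to show
\[
\dim W=\dim\ker\Theta^{(n)}.
\]
I would do this inside the Bose--Mesner algebra of the dual polar graph of type $D_n(p)$, which is bipartite and distance-regular \cite[Section~9.4]{BCN}. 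Let $V_0,\dots,V_n$ be its common eigenspaces, with eigenvalues $P_j(i)$ on $R_i$.

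\emph{Step 1.} Since $\Theta^{(n)}=\sum_i p^{n-i}R_i$, the kernel $\ker\Theta^{(n)}$ is the sum of those $V_j$ with $\sum_i p^{n-i}P_j(i)=0$. I expect this to be all $j\geq2$, after possibly relabelling by $j\mapsto n-j$.

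\emph{Step 2.} Multiplication by $\epsilon$ is the standard bipartite involution, and it maps $V_j$ to $V_{n-j}$. The column space of $M^{\mathsf t}$, i.e.\ the span of the indicator vectors of the sets $\{U\in X_n\mid U\supseteq T\}$, is a sum of eigenspaces $V_0\oplus\cdots\oplus V_s$ in the natural ordering; I expect $s=2$. Then $W=\epsilon\cdot\operatorname{Im}M^{\mathsf t}$ is the sum of the corresponding $\epsilon$-swapped eigenspaces.

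\emph{Step 3.} Match the two sums of eigenspaces from Steps 1 and 2.

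The main obstacle is this matching. It requires the explicit eigenvalues of the $D_n(p)$ dual polar scheme, and a computation of which eigenspaces occur in $\operatorname{Im}M^{\mathsf t}$, by checking when $MM^{\mathsf t}$ restricted to $V_j$ is nonzero using the $q$-Krawtchouk-type formulas. If the eigenvalue bookkeeping becomes unwieldy, I would instead try induction on $n$: restrict a vector in $\ker N^{\mathsf t}$ to the generators through a fixed singular point $z$, apply the statement to $z^\perp/z$, which has rank $n-1$, and subtract ruling relations. The base case $n=2$ is the direct check that $\ker\Theta^{(2)}$ is one-dimensional, spanned by $\sum_{X_2^+}\mathbf 1_U-\sum_{X_2^-}\mathbf 1_U$.
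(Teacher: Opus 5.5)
Your first two parts are correct and match the paper. The residue correspondence, the parity argument for the rulings, and the four-case coefficient comparison establish \eqref{eqn: orthogonal ruling relations} as the paper does.

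The gap is in the spanning claim, which you do not actually prove. Steps~1 and~2 are only expectations, Step~3 is left open, and the expectation in Step~2 is wrong. Order the eigenvalues of $R_1$ decreasingly. Then the span of the vectors $f_S=\sum_{U\supseteq S}e_U$, $S\in X_t$, is $V_0\oplus\cdots\oplus V_t$; this is the incidence decomposition of \cite[Subsection~2.4]{VanhoveAntidesigns}. So for your $M$ the correct value is $s=n-2$, not $s=2$. Already for $n=2$ one has $X_0=\{0\}$, so $\operatorname{Im}M^{\mathsf t}=\mathbb C\mathbf 1=V_0$. With $s=2$ you would get $W=V_{n-2}\oplus V_{n-1}\oplus V_n$, which is not $\ker\Theta^{(n)}$ unless $n=4$. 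With $s=n-2$, your bipartite involution gives $W=\epsilon(V_0\oplus\cdots\oplus V_{n-2})=V_2\oplus\cdots\oplus V_n$.

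The idea you are missing for Step~1 is that no eigenvalue of $\Theta^{(n)}$ needs to be computed. You already have $\ker\Theta^{(n)}=\ker N^{\mathsf t}=(\operatorname{col}N)^\perp$, so it suffices to identify $\operatorname{col}N$ directly:
\begin{enumerate}
\item the column of $N$ at a singular $z\neq0$ is $f_{\langle z\rangle}$;
\item the column at a nonsingular $z$ is zero;
\item the column at $z=0$ is $\mathbf 1=\frac{p-1}{p^n-1}\sum_{S\in X_1}f_S$.
\end{enumerate}
Hence $\operatorname{col}N=\operatorname{span}\{f_S: S\in X_1\}=V_0\oplus V_1$ by the $t=1$ case of the same decomposition. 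Orthogonality of the eigenspaces of the real symmetric matrix $R_1$ then gives $\ker\Theta^{(n)}=V_2\oplus\cdots\oplus V_n=W$. This is the paper's argument. It uses only the cases $t=1$ and $t=n-2$ of the incidence decomposition plus the involution, with no $q$-Krawtchouk bookkeeping.

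Your fallback induction also fails as stated. Restricting $\lambda\in\ker N^{\mathsf t}$ to the generators through $z$ gives a relation in $z^\perp/z$ only if $\lambda$ sums to zero over the generators through each totally isotropic plane containing $z$. That is strictly stronger than the point conditions defining $\ker N^{\mathsf t}$. For $n=2$, the sign vector restricts to $(1,-1)$ on the two generators through $z$. This is not a relation in the rank-one residue, whose Gram matrix $\begin{pmatrix}p&1\\1&p\end{pmatrix}$ is nonsingular.
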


\begin{proof}
To prove \eqref{eqn: orthogonal ruling relations}, compare coefficients
in the simple object basis of $K_0(\B)$. Fix $z\in E$. If $z\in T$,
the coefficient is $p+1$ on each side, since each ruling of $T^\perp/T$
contains $p+1$ generators. If $z\in T^\perp\setminus T$ and $Q(z)=0$,
then $\langle z+T\rangle$ is a point of the hyperbolic quadric $Q^+(3,p)$
in $\mathbb P(T^\perp/T)$. Exactly one generator from each ruling contains
this point, so exactly one term on each side contains $z$, giving
coefficient $1$. In every other case the coefficient is $0$ on both sides.

The full dual polar graph is distance-regular of diameter $n$, and its
adjacency matrix $R_1$ has $n+1$ distinct eigenvalues
\cite[Section~9.4]{BCN}. Let $V_0,\ldots,V_n$ be the corresponding
eigenspaces, ordered by decreasing eigenvalue. Let
$\{e_U\}_{U\in X_n}$ be the standard basis of $\mathbb C^{X_n}$.
For $0\leq t\leq n$, let $C_t$ be the incidence matrix with rows indexed
by $S\in X_t$, columns indexed by $U\in X_n$, and entry $1$ if
$S\subseteq U$ and $0$ otherwise. Its row indexed by $S$ is
$f_S:=\sum_{U\in X_n,\ S\subseteq U}e_U$.
The incidence decomposition of \cite[Subsection~2.4]{VanhoveAntidesigns} gives
\[
\operatorname{Im}(C_t^{\mathsf t})
=\operatorname{span}\{f_S:S\in X_t\}
=V_0\oplus\cdots\oplus V_t.
\]

Define the sign operator $D$ by $De_U=e_U$ for $U\in X_n^+$ and
$De_U=-e_U$ for $U\in X_n^-$. Since the graph is bipartite,
$DR_1D=-R_1$. Thus $D$ sends an eigenvector with eigenvalue $\theta$ to one with
eigenvalue $-\theta$. The decreasing order of the eigenvalues then gives
$D(V_j)=V_{n-j}$. For $T\in X_{n-2}$, the vectors $f_T$ span
$V_0\oplus\cdots\oplus V_{n-2}$ by the $t=n-2$ case of the incidence
decomposition. Their images under $D$ therefore span
\[
D(V_0\oplus\cdots\oplus V_{n-2})=V_2\oplus\cdots\oplus V_n.
\]
But
\[
Df_T=
\sum_{\substack{U\in X_n^+\\T\subseteq U}}e_U
-\sum_{\substack{U\in X_n^-\\T\subseteq U}}e_U
\]
is exactly the difference of the two ruling sums attached to $T$. Hence these
vectors span $V_2\oplus\cdots\oplus V_n$.

For $z\neq0$ with $Q(z)=0$, the column of $N$ indexed by $z$ is
$f_{\langle z\rangle}$, since $z\in U$ if and only if
$\langle z\rangle\subseteq U$.  If $Q(z)\neq0$, no totally isotropic subspace
contains $z$, so that column is zero. The column at $z=0$ is the
all-ones vector $\mathbf1$, which is already in the span of the
$f_S$ for $S\in X_1$: every generator contains
$(p^n-1)/(p-1)$ one-dimensional subspaces, so
\[
\sum_{S\in X_1}f_S=\frac{p^n-1}{p-1}\,\mathbf1.
\]
Consequently, the $t=1$ case of the same incidence decomposition gives
\[
\operatorname{col}(N)
=\operatorname{span}\{f_S:S\in X_1\}=V_0\oplus V_1.
\]
Since the adjacency matrix $R_1$ is real symmetric, its distinct
eigenspaces are mutually orthogonal. The Gram factorization therefore gives
\[
\ker\Theta^{(n)}=\ker N^{\mathsf t}
=(\operatorname{col}N)^\perp=V_2\oplus\cdots\oplus V_n.
\]
Thus these ruling differences span all relations.
\end{proof}

For $n\geq2$, Proposition~\ref{prop: orthogonal ruling relations} also gives
a geometric description of the radical of the complexified maximal stratum. Under $x_{U,W}\leftrightarrow e_U\otimes e_W$,
Lemma~\ref{lem: sandwich radical} identifies the radical of
$S_{\Vect}(\B)\otimes_{\mathbb Z}\mathbb C$ with
\[
(\ker\Theta^{(n)})\otimes\mathbb C^{X_n}
+\mathbb C^{X_n}\otimes(\ker\Theta^{(n)}).
\]
Thus, for each fixed $W$, replacing $[A_U]$ in
\eqref{eqn: orthogonal ruling relations} by $x_{U,W}$ or by $x_{W,U}$
gives two types of ruling differences that together linearly span the
radical over $\mathbb C$.

We now turn to the general localization stratum. Fix $0\leq r\leq n$,
choose a split quadratic space
$E_r\simeq\mathbb H(\mathbb F_p^{n-r})$, and choose isometries
\begin{equation}
\label{eqn: orthogonal gauges}
\eta_U:E_U\xrightarrow{\sim}E_r
\qquad(U\in X_r).
\end{equation}
Put $G_r=O(E_r,Q_r)$, so $G_r\cong O^+_{2(n-r)}(\mathbb F_p)$ for $r<n$
and $G_n=1$. This is the group of braided autoequivalences of the
localization up to braided tensor isomorphism
\cite[Section~2.11]{DGNO2}.

For $U,W\in X_r$, the pairing
$U/(U\cap W)\times W/(U\cap W)\to\mathbb F_p$ induced by $b$ is between
vector spaces of the same dimension. Its left and right radicals are
$(U\cap W^\perp)/(U\cap W)$ and $(W\cap U^\perp)/(U\cap W)$,
respectively. Hence one radical vanishes if and only if the other does,
and either condition is equivalent to nondegeneracy of the pairing.
Thus $U\cap W^\perp=U\cap W$ if and only if $W\cap U^\perp=U\cap W$. The form $Q$ descends to
$(U+W)^\perp/(U\cap W)$. When $U\cap W^\perp=U\cap W$, the natural quotient
maps from $(U+W)^\perp/(U\cap W)$ to $E_U$ and $E_W$ are isometries:
their kernels are the two radical quotients, all three spaces have
dimension $2(n-r)$, and the maps preserve the induced quadratic forms.
Consequently there is a canonical isometry
\begin{equation}
\label{eqn: canonical residual isometry}
\psi_{UW}:E_U\xrightarrow{\sim}E_W,
\qquad \psi_{UW}(z+U)=z+W
\quad(z\in(U+W)^\perp).
\end{equation}
Every class in $E_U$ has such a representative, and its image is
independent of that representative. We have
$\psi_{WU}=\psi_{UW}^{-1}$ and $\psi_{UU}=\id$.
Under the chosen identifications \(\eta_U\), this isometry corresponds to
\begin{equation}
\label{eqn: orthogonal transport labels}
g_{UW}:=\eta_W\psi_{UW}\eta_U^{-1}\in G_r.
\end{equation}

\begin{proposition}
\label{thm: orthogonal transition matrix}
For $U,W\in X_r$, the transition matrix of the $r$-th localization stratum is
\begin{equation}
\label{eqn: orthogonal transition matrix}
\Theta^{(r)}_{UW}=
\begin{cases}
p^{\dim(U\cap W)}g_{UW},&U\cap W^\perp=U\cap W,\\
0,&U\cap W^\perp\supsetneq U\cap W,
\end{cases}
\end{equation}
where $g_{UW}$ is a standard basis element of $\mathbb Z[G_r]$.
The subgroups $H_U$ defined by \eqref{eqn: stratum groups} are trivial
for every $U\in X_r$.
In particular, $\Theta^{(r)}_{UU}=p^r1_{G_r}$ and
$\Theta^{(r)}_{WU}=(\Theta^{(r)}_{UW})^\vee$.
Replacing $\eta_U$ by $t_U\eta_U$ sends $g_{UW}$ to
$t_Wg_{UW}t_U^{-1}$.
\end{proposition}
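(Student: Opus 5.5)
The proof has four steps: compute the transition category $\T_{UW}=\B_{A_U}\bt_\B\B_{A_W}$ explicitly, read off which summands survive, identify their Picard labels, and then check that the subgroups $H_U$ are trivial. Since $\B\simeq\C(E,\mathbf q)$ is pointed and $A_U=\bigoplus_{u\in U}u$, we have $\B_{A_U}\simeq\Vec_{E/U}$ with $E$ acting by translation. Corollary~\ref{cor:transition component count} gives the number of summands as $\dim_k\Hom_\B(A_W,A_U)=|U\cap W|=p^{\dim(U\cap W)}$. So the multiplicity in \eqref{eqn: orthogonal transition matrix} should be exactly the full summand count whenever anything survives.

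To decide survival, use Proposition~\ref{prop: all invertible transition components}(2). Here $\C_W$ has simple objects the local modules $A_W\ot z$ with $z\in W^\perp$, one for each class in $W^\perp/W$. For such a simple object $Y$ we have $\Hom_\B(U_W(Y),A_U)\neq0$ iff $(z+W)\cap U\neq\emptyset$, i.e.\ iff the class of $z$ meets $U\cap W^\perp$. Condition (2) therefore holds iff $U\cap W^\perp\subseteq W$, that is, iff $U\cap W^\perp=U\cap W$. In that case all $p^{\dim(U\cap W)}$ summands survive, by (4).

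If instead $U\cap W^\perp\supsetneq U\cap W$, I must show that no summand survives, rather than just some failing. Since all summands are permuted transitively by translations of $E$ acting on the $E/U$-graded pieces, they should have isomorphic full centers up to the outer actions. Alternatively, compute $\overline D$ directly: by \eqref{eqn: mixed contraction coefficients} every summand's localized full center contains the nontrivial isotropic subgroup $(U\cap W^\perp+W)/W$ in its right endpoint. I would argue the second way. The summands of $\T_{UW}$ correspond to double cosets, i.e.\ to $E/(U+W)$, and each is $\C(E,\mathbf q)$-module category $\Vec_{E/(U\cap W)}$-type with Lagrangian algebra $\{(x,y): x-y\in U+W\text{-shifted}\}$. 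Its endpoints are $B=U+(W\cap U^\perp)$ and $D=W+(U\cap W^\perp)$ (as subgroups), which are strictly larger than $U$ and $W$ in this case. I expect this endpoint identification to be the main obstacle. I would obtain it by writing $L_{\mathcal P}$ as the object $\bigoplus$ over the isotropic subgroup $\{(x,y)\in E\oplus E: x+y\in U+W,\ \ldots\}$, coming from convolving the subgroups $U\oplus U^\perp$-type Lagrangians via \eqref{eqn:convolution}, and intersecting with $E\oplus0$ and $0\oplus E$.

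In the surviving case, the Picard label comes from the residual equivalence $\overline\rho$ of Proposition~\ref{prop: local transition criterion}. Via \eqref{eqn: mixed contraction coefficients}, the localized full center has components $Z^*\bt Y$ with $\Hom_\B(U_W(Y),U_U(Z))\neq0$. This means the cosets $z+U$ and $y+W$ intersect, which for $z,y\in(U+W)^\perp$ is exactly $\psi_{UW}(z+U)=y+W$. Each summand is thus the graph algebra of $\psi_{UW}$ (twisted only by translation, which acts trivially on braided classes), so every label is $g_{UW}$.

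Triviality of $H_U$ holds because $\Gamma_{A_U}$ is the image of $G_{A_U}$ acting by the crossed-braiding on the neutral component. That action is by conjugation in a pointed braided setting and hence is trivial; equivalently, automorphisms of $A_U$ are characters of $U$ and act trivially on isomorphism classes of local modules, with trivial tensor structure up to isomorphism. The remaining assertions then follow: $\Theta_{UU}=p^r\cdot1$ since $\psi_{UU}=\id$, duality is Lemma~\ref{lem: transition duality} together with $\psi_{WU}=\psi_{UW}^{-1}$, and gauge change is \eqref{eqn: transition gauge change}.
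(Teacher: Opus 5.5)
Your surviving-case argument via Proposition~\ref{prop: all invertible transition components} (conditions (2) and (4)) is sound. So is your direct proof that $\Gamma_{A_U}=1$: characters of $U$ twist each local module into an isomorphic one, and braided autoequivalences of a pointed category are detected by their isometry. Both differ from the paper. The paper reads survival off the support of $\bigoplus_a\widetilde L_{\mathcal P_a}$, and it gets $H_U=1$ from $H_U$-bi-invariance of $\Theta^{(r)}_{UU}=p^r1_{G_r}$.

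The gap is in the case $U\cap W^\perp\supsetneq U\cap W$. Both \eqref{eqn: mixed contraction coefficients} and \eqref{eqn:convolution} compute only the direct sum of the (localized) full centers of all summands of $\T_{UW}$. So they do not by themselves show that \emph{every} summand has a nontrivial endpoint, and failure of condition (2) only tells you that some summand fails. Your proposed bridge does not work:
\begin{itemize}
\item The summands are not indexed by $E/(U+W)$. That set has $p^{2(n-r)+\dim(U\cap W)}$ elements, contradicting the count $p^{\dim(U\cap W)}$ you took from Corollary~\ref{cor:transition component count}. For $U=W=0$ it would give $p^{2n}$ summands of $\B\bt_\B\B\simeq\B$. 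In fact $E/(U+W)$ indexes the simple objects within each summand.
\item Translations by $E$ do not permute the summands, since they are the $\B$-action itself and preserve each summand.
\end{itemize}

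The missing idea is multiplicity-freeness: a connected \'etale algebra in a pointed braided fusion category contains each invertible simple object at most once. Take $y\in(U\cap W^\perp)\setminus W$. The coefficient of $\be\bt(y+W)$ in $\bigoplus_{\mathcal P}\widetilde L_{\mathcal P}$ is $|(y+W)\cap U|=p^{\dim(U\cap W)}$, which is exactly the number of summands. Hence every $\widetilde L_{\mathcal P}$ contains $\be\bt(y+W)$ and has $\overline D\neq\be$, so by Proposition~\ref{prop: local transition criterion} none survives. The paper makes the same count on the whole support. The sum has homogeneous multiplicity $p^{\dim(U\cap W)}$ on a set of cardinality $|E_U|$. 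Each summand is a connected Lagrangian algebra, multiplicity-free with support of cardinality $|E_U|$, so each summand has the entire support. Your endpoint formulas $U+(W\cap U^\perp)$ and $W+(U\cap W^\perp)$ are correct, but they hold summand-by-summand only because of this argument.

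The same per-summand step is implicit in your sentence that each summand is the graph algebra of $\psi_{UW}$, and you should state it. Each surviving $\widetilde L_{\mathcal P}\cong\Delta_{\psi_{\mathcal P}}$ is supported on the graph of $\psi_{\mathcal P}$. That graph must lie inside the total support, which is the graph of $\psi_{UW}$. Hence the two induce the same isometry, and $\phi_{\mathcal P}=g_{UW}$.
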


\begin{proof}
In the notation of \eqref{eqn: middle transition category}, put
$\T_{UW}=\B_{A_U}\bt_\B\B_{A_W}$.
Equation~\eqref{eqn: transition component count} and
$\dim_k\Hom_\B(A_W,A_U)=|U\cap W|$ give an equivalence of
$\B$-module categories
\[
\T_{UW}\simeq\bigoplus_{a=1}^{p^{\dim(U\cap W)}}\mathcal P_a,
\]
where every $\mathcal P_a$ is indecomposable and equivalence classes may
repeat. The localized full centers $\widetilde L_{\mathcal P_a}$ defined
by \eqref{eqn: localized full center} are connected Lagrangian algebras in
$\C(E_U,\mathbf q_U)\bt\C(E_W,\mathbf q_W)^\rev$, by
Proposition~\ref{prop: local transition components}.

For $\bar x=x+U\in E_U$, let $F_U(\bar x)$ be the underlying object in
$\B$ of the corresponding simple local $A_U$-module. Then
$F_U(\bar x)\cong\bigoplus_{u\in U}(x+u)$. For $\bar y=y+W\in E_W$,
define $F_W(\bar y)$ similarly. We have
\begin{equation}
\label{eqn: pointed local Hom}
\dim_k\Hom_\B\bigl(F_W(\bar y),F_U(\bar x)\bigr)
=|(x+U)\cap(y+W)|.
\end{equation}
A nonempty intersection is a coset of $U\cap W$, and any common
representative lies in $U^\perp\cap W^\perp=(U+W)^\perp$.
Formula~\eqref{eqn: mixed contraction coefficients}, with the outer
equivalences equal to the identity, identifies this number with the
total multiplicity of $(-\bar x,\bar y)$ in
$\bigoplus_a\widetilde L_{\mathcal P_a}$. Consequently this direct sum has
homogeneous multiplicity $p^{\dim(U\cap W)}$ on
\begin{equation}
\label{eqn: the entire displayed set}
\{(-(z+U),z+W)\mid z\in(U+W)^\perp\}
\end{equation}
and zero elsewhere. 

The signed map from $(U+W)^\perp$ to this set has kernel $U\cap W$,
so its image has cardinality
$p^{\dim((U+W)^\perp)-\dim(U\cap W)}=p^{2(n-r)}=|E_U|$.
A connected \'etale algebra in a pointed braided category has
multiplicity-free isotropic subgroup support; a connected Lagrangian
algebra in the present product category has support of cardinality
$|E_U|$. Thus the support of each $\widetilde L_{\mathcal P_a}$ is the
entire set \eqref{eqn: the entire displayed set}.

The intersections of this support with the first and second coordinate
axes are parametrized, respectively, by
$(W\cap U^\perp)/(U\cap W)$ and $(U\cap W^\perp)/(U\cap W)$, via the
maps induced by $z\mapsto (-(z+U),0)$ and $z\mapsto(0,z+W)$. These are the supports of
the residual endpoint algebras. If $U\cap W^\perp\supsetneq U\cap W$,
both parameter spaces have positive dimension, so every localized full
center has nontrivial residual endpoints.
Proposition~\ref{prop: local transition criterion} shows that none
contributes to the $r$-th stratum.

If $U\cap W^\perp=U\cap W$, the common support is
$\{(-x,\psi_{UW}(x))\mid x\in E_U\}$.
The commutative algebra structure on this isotropic subgroup is unique
up to algebra isomorphism, so every $\widetilde L_{\mathcal P_a}$ is
isomorphic as an algebra to the graph algebra of $\psi_{UW}$. Each of
the $p^{\dim(U\cap W)}$ terms therefore contributes $g_{UW}$ after
transport by $\eta_U,\eta_W$. This proves
\eqref{eqn: orthogonal transition matrix} with its multiplicities.

On the diagonal, \(\psi_{UU}=\id\) gives \(\Theta^{(r)}_{UU}=p^r1_{G_r}\). Since \(\Theta^{(r)}_{UU}\) is \(H_U\)-bi-invariant, this forces \(H_U=1\).
Finally, reversal and change of
identifications follow from the definition of $g_{UW}$, in agreement with
Lemma~\ref{lem: transition duality} and
\eqref{eqn: transition gauge change}.
\end{proof}

Since $H_U=1$, the additive group of the stratum is
$\bigoplus_{P,Q\in X_r}\mathbb Z[G_r]$, with basis $x_{P,Q}(g)$ for
$g\in G_r$. Theorem~\ref{thm: stratum multiplication} gives
\begin{equation}
\label{eqn: orthogonal stratum multiplication}
x_{P,U}(g)x_{W,Q}(h)=
\begin{cases}
p^{\dim(U\cap W)}x_{P,Q}(h\,g_{UW}\,g),&U\cap W^\perp=U\cap W,\\
0,&\text{otherwise}.
\end{cases}
\end{equation}

\begin{remark}
\label{rem: orthogonal local system}
Join distinct $U,W\in X_r$ when $U\cap W^\perp=U\cap W$.
The residual spaces $E_U$ and the isometries $\psi_{UW}$ form a canonical
\emph{orthogonal local system}: a quadratic space at each vertex and an
isometry along each oriented edge, with the reverse edge carrying the
inverse isometry. It is defined by $(E,Q)$ independently of the choices
of $\eta_U$.
Transport along a path is the composition of its edge isometries.
The maps $\eta_U$ express these transports in common coordinates; all
$g_{UW}$ can be made the identity if and only if every closed path has
identity transport. The example with $n=2,r=1$ below exhibits an
obstruction to such a choice.
\end{remark}

\subsection{Low-rank examples}
\label{subsect: low-rank orthogonal examples}

The following examples give a geometric explanation of nonsemisimplicity and show how intermediate strata differ from maximal ones. Over \(\mathbb C\), the maximal stratum is semisimple in Witt rank one, whereas in rank two the relation between the two rulings produces a nonzero radical and forces the whole module Grothendieck ring to be nonsemisimple.    For intermediate strata, the augmented transition matrices need not share the positivity of the maximal-stratum Gram matrix: for \(r=1\), the matrix is positive semidefinite and singular in Witt rank two, but indefinite and nonsingular in Witt rank three.   
Moreover, the rank-two example exhibits nonidentity transport around a triangle, so the labels \(g_{UW}\) in \eqref{eqn: orthogonal transport labels} cannot all be made equal to the identity by changing the identifications \(\eta_U\).
They therefore carry information about stratum multiplication that is lost under augmentation.

\paragraph{Witt rank one: maximal stratum.}
For $n=1$, there are two maximal isotropic lines and the full dual polar
graph is $K_2$. The transition matrix is
\[
\Theta=\begin{pmatrix}p&1\\1&p\end{pmatrix},
\]
with eigenvalues $p+1$ and $p-1$. The complexified maximal stratum is
semisimple by Lemma~\ref{lem: sandwich radical}.

\paragraph{Witt rank two: maximal stratum.}
For $n=2$, the maximal isotropic planes form two families, traditionally
called the two rulings, $\mathcal R_\pm=X_2^\pm$, each of size
$p+1$. Distinct planes in the same family intersect only in zero, while
planes from different families intersect in a line. Thus the full dual
polar graph is $K_{p+1,p+1}$. Ordering the two families separately gives
\begin{equation}
\label{eqn: rank two maximal transition matrix}
\Theta=
\begin{pmatrix}
(p^2-1)I+J&pJ\\
pJ&(p^2-1)I+J
\end{pmatrix},
\end{equation}
where $I$ and $J$ are the identity and all-ones matrices of size $p+1$, respectively. Its eigenvalues are $2p(p+1)$, $p^2-1$, and
$0$, with multiplicities $1$, $2p$, and $1$, respectively. Its kernel is
generated by the vector that is $1$ on $\mathcal R_+$ and $-1$ on
$\mathcal R_-$. Proposition~\ref{prop: orthogonal ruling relations},
with $T=0$ the unique member of $X_0$, gives
\begin{equation}
\label{eqn: rank two ruling relation}
\sum_{L\in\mathcal R_+}[A_L]=\sum_{L\in\mathcal R_-}[A_L]
\qquad\text{in }K_0(\B).
\end{equation}
This recovers the Gram matrix calculation and Lagrangian relation of
\cite[Proposition~3.10]{NikshychBurnside2026}.
The spectrum shows that every linear relation among these classes is a
scalar multiple of this one. By Corollary~\ref{cor: maximal Gram sandwich}
and Lemma~\ref{lem: sandwich radical}, the complexified maximal stratum
has a nonzero radical. Proposition~\ref{prop: semisimplicity poset grading}
therefore implies that the whole complexified module Grothendieck ring
for $\Z(\Vec_{\mathbb F_p^2})$ is not semisimple.

\paragraph{Witt rank two: intermediate stratum.}
For \(n=2,r=1\), \(X_1\) is the set of isotropic lines in \(E\), equivalently the set of \(\mathbb F_p\)-points of the hyperbolic quadric \(Q^+(3,p)\subset\mathbb P(E)\). Under the Segre identification
\[
Q^+(3,p)\simeq\mathbb P^1(\mathbb F_p)\times\mathbb P^1(\mathbb F_p),
\]
the two rulings are the fibers of the two projections. Thus two distinct points of \(X_1\) are nonorthogonal exactly when they differ in both factors.
Hence
\[
\Theta^{(1)}_{UW}=
\begin{cases}
p\,1_{G_1},&U=W,\\
g_{UW},&U\ne W\text{ and }U\not\perp W,\\
0,&\text{otherwise}.
\end{cases}
\]
Applying the augmentation \(\varepsilon:\mathbb Z[G_1]\to\mathbb Z\) entrywise gives the scalar matrix
\[
M^{(1)}=pI_{(p+1)^2}+(J-I)\otimes(J-I),
\]
where $I,J$ in the tensor factors have size $p+1$. The matrix $J-I$ has
eigenvalues $p$ and $-1$, with multiplicities $1$ and $p$. Hence $M^{(1)}$
has eigenvalues
$p(p+1)$, $0$, and $p+1$, with multiplicities $1$, $2p$, and $p^2$,
respectively.  Thus \(M^{(1)}\) is positive semidefinite and singular.

The labels \(g_{UW}\) in \eqref{eqn: orthogonal transport labels} retain further information.
Choose a hyperbolic basis
$e_1,e_2,f_1,f_2$, with $b(e_i,f_j)=\delta_{ij}$, and put
\[
U=\langle e_1\rangle,
\qquad W=\langle f_1\rangle,
\qquad T=\langle e_1+e_2+f_1-f_2\rangle.
\]
These isotropic lines are pairwise nonorthogonal. For nonorthogonal isotropic lines \(\langle u\rangle,\langle v\rangle\)
and
$x\in\langle u\rangle^\perp$, the isometry is
\begin{equation}
\label{eqn: singular-line perspectivity}
\psi_{\langle u\rangle,\langle v\rangle}(x+\langle u\rangle)
=x-\frac{b(x,v)}{b(u,v)}u+\langle v\rangle.
\end{equation}
The representative on the right is orthogonal to $v$, is unchanged when
$x$ is replaced by $x+au$, and has the same quadratic value as $x$.
A direct calculation along $U\longrightarrow W\longrightarrow T\longrightarrow U$
sends the representatives $e_2,f_2$ to $-e_1+f_2,e_1+e_2$, respectively.
Thus $\psi_{TU}\psi_{WT}\psi_{UW}$ exchanges $e_2+U$ and $f_2+U$.
This is nonidentity transport around a triangle in the local system of
Remark~\ref{rem: orthogonal local system}. Under a change of endpoint identifications, the product
$g_{TU}g_{WT}g_{UW}$ is conjugated at $U$. 
Its nonidentity conjugacy class therefore shows that no simultaneous choice of the identifications \(\eta_U\) makes every \(g_{UW}\) equal to the identity.
Discarding the group labels loses this information.

\paragraph{Witt rank three: intermediate stratum.}
For \(n=3\) and \(r=1\), we use the Klein model of the split
six-dimensional quadratic space. Let \(T=\mathbb F_p^4\) and choose
a nonzero \(\operatorname{vol}\in\bigwedge^4T\). We may identify
\(E\) with \(\bigwedge^2T\) so that
\[
Q(\omega)\operatorname{vol}=\frac12\,\omega\wedge\omega.
\]
A projective line \(\ell=\mathbb P\langle u,v\rangle\) in
\(\operatorname{PG}(3,p)\) determines an
isotropic line \(\langle u\wedge v\rangle\in X_1\), and every isotropic line
arises from a unique projective line. This is the Klein
correspondence; under it, two elements of \(X_1\) are orthogonal precisely when the corresponding projective lines meet
\cite[Section~8.2]{CameronPPS}. Indeed, for the polar form \(b\) of \(Q\),
\[
b(u\wedge v,s\wedge t)\operatorname{vol}
=u\wedge v\wedge s\wedge t,
\]
and the right-hand side vanishes precisely when \(u,v,s,t\)
are linearly dependent.

Thus two distinct elements \(U,W\in X_1\) give a nonzero transition entry precisely when the corresponding projective lines are skew, that is, disjoint.  Hence
\[
\Theta^{(1)}_{UW}=
\begin{cases}
p\,1_{G_1},&U=W,\\
g_{UW},&U\ne W\text{ and the corresponding projective lines are skew},\\
0,&\text{otherwise}.
\end{cases}
\]
Let \(A_{\mathrm{skew}}\) denote the adjacency matrix on the set of projective lines in \(\operatorname{PG}(3,p)\), with entry \(1\) for two distinct skew lines and \(0\) otherwise. Applying the augmentation \(\varepsilon:\mathbb Z[G_1]\to\mathbb Z\) entrywise to \(\Theta^{(1)}\) gives
\[
M^{(1)}=pI+A_{\mathrm{skew}}.
\]

Let $C$ be the point--line incidence matrix of $\operatorname{PG}(3,p)$,
with rows indexed by projective points and columns by projective lines.
The matrix $C^{\mathsf t}C$ has diagonal $p+1$, entry $1$ for distinct
intersecting lines, and entry $0$ for skew lines. Thus
$A_{\mathrm{skew}}=J+pI-C^{\mathsf t}C$, and
\begin{equation}
\label{eqn: rank three scalar incidence matrix}
M^{(1)}=J+2pI-C^{\mathsf t}C.
\end{equation}
The numbers of points and lines are $N_{\mathrm{pt}}=p^3+p^2+p+1$ and
$N_{\mathrm{ln}}=(p^2+1)(p^2+p+1)$. Every point lies on $p^2+p+1$ lines,
and every two distinct points determine one line, so
$CC^{\mathsf t}=p(p+1)I+J$.
Its eigenvalues are \((p+1)(p^2+p+1)\) and \(p(p+1)\), with multiplicities \(1\) and \(p^3+p^2+p\), respectively. Thus \(C\) has full row rank. Since \(C^{\mathsf t}C\) and \(CC^{\mathsf t}\) have the same nonzero eigenvalues with the same multiplicities, \(C^{\mathsf t}C\) has in addition the eigenvalue \(0\) with multiplicity \(p^4+p^2\).

The eigenspace of \(C^{\mathsf t}C\) for the eigenvalue \((p+1)(p^2+p+1)\) is spanned by the all-ones vector \(\mathbf1\) indexed by projective lines, while the eigenspaces for \(p(p+1)\) and \(0\) are orthogonal to \(\mathbf1\).
Hence \(J\) acts on these three eigenspaces by \(N_{\mathrm{ln}},0,0\), respectively, and \eqref{eqn: rank three scalar incidence matrix} gives the corresponding eigenvalues of \(M^{(1)}\):
\[
N_{\mathrm{ln}}+2p-(p+1)(p^2+p+1)=p^4+p, \qquad
2p-p(p+1)=-p(p-1),
\qquad\text{and}\qquad
2p,
\]
with multiplicities \(1\), \(p^3+p^2+p\), and \(p^4+p^2\), respectively.
Therefore \(M^{(1)}\) is indefinite and nonsingular.

\section{Module Grothendieck ring of the Drinfeld double of a group}
\label{sect: Example Drinfeld}

We now apply the general results of
Section~\ref{sect:The Grothendieck ring of module categories} to
$\B=\Rep(D(G))$, where $G$ is a finite group, with the aim of computing
$K^\mmod(\Rep(D(G)))$ explicitly. This extends the classical picture of
Section~\ref{sect: Example classical} to the group-theoretical setting.
An alternative description of $K^\mmod(\Rep(D(G)))$ in terms of the
degree-two cocycle-decorated double Burnside ring, including an explicit
multiplication formula, is given in
\cite{NikshychBurnside2026}.

\subsection{Relative cohomology and crossed extensions}
\label{subsect: relative cohomology}

Let
\begin{equation}
\label{eqn: group extension}
1\longrightarrow N\longrightarrow K\xrightarrow{\pi}Q\longrightarrow 1
\end{equation}
be an extension of finite groups. Throughout this section, group cohomology has
coefficients in $k^\times$ with the trivial action. Inflation identifies the
normalized cochain complex $C^\bullet(Q,k^\times)$ with a subcomplex of
$C^\bullet(K,k^\times)$.  The relative cohomology of \eqref{eqn: group extension}
is
\[
H^n_{\mathrm{rel}}(Q,K;k^\times)
:=H^n\!\left(C^\bullet(K,k^\times)/
\operatorname{inf} C^\bullet(Q,k^\times)\right).
\]
The short exact sequence of cochain complexes gives the exact sequence
\begin{equation}
\label{eqn: relative exact sequence}
H^2(Q,k^\times)\xrightarrow{\operatorname{inf}}H^2(K,k^\times)
\longrightarrow H^2_{\mathrm{rel}}(Q,K;k^\times)
\xrightarrow{\Delta}H^3(Q,k^\times)
\xrightarrow{\operatorname{inf}}H^3(K,k^\times).
\end{equation}
We recall the low-dimensional interpretation of the middle term following
\cite{HJPR}; see also \cite{Hue} for the crossed pair exact sequence and
its relation to the Lyndon--Hochschild--Serre spectral sequence.

A \emph{$K$-crossed extension of $N$ by $k^\times$} is a central extension
\begin{equation}
\label{eqn: crossed extension}
1\longrightarrow k^\times\longrightarrow\widetilde N
\xrightarrow{\partial}N\longrightarrow1
\end{equation}
together with an action of $K$ on $\widetilde N$ by group automorphisms,
trivial on the kernel $k^\times$, satisfying
\begin{equation}
\label{eqn: crossed module identities}
\partial({}^k x)=k\partial(x)k^{-1},
\qquad
{}^{\partial(x)}y=xyx^{-1}
\end{equation}
for $k\in K$ and $x,y\in\widetilde N$. Here $K$ acts on $N$ by
conjugation, and $\partial(x)$ in the second identity is viewed in
$N\subseteq K$. The composite
$\widetilde N\xrightarrow{\partial}N\hookrightarrow K$ is a crossed
module. Equivalence of $K$-crossed extensions is an isomorphism of central extensions inducing
the identity on $k^\times$ and $N$ and commuting with the $K$-actions.
Equivalence classes form an abelian group under Baer sum, denoted
$\operatorname{XExt}_K(N;k^\times)$.

\begin{proposition}
\label{prop: relative crossed extensions}
There is a canonical isomorphism \cite{HJPR}
\[
H^2_{\mathrm{rel}}(Q,K;k^\times)
\cong \operatorname{XExt}_K(N;k^\times).
\]
\end{proposition}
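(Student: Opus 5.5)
We build mutually inverse maps at the cochain level. The most economical route to bijectivity is a five-lemma argument against \eqref{eqn: relative exact sequence}.

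\emph{From relative cocycles to crossed extensions.} A relative $2$-cocycle is a pair $(\omega,\theta)$ with $\omega\in C^2(K,k^\times)$ and $\theta\in C^3(Q,k^\times)$ such that $d\omega=\operatorname{inf}\theta$. (This is the usual mapping-cone description of the cohomology of the quotient complex $C^\bullet(K)/\operatorname{inf}C^\bullet(Q)$; since inflation is injective on normalized cochains, it is equivalent to a cochain $\omega$ whose coboundary is inflated.) From such a pair we build the crossed extension as follows.

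\begin{itemize}
\item Put $\widetilde N=k^\times\times N$ with product
\[
(a,n)(b,m)=\bigl(ab\,\omega(n,m),\,nm\bigr).
\]
This is associative because $d\omega$ restricted to $N^3$ equals $\operatorname{inf}\theta$, which is trivial on $N$.
\item Define the $K$-action by
\[
{}^k(a,n)=\bigl(a\,\omega(k,n)\,\omega(knk^{-1},k)^{-1},\,knk^{-1}\bigr).
\]
\item Check that this is an action by automorphisms and that the Peiffer identity in \eqref{eqn: crossed module identities} holds. Each check reduces to an evaluation of $d\omega$ on a triple with at least one entry in $N$. Such an evaluation is trivial: $\operatorname{inf}\theta$ depends only on images in $Q$, and $\theta$ is normalized, so it vanishes when one argument maps to $1$. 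This is the key use of normalization.
\item Verify that changing $(\omega,\theta)$ by a relative coboundary $(\omega\,d\beta,\theta\,d\gamma)$, with $\omega\mapsto\omega\,d\beta\,\operatorname{inf}\gamma$, gives an equivalent crossed extension. For the $\gamma$ part this again uses normalization, since $\operatorname{inf}\gamma$ is trivial on pairs involving $N$. The map is visibly additive, sending pointwise product of cocycles to Baer sum.
\end{itemize}

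\emph{Compatibility with the exact sequence.} We construct the analogous exact sequence for crossed extensions:
\[
H^2(Q)\to H^2(K)\to\operatorname{XExt}_K(N)\to H^3(Q)\to H^3(K).
\]
\begin{itemize}
\item The map $H^2(K)\to\operatorname{XExt}_K(N)$ restricts a central extension of $K$ to $N$ and lets $K$ act by conjugation.
\item The map $\operatorname{XExt}_K(N)\to H^3(Q)$ is the obstruction class of the crossed module $\widetilde N\to K$ with cokernel $Q$. Equivalently, it is the $k$-invariant of the associated $2$-group.
\end{itemize}
Our cochain map intertwines both sequences: restriction corresponds to $\omega\mapsto(\omega,1)$, and the $k$-invariant of the crossed module built from $(\omega,\theta)$ is $\theta$. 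With this in hand, the five lemma gives the isomorphism.

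\emph{Main obstacle.} The hard part is exactness of the crossed-extension sequence at $\operatorname{XExt}_K(N)$ and at $H^3(Q)$. At $H^3(Q)$ we must show that a $3$-class on $Q$ which inflates trivially to $K$ arises as a $k$-invariant. To do this, we write $\operatorname{inf}\theta=d\omega$ and apply the construction above.

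Rather than rely on the five lemma there, we would construct the inverse map directly:
\begin{itemize}
\item choose a set-theoretic section $s:K\to\widetilde N$-cosets via a lift $\widehat{(\cdot)}$ of $N$ to $\widetilde N$ and a transversal of $N$ in $K$;
\item define $\omega$ on $K$ by measuring the failure of multiplicativity of the combined lift in the semidirect-type group $\widetilde N\rtimes K$ modulo the image of $\widetilde N$;
\item read off $\theta$ as the associator, which is inflated from $Q$ by the Peiffer identity;
\item check that the two constructions compose to the identity up to equivalence and up to relative coboundary.
\end{itemize}
This is the standard argument of \cite{HJPR}, and we would follow it for the explicit formulas.
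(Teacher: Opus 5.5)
Your forward map is exactly the one the paper uses to fix the isomorphism. The formulas $\mu=\omega|_{N\times N}$ and $\varepsilon_k(n)=\omega(k,n)/\omega(knk^{-1},k)$ are the restriction formulas \eqref{eqn: relative cochain parameters}. Your justification is also the paper's (via Ratcliffe's calculation): normalization of $\theta$ turns $d\omega=\operatorname{inf}\theta$ into the cocycle identity whenever an argument lies in $N$; $\operatorname{inf}\gamma$ leaves $(\mu,\varepsilon)$ unchanged; and $d\beta$ acts by \eqref{eqn: relative coboundary} with $c=\beta|_N$. For bijectivity the paper gives no argument of its own and cites \cite{HJPR}, so on that point you and the paper agree.

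Two of your remarks about bijectivity need correcting.

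First, the five-lemma route is not more economical. Once the ladder commutes with identities on the outer terms, exactness of the crossed-extension row at $\operatorname{XExt}_K(N;k^\times)$ and at $H^3(Q,k^\times)$ is equivalent to the proposition. Proving it (for instance, that every $k$-invariant inflates trivially to $K$) needs the same cochain construction as an inverse.

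Second, and more seriously, the inverse you sketch cannot work as stated. Suppose the combined lift $g\mapsto\hat g$ lived in an honest group, with $\hat g\hat h=f(g,h)\widehat{gh}$ for central scalars $f(g,h)$. Then associativity would force $df=1$. You would reach only crossed extensions with $\Delta(\xi)=0$, and there would be no associator from which to read off $\theta$.

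The inverse has to be built formally inside $\widetilde N$, as in the proof of Proposition~\ref{prop: characteristic class}. Write $g=n_gs(x)$ with $x=\pi(g)$, and choose lifts $t_{x,y}\in\widetilde N$ of $\kappa(x,y)$. Define $f(g,h)$ by $u_{n_g}\,{}^{s(x)}u_{n_h}\,t_{x,y}=f(g,h)\,u_{n_{gh}}$, as in \eqref{eqn: crossed extension cochain}. This $f$ returns $(\mu,\varepsilon)$ under the restriction formulas and satisfies $df=\pi^*\omega_\xi$. That gives surjectivity and identifies $\Delta$ with the $k$-invariant.

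Injectivity is still taken from \cite{HJPR}, both in the paper and in your proposal. What must be shown is that a relative cocycle whose $(\mu,\varepsilon)$ is cohomologically trivial has the form $d\beta\,\operatorname{inf}\gamma$.
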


For later comparison with Davydov's parameters
\cite[Proposition~3.11]{D}, we recall the
explicit factor system description of crossed extensions from
\cite[Section~3]{Rat}, written multiplicatively for coefficients in
$k^\times$. Choose a normalized section $n\mapsto u_n$ of
$\partial:\widetilde N\to N$. There are normalized functions $\mu:N\times N\to k^\times$ and $\eps:K\times N\to k^\times$ defined by
\begin{equation}
\label{eqn: mu epsilon definition}
u_n u_m=\mu(n,m)u_{nm},
\qquad
{}^k u_n=\eps_k(n)u_{knk^{-1}}.
\end{equation}
The crossed module axioms are equivalent to
\begin{align}
\mu(n,m)\mu(nm,\ell)
  &=\mu(m,\ell)\mu(n,m\ell),                                      \label{eqn: relative cocycle 1}\\
\eps_{kh}(n)
  &=\eps_k(hnh^{-1})\eps_h(n),                                    \label{eqn: relative cocycle 2}\\
\mu(n,m)\eps_k(nm)
  &=\eps_k(n)\eps_k(m)
    \mu(knk^{-1},kmk^{-1}),                                       \label{eqn: relative cocycle 3}\\
\mu(n,m)
  &=\eps_n(m)\mu(nmn^{-1},n).                                    \label{eqn: relative cocycle 4}
\end{align}
Replacing $u_n$ by
$u_n'=c(n)u_n$, where $c:N\to k^\times$ is a normalized cochain, replaces
$(\mu,\eps)$ by
\begin{equation}
\label{eqn: relative coboundary}
\mu'(n,m)=\mu(n,m)\frac{c(n)c(m)}{c(nm)},
\qquad
\eps'_k(n)=\eps_k(n)\frac{c(n)}{c(knk^{-1})}.
\end{equation}
Consequently, modulo \eqref{eqn: relative coboundary}, the pairs $(\mu,\eps)$
satisfying \eqref{eqn: relative cocycle 1}--\eqref{eqn: relative cocycle 4}
give an explicit model for
$H^2_{\mathrm{rel}}(Q,K;k^\times)$.

We use the differential
\[
(df)(g,h,\ell)=\frac{f(h,\ell)f(g,h\ell)}{f(gh,\ell)f(g,h)}.
\]
Let \(\xi\in H^2_{\mathrm{rel}}(Q,K;k^\times)\). We may choose a normalized cochain \(f:K^2\to k^\times\) whose image in the quotient complex represents \(\xi\), with
\[
df=\pi^*\nu
\]
for a normalized \(3\)-cocycle \(\nu:Q^3\to k^\times\). Then \(\Delta(\xi)=[\nu]\).
We fix the isomorphism in
Proposition~\ref{prop: relative crossed extensions} by assigning to $f$
the parameters
\begin{equation}
\label{eqn: relative cochain parameters}
\mu(n,m)=f(n,m),
\qquad
\eps_g(n)=\frac{f(g,n)}{f(gng^{-1},g)}
\quad(g\in K,\ n,m\in N).
\end{equation}
These are Ratcliffe's restriction formulas
\cite[equations~(5.2)--(5.3)]{Rat}, written multiplicatively. 
Although \(f\) need not be a \(2\)-cocycle on \(K\), the identity \(df=\pi^*\nu\) implies that \(f\) satisfies the cocycle identity whenever one of the three arguments lies in \(N\). Thus Ratcliffe's calculation gives \eqref{eqn: relative cocycle 1}--\eqref{eqn: relative cocycle 4} for the pair \((\mu,\eps)\) in \eqref{eqn: relative cochain parameters}.
If \(f\) is replaced by an equivalent representative \(f\,db\,\pi^*a\), then \(db\) changes \((\mu,\eps)\) by \eqref{eqn: relative coboundary} with \(c=b|_N\), while the inflated factor \(\pi^*a\) does not change \((\mu,\eps)\). Hence the class of \((\mu,\eps)\) modulo \eqref{eqn: relative coboundary} depends only on \(\xi\).

We will also need a description at the cocycle level of the connecting map $\Delta$
in \eqref{eqn: relative exact sequence}. We use Ratcliffe's
crossed extension obstruction construction \cite[Section~6]{Rat}.
Davydov--Simmons \cite{DS}
describe the quotient associator and the corresponding category of local
modules in the more general $3$-cocycle-twisted setting.
Let $\xi\in H^2_{\mathrm{rel}}(Q,K;k^\times)$ be represented by a
$K$-crossed extension as in \eqref{eqn: crossed extension}.
Choose a normalized section $s:Q\to K$ and put
$\kappa(x,y)=s(x)s(y)s(xy)^{-1}\in N$.
Choose normalized lifts $\widehat\kappa(x,y)\in\widetilde N$ of
$\kappa(x,y)$, with $\widehat\kappa(1,y)=\widehat\kappa(x,1)=1$.
Associativity in $K$ gives
$\kappa(x,y)\kappa(xy,z)=s(x)\kappa(y,z)s(x)^{-1}\kappa(x,yz)$.
The corresponding two lifted products have the same image in $N$, so
their ordered quotient lies in the central kernel $k^\times$. Define
$\omega_\xi:Q^3\to k^\times$ by
\begin{equation}
\label{eqn: characteristic cocycle}
\omega_\xi(x,y,z)=
{}^{s(x)}\widehat\kappa(y,z)\widehat\kappa(x,yz)
\bigl(\widehat\kappa(x,y)\widehat\kappa(xy,z)\bigr)^{-1}\in k^\times.
\end{equation}

\begin{proposition}
\label{prop: characteristic class}
The function $\omega_\xi$ is a normalized $3$-cocycle. Its cohomology class
is independent of all choices and equals $\Delta(\xi)$. If
$\widetilde N$ is written in set-theoretic coordinates  $(\mu,\eps)$ as in \eqref{eqn: mu epsilon definition},
and  $\widehat\kappa(x,y)=(1,\kappa(x,y))$, then
\begin{equation}
\label{eqn: omega formula}
\omega_\xi(x,y,z)=
\frac{\eps_{s(x)}(\kappa(y,z))\,
 \mu(s(x)\kappa(y,z)s(x)^{-1},\kappa(x,yz))}
{\mu(\kappa(x,y),\kappa(xy,z))}.
\end{equation}
\end{proposition}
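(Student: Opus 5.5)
The plan is to run the standard obstruction-cocycle argument for the crossed module $\widetilde N\to K$, and then compare $\omega_\xi$ with a cochain-level representative of $\Delta$.

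\emph{Cocycle property and independence of choices.} First, $\omega_\xi$ lies in $k^\times$. Both products in \eqref{eqn: characteristic cocycle} lie over the same element of $N$, and $k^\times$ is central and $K$-fixed, so their quotient is a scalar; normalization is immediate from the normalizations of $s$ and $\widehat\kappa$. For the cocycle identity, write the nonabelian $2$-cocycle relation with values in $\widetilde N$ as
\[
{}^{s(x)}\widehat\kappa(y,z)\,\widehat\kappa(x,yz)=\omega_\xi(x,y,z)\,\widehat\kappa(x,y)\,\widehat\kappa(xy,z).
\]
Expand the two bracketings of a fourfold product $s(x)s(y)s(z)s(w)$ lifted to $\widetilde N\rtimes$-type expressions, following the pentagon argument of Eilenberg--MacLane for obstructions to extensions. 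The crossed module identities \eqref{eqn: crossed module identities} play two roles: ${}^{\partial(a)}b=aba^{-1}$ lets us move conjugation by $\kappa$-terms past lifts, and ${}^{s(x)s(y)}={}^{\kappa(x,y)s(xy)}$ lets us rewrite actions. Comparing the two expansions gives $d\omega_\xi=1$.

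Independence then follows by tracking the effect of each choice. Changing the lifts $\widehat\kappa$ by a scalar function $c(x,y)$ multiplies $\omega_\xi$ by $dc$. Changing the section $s$ to $s'(x)=n(x)s(x)$ with $n(x)\in N$ is handled by choosing lifts $\widehat n(x)$ and transporting the $\widehat\kappa$'s, which again changes $\omega_\xi$ by a coboundary. Finally, an equivalence of crossed extensions transports all the data.

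\emph{Identification with $\Delta(\xi)$.} Represent $\xi$ by a normalized $f:K^2\to k^\times$ with $df=\pi^*\nu$, and pass to the crossed extension built from \eqref{eqn: relative cochain parameters}. Realize $\widetilde N$ as $k^\times\times N$ with multiplication twisted by $\mu=f|_{N\times N}$ and action twisted by $\eps$. Take $\widehat\kappa(x,y)=(1,\kappa(x,y))$. A direct substitution using \eqref{eqn: mu epsilon definition} turns \eqref{eqn: characteristic cocycle} into \eqref{eqn: omega formula}, which proves the last statement.

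It remains to show that \eqref{eqn: omega formula} is cohomologous to $\nu$. Set $\beta(x,y)=f(\kappa(x,y),s(xy))\,/\,f(s(x),s(y))$, or a suitable variant of this cochain on $Q^2$. Then evaluate $\pi^*\nu=df$ on triples $(s(x),s(y),s(z))$. Rewrite each product $s(x)s(y)$ as $\kappa(x,y)s(xy)$ and expand using the restricted cocycle identities for $f$, which hold whenever one argument lies in $N$. The $f$-values with an $N$-entry should collect into the $\mu,\eps$ factors of \eqref{eqn: omega formula}, and the remainder should form $d\beta$. This would give $\nu=\omega_\xi\cdot d\beta^{\pm1}$, hence $[\omega_\xi]=[\nu]=\Delta(\xi)$.

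The main obstacle is this last bookkeeping step. We have to find the correct correcting cochain $\beta$, and check that the factor $\eps_{s(x)}(\kappa(y,z))=f(s(x),\kappa(y,z))/f(s(x)\kappa(y,z)s(x)^{-1},s(x))$ combines with the $\mu$-terms exactly as required. I would first verify this in the split case $\kappa\equiv1$, where $\omega_\xi=1$ and $\nu$ should be a coboundary, and in the case $N=K$, to fix signs and orientation conventions. If the direct computation becomes unwieldy, the fallback is to cite Ratcliffe's theorem \cite[Section~6]{Rat}, which identifies this obstruction with the connecting map, and only check that our normalizations match his.
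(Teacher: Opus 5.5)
Your route is viable and differs from the paper's. As written, though, the decisive step $[\omega_\xi]=\Delta(\xi)$ is only conjectured. It does go through, with your $\beta$ inverted.

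Put $a=s(x)$, $b=s(y)$, $c=s(z)$, $\kappa_{x,y}=\kappa(x,y)$, $\kappa'=a\kappa_{y,z}a^{-1}$, $F(x,y)=f(s(x),s(y))$ and $G(x,y)=f(\kappa_{x,y},s(xy))$. The identity $df(g,h,\ell)=1$ holds whenever one argument lies in $N$. Apply it at $(a,\kappa_{y,z},s(yz))$, $(\kappa',a,s(yz))$ and $(\kappa',\kappa_{x,yz},s(xyz))$. With $f(a,\kappa_{y,z})/f(\kappa',a)=\varepsilon_a(\kappa_{y,z})$, this gives
\[
f(a,bc)=\varepsilon_a(\kappa_{y,z})\,\mu(\kappa',\kappa_{x,yz})\,\frac{F(x,yz)}{G(y,z)\,G(x,yz)}\,f\bigl(\kappa'\kappa_{x,yz},s(xyz)\bigr).
\]
Applying it at $(\kappa_{x,y},s(xy),c)$ and $(\kappa_{x,y},\kappa_{xy,z},s(xyz))$ gives
\[
f(ab,c)=\mu(\kappa_{x,y},\kappa_{xy,z})\,\frac{F(xy,z)}{G(x,y)\,G(xy,z)}\,f\bigl(\kappa_{x,y}\kappa_{xy,z},s(xyz)\bigr).
\]
Since $\kappa_{x,y}\kappa_{xy,z}=\kappa'\kappa_{x,yz}$, the last factors cancel in $\nu(x,y,z)=df(a,b,c)=F(y,z)f(a,bc)/\bigl(f(ab,c)F(x,y)\bigr)$. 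You get $\nu=\omega_\xi\cdot d(F/G)$, with $\omega_\xi$ given by \eqref{eqn: omega formula}. So your $\beta=G/F$ satisfies $\omega_\xi=\nu\,d\beta$. This is consistent with your split-case check, since $G\equiv1$ when $\kappa\equiv1$.

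The paper runs the comparison in the opposite direction. Starting from the crossed extension with arbitrary $s$, $t_{x,y}$ and $u_n$, it defines a cochain on all of $K^2$ by $u_{n_g}\theta_x(u_{n_h})t_{x,y}=f(g,h)u_{n_{gh}}$ and checks that it restricts to $(\mu,\varepsilon)$. It then proves $df=\pi^*\omega_\xi$ by expanding one triple product in $\widetilde N$ in two ways. That single identity gives three things at once, for arbitrary choices and without coordinates:
\begin{itemize}
\item $[\omega_\xi]=\Delta(\xi)$;
\item the cocycle property, by applying $d$ and using injectivity of inflation on cochains;
\item independence of choices, from $f'=f\,db\,\pi^*a$.
\end{itemize}
Your direction starts from an arbitrary relative cocycle representative, as in the definition of $\Delta$, and produces an explicit correcting cochain. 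Once the computation above is in place, your Eilenberg--MacLane pentagon argument and your change-of-section argument both become redundant. The computation holds for any normalized section, and $\omega_\xi=\nu\,d\beta$ already shows that $\omega_\xi$ is a cocycle. The only remaining freedom preserves both the cocycle property and the class: rescaling the lifts by $c$ multiplies $\omega_\xi$ by $dc$, and transport along an equivalence of crossed extensions leaves it unchanged.
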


\begin{proof}
Put $t_{x,y}=\widehat\kappa(x,y)$ and let $\theta_x$ be the action of
$s(x)$ on $\widetilde N$. Since $s(x)s(y)=\kappa(x,y)s(xy)$, the
crossed module identities give
\[
\theta_x\theta_y=\operatorname{Ad}(t_{x,y})\theta_{xy}.
\]
Choose the normalized lifts $u_n$ used in
\eqref{eqn: mu epsilon definition}. Write $g=n_gs(x)$ and $h=n_hs(y)$,
where $x=\pi(g)$ and $y=\pi(h)$. Define $f(g,h)\in k^\times$ by
\begin{equation}
\label{eqn: crossed extension cochain}
\begin{aligned}
u_{n_g}\theta_x(u_{n_h})t_{x,y}&=f(g,h)u_{n_{gh}},\\
n_{gh}&=n_gs(x)n_hs(x)^{-1}\kappa(x,y).
\end{aligned}
\end{equation}
Both sides of the first equality have the same image in $N$, so the
scalar is well-defined, and $f$ is normalized. Under
\eqref{eqn: relative cochain parameters}, it recovers the given crossed
extension: its restriction to $N^2$ is $\mu$, and
$f(g,n)/f(gng^{-1},g)=\eps_g(n)$ because conjugation by $u_{n_g}$
implements the action of $n_g$.

For $\ell=n_\ell s(z)$, compare the elements
\[
L=u_{n_g}\theta_x(u_{n_h})t_{x,y}
       \theta_{xy}(u_{n_\ell})t_{xy,z},\qquad
R=u_{n_g}\theta_x\!\left(u_{n_h}\theta_y(u_{n_\ell})t_{y,z}\right)
       t_{x,yz}
\]
of $\widetilde N$. The relation between $\theta_x\theta_y$ and
$\theta_{xy}$ and the definition of $\omega_\xi$ give
\[
R=u_{n_g}\theta_x(u_{n_h})t_{x,y}\theta_{xy}(u_{n_\ell})
    t_{x,y}^{-1}\theta_x(t_{y,z})t_{x,yz}
 =\omega_\xi(x,y,z)L.
\]
On the other hand, \eqref{eqn: crossed extension cochain} gives
\[
L=f(g,h)f(gh,\ell)u_{n_{gh\ell}},
\qquad
R=f(h,\ell)f(g,h\ell)u_{n_{gh\ell}}.
\]
Consequently, $
df=\pi^*\omega_\xi$.
Thus $f$ is a relative cocycle representing $\xi$, and the definition of
the connecting map gives $[\omega_\xi]=\Delta(\xi)$. Applying $d$  and using injectivity of inflation on cochains
gives $d\omega_\xi=1$. The normalization of $s$ and $t$ gives
normalization of $\omega_\xi$.

Changing $s$, the lifts, or the representative within the equivalence
class of the crossed extension gives a cochain $f'$ with the same
relative cohomology class, by
\eqref{eqn: relative cochain parameters}. Hence
$f'=f\,db\,\pi^*a$ for normalized $b$ and $a$, and taking differentials
gives $\omega_\xi'=\omega_\xi\,da$. This proves independence of all
choices and invariance under equivalent extensions. Finally, substituting
$t_{x,y}=(1,\kappa(x,y))$ into
\eqref{eqn: characteristic cocycle} gives \eqref{eqn: omega formula}.
\end{proof}

The exact sequence \eqref{eqn: relative exact sequence} has two useful
consequences. The possible values of $\Delta(\xi)$ are precisely
\[
\operatorname{Ker}\!\left(
H^3(Q,k^\times)\xrightarrow{\operatorname{inf}}H^3(K,k^\times)
\right),
\]
and, for a fixed class in this kernel, its lifts to relative cohomology form a
torsor under
\begin{equation}
\label{eqn: fiber delta}
\operatorname{Coker}\!\left(
H^2(Q,k^\times)\xrightarrow{\operatorname{inf}}H^2(K,k^\times)
\right).
\end{equation}

\begin{remark}
\label{rem: LHS interpretation}
The relative class packages the two successive obstructions visible in the
Lyndon--Hochschild--Serre spectral sequence $E_2^{p,q}=H^p(Q,H^q(N,k^\times))\Longrightarrow H^{p+q}(K,k^\times)$.
Under the cocycle description
\eqref{eqn: mu epsilon definition}--\eqref{eqn: relative coboundary},
forgetting the $K$-equivariant data $\eps$ defines a map
\[
H^2_{\mathrm{rel}}(Q,K;k^\times)
\longrightarrow H^2(N,k^\times)^Q,
\qquad [\mu,\eps]\longmapsto[\mu].
\]
Its image is
\[
\operatorname{Ker}\!\left(
 d_2^{0,2}:H^2(N,k^\times)^Q
 \longrightarrow H^2(Q,H^1(N,k^\times))
\right).
\]
For a fixed $[\mu]$ in this kernel, the possible crossed lifts form a torsor under $H^1(Q,H^1(N,k^\times))$.
For the usual left action
$(x\cdot\chi)(n)=\chi(s(x)^{-1}ns(x))$ on $H^1(N,k^\times)$,
let $\eta:Q\to H^1(N,k^\times)$ be a normalized left $1$-cocycle,
so that $\eta_{xy}=\eta_x(x\cdot\eta_y)$.
Its class $[\eta]$ acts on a crossed lift with fixed $\mu$ by
\[
\eps_g(n)\longmapsto
\eps_g(n)\,\eta_{\pi(g)}(gng^{-1}).
\]
For a coboundary $\eta_x=(x\cdot\chi)\chi^{-1}$, this is the gauge
change \eqref{eqn: relative coboundary} by the character $\chi$.
Changing the lift by $[\eta]$ changes its image under $\Delta$ by
$d_2^{1,1}([\eta])$. Equivalently, the image of $\Delta(\xi)$ in
$E_3^{3,0}=H^3(Q,k^\times)/\operatorname{Im}(d_2^{1,1})$ is
$d_3^{0,2}([\mu])$ \cite{Hue}. Thus relative cohomology records simultaneously the class
$[\mu]\in H^2(N,k^\times)^Q$, its equivariant lift, and the resulting
quotient $3$-cocycle.
\end{remark}

\subsection[Davydov's parameterization of \'etale algebras in Rep(D(G))]{Davydov's parameterization of \'etale algebras in $\Rep(D(G))$}
\label{subsect: Davydov parameterization}

We identify $\Rep(D(G))$ with $\Z(\Vec_G)$. Davydov's classification
\cite[Section~3]{D} admits a concise reformulation in terms of the
relative cohomology above. The $3$-cocycle-twisted extension and the
calculation of local modules were developed in  \cite{DS},
while \cite{HLRC} gives a total complex formulation of the cochain data. We
express the cochain data and the resulting quotient associator in terms of
a single relative cohomology class.

We recall the relevant part of the argument in \cite{D}, phrased in the
localization language used here. Let $A$ be a connected \'etale algebra in
$\Z(\Vec_G)$. Its trivial component is an indecomposable commutative
separable $G$-algebra, and hence $A_e\cong \Fun(G/K)$
for a subgroup $K\leq G$, determined up to conjugacy. Localization by
$\Fun(G/K)$ identifies $\Z(\Vec_G)_{\Fun(G/K)}^{\loc}\simeq \Z(\Vec_K)$,
and transitivity of localization reduces the classification to connected
\'etale algebras $B\in\Z(\Vec_K)$ with $B_e=k$. The support $N=\{n\in K\mid B_n\neq0\}$
is a normal subgroup of $K$, every nonzero homogeneous component is
one-dimensional, and after choosing nonzero vectors $u_n\in B_n$ one has $B\cong k_\mu[N]$, with $u_nu_m=\mu(n,m)u_{nm}$. The $K$-action has the form $k(u_n)=\eps_k(n)u_{knk^{-1}}$.
The conditions on $(\mu,\eps)$ are exactly
\eqref{eqn: relative cocycle 1}--\eqref{eqn: relative cocycle 4}, and his
change-of-basis relation is \eqref{eqn: relative coboundary}. Thus these
data determine a single class $\xi\in H^2_{\mathrm{rel}}(K/N,K;k^\times)$.

\begin{definition}
A \emph{twisted section of $G$} is a pair $(N\triangleleft K,\xi)$, where $N\triangleleft K\leq G$ and $\xi\in H^2_{\mathrm{rel}}(K/N,K;k^\times)$.
The group $G$ acts on twisted sections by conjugation and transport of
cohomology classes.
\end{definition}

For a twisted section $(N\triangleleft K,\xi)$, let
$A(N\triangleleft K,\xi)$ denote the connected \'etale algebra in
$\Z(\Vec_G)$ corresponding to  a representative pair
$(\mu,\eps)$ of $\xi$ as above.

\begin{theorem}
\label{thm: twisted sections}
The assignment $(N\triangleleft K,\xi)\mapsto A(N\triangleleft K,\xi)$ induces a bijection between $G$-conjugacy classes of twisted sections of $G$
and isomorphism classes of connected \'etale algebras in $\Rep(D(G))$.
Equivalently, after choosing representatives of the $G$-conjugacy classes of
normal sections,
\[
\mathfrak{Et}(\Rep(D(G)))
\cong
\coprod_{[N\triangleleft K]_G}
H^2_{\mathrm{rel}}(K/N,K;k^\times)/N_G(K,N),
\]
where $N_G(K,N)=\{g\in G\mid gKg^{-1}=K,\ gNg^{-1}=N\}$ acts by transport of
structure. Moreover, $\FPdim (A(N\triangleleft K,\xi))=[G:K]|N|$.
\end{theorem}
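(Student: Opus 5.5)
The plan is to reduce to Davydov's classification \cite[Section~3]{D}, in the localization form recalled just before the statement, and then check that the equivalence relations match.

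\emph{Existence and invariants.} Let $A$ be a connected \'etale algebra in $\Z(\Vec_G)$. Its trivial component is $A_e\cong\Fun(G/K)$, with $K$ determined up to $G$-conjugacy. Choose a representative $K$. By transitivity of localization \cite[Proposition~3.16]{DMNO}, $A$ corresponds to a connected \'etale algebra $B$ over $\Fun(G/K)$ in $\Z(\Vec_K)$ with $B_e=k$. The support $N$ of $B$ is normal in $K$, and choosing basis vectors $u_n$ yields a pair $(\mu,\eps)$ satisfying \eqref{eqn: relative cocycle 1}--\eqref{eqn: relative cocycle 4}. The factor-system description after Proposition~\ref{prop: relative crossed extensions} then gives a class $\xi\in H^2_{\mathrm{rel}}(K/N,K;k^\times)$. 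Rescaling the $u_n$ changes $(\mu,\eps)$ exactly by \eqref{eqn: relative coboundary}, so $\xi$ depends only on $B$. Conversely, every pair satisfying these relations defines an \'etale algebra $k_\mu[N]$ with $K$-action in $\Z(\Vec_K)$. Induction to $G$ then produces $A(N\triangleleft K,\xi)$. This shows the map is well defined on twisted sections and surjective.

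\emph{Injectivity modulo conjugation.} This is the step I expect to be the main obstacle. An isomorphism $A\cong A'$ restricts to an isomorphism $\Fun(G/K)\cong\Fun(G/K')$ of $G$-algebras, so $K'=gKg^{-1}$. I will compose with the transport along $g$, which acts on twisted sections by conjugation. After that, the remaining isomorphisms fixing $\Fun(G/K)$ correspond to automorphisms of $\Fun(G/K)$ as a $G$-algebra, which are right translations by $N_G(K)/K$, combined with isomorphisms $B\cong B'$ in $\Z(\Vec_K)$.

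An isomorphism $B\cong B'$ preserves supports, hence $N$. It sends $u_n$ to $c(n)u'_n$, so $(\mu,\eps)$ and $(\mu',\eps')$ differ by \eqref{eqn: relative coboundary} and $\xi=\xi'$. A translation by $g\in N_G(K)$ transports $B$ along $\operatorname{Ad}_g$, so it replaces $(N,\xi)$ by $(gNg^{-1},g_*\xi)$. The point to verify is that the class under $K$ acts trivially. Conjugation by $k\in K$ acts on $(\mu,\eps)$ by a relative coboundary, namely with $c=\eps_k$ via \eqref{eqn: relative cocycle 2}--\eqref{eqn: relative cocycle 3}, because $K$ acts by inner automorphisms on its own cohomology. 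Once the conjugacy class of $N\triangleleft K$ is fixed, the residual ambiguity is therefore exactly $N_G(K,N)$ acting by transport, which gives the displayed disjoint union.

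\emph{Dimension.} $A$ is the induction of $B=k_\mu[N]$ from $\Z(\Vec_K)$ along $\Fun(G/K)$. Its underlying object is a sum over $G/K$ of copies of $B$, so $\FPdim A=[G:K]\,|N|$. Equivalently, apply \eqref{eqn:local-module-dimension} twice: first $\FPdim\Fun(G/K)=[G:K]$, and then $B$ inside $\Z(\Vec_K)$ has dimension $|N|$ because each homogeneous component $B_n$ for $n\in N$ is one-dimensional and lies in a simple object of dimension $1$ relative to the localization.
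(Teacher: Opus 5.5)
Your proposal is correct and follows essentially the same route as the paper: you reduce via $A_e\cong\Fun(G/K)$ (equivalently, a choice of minimal idempotent) to Davydov's reduced algebra $k_\mu[N]$, identify basis changes with relative coboundaries, absorb the remaining identifications by $G$-conjugation and the action of $N_G(K,N)$, and count dimension over the $[G:K]$ idempotent summands. The only differences are that you prove directly the facts the paper cites from Davydov (Lemma~3.12 and Corollary~3.9), and you make explicit that $K$ acts trivially on the relative class, which the paper leaves implicit; note that the rescaling there is $c(n)=\eps_k(k^{-1}nk)$ (up to inversion), not literally $\eps_k$.
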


\begin{proof}
It was shown in \cite[Theorem~3.15]{D} that every
connected \'etale algebra in $\Z(\Vec_G)$ arises from a normal section
$N\triangleleft K\leq G$ and a pair $(\mu,\eps)$ satisfying
\eqref{eqn: relative cocycle 1}--\eqref{eqn: relative cocycle 4}.
For a fixed normal section, the reduced algebras are isomorphic exactly
when their pairs differ by the change of basis in
\eqref{eqn: relative coboundary}, by \cite[Lemma~3.12]{D}.
Proposition~\ref{prop: relative crossed extensions} therefore identifies
their isomorphism classes with $H^2_{\mathrm{rel}}(K/N,K;k^\times)$.
This identification commutes with conjugation, by the cocycle formulas.

Choosing a minimal idempotent $p\in A_e$ determines
$K=\operatorname{Stab}_G(p)$ and the reduced algebra $pA$. Since $G$ acts
transitively on these idempotents and Davydov's reduction and transfer
are inverse up to isomorphism \cite[Corollary~3.9]{D},
two twisted sections give isomorphic algebras exactly when they are
$G$-conjugate. For a fixed representative $N\triangleleft K$, the
remaining identifications are precisely the action of $N_G(K,N)$.

Finally, $A$ has $[G:K]$ idempotent summands of equal dimension, and the
reduced algebra has one-dimensional homogeneous components indexed by
$N$. Hence $\FPdim (A)=[G:K]|N|$.
\end{proof}

We use the convention that the associator of $\Vec_Q^\omega$,
\[
(U\otimes V)\otimes W\longrightarrow U\otimes(V\otimes W),
\]
acts on homogeneous degrees $x,y,z$ by multiplication by
$\omega(x,y,z)$. We also use the corresponding convention for
$D^\omega(Q)$.

Recall that the normalized cocycle \(\omega_\xi\) \eqref{eqn: characteristic cocycle} represents \(\Delta(\xi)\). 
The following localization theorem is the untwisted case of  \cite[Theorem 3.13]{DS}. We compute the quotient associator directly, since our calculation 
differs from the displayed quotient $3$-cocycle formula in \cite[Theorem 3.13]{DS}\footnote{After translating the associator convention of \cite{DS} to ours, formula (18) there still differs from (5.13): the \(\varepsilon\)-factor and the final \(\gamma\)-factor in that formula occur inverted. The direct calculation below gives the formula used here.}.

\begin{theorem}
\label{thm: localization twisted section}
Let $Q=K/N$, and let
$\xi\in H^2_{\mathrm{rel}}(Q,K;k^\times)$. Then there is a braided
equivalence
\begin{equation}
\label{eqn: localization twisted double}
\Rep(D(G))_{A(N\triangleleft K,\xi)}^{\loc}
\simeq
\Z(\Vec_Q^{\omega_\xi})
\simeq
\Rep(D^{\omega_\xi}(Q)).
\end{equation}
\end{theorem}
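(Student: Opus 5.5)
We reduce the localization in two steps, using transitivity of localization \cite[Proposition~3.16]{DMNO}. Write $A=A(N\triangleleft K,\xi)$. By construction $A$ contains the connected \'etale subalgebra $A_e\cong\Fun(G/K)$, and $\Z(\Vec_G)^{\loc}_{\Fun(G/K)}\simeq\Z(\Vec_K)$. Under this equivalence $A$ corresponds to the reduced algebra $B=k_\mu[N]$ with $K$-action given by $\eps$. Hence
\[
\Rep(D(G))_A^{\loc}\simeq\Z(\Vec_K)_B^{\loc}.
\]
It therefore suffices to treat the case $G=K$ and $A=B$, with $B_e=k$.

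Next we describe $\Z(\Vec_K)_B$ concretely. A $B$-module in $\Z(\Vec_K)$ is a $K$-graded vector space $M$ with a compatible $K$-action and a $\mu$-projective $N$-action $u_n:M_g\to M_{ng}$ that is $K$-equivariant via $\eps$. Locality, via the double braiding, forces the support of a local module to lie in $N$-cosets that are centralized appropriately. This gives a $Q$-grading on local modules.

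We then use the crossed extension $\widetilde N$ of Proposition~\ref{prop: relative crossed extensions}. A local $B$-module is the same as a $K$-graded vector space whose grading is pulled back from $Q$, equipped with a linear action of $\widetilde N\rtimes K$ in which the central $k^\times$ acts by scalars and the diagonal copy of $\widetilde N$ acts trivially. Equivalently, it is a projective module over the quotient crossed module $\widetilde N\to K$. Choosing a section $s:Q\to K$ and lifts $\widehat\kappa$ produces the following data on the fiber functor:
\begin{itemize}
\item an action of $Q$ by operators $\theta_x$;
\item the relation $\theta_x\theta_y=\operatorname{Ad}(\widehat\kappa(x,y))\theta_{xy}$.
\end{itemize}
After modding out by $\widetilde N$, this identifies local modules with $Q$-graded $Q$-equivariant objects. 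The associator is measured by the ordered quotient of the two parenthesizations of $\widehat\kappa$-products, which is exactly $\omega_\xi$ from \eqref{eqn: characteristic cocycle}.

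Concretely, I would construct a functor $\Z(\Vec_K)_B^{\loc}\to\Z(\Vec_Q^{\omega_\xi})$. It sends $M$ to the space of $\widetilde N$-coinvariants, graded by cosets, with half-braiding induced by the operators $\theta_x$. I would then check three things:
\begin{itemize}
\item it is monoidal, with the tensor structure from the quotient maps $M\ot N'\to M\ot_B N'$;
\item it is braided;
\item it is an equivalence.
\end{itemize}
For the equivalence, the Frobenius--Perron dimensions agree, since $|G|^2/([G:K]|N|)^2=|Q|^2$ by \eqref{eqn:local-module-dimension} and Theorem~\ref{thm: twisted sections}. Fullness and faithfulness then reduce to a direct computation of Hom spaces.

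The main obstacle is the associator computation. One must verify that the pentagon data transported through the section $s$ yields precisely $\omega_\xi$ in the convention fixed for $\Vec_Q^\omega$. This includes pinning down the order of the factors involving $\eps$ and $\mu$, which is where the discrepancy with \cite{DS} arises. I would do this by evaluating the associativity isomorphism on simple local modules induced from characters of stabilizers. The associator scalar would be compared against \eqref{eqn: omega formula} using Proposition~\ref{prop: characteristic class}, where the identity $df=\pi^*\omega_\xi$ makes the comparison a bookkeeping check. Independence of choices then follows from that proposition.
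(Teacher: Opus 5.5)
Your reduction to $\mathcal Z(\operatorname{Vec}_K)^{\mathrm{loc}}_B$ with $B=k_\mu[N]$, via transitivity of localization, is exactly the paper's first step. The direct comparison you build on it, however, has a genuine gap.

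\textbf{The functor is not well defined.} The central $k^\times\subset\widetilde N$ acts on every $B$-module by scalars, so the $\widetilde N$-coinvariants of any module are zero. Coinvariants of a lifted copy of $N$ exist only when $[\mu]=0$ in $H^2(N,k^\times)$. For example, take $K=N=C_2\times C_2$ with nontrivial $\mu$: then $Q=1$, and the only simple local module is $B$ itself, which your functor sends to zero.

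\textbf{The repair leaves the real work undone.} The workable replacement is to restrict to a transversal, $M\mapsto\bigoplus_{x\in Q}M_{s(x)}$. This is legitimate because the $u_n$ are invertible, so every $B$-module is free. But the action of $s(x)$ does not preserve the transversal and must be corrected by elements $u_n$. You therefore only obtain a projective $Q$-action with a degree-dependent $2$-cocycle. To land in $\mathcal Z(\operatorname{Vec}_Q^{\omega_\xi})$ you must show three things:
\begin{enumerate}
\item this $2$-cocycle is the one determined by $\omega_\xi$;
\item the tensor structure matches;
\item the braiding matches.
\end{enumerate}
Since the associator of the source is trivial, $\omega_\xi$ has to appear in the tensor structure of your comparison functor, not in an associativity isomorphism evaluated on simple local modules. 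None of these identities is carried out, so the part you call bookkeeping is the whole content of the theorem.

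\textbf{Locality is misdescribed.} Locality imposes no support condition. What it says is that the $K$-action of $N$ agrees with conjugation by the corresponding elements of $\widetilde N$, for the $B$-bimodule structure given by the half-braiding.

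\textbf{The missing idea.} All of this is avoided by \cite[Theorem~3.20]{DMNO}: $\mathcal Z(\operatorname{Vec}_K)^{\mathrm{loc}}_B\simeq\mathcal Z(\mathcal C)$, where $\mathcal C$ is the fusion category of left $B$-modules in $\operatorname{Vec}_K$, with tensor product over $B$ defined through the central structure of $B$. This reduces the theorem to identifying a pointed fusion category:
\begin{itemize}
\item the simple objects of $\mathcal C$ are the free modules $M_x=Bv_x$ with $\deg v_x=s(x)$;
\item the right action is $v_x\cdot a={}^{s(x)}a\,v_x$;
\item the assignment $v_x\otimes_Bv_y\mapsto\widehat\kappa(x,y)v_{xy}$ defines isomorphisms $M_x\otimes_BM_y\cong M_{xy}$.
\end{itemize}
The two composites $M_x\otimes_BM_y\otimes_BM_z\to M_{xyz}$ send $v_x\otimes v_y\otimes v_z$ to $\widehat\kappa(x,y)\widehat\kappa(xy,z)v_{xyz}$ and to ${}^{s(x)}\widehat\kappa(y,z)\widehat\kappa(x,yz)v_{xyz}$, respectively. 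Their ratio is $\omega_\xi$ by its definition \eqref{eqn: characteristic cocycle}. Hence $\mathcal C\simeq\operatorname{Vec}_Q^{\omega_\xi}$, and the braided equivalence follows by taking centers. There are no projective actions, half-braidings, or braiding compatibilities to check, and no dimension count is needed.
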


\begin{proof}
Choose a representative $(\mu,\varepsilon)$ of $\xi$, and let
$B_\xi=k_\mu[N]\in\mathcal Z(\operatorname{Vec}_K)$ be the reduced algebra
described above. Thus $B_\xi$ has basis $u_n$, $n\in N$, with
$\deg(u_n)=n$, multiplication
$
u_nu_m=\mu(n,m)u_{nm},
$
and $K$-action
$g(u_n)=\varepsilon_g(n)u_{gng^{-1}}$.
The degree-$e$ subalgebra of $A(N\triangleleft K,\xi)$ is
$\Fun(G/K)$. Under the equivalence
\[
\mathcal Z(\operatorname{Vec}_G)_{\Fun(G/K)}^{\mathrm{loc}}
\simeq
\mathcal Z(\operatorname{Vec}_K),
\]
the algebra $A(N\triangleleft K,\xi)$, regarded as a local
$\Fun(G/K)$-module, corresponds to $B_\xi$. Hence
transitivity of localization gives
\[
\operatorname{Rep}(D(G))_{A(N\triangleleft K,\xi)}^{\mathrm{loc}}
\simeq
\mathcal Z(\operatorname{Vec}_K)_{B_\xi}^{\mathrm{loc}}.
\]

Let $\mathcal C$ be the tensor category of left $B_\xi$-modules in
$\operatorname{Vec}_K$, with tensor product over $B_\xi$ determined by
its central structure. By \cite[Theorem~3.20]{DMNO},
\[
\mathcal Z(\operatorname{Vec}_K)_{B_\xi}^{\mathrm{loc}}
\simeq
\mathcal Z(\mathcal C).
\]

Choose a normalized section $s:Q\to K$ and lifts
$t_{x,y}=\widehat\kappa(x,y)$ as in the definition of $\omega_\xi$.
We regard $\widetilde N$ as the group of nonzero homogeneous elements
of $B_\xi$. The simple objects of $\mathcal C$ are the free left
modules $M_x=B_\xi v_x,\, x\in Q$,
with $\deg(v_x)=s(x)$. The right $B_\xi$-action on $M_x$ induced by
the half-braiding is $
v_x\cdot a={}^{s(x)}a\,v_x$.
Hence in the tensor product over $B_\xi$ we have
\[
v_x\otimes_{B_\xi}(av_y)
=
({}^{s(x)}a\,v_x)\otimes_{B_\xi}v_y
\qquad(a\in B_\xi).
\]

There are $B_\xi$-module isomorphisms
\[
j_{x,y}:M_x\otimes_{B_\xi}M_y\longrightarrow M_{xy},
\qquad
v_x\otimes_{B_\xi}v_y\longmapsto t_{x,y}v_{xy}.
\]
The two composites
\[
j_{xy,z}(j_{x,y}\otimes_{B_\xi}\operatorname{id})
\quad\text{and}\quad
j_{x,yz}(\operatorname{id}\otimes_{B_\xi}j_{y,z}),
\]
evaluated on the tensor of the three generators, give respectively
\[
t_{x,y}t_{xy,z}v_{xyz}
\quad\text{and}\quad
{}^{s(x)}t_{y,z}t_{x,yz}v_{xyz}.
\]
Consequently, the transported associator 
\[
(M_x\otimes_{B_\xi}M_y)\otimes_{B_\xi}M_z
\longrightarrow
M_x\otimes_{B_\xi}(M_y\otimes_{B_\xi}M_z)
\]
is the scalar $a(x,y,z)$ satisfying
\[
a(x,y,z)t_{x,y}t_{xy,z}
=
{}^{s(x)}t_{y,z}t_{x,yz}.
\]

On the other hand, $\omega_\xi$ was defined in \eqref{eqn: characteristic cocycle} by
\[
\omega_\xi(x,y,z)\,
\widehat\kappa(x,y)\widehat\kappa(xy,z)
=
{}^{s(x)}\widehat\kappa(y,z)\widehat\kappa(x,yz).
\]
Since $t_{x,y}=\widehat\kappa(x,y)$, comparison gives
$
a(x,y,z)=\omega_\xi(x,y,z).
$
In the coordinates $(\mu,\varepsilon)$, choosing
$\widehat\kappa(x,y)=(1,\kappa(x,y))$ gives
\[
\omega_\xi(x,y,z)=
\frac{\varepsilon_{s(x)}(\kappa(y,z))\,
\mu(s(x)\kappa(y,z)s(x)^{-1},\kappa(x,yz))}
{\mu(\kappa(x,y),\kappa(xy,z))},
\]
which is exactly formula~(5.13).
\end{proof}

\begin{corollary}
\label{cor: localization lattice DG}
Every localization of $\Rep(D(G))$ is a twisted double of a section of $G$.
More precisely, the points of the localization poset are represented by
\begin{equation}
\label{eqn: localization lattice DG}
\Z(\Vec_{K/N}^{\omega}),
\qquad
N\triangleleft K\leq G,
\quad
[\omega]\in
\operatorname{Ker}\!\left(
H^3(K/N,k^\times)\xrightarrow{\operatorname{inf}}H^3(K,k^\times)
\right),
\end{equation}
with braided equivalent categories identified. Conversely, every datum in
\eqref{eqn: localization lattice DG} occurs. For a fixed embedded normal
section $N\triangleleft K$ and a fixed $[\omega]$, the set of relative
cohomology classes mapping to $[\omega]$ is a torsor under
\[
\operatorname{Coker}\!\left(
H^2(K/N,k^\times)\xrightarrow{\operatorname{inf}}H^2(K,k^\times)
\right).
\]
\end{corollary}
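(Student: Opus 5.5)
The corollary follows by assembling Theorem~\ref{thm: twisted sections}, Theorem~\ref{thm: localization twisted section}, and the exact sequence \eqref{eqn: relative exact sequence}.

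\emph{First assertion.} Every localization of $\Rep(D(G))$ has the form $\Rep(D(G))_A^{\loc}$ for a connected \'etale algebra $A$. By Theorem~\ref{thm: twisted sections}, $A\cong A(N\triangleleft K,\xi)$ for some twisted section with $\xi\in H^2_{\mathrm{rel}}(K/N,K;k^\times)$. Theorem~\ref{thm: localization twisted section} then identifies the localization with $\Z(\Vec^{\omega_\xi}_{K/N})$. Proposition~\ref{prop: characteristic class} gives $[\omega_\xi]=\Delta(\xi)$. By exactness of \eqref{eqn: relative exact sequence} at $H^3(Q,k^\times)$, the class $\Delta(\xi)$ lies in $\operatorname{Ker}(\operatorname{inf})$. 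So every localization is of the displayed form. Since $\mathfrak{Loc}$ consists of braided equivalence classes, equivalent categories are identified by definition.

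\emph{Converse.} Let $N\triangleleft K\le G$ and let $[\omega]$ lie in the kernel of inflation. Exactness of \eqref{eqn: relative exact sequence} at $H^3(Q,k^\times)$ gives some $\xi$ with $\Delta(\xi)=[\omega]$. The pair $(N\triangleleft K,\xi)$ is a twisted section. Theorem~\ref{thm: twisted sections} provides the connected \'etale algebra $A(N\triangleleft K,\xi)$. By Theorem~\ref{thm: localization twisted section}, its localization is $\Z(\Vec_{K/N}^{\omega_\xi})$. Since $\omega_\xi$ and $\omega$ are cohomologous, the twisted Drinfeld centers are braided equivalent: a coboundary twist gives a tensor equivalence $\Vec^{\omega_\xi}_Q\simeq\Vec^{\omega}_Q$, and this induces a braided equivalence of centers.

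\emph{Torsor statement.} This is exactness of \eqref{eqn: relative exact sequence} at the $H^2_{\mathrm{rel}}$ term. The fiber $\Delta^{-1}([\omega])$ is a coset of $\operatorname{Ker}\Delta$, and $\operatorname{Ker}\Delta$ equals the image of $H^2(K,k^\times)$. That image is isomorphic to the cokernel of $\operatorname{inf}\colon H^2(Q)\to H^2(K)$, which is \eqref{eqn: fiber delta}. The action is by addition in the abelian group $H^2_{\mathrm{rel}}$, and it is free and transitive on the fiber.

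The only point needing care is that $[\omega_\xi]$ depends on $\xi$ only through its class and coincides with $\Delta(\xi)$, rather than being some other representative. Proposition~\ref{prop: characteristic class} supplies exactly this, so I expect no genuine obstacle. The corollary does not claim that distinct data give inequivalent localizations, so nothing further needs to be checked.
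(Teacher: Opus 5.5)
Your proposal is correct and follows the paper's argument, which simply cites Theorem~\ref{thm: localization twisted section} together with exactness of \eqref{eqn: relative exact sequence}. You also make explicit the inputs the paper leaves implicit: Theorem~\ref{thm: twisted sections} (every connected \'etale algebra is some $A(N\triangleleft K,\xi)$), Proposition~\ref{prop: characteristic class} ($[\omega_\xi]=\Delta(\xi)$), and the coboundary-twist equivalence needed for the converse. One small point: identifying $\operatorname{Ker}\Delta$ with the cokernel of inflation uses exactness at $H^2(K,k^\times)$ as well as at $H^2_{\mathrm{rel}}$.
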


\begin{proof}
The description of the possible classes \([\omega]\) and of each fiber follows from Theorem~\ref{thm: localization twisted section} and exactness of \eqref{eqn: relative exact sequence}.
\end{proof}

\subsection[The module Grothendieck ring of Rep(D(S3))]{The module Grothendieck ring of $\Rep(D(S_3))$}
\label{subsect: module ring DS3}

Let $\B=\Rep(D(S_3))$.
There are eight connected \'etale algebras in $\B$; see \cite[Table~1]{D}. In the notation of Theorem~\ref{thm: twisted sections}, the
corresponding normal sections are
\[
1\triangleleft S_3,
\qquad
1\triangleleft C_3,
\qquad
1\triangleleft C_2,
\qquad
C_3\triangleleft S_3,
\qquad
H\triangleleft H\quad(H=1,C_2,C_3,S_3).
\]
All relevant relative cohomology groups vanish. For $1\triangleleft K$,
this is immediate since $K/N=K$ and the quotient cochain complex is zero;
for $K\triangleleft K$, it follows from the triviality of the Schur
multipliers of $1,C_2,C_3,S_3$; and for $C_3\triangleleft S_3$, it follows
from \eqref{eqn: relative exact sequence} and the splitting of
$S_3\to C_2$. Hence no additional cohomological parameter occurs.
The corresponding quotient groups are respectively
$S_3,C_3,C_2,C_2$, and the trivial group. 

The fiber over $\Vect$ consists of four Lagrangian algebras
\[
L_1=A(1\triangleleft1),\qquad
L_2=A(C_3\triangleleft C_3),\qquad
L_3=A(C_2\triangleleft C_2),\qquad
L_4=A(S_3\triangleleft S_3).
\]
The remaining connected \'etale algebras are
\[
\be,\qquad
A=A(1\triangleleft C_3)=\Fun(S_3/C_3),\qquad
B_1=A(1\triangleleft C_2)=\Fun(S_3/C_2),\qquad
B_2=A(C_3\triangleleft S_3).
\]

The localization poset $\mathfrak{Loc}(\B)$ has four elements,
\[
\B,\qquad
\C_2:=\B_{B_1}^{\loc}\simeq\B_{B_2}^{\loc}\simeq\Z(\Vec_{C_2}),\qquad
\C_3:=\B_A^{\loc}\simeq\Z(\Vec_{C_3}),\qquad
\Vect.
\]
In the paired diagrams below, algebras are horizontally aligned with their
localizations.

\[
\begin{tikzpicture}[paired poset]
  \node (et-heading) at (0,3.9) {$\mathfrak{Et}(\B)$};
  \node (loc-heading) at (7,3.9) {$\mathfrak{Loc}(\B)$};
  \draw[->, line width=0.4pt] (et-heading.east) --
    node[above=2pt] {$A\mapsto\B_A^{\loc}$} (loc-heading.west);

  \node (unit) at (0,0) {$\be$};
  \node (A) at (0,1) {$A$};
  \node (B1) at (-3,2) {$B_1$};
  \node (B2) at (3,2) {$B_2$};
  \node (L3) at (-4,3) {$L_3$};
  \node (L1) at (-1,3) {$L_1$};
  \node (L2) at (1,3) {$L_2$};
  \node (L4) at (4,3) {$L_4$};
  \draw[poset cover] (unit) -- (A);
  \draw[poset cover] (unit) -- (B1);
  \draw[poset cover] (unit) -- (B2);
  \draw[poset cover] (A) -- (L1);
  \draw[poset cover] (A) -- (L2);
  \draw[poset cover] (B1) -- (L1);
  \draw[poset cover] (B1) -- (L3);
  \draw[poset cover] (B2) -- (L2);
  \draw[poset cover] (B2) -- (L4);

  \node (locB) at (7,0) {$\B$};
  \node (locC) at (6,1) {$\C_3$};
  \node (locD) at (8,2) {$\C_2$};
  \node (locVec) at (7,3) {$\Vect$};
  \draw[poset cover] (locB) -- (locC);
  \draw[poset cover] (locB) -- (locD);
  \draw[poset cover] (locC) -- (locVec);
  \draw[poset cover] (locD) -- (locVec);
\end{tikzpicture}
\]

The simple objects in $\B$ are
$\{\be,X_1^{\mathrm{sgn}},X_1^{\mathrm{st}},X_3^{\mathrm{tr}},
X_3^\omega,X_3^{\bar\omega},X_2^{\mathrm{tr}},X_2^{\mathrm{sgn}}\}$,
where the subscript $n$ represents the $n$-cycles on which the object is
supported, and the superscript denotes the representation of the stabilizer
of one of these cycles. The standard representation of $S_3$ is denoted
$\mathrm{st}$, and $\omega,\bar\omega$ are the values of the representation
on $(123)$, with $\omega$ a primitive third root of unity.

The fiber over $\B$ consists only of $\be$. Since
$\Aut^{\mathrm{br}}(\B)\cong C_2$, Theorem~\ref{thm: stratum multiplication}
gives $S_\B(\B)\cong\mathbb Z[C_2]$.

\medskip\noindent\textbf{Maximal stratum.}
Using the character decompositions in \cite[Table~1]{D}, one obtains the
sandwich matrix
\begin{equation}
\label{eqn: S3 sandwich matrix}
H=\big(\dim_k\Hom_{\B}(L_i,L_j)\big)_{i,j=1}^4
=
\begin{pmatrix}
6&2&3&1\\
2&6&1&3\\
3&1&3&2\\
1&3&2&3
\end{pmatrix}.
\end{equation}
Thus Corollary~\ref{cor: maximal Gram sandwich} gives
$x_{ij}x_{k\ell}=H_{jk}x_{i\ell}$ for the standard basis of the maximal stratum.
Since $H$ has rank $3$, the complexified maximal stratum is a nonsemisimple
sandwich algebra. By Proposition~\ref{prop: semisimplicity poset grading},
it follows that $K^\mmod(\B)\ot_{\mathbb Z}\mathbb C$ is nonsemisimple.
Nonsemisimplicity for noncyclic $G$ also follows from
\cite[Corollary~4.10]{NikshychBurnside2026}; the calculation above identifies
an explicit obstruction in the maximal stratum.

\medskip\noindent\textbf{The $\C_3$-stratum.}
The only algebra in this fiber is
$A=\Fun(S_3/C_3)=\be\oplus X_1^{\mathrm{sgn}}$.
It is the regular algebra of the Tannakian subcategory
$\Rep(C_2)\subset\B$ obtained by inflation along $S_3\to C_2$.

The category $\C_3\simeq\Z(\Vec_{C_3})$ has metric group
$(\mathbb F_3^2,q)$, where $q(i,j)=\omega^{ij}$.
Its isometry group is generated by simultaneous inversion and
interchange of coordinates, so
$G=\Aut^{\mathrm{br}}(\C_3)\cong C_2^2$.
The nontrivial algebra automorphism of $A$ exchanges its two minimal
idempotents and induces conjugation by a transposition on
$\Z(\Vec_{C_3})$. This inverts both $C_3$ and its characters, so the
induced autoequivalence is $\sigma(i,j)=(-i,-j)$.
Thus $\Gamma_A=\langle\sigma\rangle$.

By Remark~\ref{rem: Tannakian regular endpoint},
$\Theta_A=1+\sigma$, and products in this block have no terms in
higher localizations. Since $G$ is abelian, the stratum basis is
indexed by $\Gamma_A\backslash G/\Gamma_A=G/\Gamma_A\cong C_2$, and
\eqref{eqn: stratum multiplication} gives
\[
x_{\bar g}x_{\bar h}=2x_{\overline{hg}},
\qquad
S_{\C_3}(\B)\cong2\mathbb Z[C_2],
\]
where $2\mathbb Z[C_2]$ is the ideal of $\mathbb Z[C_2]$ with its
induced multiplication. Its complexification is $\mathbb C^2$, hence semisimple.

\medskip\noindent\textbf{The $\C_2$-stratum.}
There are two algebras in this fiber, $B_1$ and $B_2$. The first is supported
entirely on the identity and is the function algebra on $S_3/C_2$; the
second is its image under the nontrivial braided autoequivalence of $\B$.
As objects in $\B$, they are
$B_1=\be\oplus X_1^{\mathrm{st}}$ and
$B_2=\be\oplus X_3^{\mathrm{tr}}$.

The simple objects of $\B_{B_1}^{\loc}$ and $\B_{B_2}^{\loc}$ have the
respective ordered lists of underlying objects
\begin{equation}
\label{eqn: S3 C2 underlying objects}
\bigl(
B_1,\,
X_1^{\mathrm{sgn}}\oplus X_1^{\mathrm{st}},\,
X_2^{\mathrm{tr}},\,
X_2^{\mathrm{sgn}}
\bigr),
\qquad
\bigl(
B_2,\,
X_1^{\mathrm{sgn}}\oplus X_3^{\mathrm{tr}},\,
X_2^{\mathrm{tr}},\,
X_2^{\mathrm{sgn}}
\bigr).
\end{equation}
Each local simple is self-dual, and the final object in each list is the
only one with nontrivial twist.   The nontrivial braided autoequivalence of $\C_2$ exchanges the second and
third local simples in \eqref{eqn: S3 C2 underlying objects} and fixes the
first and fourth; thus $\Aut^{\mathrm{br}}(\C_2)\cong C_2$.
Fix $\eta_1=\id_{\C_2}$ and the braided equivalence
$\eta_2:\B_{B_2}^{\loc}\to\C_2$ matching the displayed lists in order.
Let $U_i:\C_2\to\B$ be the underlying object functor composed with
$\eta_i^{-1}$.

With rows indexed by $U_i(X)$ and columns by $U_j(Y)$, the entries
$\dim_k\Hom_{\B}(U_j(Y),U_i(X))$ for $(i,j)=(1,1)$ and $(1,2)$ are
\[
\begin{array}{c|cccc}
\dim_k\Hom_{\B}&\be\oplus X_1^{\mathrm{st}}
&X_1^{\mathrm{sgn}}\oplus X_1^{\mathrm{st}}
&X_2^{\mathrm{tr}}&X_2^{\mathrm{sgn}}\\ \hline
\be\oplus X_1^{\mathrm{st}}&2&1&0&0\\
X_1^{\mathrm{sgn}}\oplus X_1^{\mathrm{st}}&1&2&0&0\\
X_2^{\mathrm{tr}}&0&0&1&0\\
X_2^{\mathrm{sgn}}&0&0&0&1
\end{array}
\]
and
\[
\begin{array}{c|cccc}
\dim_k\Hom_{\B}&\be\oplus X_3^{\mathrm{tr}}
&X_1^{\mathrm{sgn}}\oplus X_3^{\mathrm{tr}}
&X_2^{\mathrm{tr}}&X_2^{\mathrm{sgn}}\\ \hline
\be\oplus X_1^{\mathrm{st}}&1&0&0&0\\
X_1^{\mathrm{sgn}}\oplus X_1^{\mathrm{st}}&0&1&0&0\\
X_2^{\mathrm{tr}}&0&0&1&0\\
X_2^{\mathrm{sgn}}&0&0&0&1
\end{array}.
\]
The underlying objects in \eqref{eqn: S3 C2 underlying objects} give
the same diagonal table for $(i,j)=(2,2)$. The table for $(2,1)$ is the
transpose of that for $(1,2)$, so both off-diagonal matrices are $I_4$.

By \eqref{eqn: mixed contraction coefficients}, these tables give the
multiplicities of simple objects in the localized transition algebras,
transported to $\C_2\bt\C_2^{\rev}$ by the chosen gauges.
For each diagonal entry,
Corollary~\ref{cor:diagonal transition element} supplies an
identity graph algebra summand, corresponding to the regular
$\B_{B_i}^{\loc}$-module component of $\B_{B_i}$. The tensor unit
$\be_{\C_2}\bt\be_{\C_2}$ occurs with multiplicity two in the localized
transition algebra, while it occurs with multiplicity one in the
identity graph summand. Hence precisely one further connected summand remains.
Subtracting the identity matrix leaves ones in the upper-left
$2\times2$ block and zeros elsewhere. Let $E$ be the Lagrangian algebra in $\C_2$
whose underlying object is the sum of the first two local simples. 
The remaining connected algebra has both residual endpoints equal to $E$.
Since
${\C_2}_E^{\loc}\simeq\Vect$, these residual data identify it as $E\bt E$.
Then, for $i=1,2$,
\[
(\eta_i\bt\eta_i^{\rev})
\operatorname{Loc}_{B_i\bt B_i}
\bigl(L(B_i,B_i,1)\odot L(B_i,B_i,1)\bigr)
\simeq\Delta_{\id_{\C_2}}\oplus(E\bt E).
\]
Only the identity graph survives in the $\C_2$-stratum, so
$\Theta_{11}=\Theta_{22}=1$.

Under the correspondence between \'etale algebras over $B_i$ and local
\'etale algebras \cite[Proposition~3.16]{DMNO}, the two lifts of $E$
have underlying objects
\[
U_1(E)=\be\oplus X_1^{\mathrm{sgn}}\oplus2X_1^{\mathrm{st}},\qquad
U_2(E)=\be\oplus X_1^{\mathrm{sgn}}\oplus2X_3^{\mathrm{tr}}.
\]
These lifts are $L_1=\Fun(S_3)$ and $L_2$, respectively.
Hence the two transition summands with noninvertible local transition
components have endpoints $(L_1,L_1)$ and $(L_2,L_2)$, respectively.
For the off-diagonal entries $(1,2)$ and $(2,1)$, the coefficient matrix is
$I_4$. Hence $\be_{\C_2}\bt\be_{\C_2}$ occurs with multiplicity one in each
localized transition algebra, so each consists of a single connected
\'etale summand. Its residual endpoints are trivial, so it is a graph algebra.
The matrix $I_4$ shows that the corresponding braided autoequivalence
fixes every simple object. Hence it represents the identity in
$\Aut^{\mathrm{br}}(\C_2)\cong C_2$.
Thus $\Theta_{12}=\Theta_{21}=1$, and neither off-diagonal middle pair
has a component in a higher localization.

For $i=1,2$, an algebra automorphism of $B_i$ induces a braided
autoequivalence of $\B_{B_i}^{\loc}$ by twisting the $B_i$-module structure,
and therefore preserves the underlying object in $\B$. The four underlying
objects in each list in \eqref{eqn: S3 C2 underlying objects} are pairwise
distinct, whereas the nontrivial braided autoequivalence of $\C_2$ exchanges
the second and third local simples. Hence $H_1=H_2=1$,
and the group ring transition matrix is
\[
\Theta^{(\C_2)}=\begin{pmatrix}1&1\\1&1\end{pmatrix},
\]
where each $1$ is the identity basis element of $\mathbb Z[C_2]$.

As an abelian group, the stratum is $\operatorname{Mat}_2(\mathbb Z[C_2])$; in this matrix decomposition, $S_{12}$ and $S_{21}$ are the heterotic endpoint blocks. The multiplication is $X*Y=XJ_2Y$, where $J_2$ is the all-ones matrix.
After complexification, $\mathbb C[C_2]\cong\mathbb C\oplus\mathbb C$, so
the stratum is the direct sum of two matrix sandwich algebras with sandwich
matrix $J_2$. Since $\rank J_2=1$, Lemma~\ref{lem: sandwich radical} gives
radical dimension $3$ for each summand. Hence the complexified stratum has
radical dimension $6$ and semisimple quotient $\mathbb C^2$.

\begin{remark}
Ostrik's classification \cite[Subsection~4.2 of the corrected version]{O} gives the
$28$ indecomposable module categories over
$\Rep(D(S_3))\simeq\Z(\Vec_{S_3})$. The $\B$-, $\C_3$-, $\C_2$-, and
maximal strata above have ranks $2,2,8$, and $16$, respectively, summing
to $28$. Clark and Davydov \cite{CD} computed the complete
multiplication table for these classes. Our purpose here is different: we
organize the calculation by localization strata and exhibit the resulting
sources of nonsemisimplicity. The calculation of the maximal stratum above also
appears in \cite[Example~3.13]{NikshychBurnside2026}.
\end{remark}


\section{Module Grothendieck rings for two exceptional WZW categories}
\label{sect: Example WZW}

We consider two exceptional WZW categories using the full stratum
multiplication theorem of
Subsection~\ref{subsect: The structure of stratum algebras}.
It organizes both singleton localization fibers and fibers with heterotic blocks, i.e., blocks with distinct endpoints. 
The classification and categorical input used below are due to Edie-Michell--Gannon
\cite{EG}, together with the
braided autoequivalence calculations of Edie-Michell \cite{EM}. For
$\C(\mathfrak{sl}_4,4)$, their modular invariant method does not determine
every relative tensor product because the relevant modular invariants are not
linearly independent \cite[Lemma~4.9]{EG}; our 
stratum formula extracts the remaining multiplication from their
classification and data on the local categories. For $\C(\mathfrak{sl}_6,6)$,
Edie-Michell--Gannon compute the complete $16\times16$ multiplication table
\cite[Example 1.4]{EG}. We instead derive and organize the stratum structure
independently.

For $\C(\mathfrak{sl}_n,k)$, write $V_{a_1\ldots a_{n-1}}$ for the simple
object of highest weight $a_1\omega_1+\cdots+a_{n-1}\omega_{n-1}$, where
$a_i\geq0$ and $\sum_i a_i\leq k$. We describe the \'etale algebras below by
their underlying objects in this notation; their algebra structures are those
of the cited constructions.

\subsection{Beyond modular invariants: \texorpdfstring{\(\mathcal C(\mathfrak{sl}_4,4)\)}{C(sl4,4)}}
\label{subsect: sl4 rank seven}

Let $\B=\C(\mathfrak{sl}_4,4)$. The classification in
\cite[Subsection~5.2]{EG} gives three connected \'etale algebras,
\[
\be,\qquad A_2,\qquad A_{\mathfrak{so}_{15}},
\]
whose local categories are pairwise braided inequivalent. Consequently, there
are no heterotic residual data between distinct algebras. The numbers of
residual data classes at these three algebras are respectively $2$, $4$, and
$1$, and hence $\operatorname{rk}K^\mmod(\B)=7$. More explicitly,
\[
\Aut^{\mathrm{br}}(\B_{A_2}^{\loc})\cong C_2^3
\]
by \cite[Theorem 1.4]{EM}, and the four middle classes are indexed by
\[
\Gamma_{A_2}\backslash
\Aut^{\mathrm{br}}(\B_{A_2}^{\loc})/
\Gamma_{A_2}.
\]
We use the stratum calculations together with the multiplicity formula to determine the full based ring multiplication in \(K^\mmod(\B)\), including products between different localization strata.
In the notation fixed above, the underlying objects of the two nontrivial
\'etale algebras, translated from the pointed algebra and adjoint
conformal embedding data in \cite{EG}, are
\[
A_2\cong V_{000}\oplus V_{040},
\qquad
A_{\mathfrak{so}_{15}}
\cong V_{000}\oplus V_{210}\oplus V_{012}\oplus V_{040}.
\]

The pointed algebra $A_2$ is the order-two simple current subalgebra of
$A_{\mathfrak{so}_{15}}$. Thus the \'etale algebra poset is the chain
$\be<A_2<A_{\mathfrak{so}_{15}}$. Put
\[
\D_2:=\B_{A_2}^{\loc},
\qquad
\mathcal I:=\B_{A_{\mathfrak{so}_{15}}}^{\loc}
\simeq\C(\mathfrak{so}_{15},1).
\]
The category $\mathcal I$ is an Ising category, so
$\Aut^{\mathrm{br}}(\mathcal I)=1$; hence the $\mathcal I$-stratum has a
single residual class.

The localization chain is $\B<\D_2<\mathcal I$. The paired posets are
\[
\begin{tikzpicture}[paired poset]
  \node (et-heading) at (0,2.9) {$\mathfrak{Et}(\B)$};
  \node (loc-heading) at (7,2.9) {$\mathfrak{Loc}(\B)$};
  \draw[->, line width=0.4pt] (et-heading.east) --
    node[above=2pt] {$A\mapsto\B_A^{\loc}$} (loc-heading.west);

  \node (unit) at (0,0) {$\be$};
  \node (A2) at (0,1) {$A_2$};
  \node (Aso) at (0,2) {$A_{\mathfrak{so}_{15}}$};
  \draw[poset cover] (unit) -- (A2);
  \draw[poset cover] (A2) -- (Aso);

  \node (locB) at (7,0) {$\B$};
  \node (locD) at (7,1) {$\D_2$};
  \node (locI) at (7,2) {$\mathcal I$};
  \draw[poset cover] (locB) -- (locD);
  \draw[poset cover] (locD) -- (locI);
\end{tikzpicture}
\]
Set $A=A_2=\be\oplus V_{040}$, $\C=\D_2$, and
$G=\Aut^{\mathrm{br}}(\D_2)\cong C_2^3$. Let $\Gamma_A$ be the image of
$\Aut_{\alg}(A_2)$ in $G$. By the classification in \cite[Subsection~5.2]{EG}, this endpoint block has four residual classes. Since \(G\) is abelian, \(\Gamma_A\backslash G/\Gamma_A=G/\Gamma_A\), so \(|G/\Gamma_A|=4\) and hence \(|\Gamma_A|=2\).
Moreover, $\Aut_{\alg}(A_2)\cong C_2$: an algebra automorphism fixes the unit and acts
on the order-two simple current summand $V_{040}$ by a scalar $\lambda$, while
compatibility with its square gives $\lambda^2=1$. Thus induction is injective
in this example; both choices of $\lambda$ occur.

$A_2$ is the regular algebra of the order-two Tannakian simple current
subcategory. Hence $\B_{A_2}$ is a faithfully $C_2$-graded extension of
$\D_2$. By Propositions~\ref{prop:diagonal component correspondence}
and~\ref{prop:diagonal local transition component}, its two
homogeneous components are precisely the diagonal transition components. By
Remark~\ref{rem: Tannakian regular endpoint}, both are invertible, their
Picard labels are the two elements of $\Gamma_A$, and no term in a higher stratum can
occur in a product within this block. The transition count
\eqref{eqn: transition component count} provides the numerical check
\[
\#\{\text{diagonal transition summands}\}
=\dim_k\End_\B(A_2)=2.
\]
Thus $\Theta_A=\sum_{h\in \Gamma_A}h$.

Put $V:=G/\Gamma_A\cong C_2^2$ and label the four classes in the middle stratum by $x_v$,
$v\in V$. The diagonal case of Theorem~\ref{thm: stratum multiplication} gives
\begin{equation}
\label{eqn: sl4 middle multiplication}
x_vx_w=2x_{v+w},
\qquad
S_{\D_2}(\B)\cong2\mathbb Z[C_2^2].
\end{equation}
Here the right-hand side denotes the based ring with basis $x_v$ and
multiplication $x_vx_w=2x_{v+w}$. Remark~\ref{rem: Tannakian regular endpoint} shows
that the same formula holds in the full module Grothendieck ring, without any
correction from the Ising stratum.

Let \(1,q\) be the two basis elements in the \(\B\)-stratum, corresponding respectively to the identity and charge conjugation \(c\in\Aut^{\mathrm{br}}(\B)\), and let \(z\) be the unique basis element in the Ising stratum. For \(x_vz\) and \(zx_v\), Remark~\ref{rem: product multiplicity} gives total multiplicity \(2\), since
\[
\dim_k\Hom_{\B}(A_2,A_{\mathfrak{so}_{15}})
=2.
\]
Endpoint monotonicity, together with the inequivalence of \(\D_2\) and \(\mathcal I\), forces both output endpoints to be \(A_{\mathfrak{so}_{15}}\). Since \(z\) is the unique residual class with these endpoints, \(x_vz=zx_v=2z\). Maximality of \(A_{\mathfrak{so}_{15}}\) and \(\dim_k\End_{\B}(A_{\mathfrak{so}_{15}})=4\) give \(z^2=4z\). Finally, \(qz=zq=z\), since the relevant total multiplicity is one and \(z\) is the unique possible output.

Since $c(A_2)\cong A_2$ as an algebra, a choice of algebra isomorphism
$c(A_2)\xrightarrow{\sim}A_2$ induces a braided autoequivalence
$\bar c\in G=\Aut^{\mathrm{br}}(\D_2)$. Changing the algebra isomorphism
changes $\bar c$ by an element of $\Gamma_A$, so the coset
\[
a:=\bar c\Gamma_A\in V=G/\Gamma_A
\]
is canonical. Composition of residual graph equivalences gives
\[
qx_v=x_{v+a},
\qquad
x_vq=x_{v+a};
\]
the left and right translations agree because $G\cong C_2^3$ is abelian.

Every element of \(\Gamma_A\) preserves the underlying object of a local \(A_2\)-module, since it is induced by twisting the module structure by an algebra automorphism of \(A_2\). On the other hand, the free local $A_2$-module \(A_2 \otimes V_{200}\) has underlying object $V_{200}\oplus V_{022}$,
while its image under \(\bar c\) has underlying object $V_{002}\oplus V_{220}$.
Hence \(\bar c\notin \Gamma_A\), so \(a\ne0\) in \(V\cong C_2^2\).
Choose coordinates on $V$ with
$a=(1,0)$. The complete based ring multiplication of $K^\mmod(\B)$, with all indices in $C_2$,
is
\[
q^2=1,
\qquad
x_{(i,j)}x_{(k,\ell)}=2x_{(i+k,j+\ell)},
\qquad
qx_{(i,j)}=x_{(i+1,j)}=x_{(i,j)}q,
\]
\[
qz=zq=z,
\qquad
x_{(i,j)}z=zx_{(i,j)}=2z,
\qquad
z^2=4z.
\]
The three complexified strata are respectively $\mathbb C^2$, $\mathbb C^4$,
and $\mathbb C$. Proposition~\ref{prop: semisimplicity poset grading} therefore shows
that the full rank-seven complexified algebra is semisimple, and its
commutativity gives a noncanonical algebra isomorphism
\[
K^\mmod\bigl(\mathcal C(\mathfrak{sl}_4,4)\bigr)
\ot_{\mathbb Z}\mathbb C
\cong\mathbb C^7.
\]

Thus the stratum calculation determines the full based ring multiplication
from the classification and data on the local categories of Edie-Michell--Gannon and
Edie-Michell, in a case where modular invariants alone do not distinguish all
module categories.

\subsection{A noncommutative semisimple example:
\texorpdfstring{$\mathcal C(\mathfrak{sl}_6,6)$}{C(sl6,6)}}
\label{subsect: sl6 semisimple}

Let $\B=\C(\mathfrak{sl}_6,6)$ and abbreviate
\[
P=A_3,
\qquad O=A_{\mathfrak{so}_{35}},
\qquad S=A_{\mathfrak{sp}_{20}},
\qquad E=A_{\mathfrak{sp}_{20}}^{\mathrm{ext}},
\qquad T=A_{\mathfrak{sp}_{20}}^{\mathrm{tr}}.
\]
The six connected \'etale algebras are $\be,P,O,S,E,T$. 
The classification and the explicit conformal embedding and lifting constructions in \cite[Subsection~5.4]{EG} give the complete list of cover relations
\[
\be\lessdot P,\qquad P\lessdot O,\qquad P\lessdot S,\qquad P\lessdot T,\qquad S\lessdot E,\qquad T\lessdot E.
\]
There are no other inclusions except those forced transitively.
Here $S<E$ is obtained by lifting the $E_6$-algebra from
$\B_S^{\loc}\simeq\C(\mathfrak{sp}_{20},1)$, and $T<E$ follows from the
level-rank/transposition construction. We use this poset as classification input.

Put
\[
\D_3:=\B_P^{\loc},
\qquad
\D_S:=\B_S^{\loc}\simeq\C(\mathfrak{sp}_{20},1),
\qquad
\D_T:=\B_T^{\loc},
\qquad
\mathcal I:=\B_O^{\loc}\simeq\B_E^{\loc}.
\]
Here $\mathcal I$ is an Ising category, while $\D_S$ and $\D_T$ have the
$SU(2)_{10}$ fusion rules but are not braided equivalent. There are exactly
five localization classes, and the paired posets are
\[
\begin{tikzpicture}[paired poset]
  \node (et-heading) at (0,3.9) {$\mathfrak{Et}(\B)$};
  \node (loc-heading) at (7,3.9) {$\mathfrak{Loc}(\B)$};
  \draw[->, line width=0.4pt] (et-heading.east) --
    node[above=2pt] {$A\mapsto\B_A^{\loc}$} (loc-heading.west);

  \node (unit) at (0,0) {$\be$};
  \node (P) at (0,1) {$P$};
  \node (S) at (0,2) {$S$};
  \node (T) at (2,2) {$T$};
  \node (O) at (-2,3) {$O$};
  \node (E) at (1,3) {$E$};
  \draw[poset cover] (unit) -- (P);
  \draw[poset cover] (P) -- (O);
  \draw[poset cover] (P) -- (S);
  \draw[poset cover] (P) -- (T);
  \draw[poset cover] (S) -- (E);
  \draw[poset cover] (T) -- (E);

  \node (locB) at (7,0) {$\B$};
  \node (locD3) at (7,1) {$\D_3$};
  \node (locDS) at (6,2) {$\D_S$};
  \node (locDT) at (8,2) {$\D_T$};
  \node (locI) at (7,3) {$\mathcal I$};
  \draw[poset cover] (locB) -- (locD3);
  \draw[poset cover] (locD3) -- (locDS);
  \draw[poset cover] (locD3) -- (locDT);
  \draw[poset cover] (locDS) -- (locI);
  \draw[poset cover] (locDT) -- (locI);
\end{tikzpicture}
\]
The fiber over
$\mathcal I$ is $\{O,E\}$. In particular, the localization map identifies two
distinct maximal \'etale algebras, and the cover $P\lessdot O$ maps to the
non-cover $\D_3<\mathcal I$.

The five stratum ranks are $4,4,2,2,4$, summing to $16$. The rank-four Ising
stratum consists of the residual data with ordered endpoints $(O,O)$, $(E,E)$,
$(E,O)$, and $(O,E)$; denote the corresponding classes by $x_{OO}$,
$x_{EE}$, $x_{EO}$, and $x_{OE}$. The braided autoequivalence group of an
Ising category is trivial, so each ordered pair contributes one basis element;
the two off-diagonal pairs give the heterotic module categories.

We now derive the five stratum rings from
Theorem~\ref{thm: stratum multiplication}. For $A=\be$, the
category $\B_A=\B$ has a single invertible $\B$-module component, with
identity Picard label. By
Proposition~\ref{prop:diagonal component correspondence}, this is the unique
diagonal transition summand. Corollary~\ref{cor:diagonal transition element}
therefore gives $\Theta_{\be}=1$. Since
$\Aut^{\mathrm{br}}(\B)\cong C_2^2$, the
diagonal formula gives
\[
S_\B(\B)\cong\mathbb Z[C_2^2],
\qquad
S_\B(\B)_\mathbb C\cong\mathbb C^4.
\]

The algebra $P=A_3$ is the regular algebra of its order-three Tannakian
simple current subcategory. Thus $\B_P$ is a faithfully $C_3$-graded extension of
$\D_3$. Proposition~\ref{prop:diagonal component correspondence} identifies its
three homogeneous components with the diagonal transition summands, and
Remark~\ref{rem: Tannakian regular endpoint} shows that all three are
invertible and that no term in a higher stratum occurs in their diagonal products. 
Their Picard labels form the normal subgroup
$\Gamma_P\cong C_3$ of
$\Aut^{\mathrm{br}}(\D_3)\cong D_3\times C_2$, and
$\Aut^{\mathrm{br}}(\D_3)/\Gamma_P\cong C_2^2$.
Corollary~\ref{cor:diagonal transition element} gives
$\Theta_P=\sum_{h\in\Gamma_P}h$. Writing the four
residual classes as $x_v$, $v\in C_2^2$, we obtain
\[
x_vx_w=3x_{v+w},
\qquad
S_{\D_3}(\B)\cong3\mathbb Z[C_2^2],
\qquad
S_{\D_3}(\B)_\mathbb C\cong\mathbb C^4.
\]

We next use the same short ADE count for the \(\mathcal D_S\)- and \(\mathcal D_T\)-strata. Both categories have the \(SU(2)_{10}\) fusion rules, whose indecomposable module categories have ADE types \(A_{11}\), \(D_7\), and \(E_6\) \cite{Ost03}. Let \(A\in\{S,T\}\), put \(\mathcal D_A=\mathcal B_A^{\mathrm{loc}}\), and label its simple objects \(X_0,\ldots,X_{10}\), with \(X_0=\mathbf1\). Write
\[
\left.\mathcal B_A\right|_{\mathcal D_A}
\simeq a\mathcal M_{A_{11}}\oplus d\mathcal M_{D_7}\oplus e\mathcal M_{E_6},
\]
where in the \(T\)-case the \(E_6\)-type module category is the transposed one, denoted below by \(\mathcal M_{E_6}^{\mathrm{tr}}\).

By \eqref{eqn: mixed contraction coefficients} and
Propositions~\ref{prop:diagonal component correspondence}
and~\ref{prop:diagonal local transition component}, for \(i=0,1,2\) the number
\(\dim_k\End_{\mathcal B}(U_A(X_i))\) is the sum of the corresponding diagonal full-center coefficients of these components. At the labels \(0,1,2\), the \(A_{11}\)-, \(D_7\)-, and \(E_6\)-matrices have diagonal coefficients \((1,1,1)\), \((1,0,1)\), and \((1,0,0)\), respectively \cite{CIZ,Ost03}; transposition does not change these coefficients. For \(S\), the restriction rules in \cite[Subsection~2.4.4]{EG}, and for \(T\), Definition 5.4 and the displayed branching rules in \cite[Subsection~5.4]{EG}, show that \(U_A(X_0),U_A(X_1),U_A(X_2)\) are multiplicity-free with \(7,6,6\) simple summands, respectively. Hence
\[
a+d+e=7,\qquad a=6,\qquad a+d=6,
\]
so \((a,d,e)=(6,0,1)\). Therefore
\[
\left.\mathcal B_S\right|_{\mathcal D_S}
\simeq\mathcal D_S^{\oplus6}\oplus\mathcal M_{E_6},
\qquad
\left.\mathcal B_T\right|_{\mathcal D_T}
\simeq\mathcal D_T^{\oplus6}\oplus\mathcal M_{E_6}^{\mathrm{tr}}.
\]
In each case the six regular components are invertible and have identity Picard label, while the \(E_6\)-type component is noninvertible. The classification data give \(\operatorname{Aut}^{\mathrm{br}}(\mathcal D_S)\cong\operatorname{Aut}^{\mathrm{br}}(\mathcal D_T)\cong C_2\) and \(\Gamma_S=\Gamma_T=1\). Thus
\[
\Theta_S=\Theta_T=6\cdot1,\qquad
S_{\mathcal D_S}(\mathcal B)\cong6\mathbb Z[C_2],
\qquad
S_{\mathcal D_T}(\mathcal B)\cong6\mathbb Z[C_2].
\]

The multiplicity formula gives total multiplicity seven in a product within
either diagonal block, while the projected stratum term has total
multiplicity six. The unique remaining component is the noninvertible
$E_6$-component, and the only possible higher endpoint is $E$. Thus, if
$s_\epsilon$ and $t_\epsilon$, for $\epsilon\in C_2$, denote the two classes
in the $S$- and $T$-blocks, respectively, then the relations in the full ring are
\begin{equation}
\label{eqn: sl6 ADE block products}
s_\epsilon s_\delta=6s_{\epsilon+\delta}+x_{EE},
\qquad
t_\epsilon t_\delta=6t_{\epsilon+\delta}+x_{EE}.
\end{equation}

It remains to analyze the maximal Ising stratum. Since
$\Aut^{\mathrm{br}}(\mathcal I)=1$, its four residual classes are indexed by
the ordered endpoint pairs in $\{O,E\}^2$. Translating the relevant
forgetful functor formulas into the Dynkin label notation fixed above gives
the following underlying objects. The object $O=A_{\mathfrak{so}_{35}}$ comes
from the adjoint conformal embedding restriction data in
\cite[Subsection 2.4.3]{EG}; the object
$E=A_{\mathfrak{sp}_{20}}^{\mathrm{ext}}$ is constructed in
\cite[Definition 5.3]{EG} from the forgetful functor whose decompositions are
listed in \cite[Subsection 2.4.4]{EG}:
\begin{align*}
O\cong{}&V_{00000}\oplus V_{20010}\oplus V_{01002}\oplus V_{41000}
\oplus V_{21101}\oplus V_{02020}\oplus V_{10112}\oplus V_{00014}\notag\\
&\oplus V_{23010}\oplus V_{03200}\oplus V_{11211}\oplus V_{00230}
\oplus V_{01032}\oplus V_{06000}\oplus V_{01410}\oplus V_{00060},
\end{align*}
and
\begin{align*}
E\cong{}&V_{00000}\oplus V_{06000}\oplus V_{00060}\oplus V_{00200}
\oplus V_{04002}\oplus V_{20040}\oplus2V_{02020}\notag\\
&\oplus V_{00303}\oplus V_{30003}\oplus V_{30300}\oplus V_{21101}
\oplus V_{11211}\oplus V_{10112}.
\end{align*}
Thus $O$ has sixteen distinct simple summands; $E$ has twelve summands of
multiplicity one and one of multiplicity two. Their common summands are
$V_{00000}$, $V_{06000}$, $V_{00060}$, $V_{21101}$, $V_{11211}$,
$V_{10112}$, and $V_{02020}$, the last with multiplicity two in $E$.
Consequently
\[
\dim_k\End_\B(O)=16,
\qquad
\dim_k\End_\B(E)=16,
\qquad
\dim_k\Hom_\B(O,E)=8.
\]
Since $G=1$ and
the Ising localization is maximal, Corollary~\ref{cor: scalar sandwich strata}
identifies the stratum with the matrix sandwich ring
\begin{equation}
\label{eqn: sl6 Ising sandwich}
x_{ij}x_{k\ell}=h_{jk}x_{i\ell},
\qquad
H_{\mathcal I}=(h_{jk})_{j,k\in\{O,E\}}
=\begin{pmatrix}16&8\\8&16\end{pmatrix}.
\end{equation}
In particular,
\[
x_{OO}x_{EE}=8x_{OE},
\qquad
x_{EE}x_{OO}=8x_{EO},
\]
so the stratum, and hence the full ring, is noncommutative. Since
$H_{\mathcal I}$ is nonsingular,
\[
S_{\mathcal I}(\B)_\mathbb C\cong\operatorname{Mat}_2(\mathbb C).
\]

The other four complexified strata are $\mathbb C^4$, $\mathbb C^4$,
$\mathbb C^2$, and $\mathbb C^2$. Proposition~\ref{prop: semisimplicity poset grading}
therefore gives the noncanonical algebra isomorphism
\[
K^\mmod(\C(\mathfrak{sl}_6,6))\ot_{\mathbb Z}\mathbb C
\cong\mathbb C^{12}\oplus\operatorname{Mat}_2(\mathbb C).
\]
Edie-Michell--Gannon computed the full $16\times16$ table by the
modular invariant method. The calculation above rederives and organizes its
stratum structure from their classification and forgetful functor data,
together with the general formulas of
Subsection~\ref{subsect: The structure of stratum algebras}.

This example contrasts sharply with Subsection~\ref{subsect: module ring DS3}.
For $\Rep(D(S_3))$, the maximal sandwich matrix is singular and produces a
radical. For $\C(\mathfrak{sl}_6,6)$, the Ising sandwich matrix is nonsingular
and every other stratum is semisimple. Thus noncommutativity of a module
Grothendieck algebra is independent of the presence of a radical.


\section{A radical in the module Grothendieck ring of the Haagerup center}
\label{sect: Haagerup center}

\subsection{The three Lagrangian algebras}
\label{subsect: Haagerup full centers}

Let $\H=\H_2$ be the six-object Haagerup fusion category in the convention of
Grossman--Snyder \cite{GS}, and put $\B:=\Z(\H)$.
Write $\be,\alpha,\alpha^2,\rho,\alpha\rho,\alpha^2\rho$ for the simple objects of $\H_2$.
In the notation of
Hong--Rowell--Wang \cite[Subsection 2.2]{HRW}, the simple objects of $\B$ are
\[
\be,\ \pi_1,\ \pi_2,\ \sigma_1,\ \sigma_2,\ \sigma_3,
\ \mu_1,\ldots,\mu_6.
\]
We fix the $\H_2$-adapted convention through the forgetful functor
$F_2:\B\to\H_2$. Put $M:=\rho+\alpha\rho+\alpha^2\rho$. The forgetful images
needed to distinguish $\pi_2$ from $\sigma_1$ are
\[
F_2(\pi_1)=\be+M,
\qquad
F_2(\pi_2)=2\be+M,
\qquad
F_2(\sigma_1)=\alpha+\alpha^2+M,
\]
as computed in \cite[Section 8]{IzumiLR2}.

The three Lagrangian algebras in $\B$ are
\begin{equation}
\label{eqn: F2 images of L}
L_{\H_2}=\be+\pi_1+2\pi_2,
\qquad
L_{\H_1}=\be+\pi_1+\pi_2+\sigma_1,
\qquad
L_{\H_3}=\be+\pi_1+2\sigma_1.
\end{equation}
Grossman--Snyder classify three indecomposable $\H_2$-module categories, with dual fusion categories $\H_2,\H_1,\H_3$ \cite[Section~3]{GS}. The computations of the canonical sectors in \cite[Subsection~2.2]{EGHaag} give the first two displayed decompositions in \eqref{eqn: F2 images of L}, corresponding to the $\H_2$- and $\H_1$-dual module categories, while \cite[Theorem~4.3]{EP06} gives the third. By exhaustiveness, the third therefore corresponds to the $\H_3$-dual module category.

\subsection{The \texorpdfstring{\'etale}{etale} algebra poset}
\label{subsect: Haagerup etale lattice}

Every connected \'etale algebra in
$\B=\Z(\H)$ is contained in a Lagrangian algebra, and the interval below a
fixed $L_{\H_i}$ is anti-isomorphic to the lattice of fusion subcategories of
$\H_i$ \cite[Theorem 4.10]{DMNO}. The Haagerup fusion rules give
\[
\operatorname{Sub}(\H_1)=\{\Vect,\H_1\},
\qquad
\operatorname{Sub}(\H_i)=\{\Vect,\Vec_{\mathbb Z_3},\H_i\}
\quad(i=2,3).
\]
For $\H_1$, every nontrivial simple object generates the full category. For
$\H_2$ and $\H_3$, the unique proper nontrivial fusion subcategory is the
pointed subcategory on the three invertible objects; the two categories have
the same fusion ring by \cite[Theorem 1.4]{GS}.

Put
\[
A:=\be+\pi_1.
\]
Evans--Pinto identify \(A\) as a connected \'{e}tale algebra with
\[
\mathcal C:=\mathcal B_A^{\mathrm{loc}}\simeq \mathcal Z(\Vec_{\mathbb Z_3}),
\]
and realize \(L_{\mathcal H_2}\) and \(L_{\mathcal H_3}\) as Lagrangian algebras containing \(A\) \cite[Section 4]{EP06}.
Although the underlying object of $L_{\H_1}$ contains $\be+\pi_1$ as a
direct summand, $A\not\subset L_{\H_1}$ as \'etale algebras. Indeed, such an
algebra inclusion would give a proper nontrivial fusion subcategory of
$\H_1$, and none exists.

It follows that the complete list of connected \'etale algebras in \(\mathcal B\) is
\[
\mathbf 1,\quad A,\quad L_{\mathcal H_1},\quad L_{\mathcal H_2},\quad L_{\mathcal H_3}.
\]
The paired posets are
\[
\begin{tikzpicture}[paired poset]
  \node (et-heading) at (0,2.9) {$\mathfrak{Et}(\B)$};
  \node (loc-heading) at (7,2.9) {$\mathfrak{Loc}(\B)$};
  \draw[->, line width=0.4pt] (et-heading.east) --
    node[above=2pt] {$A\mapsto\B_A^{\loc}$} (loc-heading.west);

  \node (unit) at (0,0) {$\be$};
  \node (A) at (1,1) {$A$};
  \node (LH1) at (-2,2) {$L_{\H_1}$};
  \node (LH2) at (0,2) {$L_{\H_2}$};
  \node (LH3) at (2,2) {$L_{\H_3}$};
  \draw[poset cover] (unit) -- (A);
  \draw[poset cover] (unit) -- (LH1);
  \draw[poset cover] (A) -- (LH2);
  \draw[poset cover] (A) -- (LH3);

  \node (locB) at (7,0) {$\B$};
  \node (locC) at (7,1) {$\C$};
  \node (locVec) at (7,2) {$\Vect$};
  \draw[poset cover] (locB) -- (locC);
  \draw[poset cover] (locC) -- (locVec);
\end{tikzpicture}
\]
Thus the localization poset is the chain $\B<\C<\Vect$, and its three fibers
have cardinalities $1,1,3$.

\subsection{The stratum rings}
\label{subsect: Haagerup strata}

Grossman--Snyder prove $\BrPic(\H)=1$ \cite[Theorem 1.5]{GS}; hence
$\Aut^{\mathrm{br}}(\B)=1$ by the identification
$\BrPic(\H)\cong\Aut^{\mathrm{br}}(\Z(\H))$
\cite[Theorem 1.1]{ENO2}. Therefore the bottom stratum is
\[
S_{\B}(\B)\cong\mathbb Z.
\]

For the middle localization put
\[
\C:=\B_A^{\loc}\simeq\Z(\Vec_{\mathbb Z_3}),
\qquad
G:=\Aut^{\mathrm{br}}(\C).
\]
The description by the associated metric group gives $G\cong O^+(2,3)\cong C_2^2$.

Let $\D:=\Vec_{\mathbb Z_3}\subset\H_2$. Under the equivalence
$\B_A\simeq\Z_\D(\H_2)$, where $\Z_\D(\H_2)$ denotes the relative center of $\H_2$ with respect to $\D$, the local subcategory $\C$ identifies with
$\Z(\D)$. The forgetful tensor functor to $\H_2$ sends every nonlocal
simple object to an object containing a noninvertible simple, so its
Frobenius--Perron dimension is at least $(3+\sqrt{13})/2>3$.
Since every homogeneous component of the commutator grading has
Frobenius--Perron dimension $\FPdim(\C)=9$, no nonneutral component
can occur. Theorem~\ref{thm: automorphisms etale algebra} therefore gives
$\Aut_{\alg,\B}(A)=1$, and hence
$\Gamma_A=1$.

As a \(\D\)-bimodule category,
\[
\H_2=\D\oplus\mathcal M,
\qquad
\mathcal M:=\langle\rho,\alpha\rho,\alpha^2\rho\rangle.
\]
Both components are invertible: \(\D\) is the regular bimodule, while \(\mathcal M\) is the \(\mathbb Z_3\)-bitorsor whose right action is twisted by inversion, as follows from
\(\alpha\rho=\rho\alpha^{-1}\).
By Propositions~\ref{prop:diagonal component correspondence}
and~\ref{prop:diagonal local transition component}, they give the two diagonal transition summands. The regular component has Picard label \(1\), while the inversion-twisted component has Picard label a nontrivial involution \(g\in G\). Hence
\[
\Theta_A=1+g\in\mathbb Z[G].
\]
Write $G\cong C_2^2$ additively. The diagonal specialization of
Theorem~\ref{thm: stratum multiplication} gives the complete multiplication in the middle stratum
\begin{equation}
\label{eqn: Haagerup middle multiplication}
x_ux_v=x_{u+v}+x_{u+v+g}
\qquad (u,v\in G).
\end{equation}
There are no heterotic endpoint blocks because the localization fiber over
$\C$ contains only $A$.

Choose \(h\in G\setminus\langle g\rangle\). It follows from \eqref{eqn: Haagerup middle multiplication} that
\[
J_{\C}:=\operatorname{Span}_{\mathbb C}\{x_0-x_g,\;x_h-x_{h+g}\}
=\operatorname{Jac}\bigl(S_{\C}(\B)_{\mathbb C}\bigr),
\]
with
\[
\dim_{\mathbb C}J_{\C}=2,\qquad J_{\C}^2=0,
\qquad
S_{\C}(\B)_{\mathbb C}/J_{\C}\cong\mathbb C[C_2]\cong\mathbb C^2.
\]

The maximal stratum is described by
Corollary~\ref{cor: maximal Gram sandwich}.  It has the nine basis elements
\[
x_{ij}=x_{L_{\H_i},L_{\H_j}},
\qquad 1\leq i,j\leq3.
\]
The multiplicity vectors of the underlying objects, in the ordered basis
\((\be,\pi_1,\pi_2,\sigma_1)\), are
\[
L_{\H_1}=(1,1,1,1),\qquad
L_{\H_2}=(1,1,2,0),\qquad
L_{\H_3}=(1,1,0,2).
\]
Thus the sandwich matrix, indexed by
\((L_{\H_1},L_{\H_2},L_{\H_3})\), is
\begin{equation}
\label{eqn: Haagerup sandwich matrix}
H=
\begin{pmatrix}
4&4&4\\
4&6&2\\
4&2&6
\end{pmatrix},
\end{equation}
and
\begin{equation}
\label{eqn: Haagerup sandwich multiplication}
x_{ij}x_{k\ell}=H_{jk}x_{i\ell}.
\end{equation}

The underlying objects satisfy
\begin{equation}
\label{eqn: Haagerup full center relation}
-2[L_{\H_1}]+[L_{\H_2}]+[L_{\H_3}]=0
\qquad\text{in }K_0(\B).
\end{equation}
Hence
\[
\ker H=\mathbb C(-2,1,1),
\qquad
\operatorname{rk}H=2.
\]

Applying Lemma~\ref{lem: sandwich radical} gives
\begin{equation}
\label{eqn: Haagerup maximal radical}
J_{\Vect}:=\operatorname{Jac}\bigl(S_{\Vect}(\B)_{\mathbb C}\bigr)
=\{X\in\operatorname{Mat}_3(\mathbb C)\mid HXH=0\},
\qquad
\dim_{\mathbb C}J_{\Vect}=5,
\end{equation}
with
\[
S_{\Vect}(\B)_{\mathbb C}/J_{\Vect}
\cong\operatorname{Mat}_2(\mathbb C),
\qquad
J_{\Vect}^3=0.
\]

The bottom, middle, and maximal strata have ranks \(1,4,9\), respectively, so
$\operatorname{rk}K^\mmod(\B)=14$.
Since the middle and maximal strata have nonzero radicals, \(K^\mmod(\B)\otimes_{\mathbb Z}\mathbb C\) is not semisimple by Proposition~\ref{prop: semisimplicity poset grading}. The rank \(14\) is larger than that of the fusion algebra of realized modular invariant characters computed by Evans--Pinto \cite[Section~4]{EP06}; the difference reflects the loss of information under $[\M]\mapsto[L_\M]\in K_0(\Z(\B))$, which forgets the algebra structure of the full center.

\bibliographystyle{amsalpha}
\bibliography{Grothendieckrefs}

\end{document}